\definecolor{crimson}{rgb}{0.86, 0.08, 0.24}
\definecolor{darkcyan}{rgb}{0.0, 0.55, 0.55}
\numberwithin{equation}{section}
\newtheorem{thm}{Theorem}[section]
\newtheorem{lem}[thm]{Lemma}
\newtheorem{prop}[thm]{Proposition}
\newtheorem{df}[thm]{Definition}
\newtheorem*{df*}{Definition}
\newtheorem{conj}[thm]{Conjecture}
\newtheorem{cor}[thm]{Corollary}
\newtheorem{rmk}[thm]{Remark}
\newtheorem{construction}[thm]{Construction}
\newtheorem{exam}[thm]{Example}
\newtheorem{quest}[thm]{Question}
\newtheorem{thmx}{Theorem}
\newcommand{\EF}{\widetilde{E}\mathcal{F}}
\newcommand{\HF}{H\mathbb{F}_2}
\newcommand{\MUR}{MU_\mathbb{R}}
\newcommand{\MUCnplusone}{MU^{(\!(C_{2^{n+1}})\!)}}
\newcommand{\MUCn}{MU^{(\!(C_{2^n})\!)}}
\newcommand{\MUG}{MU^{(\!(G)\!)}}
\newcommand{\BPR}{BP_\mathbb{R}}
\newcommand{\BPCfour}{BP^{(\!(C_{4})\!)}}
\newcommand{\BPCtwoCtwo}{BP^{(\!(C_2 \times C_2)\!)}}
\newcommand{\BPCeight}{BP^{(\!(C_{8})\!)}}
\newcommand{\BPQeight}{BP^{(\!(Q_{8})\!)}}
\newcommand{\BPCfourone}{BP^{(\!(C_{4})\!)}\langle 1 \rangle}
\newcommand{\BPCn}{BP^{(\!(C_{2^n})\!)}}
\newcommand{\BPCk}{BP^{(\!(C_{2^k})\!)}}
\newcommand{\BPCkplusone}{BP^{(\!(C_{2^{k+1}})\!)}}
\newcommand{\BPG}{BP^{(\!(G)\!)}}
\newcommand{\BPGmodtwo}{BP^{(\!(G/C_2)\!)}}
\newcommand{\BPGmodCk}{BP^{(\!(G/C_{2^k})\!)}}
\newcommand{\BPCnminusone}{BP^{(\!(C_{2^{n-1}})\!)}}
\newcommand{\BPCnplusone}{BP^{(\!(C_{2^{n+1}})\!)}}
\newcommand{\BPCnminuskplusone}{BP^{(\!(C_{2^{n-k+1}})\!)}}
\newcommand{\tee}{\bar{t}}
\newcommand{\ti}{\bar{t}_i}
\newcommand{\CP}{\mathbf{CP}^\infty}
\newcommand{\RP}{\mathbf{RP}^\infty}
\newcommand{\smashover}[1]{{\underset{#1}{\wedge}}}
\newcommand{\Pb}{\mathcal{P}^*}
\newcommand{\Mpi}{{\underline{\pi}}}
\newcommand{\Cn}{C_{2^n}}
\newcommand{\Cnplusone}{C_{2^{n+1}}}
\newcommand{\Cnminusone}{C_{2^{n-1}}}
\newcommand{\Ck}{C_{2^k}}
\newcommand{\Ckplusone}{C_{2^{k+1}}}
\newcommand{\Cnminuskplusone}{C_{2^{n-k+1}}}
\newcommand{\FF}{\mathcal{F}}
\newcommand{\Z}{\underline{\mathbb{Z}}}
\newcommand{\D}{\mathscr{D}}
\newcommand{\N}{\mathbb{N}}
\newcommand{\E}{\underline{\mathcal{E}}}
\newcommand{\sm}{\wedge}
\newcommand{\rhobar}{\bar{\rho}}
\newcommand{\Aut}{\textup{Aut}}
\newcommand{\im}{\textup{im}}
\newcommand{\id}{\textup{id}}
\newcommand{\SliceSS}{\textup{SliceSS}}
\newcommand{\Sp}{\textup{Sp}}
\title[Transchromatic phenomena in the slice spectral sequence]{Transchromatic phenomena in the equivariant slice spectral sequence}
\author{Lennart Meier}
\address{Mathematical Institute, Utrecht University, Utrecht, 3584 CD, the Netherlands}
\email{f.l.m.meier@uu.nl}
\author{XiaoLin Danny Shi}
\address{Department of Mathematics, 
University of Washington, 
4110 E Stevens Way NE,
Seattle, WA 98195
}
\email{dannyshixl@gmail.com}
\author{Mingcong Zeng}
\address{Department of Mathematics, 
Ny Munkegade 118, 
8000 Aarhus C, 
Denmark}
\email{mingcongzeng@gmail.com}
\begin{document}

\maketitle
\begin{abstract}
In this paper, we prove a transchromatic phenomenon for Hill--Hopkins--Ravenel and Lubin--Tate theories. This establishes a direct relationship between the equivariant slice spectral sequences of height-$h$ and height-$(h/2)$ theories. As applications of this transchromatic phenomenon, we prove periodicity and vanishing line results for these theories.
\end{abstract}

{\hypersetup{linkcolor=black}
\tableofcontents}

\section{Introduction}

\subsection{Motivation and main theorem} \label{subsec:Motivation}

The equivariant slice filtration was developed by Hill, Hopkins, and Ravenel in their celebrated solution of the Kervaire invariant one problem \cite{HHR}.  This filtration gives rise to the equivariant slice spectral sequence, which is a valuable computational tool in equivariant homotopy theory for analyzing the norms of the Real bordism spectrum $\MUG := N_{C_2}^G (\MUR)$.

Let $E_h = E(k, \Gamma_h)$ be the Lubin--Tate spectrum associated to a formal group law $\Gamma_h$ of height $h$ over a finite field $k$ of characteristic 2.  Goerss, Hopkins, Miller, and Lurie have shown that $E_h$ is a commutative ring spectrum, and there is an action of $\Aut(\Gamma_h)$ on $E_h$ by commutative ring maps \cite{HopkinsMiller, GoerssHopkins, LurieElliptic2}.  For any finite group $G$ of $\Aut(\Gamma_h)$, we can view $E_h$ as a $G$-equivariant commutative ring spectrum using the cofree functor $F(EG_+, -)$.

The theories $E_h^{hG}$ hold significant importance in chromatic homotopy theory, as they are periodic theories capable of detecting periodic patterns and nontrivial families of elements in the stable homotopy groups of spheres \cite{AdamsJ4, RavenelOddKervaire,HHR, HurewiczImages, BehrensMahowaldQuigleyTMF}.  They are also used in the study of the $K(h)$-local sphere \cite{GoerssHennMahowald, GoerssHennMahowaldRezk, BehrensModular, Henn2007, HennKaramanovMahowald, Lader, GoerssHennMahowaldRational, BobkovaGoerss, BeaudryGoerssHenn}.  At height 1, there is a well-established understanding of these theories due to their relationship with complex and real $K$-theory \cite{Adams1974, BousfieldLocalization, RavenelLocalization, Bousfield1985}.  At height 2, they are closely related to topological modular forms with level structures \cite{HopkinsMahowald, BauerTMF, MahowaldRezk, tmfbook, BehrensOrmsby, HillMeier, HHRKH}.

In this paper, we focus our attention at the prime 2.  Prior to \cite{HahnShi, HSWX}, computations of $E_h^{hG}$ at the prime 2 were limited to heights $h \leq 2$.  Explicitly computing these theories posed challenges, partly due to the lack of geometric insight into the $G$-action on $E_h$.

In \cite{HahnShi}, Hahn and the second author produced a $G$-equivariant orientation
\[\MUG \longrightarrow E_h.\]
This orientation establishes a connection between the obstruction theoretic $G$-action on $E_h$, induced from $\Aut(\Gamma)$, to the geometric $G$-action on $\MUG$, which arises from complex conjugation.  This connection enables us to construct $E_h$ as a localization of a quotient of $\MUG$.

For $G = \Cnplusone$ and $h = 2^n \cdot m$, Beaudry--Hill--Shi--Zeng \cite{BHSZ} constructed $G$-equivariant models of $E_h$ using quotients of the norm $\BPG := N_{C_2}^{G}(\BPR)$, where $\BPR$ is the Real Brown--Peterson spectrum.  These quotients, denoted as $\BPG \langle m \rangle$, are the equivariant analogues of the classical truncated Brown--Peterson spectrum $BP\langle m \rangle$.  Consequently, computing the fixed points $E_h^{hG}$ is directly tied to computing the $G$-fixed points of $\BPG \langle m \rangle$.

Using the equivariant slice spectral sequence, the following computations have been made: 
\begin{enumerate}
\item The $C_2$-fixed points of $\BPR \langle m \rangle$ for all $m \geq 1$.  This yields $E_m^{hC_2}$ for all $m \geq 1$ (\cite{HuKriz}, \cite{HahnShi});
\item The $C_4$-fixed points of $\BPCfour \langle 1 \rangle$.  This yields $E_2^{hC_4}$ (\cite{BehrensOrmsby}, \cite{HHRKH});
\item The $C_4$-fixed points of $\BPCfour \langle 2 \rangle$.  This yields $E_4^{hC_4}$ (\cite{HSWX}).
\end{enumerate}

The goal of this paper is to achieve a systematic understanding of the structural properties of $E_h^{hG}$ across different groups and heights.  This includes differentials, periodicities, and vanishing lines.  A key conjecture by Hill--Hopkins--Ravenel states that the slice spectral sequences of $E_h^{hG}$ exhibit connections to each other as we vary $h$ and $G$:

\begin{conj}[Transchromatic Phenomenon] \label{conj:HHRConjecture}
The slice spectral sequences of higher height theories contain isomorphism regions to the slice spectral sequences of lower height theories.
\end{conj}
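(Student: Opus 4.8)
The conjecture as phrased is a heuristic principle, so I would begin by committing to a precise instance and proving that. Fix $m\ge 1$, put $h=2^{n}m$ and $G=\Cnplusone$, and recall that the Hahn--Shi orientation $\MUG\to E_h$ together with the Beaudry--Hill--Shi--Zeng models realize $E_h$ as the $G$-fixed points of a localization of $\BPG\langle m\rangle$. Consequently $\SliceSS(E_h^{hG})$ is accessible through the algebraic slice spectral sequence of Hill--Hopkins--Ravenel, whose input page is the $RO(G)$-graded homotopy of a polynomial $\Z$-algebra on the norm-generators obtained from $\bar{t}_1,\dots,\bar{t}_m$. The statement I would aim for is: there is an explicit cone $\mathcal C\subset\mathbb Z^2$ in the $(s,t)$-plane, together with a regrading, under which $\SliceSS(E_h^{hG})$ restricted to $\mathcal C$ is isomorphic --- as a multiplicative spectral sequence, with all its differentials --- to $\SliceSS(E_{h/2}^{h\Cn})$, where $E_{h/2}$ is modeled by $\BPCn\langle m\rangle$ and $\Cn$ is realized as the quotient $G/C_2$ by the central subgroup.

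The mechanism is isotropy separation with respect to the central $C_2\le G$. Smashing $E_h$ with the cofiber sequence $E\mathcal F_+\to S^0\to\EF$, where $\mathcal F$ is the family of subgroups of $G$ not containing $C_2$, produces a ``Borel'' summand $E\mathcal F_+\wedge E_h$ and a ``geometric'' summand $\EF\wedge E_h$; passing to $\EF$ amounts to inverting the Euler classes that become invertible on $C_2$-geometric fixed points, so this operation is visible on the slice tower as a localization. By the identity $\Phi^{C_2}(-)\simeq(\EF\wedge -)^{C_2}$, the $G$-homotopy of $\EF\wedge E_h$ is computed by the $\Cn$-spectrum $\Phi^{C_2}E_h$. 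The decisive point --- the origin of the drop in height --- is that $\Phi^{C_2}$ carries the height-$h$ truncated theory $\BPG\langle m\rangle$ to (a mild modification of) the height-$(h/2)$ theory $\BPCn\langle m\rangle$: this uses the interaction of $\Phi^{C_2}$ with the norm, the way $\Phi^{C_2}$ acts on the defining relations of $\langle m\rangle$, and the fact that $\Phi^{C_2}\BPR$ is a polynomial $\HF$-algebra. Granting this, and invoking the Hill--Hopkins--Ravenel (and Ullman) description of the geometric fixed points of slice cells to see that the localization respects the slice filtration, the localized slice spectral sequence of $E_h^{hG}$ is isomorphic, after the predicted regrading, to $\SliceSS(E_{h/2}^{h\Cn})$, with the $\bar{\mathfrak d}$-type slice classes matched ($\done$ upstairs, $\dazero$ downstairs).

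It then remains to locate the region in which the localization map from $\SliceSS(E_h^{hG})$ to the slice spectral sequence of $(\EF\wedge E_h)^G\cong E_{h/2}^{h\Cn}$ is an isomorphism --- equivalently, the region in which $\Mpi_\star^G(E\mathcal F_+\wedge E_h)$ vanishes. Since $E\mathcal F_+\wedge E_h$ is built from cells induced from proper subgroups, its $G$-homotopy is governed by homotopy fixed point spectral sequences for proper subquotients, for which horizontal vanishing lines are available (and which the periodicity/vanishing-line results of this paper will sharpen). Together with the fact that multiplication by the relevant Euler class raises slice filtration, this confines the $a$-power torsion below an explicit line, so the comparison map is an isomorphism on the resulting cone $\mathcal C$. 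Iterating the construction --- replacing $E_h$ by $E_{h/2}$, then $E_{h/4}$, and so on, and composing the regradings --- yields a chain of isomorphism regions linking the heights $2^{n}m,2^{n-1}m,\dots,m$ for the nested groups $\Cnplusone\supset\Cn\supset\cdots\supset C_2$; this is the precise incarnation of the Transchromatic Phenomenon.

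I expect the main obstacle to be twofold. First, identifying $\Phi^{C_2}E_h$ \emph{compatibly with slice filtrations}, so that differentials --- and not merely the input page --- transport across the regrading, requires producing an honest map of slice towers (or of filtered $\Cn$-spectra) realizing the identification, rather than comparing associated graded after the fact; threading the norm structure and the truncation relations of $\langle m\rangle$ through $\Phi^{C_2}$ coherently is the heart of the matter. Second, pinning down the optimal cone $\mathcal C$ is a sharp vanishing-line problem for the Borel piece $E\mathcal F_+\wedge E_h$ --- itself of the same nature as the structural results one hopes to extract --- so to avoid circularity one first proves a crude vanishing line yielding \emph{some} nonempty $\mathcal C$, deduces a weak transchromatic isomorphism, and only then feeds in the known low-height computations ($E_1^{hC_2}$, $E_2^{hC_4}$, \dots) to sharpen the line and the cone simultaneously.
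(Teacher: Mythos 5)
Your high-level strategy---isotropy-separate along the central $C_2$, pass to $C_2$-geometric fixed points, observe that $\Phi^{C_2}$ takes $\BPG\langle m\rangle$ to (a localization of) $\BPCn\langle m\rangle$ with $\Cn$ arising as the \emph{quotient} $G/C_2$, and then thread this through the slice tower rather than merely the associated graded---is the approach the paper actually takes. You have also correctly put your finger on the two genuine technical difficulties: making the identification at the level of towers (the paper does this via the explicit twisted-monoid-ring models $P_d(\BPG\langle I\rangle)=\BPG\langle I\rangle\wedge_A M_{\ge d}$ together with \cref{lem:tiGeneratorRelations}, which matches $\Phi^{C_2}(\ti^{\Cn})$ with $\Phi^{C_4}N_{C_2}^{C_4}(\ti^{\Cnplusone})$), and making the regrading precise (the paper does this with the pullback functor $\Pb_{G/N}$ and the doubling operation $\D^k$, which, fed into \cref{thm:ShearingGeneralTheory}, produce the exact exchange $d_{2^kr-(2^k-1)}\leftrightsquigarrow d_r$).

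Where your proposal diverges---and where it has a real gap---is in how you propose to locate the isomorphism region $\mathcal C$. You assert that the localization map $\SliceSS(E_h)\to\EF\wedge\SliceSS(E_h)$ is an isomorphism ``equivalently, [in] the region in which $\Mpi_\star^G(E\mathcal F_+\wedge E_h)$ vanishes,'' and then hope to bound that region by vanishing-line arguments for the Borel piece, bootstrapping to avoid circularity. This is not the right characterization: the comparison map on spectral sequences is controlled by where the \emph{localized slice spectral sequence} of $E\mathcal F_+\wedge E_h$ vanishes, not by where its abutment does, and these are not the same condition. More importantly, the paper avoids the circularity entirely. The Slice Recovery Theorem (\cref{thm:SliceRecovery1}, quoted from \cite{MeierShiZengStratification}) is a purely combinatorial statement about the dimensions of slice cells $G_+\wedge_H S^{k\rho_H}$ after smashing with $\EF[\Ckplusone]$: it gives the sharp line $\mathcal L^V_{2^k-1}$ of slope $2^k-1$ above which $\varphi_{2^k}$ is an isomorphism, valid for \emph{any} $G$-spectrum $X$, with no input from $E_h$-specific vanishing lines. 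Your plan of deriving a crude cone from HFPSS vanishing for proper subquotients and then sharpening would produce (at best) horizontal cutoffs and would depend on the very structural results the theorem is meant to establish. The missing ingredient in your sketch is precisely this slice-cell-dimension argument, which decouples the ``where is the comparison an iso?'' question from the ``what are the vanishing lines of $E_h$?'' question and makes the whole scheme non-circular.
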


\cref{conj:HHRConjecture} originated from the work of Hill, Hopkins, and Ravenel during their study of $\BPCfourone$ \cite{HHRKH}, where they observed isomorphism regions between the $C_4$-slice spectral sequence of $\BPCfourone$ ($E_2^{hC_4}$) and the $C_2$-slice spectral sequence of $\BPR \langle 1 \rangle$ ($E_1^{hC_2}$).  A similar phenomenon was later discovered between the $C_4$-slice spectral sequence of $\BPCfour \langle 2 \rangle$ ($E_4^{hC_4}$) and the $C_2$-slice spectral sequence of $\BPR\langle 2 \rangle$ ($E_2^{hC_2}$) \cite{HSWX}. 

\vspace{0.1in}
To this end, the main theorem of this paper is the following: 

\begin{thmx}[Transchromatic Isomorphism]\label{thm:introThmTranschromaticIsom}
For $G = \Cnplusone$ and $m \geq 1$, there is a shearing isomorphism $d_{2r-1} \leftrightsquigarrow d_r$ between the following regions of spectral sequences: 
\begin{enumerate}
\item The $G$-slice spectral sequence of $\BPG \langle m \rangle$ on or above the line of slope $1$; and  
\item The $(G/C_2)$-slice spectral sequence of $\BPGmodtwo \langle m \rangle$.
\end{enumerate}
\end{thmx}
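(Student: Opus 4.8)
The plan is to bridge the two spectral sequences through the geometric fixed point functor $\Phi^{C_2}\colon\Sp^{G}\to\Sp^{G/C_2}$ attached to the unique order-two subgroup $C_2\subseteq G$, factoring the comparison into three steps. First I would show that in the region on or above the line of slope $1$ the $G$-slice spectral sequence of $\BPG\langle m\rangle$ is unchanged under the Euler-class localization $a_\lambda^{-1}(-)$, where $\lambda$ is a faithful two-dimensional rotation representation of $G$. Second I would identify the $G$-slice spectral sequence of this localization with a reindexed copy of the $(G/C_2)$-slice spectral sequence of $\Phi^{C_2}\BPG\langle m\rangle$, the reindexing being the shear that halves slice filtration and hence turns $d_{2r-1}$ into $d_r$. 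Third I would identify $\Phi^{C_2}\BPG\langle m\rangle$, in the range of slice degrees visible in the region, with $\BPGmodtwo\langle m\rangle$. Throughout, the isomorphisms should be realized by (zig-zags of) maps of filtered spectra, so that compatibility with differentials is built in rather than checked by hand.

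\emph{Step 1.} Since $\lambda^{H}=0$ for every nontrivial $H\le G$, inverting $a_\lambda$ is the same as smashing with $\EF[C_2]$, which here equals $\widetilde{EG}$ because $C_2$ is the minimal nontrivial subgroup of $G$. The isotropy cofiber sequence
\[ EG_+\sm\BPG\langle m\rangle\longrightarrow\BPG\langle m\rangle\longrightarrow\widetilde{EG}\sm\BPG\langle m\rangle \]
yields a map of slice spectral sequences on the last arrow. Using the Slice Theorem of \cite{BHSZ}, which presents the slice associated graded of $\BPG\langle m\rangle$ as a wedge of induced slice cells $G_+\sm_{H}S^{k\rho_H}\sm H\Z$, I would check that the fiber $EG_+\sm\BPG\langle m\rangle$ is a free $G$-spectrum whose slice spectral sequence — equivalently, the homotopy-orbit spectral sequence of its even, bounded-below underlying spectrum — is supported below the line of slope $1$ (the behaviour exactly on the line requiring a small separate argument). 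Consequently the localization map is an isomorphism of spectral sequences, on every page, in the region on or above that line.

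\emph{Steps 2 and 3.} Using the standard fact that $\widetilde{EG}\sm X\simeq\widetilde{EG}\sm\inf_{G/C_2}^{G}\bigl(\Phi^{C_2}X\bigr)$, the localized spectrum becomes $\widetilde{EG}\sm\inf_{G/C_2}^{G}\bigl(\Phi^{C_2}\BPG\langle m\rangle\bigr)$. I would then compare its $G$-slice filtration with the $(G/C_2)$-slice filtration of $\Phi^{C_2}\BPG\langle m\rangle$: after inverting the Euler classes of the faithful two-dimensional representations of $G$ (which is what $\widetilde{EG}\sm(-)$ does), the slice tower of an inflated $(G/C_2)$-slice cell gets rescaled so that one step of the downstairs slice filtration corresponds to two steps upstairs, and tracking the $\pi_{t-s}$-index converts this rescaling into exactly the shear taking $d_r$ downstairs to $d_{2r-1}$ upstairs, with ``filtration at least the stem'' upstairs matching the whole (nonnegatively filtered) spectral sequence downstairs. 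For the remaining identification I would use $\BPG=N_{C_2}^{G}\BPR$ and the geometric fixed point formula $\Phi^{C_2}N_{C_2}^{G}\simeq N_{e}^{G/C_2}\Phi^{C_2}$, compare generators with the analogous Slice-Theorem presentation of $\BPGmodtwo\langle m\rangle=N_{C_2}^{G/C_2}\BPR$, and verify that the extra polynomial classes introduced by geometric fixed points (coming from $\Phi^{C_2}\BPR$ and $\Phi^{C_2}H\Z$) produce, after the shear, precisely the classes of $\BPGmodtwo\langle m\rangle$ visible in the region.

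\emph{Where the difficulty lies.} I expect Steps 2--3 to be the crux: the spectra $\Phi^{C_2}\BPG\langle m\rangle$ and $\BPGmodtwo\langle m\rangle$ are not equivalent on the nose, so one must control the comparison only in the relevant range of slice filtration and show the discrepancy lands strictly below the line of slope $1$ after the shear; and one must match slice towers precisely enough that \emph{every} differential in the region — not only those forced by multiplicativity — is imported from the $(G/C_2)$-side. The argument would then close with an induction on the page, using the Leibniz rule, naturality of the comparison maps, and the Slice Differentials Theorem of \cite{HHR} (with the known cases $G=C_4$ of \cite{HHRKH,HSWX} as a guide) to seed the primary differentials and propagate the isomorphism to all higher pages.
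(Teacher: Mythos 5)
Your overall architecture --- localize, identify the localized tower with a pulled-back doubled tower downstairs, then recover via the Slice Recovery Theorem --- is the right one, and your Step 2 correctly isolates the doubling mechanism that converts $d_r$ into $d_{2r-1}$. But you localize at the wrong family, and this off-by-one error propagates into a Step 3 that cannot be repaired as stated. Inverting the Euler class of a \emph{faithful} two-dimensional representation $\lambda = \lambda_n$ is smashing with $\EF[C_2] = \widetilde{E}G$; the associated localized slice spectral sequence computes $\Mpi_\star\Phi^{C_2}(\BPG\langle m\rangle)$ and agrees with $\SliceSS(\BPG\langle m\rangle)$ on the \emph{entire} region $s\geq 0$, $t-s\geq 0$ (the $h=1$ case of the Slice Recovery Theorem), not merely on or above the line of slope $1$; correspondingly, your claim that $EG_+\wedge\SliceSS$ is supported below slope $1$ is the heuristic for the wrong family. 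The slope-$1$ line is the boundary of the isomorphism region for the \emph{next} localization, $a_{\lambda_{n-1}}^{-1}(-) = \EF[C_4]\wedge(-)$, where $\lambda_{n-1}$ has kernel exactly $C_2$, and that is the localization the argument needs upstairs.

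Because you stop at $\Phi^{C_2}$ upstairs, your Step 3 asks for an identification of $\Phi^{C_2}\BPG\langle m\rangle$ with $\BPGmodtwo\langle m\rangle$ in a range of degrees, and no such identification exists in any range: $\Phi^{C_2}\BPG\langle m\rangle$ is a module over $\Phi^{C_2}N_{C_2}^{G}\BPR \simeq N_e^{G/C_2}\Phi^{C_2}\BPR \simeq N_e^{G/C_2}\HF$, so all of its homotopy is killed by $2$, whereas $\pi_0\, i_e^*\BPGmodtwo\langle m\rangle \cong \mathbb{Z}_{(2)}$. At the level of towers, $\Phi^{C_2}$ converts the $C_2$-induced slice cells of $\BPG\langle m\rangle$ into odd-dimensional cells induced from the trivial subgroup of $G/C_2$, which have no counterpart among the slice cells of $\BPGmodtwo\langle m\rangle$; these are exactly the cells that $\EF[C_4]$ annihilates. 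The identification that does hold sits one geometric-fixed-point level higher on \emph{both} sides, namely $\Phi^{C_4}\BPG\langle m\rangle \simeq \Phi^{C_2}\BPGmodtwo\langle m\rangle$ (both are quotients of $N_e^{C_{2^{n-1}}}\HF[\ldots]$, matched via $\Phi^{C_4}N_{C_2}^{C_4}(\ti^{\Cnplusone}) = \Phi^{C_2}(\ti^{\Cn})$). This forces you to also localize the downstairs spectral sequence at $\widetilde{E}(G/C_2)$, which costs nothing precisely because that localization recovers the entire downstairs slice spectral sequence --- and this is where the asymmetry in the theorem (a region upstairs versus the whole spectral sequence downstairs) comes from. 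Finally, once the two localized towers are identified as towers, the isomorphism of spectral sequences on every page is automatic; the concluding induction on pages via the Leibniz rule and the Slice Differentials Theorem is unnecessary and would not by itself account for differentials on indecomposable classes.
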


More precisely, \cref{thm:introThmTranschromaticIsom} states that all the differentials in the $G$-slice spectral sequence of $\BPG \langle m \rangle$ on or above the line of slope 1 have lengths of the form $(2r - 1)$, where $r \geq 2$. Additionally, the transchromatic isomorphism induces a one-to-one correspondence between these differentials and the $d_r$-differentials in the $(G/C_2)$-slice spectral sequence of $\BPGmodtwo \langle m \rangle$.

Combined with the results in \cite{BHSZ}, \cref{thm:introThmTranschromaticIsom} establishes an explicit correspondence between differentials in the $\Cnplusone$-slice spectral sequence of $E_h$ on or above the line of slope 1 and the differentials in the $\Cn$-slice spectral sequence of $E_{h/2}$. 

For any finite subgroup $G \subset \mathbb{G}_h$, the 2-Sylow subgroup $H$ of $G \cap \mathbb{S}_h$ is isomorphic to either a cyclic group of order a power of $2$ or the quaternion group $Q_8$ \cite{Hewett, Hewett2, Bujard}.  In light of this classification, \cref{thm:introThmTranschromaticIsom} resolves \cref{conj:HHRConjecture} for the case when the Sylow subgroup $H$ is cyclic.

Computationally, \cref{thm:introThmTranschromaticIsom} significantly simplifies the complexity of the slice spectral sequences of Hill--Hopkins--Ravenel ($\BPCnplusone \langle m \rangle$) and Lubin--Tate theories ($E_h^{h\Cnplusone}$).  More specifically, the Stratification Theorem \cite[Theorem~A]{MeierShiZengStratification} shows that the $\Cnplusone$-slice spectral sequence is stratified into $(n+1)$ regions, separated by the lines $\mathcal{L}_{2^i-1}$ of slope $(2^i -1)$, $0 \leq i \leq n+1$ (see \cref{fig:IntroPicThmATransIsom}).  \cref{thm:introThmTranschromaticIsom} shows that all the differentials in the $n$ regions above the line $\mathcal{L}_1$ are accounted for by the $\Cn$-slice spectral sequence of $\BPCn \langle m \rangle$ ($E_{h/2}^{h \Cn}$).  As a consequence, the new differentials needing computation are limited to the conical region between $\mathcal{L}_0$ and $\mathcal{L}_1$.  The vanishing lines results in \cref{subsec:PeriodicityandVanishingLines} will further narrow down the possibilities for differentials in this region.  

\begin{figure}
\begin{center}
\makebox[\textwidth]{\includegraphics[trim={0cm 0cm 0cm 0cm}, clip, scale = 0.6]{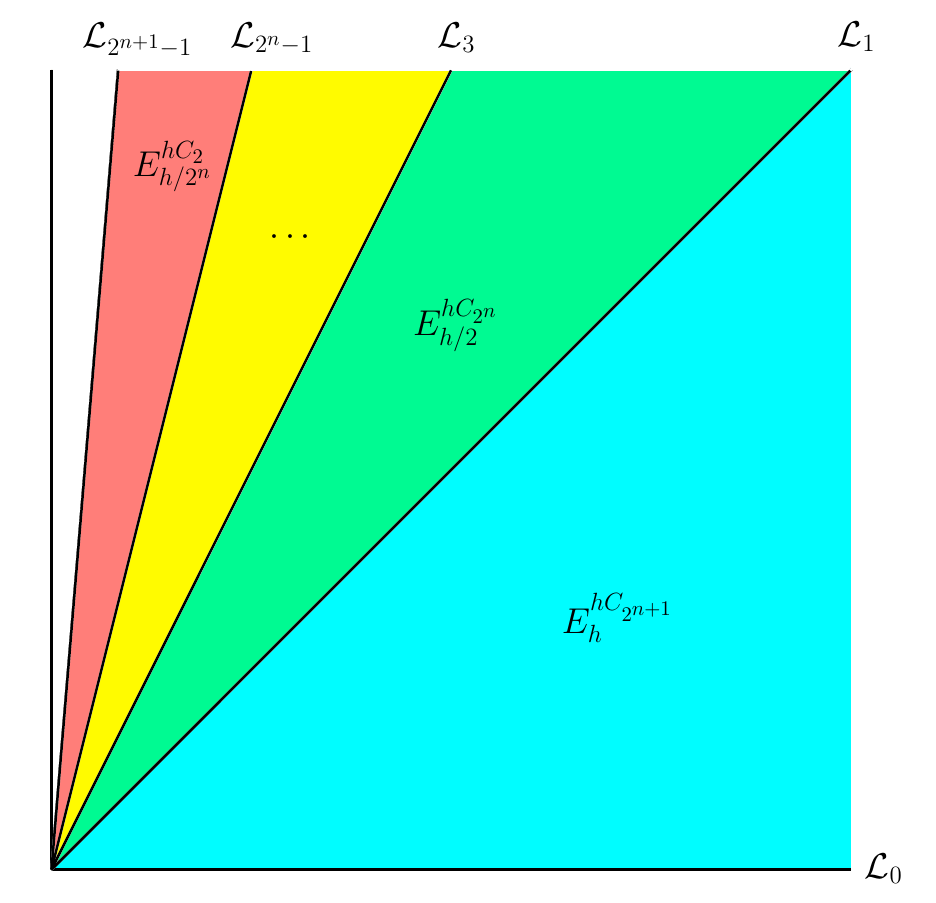}}
\caption{The Transchromatic Isomorphism Theorem.}
\hfill
\label{fig:IntroPicThmATransIsom}
\end{center}
\end{figure}

\subsection{Periodicities and vanishing lines}\label{subsec:PeriodicityandVanishingLines}

\cref{thm:introThmTranschromaticIsom} establishes explicit correspondence formulas between differentials in the corresponding isomorphism regions of the slice spectral sequences.  In addition to providing an inductive approach for computing differentials, it also offers a new method for studying $RO(G)$-graded periodicities and vanishing lines for the higher-height theories through the lower-height theories.

The classical 2-periodicity of $E_h$ can be refined to a $C_2$-equivariant $\rho_2$-periodicity, and, $G$-equivariantly, a $\rho_G$-periodicity \cite{HahnShi}.  On top of these periodicities, there are numerous other $RO(G)$-graded periodicities that are of the form $(|V| - V)$, where $V \in RO(G)$.  They correspond to the survival of orientation classes $u_V$ to the $\mathcal{E}_\infty$-page of the slice spectral sequence of $E_h$.

The $RO(G)$-graded periodicities of $E_h$ play a crucial role in the proof of the Kervaire invariant one problem, producing the 256-periodicity of the detecting spectrum $\Omega$ \cite[Theorem~1.7]{HHR}. They are also valuable for understanding periodic behaviors for differentials in the slice spectral sequence \cite{HHRKH, HSWX, BHLSZ}, as well as for computing Picard groups \cite{MathewStojanoskaPicard, HeardMathewStojanoska, HillMeier, BBHS, HeardLiShi}.

\begin{thmx}[Periodicity, \cref{thm:ROGPeriodicityTranschromatic}]\label{thm:introThmPeriodicity}
Let $G = \Cnplusone$, $h = 2^n \cdot m$, and suppose $V \in RO(G/C_2)$.  Then $(|V|-V)$ is an $RO(G)$-graded periodicity for $E_h$ if and only if $(|V|- V)$ is an $RO(G/C_2)$-graded periodicity for $E_{h/2}$.  Here, $V$ is treated as an element in $RO(G)$ through the pullback along the map $G \to G/C_2$.      
\end{thmx}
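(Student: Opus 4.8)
The plan is to leverage the Transchromatic Isomorphism (Theorem~A) directly, translating the question of whether $u_V$ (for $V$ pulled back from $RO(G/C_2)$) survives to the $\mathcal{E}_\infty$-page in the $G$-slice spectral sequence of $\BPG\langle m\rangle$ into the corresponding question for $\BPGmodtwo\langle m\rangle$. The first step is to recall from \cite{BHSZ, HahnShi} that the $RO(G)$-graded periodicities of $E_h$ of the form $(|V|-V)$ correspond exactly to permanent cycles $u_V$ on the $\mathcal{E}_\infty$-page of the slice spectral sequence computing $E_h^{hG}$, and similarly for $E_{h/2}$ and $G/C_2$; so it suffices to work entirely within the slice spectral sequences of $\BPG\langle m\rangle$ and $\BPGmodtwo\langle m\rangle$. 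For $V \in RO(G/C_2)$, the class $u_V$ lives in a bidegree (stem $0$, filtration $0$ after the appropriate regrading, i.e.\ on the zero line) of the slice spectral sequence, which lies \emph{on} the line of slope $1$ in the sense required by Theorem~A; more importantly, since $V$ is pulled back along $G \to G/C_2$, the relevant cells $S^{|V|}$, $S^V$ are themselves pulled back, and the orientation class $u_V$ is the image of the corresponding class $u_V^{G/C_2}$ under the functoriality relating the two spectral sequences that underlies Theorem~A.

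The second step is the key reduction: a class $x$ in the isomorphism region dies on page $\mathcal{E}_{2r-1}$ of the $G$-slice spectral sequence if and only if its counterpart $\bar x$ dies on page $\mathcal{E}_r$ of the $(G/C_2)$-slice spectral sequence, and likewise $x$ supports a nontrivial $d_{2r-1}$ iff $\bar x$ supports a nontrivial $d_r$ — this is precisely the content of the shearing correspondence $d_{2r-1} \leftrightsquigarrow d_r$. Since $u_V$ and all classes in bidegrees it could map to or be hit from lie on or above $\mathcal{L}_1$ (one must check that the shearing takes the slope-$1$ line to itself and that no differential can enter the relevant bidegree from below $\mathcal{L}_1$ — the Stratification Theorem \cite[Theorem~A]{MeierShiZengStratification} together with the vanishing region below $\mathcal{L}_1$ handles this), the survival of $u_V$ to $\mathcal{E}_\infty$ in the $G$-slice spectral sequence is equivalent to the survival of $u_V^{G/C_2}$ to $\mathcal{E}_\infty$ in the $(G/C_2)$-slice spectral sequence. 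Translating back through the correspondence between surviving orientation classes and $RO$-graded periodicities of the Lubin--Tate theories gives the stated equivalence.

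The main obstacle I anticipate is bookkeeping around the edges of the isomorphism region: the class $u_V$ sits on the boundary line $\mathcal{L}_1$, and one must be careful that (a) both the source and all potential targets of differentials on $u_V$, as well as all potential sources of differentials hitting $u_V$, genuinely lie in the region where Theorem~A applies, and (b) the shearing identification is compatible with multiplicative structure, so that $u_V$ on one side is matched with $u_V^{G/C_2}$ on the other rather than with some other class in the same bidegree. Point (a) should follow from the explicit slopes: a $d_{2r-1}$ out of a bidegree on $\mathcal{L}_1$ lands strictly above $\mathcal{L}_1$ (since $2r-1 \geq 3 > 1$), and a differential \emph{into} a bidegree on $\mathcal{L}_1$ would have to originate below it, but the region below $\mathcal{L}_0$ is zero and the region between $\mathcal{L}_0$ and $\mathcal{L}_1$ needs the vanishing-line input from \cref{subsec:PeriodicityandVanishingLines} to rule out such incoming differentials — so the proof of Theorem~B will cite the vanishing line results proved alongside it. Point (b) is handled by noting that the Transchromatic Isomorphism is constructed at the level of filtered spectra (or as a map of spectral sequences respecting the relevant module structure over the slice spectral sequence of $\Z$ or of $\BPR$), so it automatically carries $u_V$ to $u_V^{G/C_2}$.
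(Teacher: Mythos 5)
Your overall strategy is the one the paper uses: reduce the periodicity statement to the question of whether $u_V$ is a (surviving) permanent cycle in the slice spectral sequence, place $u_V$ at the origin of the $(|V|-V+t-s,s)$-graded page, and transport the question across the shearing isomorphism using the correspondence $u_V \leftrightsquigarrow u_V$. This is exactly the route of \cref{thm:ROGPeriodicityTranschromatic} via \cref{cor:UVmapstoUVTranschromatic} and \cref{thm:CorrespondenceFormula}(2). Two of your supporting steps, however, are off. First, your treatment of the boundary is misdirected: you propose to rule out differentials \emph{entering} the bidegree of $u_V$ from below $\mathcal{L}_1$ by citing the vanishing-line results of \cref{subsec:PeriodicityandVanishingLines}. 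No such input is needed (and the paper does not use it): $u_V$ sits in filtration $0$ of a connective theory, so nothing can hit it, and the only issue is the \emph{outgoing} differentials $d_r(u_V)$, whose targets lie at $(t-s,s)=(-1,r)$, i.e.\ in the region $t-s<0$ where the Slice Recovery Theorem (\cref{thm:SliceRecovery1}) gives an isomorphism on or above the horizontal line $s=\tau_{|V|-V}(\mathcal{F}_{\leq 2})=0$; the fringe behavior of the source class sitting exactly on $\mathcal{L}_1$ is handled by the surjectivity/fringe remark following the proof of \cref{thm:TranschromaticMain}, not by vanishing lines. Second, the identification of $u_V$ with $u_V^{G/C_2}$ under the shearing is not ``automatic'' from the filtered-level construction: the equivalence of towers only matches homotopy groups, and one must check that the preferred generators correspond. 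The paper does this by an explicit adjunction computation (\cref{prop:uVCorrespondenceFormula}), identifying both classes with the generator of $\pi_{|V|-V}(a_{\lambda_{n-1}}^{-1}H\Z)\cong \mathbb{Z}/2^n$; your proposal needs this lemma rather than a naturality assertion.

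One further difference worth noting: to make the two-way connection with $E_h$ (rather than only the implication from a permanent cycle in $\SliceSS(\BPG\langle m\rangle)$ to a periodicity of $E_h$), the paper works with the localized theories $D_{G,m}^{-1}\BPG\langle m\rangle$ through which the orientation $\BPG\langle m\rangle \to E_h$ factors, and this requires an extra compatibility statement (\cref{lem:DinverseLocalizedSSEquiv}) showing that the shearing equivalence of localized dual slice towers persists after inverting $\overline{D}_{G,m}$ upstairs and $D_{G/C_2,m}$ downstairs. Your proposal, which stays with $\BPG\langle m\rangle$ and asserts an unexplained two-way equivalence between periodicities of $E_h$ and permanent cycles in $\SliceSS(\BPG\langle m\rangle)$, leaves this step unaccounted for.
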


\cref{thm:introThmPeriodicity} provides an inductive method for establishing $RO(G)$-graded periodicities.  It shows that the periodicities obtained for the smaller height theories immediately extend to the higher height theories.  As we move up the height and group, the $RO(G)$-graded periodicities build up and accumulate, rather than being random.  

More specifically, when $G = \Cnplusone$ is the cyclic group of order $2^{n+1}$ and we are working 2-locally, the $RO(G)$-graded homotopy groups are graded by the irreducible real representations $\{1, \sigma, \lambda_1, \ldots, \lambda_n\}$, where 1 is the trivial representation, $\sigma$ is the sign representation, and $\lambda_i$ is the 2-dimensional real representation corresponding to rotation by $\left(\frac{\pi}{2^i}\right)$.  Similarly, when considering the quotient group $G/C_2$, the $RO(G/C_2)$-graded homotopy groups are graded by the representations $\{1, \sigma, \lambda_1, \ldots, \lambda_{n-1}\}$.  By applying \cref{thm:introThmPeriodicity} to the $RO(G/C_2)$-graded periodicities of $E_{h/2}$, we can immediately obtain all the $RO(G)$-graded periodicities of $E_h$ that are generated by the representations $\{1, \sigma, \lambda_1, \ldots, \lambda_{n-1}\}$.

Our second application of \cref{thm:introThmTranschromaticIsom} is in establishing vanishing lines.  Classically, it is a consequence of the Nilpotence Theorem of Devinatz, Hopkins, and Smith that the homotopy fixed point spectral sequences of $E_h$ all admit strong horizontal vanishing lines at \textit{some} finite filtration (see \cite[Section~5]{devinatzHopkins} and \cite[Section~2.3]{BeaudryGoerssHenn}).  This means that there exists a finite number $R > 0$ such that the spectral sequence collapses after the $\mathcal{E}_R$-page, with no elements of filtration greater than or equal to $R$ surviving to the $\mathcal{E}_\infty$-page.

While theoretically useful, the existence of the horizontal vanishing line cannot be used to compute any differentials, because there is no knowledge of exactly \textit{where} the vanishing occurs.  Knowing a bound (or even better, the exact location) for the vanishing line would significantly aid in computing differentials and in studying their structural properties.

In \cite{DuanLiShiVanishing}, Duan, Li, and the second author established a bound for the strong horizontal vanishing line in the $\Cnplusone$-homotopy fixed point spectral sequence and the $\Cnplusone$-slice spectral sequence of $E_{h}$.  

Combining \cref{thm:introThmTranschromaticIsom} with the results in \cite{DuanLiShiVanishing}, we prove the following result:

\begin{thmx}[Vanishing Lines, \cref{thm:VanishingLineGeneralSlope}] \label{thm:introThmVanishingLines}
Let $G = \Cnplusone$ and $h = 2^n \cdot m$.  For all $V \in RO(G)$ and $0 \leq k \leq n$, the line $\mathcal{L}_{2^k-1, N_k}$ defined by the equation 
\[s = (2^k-1)(t-s) + N_k\]
is a strong vanishing line in the slice spectral sequence of $E_h$ within the region $t-s \geq 0$.  Here, $N_k$ is the constant $(2^{h/2^k+n+1} - 2^{n+1}+2^k)$.  This strong vanishing line satisfies the following properties: 
\begin{enumerate}
\item There are no classes on or above $\mathcal{L}_{2^k-1, N_k}$ that survive to the $\mathcal{E}_\infty$-page; 
\item Differentials originating on or above the line $\mathcal{L}_{2^k-1}$ have lengths at most ${N_k - (2^k-1) = 2^{h/2^k+n+1} - 2^{n+1} + 1}$; 
\item All differentials originating below $\mathcal{L}_{2^k-1}$ have targets strictly below $\mathcal{L}_{2^k-1, N_k}$.
\end{enumerate}    
\end{thmx}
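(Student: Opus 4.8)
I would prove this by induction on $n$ (keeping $m$ fixed), using \cref{thm:introThmTranschromaticIsom} to propagate the vanishing lines of $E_{h/2}$ upward to $E_h$, with the horizontal line supplied directly by \cite{DuanLiShiVanishing}. Since that bound and \cref{thm:introThmTranschromaticIsom} may be taken $RO(G)$-graded, the resulting statement is $RO(G)$-graded and I suppress $V$ from the notation. When $k=0$, the line $\mathcal{L}_{0,N_0}$ is horizontal at filtration $N_0=2^{h+n+1}-2^{n+1}+1$, and properties (1)--(3) are exactly the strong horizontal vanishing line of \cite{DuanLiShiVanishing} for the $\Cnplusone$-slice spectral sequence of $E_h$; this also settles the base case $n=0$, where $k=0$ is the only possibility.

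For the inductive step, fix $1\le k\le n$ and assume the theorem for $E_{h/2}$ with group $G/C_2=\Cn$; in particular, the line $\mathcal{L}_{2^{k-1}-1,\,N'_{k-1}}$ with $N'_{k-1}=2^{h/2^k+n}-2^n+2^{k-1}$ is a strong vanishing line for the $\Cn$-slice spectral sequence of $\BPGmodtwo\langle m\rangle\simeq E_{h/2}^{h\Cn}$. Because $2^k-1\ge 1$, the part of $\mathcal{L}_{2^k-1,N_k}$ with $t-s\ge 0$ lies on or above the line $\mathcal{L}_1$ of slope $1$, which is precisely the region in which \cref{thm:introThmTranschromaticIsom} identifies the $G$-slice spectral sequence of $\BPG\langle m\rangle$ with the $\Cn$-slice spectral sequence of $\BPGmodtwo\langle m\rangle$ via the shearing $d_{2r-1}\leftrightsquigarrow d_r$. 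On the $(t-s,s)$-plane this shearing sends a class of stem $x$ and filtration $y$ in the lower spectral sequence to a class of the same stem and filtration $x+2y$ in the upper one --- the unique affine change of coordinates compatible with $d_r\mapsto d_{2r-1}$ --- and hence carries a line of slope $\ell$ and intercept $N$ to the line of slope $2\ell+1$ and intercept $2N$, and a length-$r$ differential to a length-$(2r-1)$ one. Taking $\ell=2^{k-1}-1$ and $N=N'_{k-1}$ gives slope $2^k-1$ together with the key identity $2N'_{k-1}=2^{h/2^k+n+1}-2^{n+1}+2^k=N_k$, and likewise $2\bigl(N'_{k-1}-(2^{k-1}-1)\bigr)-1=N_k-(2^k-1)$; so $\mathcal{L}_{2^{k-1}-1,\,N'_{k-1}}$ is carried exactly onto $\mathcal{L}_{2^k-1,N_k}$. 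As the shearing is an isomorphism of spectral sequences on these regions, this transports the strong vanishing line of $E_{h/2}$ into property (1) for $\mathcal{L}_{2^k-1,N_k}$, property (2), and the part of property (3) concerning differentials whose source lies on or above $\mathcal{L}_1$.

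It remains to establish property (3) for differentials whose source lies strictly below $\mathcal{L}_1$ --- for $k=1$ this is all of property (3), and it concerns the conical region emphasized after \cref{thm:introThmTranschromaticIsom}, which the transchromatic isomorphism does not see. Here I would combine the horizontal vanishing line $\mathcal{L}_{0,N_0}$ of \cite{DuanLiShiVanishing}, property (1) for $\mathcal{L}_{2^k-1,N_k}$ proven above, and the stratification of the slice spectral sequence of \cite{MeierShiZengStratification} --- which bounds the lengths of the differentials supported below $\mathcal{L}_1$ --- to rule out any such differential having its target on or above $\mathcal{L}_{2^k-1,N_k}$. I expect this to be the main obstacle: one must control the comparatively short differentials living below the slope-$1$ line tightly enough that they cannot reach above the propagated vanishing line. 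Making the $(t-s,s)$-plane bookkeeping for the shearing of \cref{thm:introThmTranschromaticIsom} completely precise is the other point that needs care; the rest is the numerology recorded above.
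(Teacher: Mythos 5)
Your route --- induct on $n$ and push the vanishing lines of $E_{h/2}$ upward through the shearing isomorphism --- is not the paper's, and the step you flag as ``the main obstacle'' is exactly where the two arguments diverge. The paper's proof of \cref{thm:VanishingLineGeneralSlope} uses no induction on $n$ and never transports the lower-height vanishing lines. For each $1\le k\le n$ it works with the \emph{periodic} theory $X=N_{C_2}^{G}(\tee^{\Ckplusone}_{h/2^k})^{-1}\BPG$: the depth-$k$ localized tower equivalence (\cref{lem:VanshingLinesLocalizedTowerEquiv}) shears $\EF[\Ckplusone]\wedge\SliceSS(X)$ down to the $C_2$-level, where the Hu--Kriz differential $d_{2^{h/2^k+1}-1}(u_{2^{h/2^k}\sigma})=\tee^{C_2}_{h/2^k}a_\sigma^{2^{h/2^k+1}-1}$, with $\tee^{C_2}_{h/2^k}$ and $a_\sigma$ inverted, kills the unit $1$. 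Norming up and shearing back shows $1$ dies in $\EF[\Ckplusone]\wedge\SliceSS(E_h)$ by page $N_k-(2^k-1)$; by multiplicativity the entire localized spectral sequence collapses by that page, and \cref{thm:SliceRecovery1} transfers this collapse to everything on or above $\mathcal{L}^V_{2^k-1}$ in $\SliceSS(E_h)$. All three properties then fall out at once --- in particular (3), since a differential from below $\mathcal{L}_{2^k-1}$ into the region on or above $\mathcal{L}_{2^k-1,N_k}$ would need its target to survive past page $N_k-(2^k-1)$, which is now impossible.

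Measured against that, your proposal has two genuine gaps. First, the unresolved part of (3): the ingredients you name do not suffice as stated, because the horizontal bound $N_0=2^{h+n+1}-2^{n+1}+1$ is far larger than $N_k$, so it cannot rule out a differential of length $\approx N_k$ sourced in the bottom cone. What closes the argument is a \emph{page-quantified} form of your property (1) --- every class on or above $\mathcal{L}_{2^k-1,N_k}$ must die by page $N_k-(2^k-1)$ (extractable by combining your inductive (1)--(3) and transporting through the localized spectral sequence) --- after which the arithmetic $r\ge N_k>N_k-(2^k-1)$ kills the would-be differential; your sketch never isolates this. Second, your induction transports a statement about $\SliceSS(E_{h/2})$ through \cref{thm:introThmTranschromaticIsom}, which concerns the \emph{connective} models $\BPG\langle m\rangle$ and $\BPGmodtwo\langle m\rangle$. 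Vanishing-line statements do not pass back and forth between the connective and periodic theories by naturality alone (the $\mathcal{E}_2$-page of $\SliceSS(E_h)$ is not in the image of $\SliceSS(\BPG\langle m\rangle)$, and the connective $\mathcal{E}_\infty$-page genuinely retains classes above these lines in suitable $RO(G)$-degrees), so you need the periodic versions of the tower equivalence --- precisely what \cref{lem:DinverseLocalizedSSEquiv} and \cref{lem:VanshingLinesLocalizedTowerEquiv} provide --- before invoking naturality along $X\to E_h$. Your numerology ($2N'_{k-1}=N_k$, slope $2^{k-1}-1\mapsto 2^k-1$, $d_r\mapsto d_{2r-1}$) is correct for $V=0$, but the assertion that everything ``may be taken $RO(G)$-graded'' conceals the $\tau_V$-corrections of \cref{df:LineLVh-1}, which do not commute naively with the shearing.
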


\begin{figure}
\begin{center}
\makebox[\textwidth]{\includegraphics[trim={0cm 0cm 0cm 0cm}, clip, scale = 0.6]{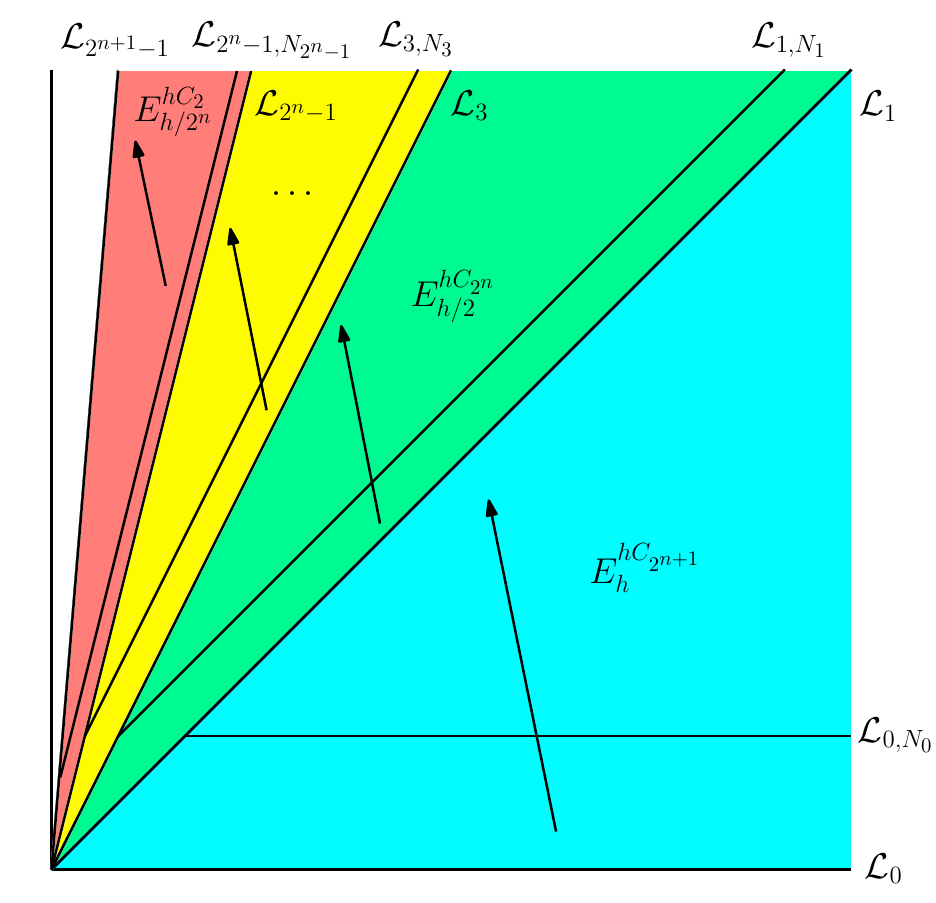}}
\caption{Vanishing Lines in the slice spectral sequence.}
\hfill
\label{fig:IntroPicThmCVanishingLines}
\end{center}
\end{figure}

The collection of strong vanishing lines established in \cref{thm:introThmVanishingLines} is unique to the equivariant slice spectral sequence.  With the exception of the horizontal vanishing line $\mathcal{L}_{0, N_0}$, the remaining vanishing lines do not appear in the homotopy fixed point spectral sequence of $E_h$.  This absence is due to the absence of stratification regions in the homotopy fixed point spectral sequence, which, in turn, prevents the observation of transchromatic phenomena.

In contrast, the slice spectral sequence has $(n+1)$ stratification regions \cite{MeierShiZengStratification}.   \cref{thm:introThmTranschromaticIsom} grants each stratification region with a vanishing line of its own.  Consequently, in addition to the original horizontal vanishing line established in \cite{DuanLiShiVanishing}, there are $n$ more vanishing lines of slopes $1$, $3$, $\ldots$, $(2^{n}-1)$, respectively.  

Together, these vanishing lines offer more precise bounds for both the \textit{lengths} and \textit{locations} of differentials, as illustrated in \cref{fig:IntroPicThmCVanishingLines}.  Rather than computing the entire spectral sequence all at once, one can focus on proving differentials in each of the $(n+1)$ conical region individually.  This significantly reduces the number of potential differential possibilities.  This feature makes the slice spectral sequence an effective computational tool with the potential to greatly expand our ability to analyze and compute $E_h^{hG}$ at higher heights.


\subsection{The stratification tower of Hill--Hopkins--Ravenel theories}\label{subsec:introStratificationTowerHHRTheories}

We will now outline our strategy for proving \cref{thm:introThmTranschromaticIsom}.  It is worth noting that, although \cref{conj:HHRConjecture} originally emerged from explicit spectral sequence computations, our proof of \cref{thm:introThmTranschromaticIsom} does not require the computation of explicit differentials.  Instead, we will prove a more general statement on the level of filtered objects, which establishes $RO(G)$-graded shearing isomorphisms for all generalized Hill--Hopkins--Ravenel theories $\BPCn \langle I \rangle$.

By the work of Hill--Hopkins--Ravenel \cite[Section~5]{HHR}, we have the following isomorphism: 
\[\pi_{*\rho_2}^{C_2} \BPCn \cong \mathbb{Z}_{(2)}[\Cn \cdot \tee_1^{\Cn}, \Cn \cdot \tee_2^{\Cn}, \ldots].\]
Here, $\tee_i^{\Cn} \in \pi_{(2^i-1)\rho_2}^{C_2} \BPCn$, and $\Cn \cdot \tee_i^{\Cn}$ denotes the set 
\[\left\{\tee_i^{\Cn}, \gamma \tee_i^{\Cn}, \ldots, \gamma^{2^{n-1}-1} \tee_i^{\Cn}\right\}\]
containing $2^{n-1}$ elements, where $\gamma$ represents the generator of $\Cn$.  The method of twisted monoid rings \cite[Section~2]{HHR} allows us to form quotients of $\BPCn$ by collections of permutation summands of the form $\Cn \cdot \tee_i^{\Cn}$.

\begin{df} \label{df:introDefinitionHHRTheory} \rm
For $I \subseteq \mathbb{N}$, the \textit{generalized Hill--Hopkins--Ravenel theory} $\BPCn \langle I \rangle$ is defined as
\[\BPCn \langle I \rangle := \BPG/(\Cn \cdot \tee_{j}^{\Cn}\,|\, j \notin I).\]
\end{df}

The theories $\BPCn \langle I \rangle$ are studied in \cite{BHLSZ}.  In particular, \cite[Theorem~A]{BHLSZ} shows that the slice associated graded of $\BPCn \langle I \rangle$ is the generalized Eilenberg--Mac Lane spectrum 
\[H\Z\left[\Cn \cdot \tee_{i}^{\Cn}\,|\, i \in I\right].\]

When $I = \{1, 2, \ldots, m\}$, the spectrum $\BPCn \langle I \rangle$ is the classical Hill--Hopkins--Ravenel theory 
\[\BPCn \langle m \rangle = \BPCn/(\Cn \cdot \tee_{m+1}^{\Cn}, \Cn \cdot \tee_{m+2}^{\Cn}, \ldots).\]

For a fixed $I \subseteq \N$, the key to proving \cref{thm:introThmTranschromaticIsom} lies in isolating and comparing regions in the slice spectral sequences associated with the theories $\BPCn \langle I \rangle$ as we vary $n$.  To achieve this, we will utilize the localized slice spectral sequence and the stratification tower, developed by the authors in \cite{MeierShiZengHF2} and \cite{MeierShiZengStratification}.

\begin{thmx}[Localized Slice Tower Isomorphism, \cref{thm:DualTowerEquivalenceMain}]\label{thm:introThm5DualTowerEquivalence}
Suppose $n$ is a positive integer.  For all $1 \leq k \leq n$, we have the following equivalence of towers:
\[\EF[C_{2^{k+1}}] \wedge P_\bullet\left(\BPCnplusone \langle I \rangle \right) \simeq \Pb_{C_{2^{n+1}}/C_{2^k}}\D^k\left(\EF[C_{2}] \wedge P_\bullet (\BPCnminuskplusone \langle I \rangle) \right).\]
Here, $P_\bullet(-)$ denotes the dual slice tower, $\Pb_{\Cnplusone/\Ck}(-)$ is the pullback functor as defined in \cref{df:PullBack}, and $\D(-)$ is the doubling operation as defined in \cref{df:doubleTower}.
\end{thmx}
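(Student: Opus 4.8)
The plan is to reduce to the case $k=1$ and then iterate. First I would set up the $k=1$ statement carefully: we want an equivalence of filtered objects
\[
\EF[C_{4}] \wedge P_\bullet\bigl(\BPCnplusone \langle I \rangle\bigr) \simeq \Pb_{C_{2^{n+1}}/C_2}\D\bigl(\EF[C_{2}] \wedge P_\bullet(\BPCn \langle I \rangle)\bigr),
\]
and the general case follows by applying this $k$ times, using that $\Pb$ and $\D$ are compatible with the intermediate-quotient structure (i.e.\ $\Pb_{C_{2^{n+1}}/C_{2^k}} \simeq \Pb_{C_{2^{n+1}}/C_{2^{k-1}}} \circ \Pb_{C_{2^{k-1}}/\cdots}$ at the appropriate stages, and $\D^k = \D \circ \D^{k-1}$) together with the identity $\EF[C_{2^{k+1}}] \wedge (-) \simeq \EF[C_{2^{k+1}}] \wedge \EF[C_{2^k}] \wedge (-)$ restricted through the tower. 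So the real content is the single step $n \mapsto n+1$.

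The second step is to identify, on the underlying (unfiltered) level, the smash product $\EF[C_4] \wedge \BPCnplusone\langle I\rangle$ with the pullback through $C_{2^{n+1}} \to C_{2^{n+1}}/C_2$ of the doubled (in the $RO$-grading / representation-sphere sense) spectrum $\EF[C_2] \wedge \BPCn\langle I\rangle$. This is where the geometry of the norm construction enters: $\BPG = N_{C_2}^G \BPR$, and smashing with $\EF[C_4]$ inverts the Euler class $a_{\lambda_1}$ (or the relevant $a$-class for the $C_4$-family), which has the effect of "folding" the bottom $C_2$ out of the group and rescaling the remaining rotation representations $\lambda_i \mapsto \lambda_{i-1}$ — precisely the doubling operation $\D$. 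I would cite or reprove the relevant computation of $\pi^{C_2}_{\star}$ from \cite{HHR}, Section 5, and the behavior of the generators $\tee_i^{\Cn}$ under restriction/geometric fixed points, to see that the quotient relations defining $\langle I \rangle$ match up on both sides. The classes $\tee_i^{C_{2^{n+1}}}$ for the larger group correspond, after inverting $a$, to $\tee_i^{C_{2^n}}$ for the smaller group with a grading shift, which is exactly what makes $\langle I\rangle$ go to $\langle I\rangle$ rather than to some reindexed set.

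The third and main step is to promote this underlying equivalence to an equivalence of \emph{towers}, i.e.\ to check it is filtration-preserving for the dual slice filtrations. The key input is the interaction between the slice filtration and the two operations $\Pb$ and $\D$. For $\Pb_{G/N}$: pulling back a $G/N$-spectrum along $G \to G/N$ multiplies slice dimensions by $|N|$ on induced cells — this is essentially the statement that $\Pb_{G/N}$ sends $(G/N)$-slice-$n$-spheres to $G$-slice-$(|N|\cdot n)$-spheres, which should follow from the inductive/geometric-fixed-point characterization of the slice filtration in \cite{HHR}. For $\D$: the doubling operation on towers is \emph{defined} (per \cref{df:doubleTower}) to rescale filtrations by the factor $2$, matching $|C_2|$. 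Smashing with $\EF[C_{2^{k+1}}]$ is a localization, so it is lax symmetric monoidal and takes the slice tower of a spectrum to the localized slice tower — the content here is that $\EF[C_4] \wedge P_\bullet(-)$ depends only on the "above slope $1$" part, which is where the localized slice spectral sequence machinery of \cite{MeierShiZengHF2} is used. Concretely I would: (a) show both towers have the same associated graded, using \cite[Theorem~A]{BHLSZ} identifying $\mathrm{gr}\, \BPCn\langle I\rangle \simeq H\Z[\Cn\cdot\tee_i^{\Cn} \mid i\in I]$ together with the fact that doubling and pullback send generalized Eilenberg–Mac Lane spectra to generalized Eilenberg–Mac Lane spectra with the expected generators and gradings; (b) produce a map of towers inducing this equivalence on associated graded, by constructing it on the underlying level and then lifting filtration-by-filtration using the universal property of the (localized) slice coconnective covers; (c) conclude by induction on the tower, since a map of towers that is an equivalence on every associated graded piece and on the limit is an equivalence.

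I expect the main obstacle to be step (c), specifically constructing a genuinely \emph{filtered} map rather than just an abstract levelwise equivalence: one needs the doubling functor $\D$ and the pullback $\Pb$ to commute with (localized) slice truncation functors $P^\bullet$, not merely with their associated graded. The cleanest route is probably to characterize the localized slice filtration on $\EF[C_{2^{k+1}}] \wedge \BPCnplusone\langle I\rangle$ by a universal property (it is the slice filtration in the category of $\EF[C_{2^{k+1}}]$-local spectra, which by the tom Dieck / isotropy-separation splitting is controlled by the $C_{2^k}$-geometric fixed points), and then observe that $C_{2^k}$-geometric fixed points intertwine $\Pb_{C_{2^{n+1}}/C_{2^k}}\D^k$ with the identity — reducing the filtered comparison to the already-understood $C_2$-level (Real) case and the $C_{2^{n-k+1}}$-level case. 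Verifying that the Euler-class localizations on both sides are matched under this identification, with the correct shift of representations $\lambda_i \mapsto \lambda_{i-k}$, is the technical heart and will require care with the $RO(G)$-grading bookkeeping inherited from \cref{df:PullBack} and \cref{df:doubleTower}.
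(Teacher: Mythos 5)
Your overall skeleton (reduce to $k=1$, iterate; compare the underlying objects via the behavior of the $\ti$-generators under geometric fixed points; then promote to a filtered statement) matches the paper's, and your identification of the generator comparison $\Phi^{C_2}(\ti^{C_{2^n}}) = \Phi^{C_4}N_{C_2}^{C_4}(\ti^{C_{2^{n+1}}})$ and of Ullman's compatibility $P_m(\Pb_{G/N}X)\simeq \Pb_{G/N}(P_{\lceil m/|N|\rceil}X)$ as the two key inputs is correct. The gap is exactly at the step you flag as the "main obstacle," and your proposed fixes do not close it. The localized slice tower $\EF[C_4]\wedge P_\bullet X$ is \emph{not} a functor of the localized spectrum $\EF[C_4]\wedge X$: since $\Phi^{C_2}$ does not commute with slice truncation, $\Phi^{C_2}(P_dX)$ is not determined by $\Phi^{C_2}X$, so the spectrum-level equivalence $\EF[C_4]\wedge \BPCnplusone\langle I\rangle \simeq \EF[C_4]\wedge \Pb_{C_{2^{n+1}}/C_2}\BPCn\langle I\rangle$ does not by itself induce any map of localized towers. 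Moreover the localized tower does not satisfy the universal property of slice-coconnective covers in a category of $\EF[C_4]$-local spectra (that would again require $\Phi$ to commute with $P_\bullet$), so "lifting filtration-by-filtration by the universal property" has nothing to lift against; and matching associated gradeds, as in your step (a), cannot substitute for an actual map of towers.

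What the paper uses instead, and what is missing from your proposal, is the explicit \emph{multiplicative} model of the dual slice tower coming from the method of twisted monoid rings: $P_d(\BPCnplusone\langle I\rangle) = \BPCnplusone\langle I\rangle \wedge_A M_{\geq d}$, where $M_{\geq d}$ is the monomial ideal of spheres of dimension $\geq d$ in $A = S^0[C_{2^{n+1}}\cdot\ti^{C_{2^{n+1}}}\mid i\in I]$ (and similarly with $B$, $N_{\geq d}$ for $\BPCn\langle I\rangle$). Geometric fixed points of induced slice cells are computed explicitly ($\Phi^N(G_+\wedge_H S^{k\rho_H})$ vanishes unless $N\subseteq H$, in which case it is $(G/N)_+\wedge_{H/N}S^{k\rho_{H/N}}$), so $\Phi^{C_4}(M_{\geq 4d})$ and $\Phi^{C_2}(N_{\geq 2d})$, together with their structure maps into $\Phi^{C_4}(\BPCnplusone\langle I\rangle)$ and $\Phi^{C_2}(\BPCn\langle I\rangle)$, can be identified term by term using the generator identity above; this produces the filtered equivalence directly rather than reconstructing it from associated graded data. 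The second half of the argument is then exactly Ullman's result converting $P_\bullet(\Pb(-))$ into $\Pb(\D P_\bullet(-))$. Two smaller corrections: the Euler class inverted by $\EF[C_4]\wedge(-)$ on the associated graded is $a_{\lambda_{n-1}}$ (in the paper's indexing for $C_{2^{n+1}}$-representations), not $a_{\lambda_1}$; and the quotient relations match with \emph{no} reindexing of $I$ precisely because the correspondence of generators carries $N_{C_2}^{C_4}(\ti^{C_{2^{n+1}}})$ to $\ti^{C_{2^n}}$ for the same $i$, which is the content of the formal-group-law computation in the paper's Lemma on the $\ti$-generators.
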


\cref{thm:introThm5DualTowerEquivalence} establishes equivalences between various localized dual slice towers, leading to corresponding equivalences in the localized slice spectral sequences across different groups.  This equivalence results in a \textit{shearing isomorphism} of the corresponding localized slice spectral sequences.  This shearing isomorphism, combined with the Slice Recovery Theorem \cite[Theorem~3.3]{MeierShiZengStratification}, produces shearing isomorphisms for the $RO(G)$-graded slice spectral sequences of $\BPCnplusone \langle I \rangle$, $n \geq 0$.

\begin{thmx}[Shearing Isomorphism, \cref{thm:ShearingGeneralTheory}, \cref{thm:TranschromaticMain}]\label{thm:introThmTranschromaticIsomBPGI}
For all $V \in RO(C_{2^{n+1}})$ and $1 \leq k \leq n$, there is a shearing isomorphism $d_{2^kr - (2^k-1)} \leftrightsquigarrow d_r$ between the following regions of spectral sequences:
\begin{enumerate}
\item The $(V+t-s, s)$-graded page of ${\SliceSS(\BPCnplusone \langle I \rangle)}$ on or above the line $\mathcal{L}^V_{2^k-1}$ within the region $t-s \geq 0$; and 
\item The $(V^{C_{2^k}} + t-s, s)$-graded page of $\SliceSS(\BPCnminuskplusone \langle I \rangle)$ on or above the horizontal line $s = C_{V, k}$ within the region $t -s \geq 0$.  
\end{enumerate}
Here, the line $\mathcal{L}_{2^k-1}^V$ is a line of slope $(2^k-1)$ as defined in \cref{df:LineLVh-1}, and $C_{V, k}$ is a constant as defined in \cref{df:ConstantCinTranschromaticTheorem}. 
\end{thmx}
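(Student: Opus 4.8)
The plan is to deduce \cref{thm:introThmTranschromaticIsomBPGI} from the tower equivalence of \cref{thm:introThm5DualTowerEquivalence}, together with the Slice Recovery Theorem \cite[Theorem~3.3]{MeierShiZengStratification}. First I would read off from \cref{thm:introThm5DualTowerEquivalence} a shearing isomorphism between the two \emph{localized} slice spectral sequences in play; then I would apply the Slice Recovery Theorem at each end to replace these localized spectral sequences by the honest $RO(G)$-graded slice spectral sequences on a region, which yields the stated correspondence.

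For the first step, the equivalence of towers in \cref{thm:introThm5DualTowerEquivalence} directly induces an isomorphism of the associated $RO(G)$-graded spectral sequences: one side is the $\EF[C_{2^{k+1}}]$-localized slice spectral sequence of $\BPCnplusone\langle I\rangle$, and the other is the spectral sequence of $\Pb_{\Cnplusone/\Ck}\D^k\bigl(\EF[C_2]\wedge P_\bullet(\BPCnminuskplusone\langle I\rangle)\bigr)$. To recognize the latter I would record the effect of the two functors. The pullback $\Pb_{\Cnplusone/\Ck}$ of \cref{df:PullBack} is inflation along $\Cnplusone\to\Cnplusone/\Ck\cong\Cnminuskplusone$; on $RO$-graded homotopy it is an isomorphism onto the part of the grading pulled back along this quotient map, and it relabels a $\Ck$-fixed representation, viewed as a $\Cnplusone/\Ck$-representation, as the corresponding element of $RO(\Cnplusone)$ --- this produces the substitution $V\rightsquigarrow V^{\Ck}$ in the statement. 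The doubling $\D$ of \cref{df:doubleTower} reindexes the slice tower, and a direct computation from its definition shows that the induced operation on spectral sequences sends a $d_r$-differential to a $d_{2r-1}$-differential; applying $\D$ $k$ times therefore sends $d_r$ to $d_{2^k r-(2^k-1)}$, the $k$-fold iterate of $r\mapsto 2r-1$. Combining these two computations gives a shearing isomorphism $d_{2^k r-(2^k-1)}\leftrightsquigarrow d_r$ between the two localized slice spectral sequences, compatible with the $RO$-grading.

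For the second step, the Slice Recovery Theorem compares the slice spectral sequence of a $\Cnplusone$-spectrum with its $\EF[C_{2^j}]$-localization, asserting that the localization map is an isomorphism of $RO(G)$-graded spectral sequences on and above a line of slope $2^{j-1}-1$ whose intercept is governed by the Stratification Theorem \cite[Theorem~A]{MeierShiZengStratification}. Applied to $\BPCnplusone\langle I\rangle$ with $j=k+1$, this identifies the first localized spectral sequence with $\SliceSS(\BPCnplusone\langle I\rangle)$ on and above the line $\mathcal{L}^V_{2^k-1}$ of \cref{df:LineLVh-1}. Applied to $\BPCnminuskplusone\langle I\rangle$ with $j=1$, and combined with the reindexing induced by $\Pb_{\Cnplusone/\Ck}\D^k$, it identifies the second with $\SliceSS(\BPCnminuskplusone\langle I\rangle)$ on and above the horizontal line $s=C_{V,k}$ of \cref{df:ConstantCinTranschromaticTheorem}. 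The remaining point, which I would verify by unwinding the slope and intercept shifts contributed by $\D^k$ and $\Pb_{\Cnplusone/\Ck}$, is that this reindexing carries the region on or above $\mathcal{L}^V_{2^k-1}$ within $\{t-s\geq 0\}$ exactly onto the region on or above $\{s=C_{V,k}\}$ within $\{t-s\geq 0\}$; the line $\mathcal{L}^V_{2^k-1}$ and the constant $C_{V,k}$ are defined so as to make this so. Assembling the two steps proves the theorem; specializing $I=\{1,\dots,m\}$, $k=1$, and $V=0$ recovers \cref{thm:introThmTranschromaticIsom}.

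The main obstacle I anticipate is the $RO(G)$-graded bookkeeping: propagating the differential shift $d_r\mapsto d_{2^k r-(2^k-1)}$, the $V\rightsquigarrow V^{\Ck}$ translation, and the slope/intercept transformation $\mathcal{L}^V_{2^k-1}\leftrightarrow\{s=C_{V,k}\}$ through the composite $\Pb_{\Cnplusone/\Ck}\D^k$, and checking that the two Slice Recovery regions correspond under it. A secondary technical point is to confirm that $\D$ and $\Pb_{\Cnplusone/\Ck}$ interact correctly with the $\EF$-localizations on each side, so that it really is the localized slice spectral sequences that are being compared; this should follow from the formal properties of these functors established together with \cref{thm:introThm5DualTowerEquivalence}.
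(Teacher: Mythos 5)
Your proposal is correct and follows essentially the same route as the paper. The paper's proof of \cref{thm:TranschromaticMain} is organized as a diagram with exactly the three ingredients you identify: the tower equivalence of \cref{thm:DualTowerEquivalenceMain}, a shearing result for spectral sequences of towers of the form $\Pb_{G/N}\D^k P_\bullet$ (packaged as \cref{thm:ShearingGeneralTheory}, which does the $\D$/$\Pb$ bookkeeping you describe doing by hand, including the formulas for $t'$, $s'$ and the $d_r\mapsto d_{2^kr-(2^k-1)}$ rule), and the Slice Recovery Theorem applied once on each side ($\varphi_1$ with isomorphism region $s\geq 0$ on the $\Cnminuskplusone$ side, $\varphi_{2^k}$ with isomorphism region on or above $\mathcal{L}^V_{2^k-1}$ on the $\Cnplusone$ side), with the final check that the shearing carries $s=C_{V,k}$ onto $\mathcal{L}^V_{2^k-1}$.
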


A more detailed description of the shearing isomorphism in \cref{thm:introThmTranschromaticIsomBPGI} is as follows: let $\E_{\Cnplusone}$ and $\E_{\Cnminuskplusone}$ represent the slice spectral sequences of  $\BPCnplusone \langle I \rangle$ and $\BPCnminuskplusone \langle I \rangle$, respectively.  Then the following statements hold: 
\begin{enumerate}
\item In the specified regions of their respective $\mathcal{E}_2$-pages, there is an isomorphism 
\begin{equation}\label{eq:IntroTranschromaticMainIsom}
\E_{\Cnplusone, 2}^{s', V+t'} \cong \Pb_{\Cnplusone/\Ck}\left(\E_{\Cnminuskplusone, 2}^{s, V^{\Ck}+t}\right),
\end{equation}
where
\begin{align*}
t' &= \left(|V^{\Ck}|\cdot 2^k-|V|\right) + 2^k t, \\
s' &= \left(|V^{\Ck}|\cdot 2^k-|V|\right) + (2^k-1)(t-s) +2^k s.
\end{align*}

\item In the specified region for $\E_{\Cnplusone}$, all the differentials have lengths of the form 
\[2^kr - (2^k-1), \,\,\, r \geq 2.\] 

\item The isomorphism (\ref{eq:IntroTranschromaticMainIsom}) induces a one-to-one correspondence between the $d_{2^kr - (2^k-1)}$-differentials in $\E_{\Cnplusone}$ originating on or above the line $\mathcal{L}^V_{2^k-1}$ and the $d_r$-differentials in $\E_{\Cnminuskplusone}$ originating on or above the horizontal line $s = C_{V, k}$. 
\end{enumerate}

In particular, when we set $V = 0$ in \cref{thm:introThmTranschromaticIsomBPGI}, a shearing isomorphism ${d_{2^kr - (2^k-1)} \leftrightsquigarrow d_r}$ is established between the following regions of spectral sequences:
\begin{enumerate}
\item The slice spectral sequence of $\BPCnplusone \langle I \rangle$ on or above the line $\mathcal{L}_{2^k-1}$ (the line of slope $(2^k-1)$ through the origin); and 
\item The entire slice spectral sequence of $\BPCnminuskplusone \langle I \rangle$.  
\end{enumerate}
By setting $I = \{1, 2, \ldots, m\}$, \cref{thm:introThmTranschromaticIsom} immediately follows from \cref{thm:introThmTranschromaticIsomBPGI}.  As an important application, it is an immediate consequence of \cref{thm:introThmTranschromaticIsom} that in the slice spectral sequence of $\BPCeight \langle 1 \rangle$ ($E_4^{hC_8}$), which detects all the Kervaire invariant elements, the differentials on or above the line of slope 1 are completely determined by the differentials in the slice spectral sequence of $\BPCfour \langle 1 \rangle$ ($E_2^{hC_4}$).  

Another notable case is when we set $I = \{m\}$.  In this case, the spectrum $\BPCn \langle I \rangle$ becomes
\[\BPCn \langle m, m \rangle := \BPCn\langle m \rangle/(\Cn \cdot \tee_1^{\Cn}, \ldots, \Cn \cdot \tee_{m-1}^{\Cn}).\]
These theories have been explored in \cite{BHLSZ}.  By setting $ I = \{m\}$, \cref{thm:introThmTranschromaticIsomBPGI} also establishes a transchromatic isomorphism theorem for the theories $\BPCnplusone \langle m, m \rangle$. 

We would like to emphasize the crucial role played by the pullback functor $\Pb_{\Cnplusone/\Ck}(-)$, which is the right-adjoint to the geometric fixed point functor $\Phi^{\Ck}(-)$.  This functor is crucial in establishing the equivalence in \cref{thm:introThm5DualTowerEquivalence} and the shearing isomorphism in \cref{thm:introThmTranschromaticIsomBPGI}.  While comparing the slice spectral sequence of a $\Cnplusone$-spectrum with a $\Cnminuskplusone$-spectrum, it may be tempting to consider $\Cnminuskplusone$ as a subgroup of $\Cnplusone$.  However, a key observation is that, to establish the desired equivalence, one should instead treat $\Cnminuskplusone$ as the quotient group $\Cnplusone/\Ck$. 

The shearing isomorphism of \cref{thm:introThmTranschromaticIsomBPGI} induces an explicit correspondence between classes located in the isomorphism regions.

\begin{thmx}[Correspondence formulas, \cref{thm:CorrespondenceFormula}] \label{thm:introThmCorrespondenceFormulas}
 We have the following correspondences between classes in $\SliceSS(\BPCnplusone \langle I \rangle)$ and $\SliceSS(\BPCnminuskplusone \langle I \rangle):$
\begin{enumerate}
\item Suppose $V \in RO(\Cnplusone/\Ck) = RO(\Cnminuskplusone)$ is an actual representation, then we have the correspondence 
\[a_V \leftrightsquigarrow a_V.\]
\item Suppose $V \in RO(\Cnplusone/\Ck) = RO(\Cnminuskplusone)$ is an orientable representation, then we have the correspondence
\[u_V \leftrightsquigarrow u_V. \]
\item For all $i \in I$ and $1 \leq j \leq n -k +1$, we have the correspondence 
\[N_{C_2}^{C_{2^{k+j}}}(\ti^{\Cnplusone}) \cdot \left(\frac{a_{\rhobar_{2^{k+j}}}}{a_{\rhobar_{2^{k+j}/2^k}}} \right)^{2^i-1} \leftrightsquigarrow N_{C_2}^{C_{2^j}}(\ti^{\Cnminuskplusone}).\]
\end{enumerate}
In (1) and (2), $V$ is also viewed as an element of $RO(\Cnplusone)$ through pullback along the map $\Cnplusone \to \Cnplusone/\Ck$.  In (3), the representation $\rhobar_{2^{k+j}/2^k} \in RO(C_{2^{k+j}})$ is defined as in \cref{df:RhobarNotation}.
\end{thmx}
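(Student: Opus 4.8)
The plan is to read the three correspondence formulas directly off the tower equivalence of \cref{thm:introThm5DualTowerEquivalence}. The localization functor $\EF[-]\wedge(-)$, the doubling operation $\D$ of \cref{df:doubleTower}, and the pullback $\Pb_{\Cnplusone/\Ck}$ of \cref{df:PullBack} are all (lax symmetric) monoidal, so the isomorphism \eqref{eq:IntroTranschromaticMainIsom} of $\mathcal{E}_2$-pages supplied by \cref{thm:introThmTranschromaticIsomBPGI} (obtained by passing to the localized slice spectral sequences and then invoking the Slice Recovery Theorem \cite[Theorem~3.3]{MeierShiZengStratification}) is multiplicative. It therefore suffices to match up multiplicative generators, and by \cite[Theorem~A]{BHLSZ} the relevant subrings of both $\mathcal{E}_2$-pages are generated, over suitable coefficient rings, by the Euler classes $a_V$, the orientation classes $u_V$, and the norm monomials $N_{C_2}^{C_{2^j}}(\ti)$ in the slice associated graded $H\Z[\Cn\cdot\ti \mid i \in I]$. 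Finally, since $\D^k$ and $\Pb_{\Cnplusone/\Ck}$ factor as $k$-fold composites of one-step functors, compatibly with iterating \cref{thm:introThm5DualTowerEquivalence}, it is enough to establish all three correspondences for $k=1$ and then check that they iterate. The only point requiring care in the iteration is (3): one must verify $\Phi^{\Ck}\bigl(a_{\rhobar_{2^{k+j}}}/a_{\rhobar_{2^{k+j}/2^k}}\bigr)=a_{\rhobar_{2^{j}}}$ and that the exponent $2^i-1$ is preserved, which follows from the identity $\rhobar_{2^{k+j}}=\rhobar_{2^{k+j}/2^k}+p^{*}\rhobar_{2^{j}}$ for the quotient $p\colon C_{2^{k+j}}\to C_{2^{j}}$ of \cref{df:RhobarNotation}: the displayed ratio is then just $a_{p^{*}\rhobar_{2^{j}}}$, the class $a_{\rhobar_{2^{k+j}/2^k}}$ being $\Ck$-geometrically invertible because $(\rhobar_{2^{k+j}/2^k})^{\Ck}=0$.

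For (1) and (2), take $V\in RO(\Cnplusone/C_2)$. Since $C_2$ acts trivially on $V$ we have $S^{V}=p^{*}S^{V}$ and $\Phi^{C_2}S^{V}=S^{V}$ as a $(\Cnplusone/C_2)$-spectrum, so $\Phi^{C_2}$ carries $a_V$, and (when $V$ is orientable) $u_V$, to their $(\Cnplusone/C_2)$-analogues. Because $\Pb_{\Cnplusone/C_2}$ is right adjoint to $\Phi^{C_2}$, and because the doubling $\D$ does not alter classes inflated from the quotient (there $|V^{C_2}|=|V|$, so there is no shearing shift), naturality of the tower equivalence forces $a_V\leftrightsquigarrow a_V$ and $u_V\leftrightsquigarrow u_V$. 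Equivalently: substituting a pulled-back $V$ into the degree formulas of \cref{thm:introThmTranschromaticIsomBPGI} places $a_V$ and $u_V$ at locations of the target $\mathcal{E}_2$-page occupied, up to a unit, only by $a_V$ and $u_V$, and multiplicativity of \eqref{eq:IntroTranschromaticMainIsom} pins down the unit.

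Formula (3) is the heart of the matter. The slice associated graded of $\BPCn\langle I\rangle$ is a generalized Eilenberg--Mac Lane spectrum with polynomial homotopy, and $\EF[-]\wedge(-)$, $\D$, and $\Pb$ all admit transparent descriptions on such objects, so tracking the generators $N_{C_2}^{C_{2^j}}(\ti)$ reduces to the combinatorics of the permutation summands $\Cn\cdot\ti$ and their regular-representation gradings (here $\ti\in\pi_{(2^i-1)\rho_2}^{C_2}$, so $N_{C_2}^{C_{2^{\ell}}}(\ti)\in\pi_{(2^i-1)\rho_{2^{\ell}}}^{C_{2^{\ell}}}$ using $\mathrm{Ind}_{C_2}^{C_{2^{\ell}}}\rho_2=\rho_{2^{\ell}}$). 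In the localized slice spectral sequences, where the Euler classes $a_{\rhobar_{2^{k+j}/2^k}}$ are inverted, the doubling identifies $N_{C_2}^{C_{2^{k+j}}}(\ti^{\Cnplusone})$ with $N_{C_2}^{C_{2^{j}}}(\ti^{\Cnminuskplusone})$ directly; the return to the honest slice spectral sequences through the Slice Recovery Theorem then reintroduces exactly the factor $\bigl(a_{\rhobar_{2^{k+j}}}/a_{\rhobar_{2^{k+j}/2^k}}\bigr)^{2^i-1}$ needed for the left-hand class to be an honest (non-localized) $\mathcal{E}_2$-class, the exponent $2^i-1$ being dictated by the $\rho_2$-degree of $\ti$. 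I would conclude by checking the bidegree match: feeding the relevant $V$ into the shearing exponents $t',s'$ of \eqref{eq:IntroTranschromaticMainIsom} must land $N_{C_2}^{C_{2^{k+j}}}(\ti)\cdot\bigl(a_{\rhobar_{2^{k+j}}}/a_{\rhobar_{2^{k+j}/2^k}}\bigr)^{2^i-1}$ precisely on the bidegree of $N_{C_2}^{C_{2^{j}}}(\ti)$, a direct computation with $\rho_{2^{\ell}}$ and its geometric fixed points.

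The main obstacle is the de-localization bookkeeping in (3): identifying exactly which Euler classes, and to which powers, must be multiplied back in when passing from the localized slice spectral sequence to the honest one — this is where the combinatorics of the regular representations $\rho_{2^{\ell}}$, their quotients, and the doubling operation all interact, and it is what forces the somewhat elaborate shape of the correction term. A secondary point is to confirm throughout that the matched generators agree on the nose rather than merely up to a unit in the ground ring, which rests on the monoidality of the tower-level equivalence of \cref{thm:introThm5DualTowerEquivalence} rather than on any bidegree count.
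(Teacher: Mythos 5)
The overall strategy — read the correspondences off the tower equivalence of \cref{thm:introThm5DualTowerEquivalence}, reduce to $k=1$, iterate, and use the adjunction $\Phi^{C_2} \dashv \Pb_{\Cnplusone/C_2}$ together with $V^{C_2}=V$ for inflated $V$ — is the same as the paper's, and your treatment of (1) and (2) is essentially correct.

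Your treatment of (3) has a genuine gap and a concrete error. The gap: you attribute the matching of $\ti$-generators to ``monoidality of the tower-level equivalence'' and ``the combinatorics of the permutation summands,'' but neither of these pins down the identification. A monoidal self-equivalence of a graded polynomial ring could still permute generators in the same bidegree or add decomposable corrections. What actually fixes the correspondence is the geometric-fixed-point identity $\Phi^{C_2}(\ti^{C_{2^n}}) = \Phi^{C_4} N_{C_2}^{C_4}(\ti^{C_{2^{n+1}}})$ (\cref{lem:tiGeneratorRelations}), proved by comparing coefficients of formal group law power series. That lemma is the ingredient that makes \cref{prop:tiCorrespondenceFormula} work: both sides of that equality are then represented as composites of $\ti$ or $N_{C_2}^{C_4}(\ti)$ with Euler classes, and cancelling the common $a_\sigma^{2^i-1}$ factor (using (1)) yields $\ti^{C_{2^n}}\leftrightsquigarrow N_{C_2}^{C_4}(\ti^{C_{2^{n+1}}})a_{\lambda_1}^{2^i-1}$. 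Your sketch never supplies a substitute for this input, so the generator matching is not justified.

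The concrete error is in your iteration step. You claim the identity $\rhobar_{2^{k+j}}=\rhobar_{2^{k+j}/2^k}+p^{*}\rhobar_{2^{j}}$ and that $(\rhobar_{2^{k+j}/2^k})^{\Ck}=0$. Both are false. In fact $\rhobar_{2^{k+j}/2^k}$ consists precisely of the irreducibles $\lambda_1,\dots,\lambda_{j-1},\sigma$ with \emph{nontrivial} $\Ck$-fixed points, so $\rhobar_{2^{k+j}/2^k}=p^{*}\rhobar_{2^j}$ itself and $(\rhobar_{2^{k+j}/2^k})^{\Ck}=\rhobar_{2^j}\neq 0$ (a dimension count already rules out your identity: $2^{k+j}-1 \neq 2(2^j-1)$ for $k\geq 1$). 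The ratio $a_{\rhobar_{2^{k+j}}}/a_{\rhobar_{2^{k+j}/2^k}}$ is the Euler class of $\rhobar_{2^{k+j}}-p^{*}\rhobar_{2^j}=2^{k+j-2}\lambda_{k+j-1}+\cdots+2^{j-1}\lambda_j$, a representation whose $\Ck$-fixed points \emph{are} zero; that is precisely why this factor is geometrically invertible and why $\Phi^{\Ck}$ of it is the unit, not $a_{\rhobar_{2^j}}$. The correct iteration just composes the one-step formula of \cref{prop:tiCorrespondenceFormula} $k$ times, with the single Euler-class factors $a_{\lambda_j}^{2^{j-1}(2^i-1)}, a_{\lambda_{j+1}}^{2^{j}(2^i-1)}, \dots$ telescoping to $\bigl(a_{\rhobar_{2^{k+j}}}/a_{\rhobar_{2^{k+j}/2^k}}\bigr)^{2^i-1}$; no $\Phi^{\Ck}$ of the correction term is needed.
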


The correspondence formulas in \cref{thm:introThmCorrespondenceFormulas} allows us to directly translate differentials and permanent cycles from the slice spectral sequence of $\BPCnminuskplusone \langle I \rangle$ to the slice spectral sequence of $\BPCnplusone \langle I \rangle$.  They play a key role in establishing the Periodicity Theorem (\cref{thm:introThmPeriodicity}) and the Vanishing Lines Theorem (\cref{thm:introThmVanishingLines}).

\subsection{Open questions and future directions}\label{subsec:OpenQuestions}

\subsubsection*{Other types of transchromatic phenomena}

In addition to the transchromatic phenomenon proved in \cref{thm:introThmTranschromaticIsom}, the following conjecture offers another transchromatic approach to studying the $\Cn$-fixed points of $E_h$ across different heights:

\begin{conj} \label{conj:BPCnheightDifferentials}
The differentials in the slice spectral sequence of $\BPCn$ can be classified by ``height''.  With this classification, the $\Cn$-slice spectral sequence of $\BPCn \langle m \rangle$ contains all the differentials in the slice spectral sequence of $\BPCn$ that are of height at most $h = 2^{n-1}m$.    
\end{conj}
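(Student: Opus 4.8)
The plan is to exploit the fact that $\BPCn$ is the homotopy limit of the tower of generalized Hill--Hopkins--Ravenel theories $\{\BPCn \langle m \rangle\}_{m \geq 1}$ formed along the canonical quotient maps $\BPCn \langle m+1 \rangle \to \BPCn \langle m \rangle$ (which kill the permutation summand $\Cn \cdot \tee_{m+1}^{\Cn}$), and to use this tower to \emph{define} the height of a differential as the first stage at which it becomes visible. The method of twisted monoid rings \cite[Section~2]{HHR}, together with the description of the slice associated graded of $\BPCn \langle I \rangle$ in \cite[Theorem~A]{BHLSZ}, shows that the fiber of $\BPCn \langle m+1 \rangle \to \BPCn \langle m \rangle$ is built from induced slice cells whose slice dimensions grow without bound as $m \to \infty$. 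Consequently the quotient maps $q_m \colon \BPCn \to \BPCn \langle m \rangle$ exhibit $\BPCn$ as the homotopy limit of the tower (the relevant $\lim^1$-terms vanish, since in each fixed degree the homotopy groups stabilize), and, more usefully, the induced map of slice spectral sequences $q_{m,*} \colon \SliceSS(\BPCn) \to \SliceSS(\BPCn \langle m \rangle)$ is an isomorphism on the portion of each $\mathcal{E}_r$-page of bounded slice filtration, for $m$ large relative to $r$ and the filtration.

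Granting this, the formal part of the statement falls out. First I would check that each $d_r$-differential of $\SliceSS(\BPCn)$ -- having a source and target of bounded slice filtration and finite length $r$ -- is carried by $q_{m,*}$ to a genuine $d_r$-differential of $\SliceSS(\BPCn \langle m \rangle)$ once $m$ is large enough, and that no shorter differential can appear on the image (a shorter differential in $\BPCn \langle m \rangle$ would, by naturality of the slice filtration, lift to one in $\BPCn$ and contradict the permanence of the source there). Next comes a monotonicity observation: since $q_{m_0}$ factors through $q_m$ whenever $m \geq m_0$, a differential of $\BPCn$ visible in $\SliceSS(\BPCn \langle m_0 \rangle)$ stays visible in $\SliceSS(\BPCn \langle m \rangle)$ for all $m \geq m_0$. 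One may then declare the \emph{height} of a differential of $\SliceSS(\BPCn)$ to be $2^{n-1} \cdot m_0$, where $m_0$ is the least $m$ for which the differential is visible in $\SliceSS(\BPCn \langle m \rangle)$; this invariant is well defined by the previous paragraph, and with this definition the claim that $\SliceSS(\BPCn \langle m \rangle)$ contains exactly the differentials of $\SliceSS(\BPCn)$ of height at most $h = 2^{n-1}m$ becomes a tautology.

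The genuine content -- and the step I expect to be the main obstacle -- is to identify this abstract ``first-appearance'' height with an intrinsic, combinatorial height, and thereby with the chromatic height at which the differential is relevant: for a differential $d_r(x) = y$ one would want to prove that the first-appearance height equals $2^{n-1} \cdot \max\{\, i : \tee_i^{\Cn} \text{ occurs in a normal form of } x \text{ or of } y\,\}$, so that $\BPCn \langle m \rangle = E_{2^{n-1}m}$ literally sees the height-$\leq 2^{n-1}m$ part of the chromatic picture. This requires much finer control of the differentials in $\SliceSS(\BPCn)$ than is presently available. The transchromatic isomorphism of \cref{thm:introThmTranschromaticIsomBPGI} organizes all differentials lying on or above the line of slope $1$, but the conical region below slope $1$ -- where the low-height differentials are concentrated -- remains uncharted, and even formulating the combinatorial notion of height precisely seems to require a conjectural complete description of the differentials of $\SliceSS(\BPCn)$. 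I therefore expect the difficulty to lie not in the limiting argument above, but in establishing, in families, the equality of the abstract stage-of-appearance with the chromatic height.
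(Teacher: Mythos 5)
The statement you're working with is explicitly a \emph{conjecture} in the paper -- it appears under ``Open questions and future directions'' -- and the paper does not prove it. There is therefore no paper proof to compare against. What the paper does offer is a proposed route through \cref{conj:BPCnmtoBPCnm-1surjective}: that the maps $\SliceSS(\BPCn\langle m+1\rangle) \to \SliceSS(\BPCn\langle m\rangle)$ remain surjective on every $\mathcal{E}_r$-page and on $d_r$-differentials, together with the observation that \cref{thm:introThmTranschromaticIsom} would reduce this surjectivity, inductively on the group, to the conical region below the line of slope one. You correctly recognize the statement as open and do not claim to resolve it, so at that level your assessment matches the paper's.

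Two comments on your sketch nonetheless. First, the ``first-appearance height'' reformulation makes the displayed containment tautological, but precisely for that reason it carries no content by itself: the substance, as you acknowledge, lies in equating first appearance in the tower $\{\BPCn\langle m\rangle\}$ with a combinatorial invariant built from the $\tee_i^{\Cn}$, which remains out of reach. The paper's proposed route via tower-map surjectivity attacks a concrete sub-problem instead of deferring all the content to a definition; your proposal does not engage with that sub-problem. Second, your appeal to ``a shorter differential in $\BPCn\langle m\rangle$ would, by naturality of the slice filtration, lift to one in $\BPCn$'' is imprecise and handles only half of what is needed. Naturality pushes $d_{r'}(x)=0$ forward along $q_m\colon \BPCn\to\BPCn\langle m\rangle$ to $d_{r'}(q_{m,*}x)=0$, which does rule out $q_{m,*}(x)$ \emph{supporting} a shorter differential; but it does not rule out an \emph{incoming} $d_{r'}$ hitting $q_{m,*}(x)$ on the way to page $r$, which would break the first-appearance argument. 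Excluding the latter requires showing that any potential source in $\SliceSS(\BPCn\langle m\rangle)$ lifts to $\SliceSS(\BPCn)$ in a bounded range of bidegrees for $m$ large -- plausible given the slice-cell estimates from \cite[Theorem~A]{BHLSZ}, but an additional step, not a consequence of naturality alone.
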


To prove \cref{conj:BPCnheightDifferentials}, it is worth noting that the map
\begin{equation} \label{eq:BPCnmtoBPCnm-1}
\SliceSS(\BPCn \langle m+1 \rangle) \longrightarrow \SliceSS(\BPCn \langle m \rangle)
\end{equation}
is a surjection on the $\mathcal{E}_2$-page for all $m, n \geq 1$. 

\begin{conj} \label{conj:BPCnmtoBPCnm-1surjective}
For all $m \geq 1$ and $r \geq 2$, the map (\ref{eq:BPCnmtoBPCnm-1}) induces a surjection on the $\mathcal{E}_r$-page and of the $d_r$-differentials.
\end{conj}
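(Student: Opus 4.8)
\textbf{Proof proposal for \cref{conj:BPCnmtoBPCnm-1surjective}.}

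The plan is to realize the map \eqref{eq:BPCnmtoBPCnm-1} by a map of filtered spectra and then extract surjectivity on each page from an algebraic splitting of the associated graded. Recall that $\BPCn \langle m+1 \rangle = \BPCn \langle m \rangle / (\Cn \cdot \tee_{m+1}^{\Cn})$ realizes the quotient via the method of twisted monoid rings, so the natural ring map $q \colon \BPCn \langle m+1 \rangle \to \BPCn \langle m \rangle$ is a map of $\Cn$-equivariant (associative, indeed $E_\infty$ after refinement) ring spectra. Applying the dual slice tower $P_\bullet(-)$ to $q$ gives a map of towers $P_\bullet(\BPCn \langle m+1 \rangle) \to P_\bullet(\BPCn \langle m \rangle)$, hence a map of slice spectral sequences. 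The first key step is to compute the effect on slice associated graded: by \cite[Theorem~A]{BHLSZ}, the slice associated graded of $\BPCn \langle m+1 \rangle$ is $H\Z[\Cn \cdot \tee_i^{\Cn} \mid 1 \le i \le m+1]$ and that of $\BPCn \langle m \rangle$ is $H\Z[\Cn \cdot \tee_i^{\Cn} \mid 1 \le i \le m]$, and under these identifications the induced map is the evident projection killing the polynomial generators $\Cn \cdot \tee_{m+1}^{\Cn}$. In particular, on slice associated graded the map is a split surjection of $H\Z$-modules, split by the inclusion of the sub-polynomial-algebra on the first $m$ families of generators.

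The second step is to leverage this splitting page by page. On the $\mathcal{E}_2$-page the map $\pi_\star^{\Cn}$ of the slice associated graded is precisely $\E_2(\BPCn \langle m+1 \rangle) \to \E_2(\BPCn \langle m \rangle)$, and the algebraic splitting above furnishes a section of $\mathcal{E}_2$-modules (over the $\mathcal{E}_2$-page of the sphere, or over any fixed base ring spectral sequence), which gives the $m=1$, $r=2$ claim and more generally the stated $\mathcal{E}_2$-surjectivity. To propagate to $\mathcal{E}_r$, I would argue inductively: suppose the map is surjective on $\mathcal{E}_r$ and on $d_r$-differentials. Surjectivity on $d_r$ means that for every $d_r$-differential $x \to y$ in the target there is a lift $\tilde x \to \tilde y$ in the source with $\tilde x, \tilde y$ mapping to $x, y$; combined with surjectivity on $\mathcal{E}_r$, a diagram chase (the map of spectral sequences is compatible with the differential, so it sends $\ker d_r$ to $\ker d_r$ and $\im d_r$ to $\im d_r$) shows the induced map on $\mathcal{E}_{r+1} = \ker d_r / \im d_r$ is surjective. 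The genuinely new content at stage $r+1$ is then surjectivity on $d_{r+1}$-differentials: given a $d_{r+1}$-differential $x \to y$ in $\E_{r+1}(\BPCn \langle m \rangle)$, one must produce a class on $\mathcal{E}_{r+1}$ of the source mapping to $x$ that supports a $d_{r+1}$ hitting a lift of $y$.

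\textbf{Main obstacle.} The crux — and the step I expect to be hardest — is exactly this last point: promoting surjectivity on $\mathcal{E}_r$ to surjectivity on $d_r$-differentials. Surjectivity on a page does not formally imply surjectivity on differentials, because a lift $\tilde x$ of a class $x$ that supports $d_r x = y$ need not itself support a differential (it could be a permanent cycle in the source, with $y$ hit by some other class, or not hit at all). The natural way to close this gap is to exhibit a multiplicative \emph{section} of spectral sequences, not merely a linear one: the inclusion $\BPCn \langle m \rangle \hookleftarrow {}$? — there is no such ring map, so instead one should use that $\BPCn \langle m+1 \rangle$ is, as a $\BPCn \langle m \rangle$-module (or better, via the twisted monoid ring structure), free on the monomials in $\Cn \cdot \tee_{m+1}^{\Cn}$, and that this module splitting is realized compatibly with the slice filtration. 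Concretely, I would try to show that $P_\bullet(\BPCn \langle m+1 \rangle)$ splits, as a filtered $P_\bullet(\BPCn \langle m \rangle)$-module, as a wedge indexed by monomials in the $\tee_{m+1}$-generators, each summand a shift of $P_\bullet(\BPCn \langle m \rangle)$ — this would follow from the methods of \cite{BHSZ, BHLSZ} on the behavior of the slice filtration under forming twisted monoid rings. Granting such a filtered module splitting, the map $q$ is a retract of filtered spectra, hence induces a retract of spectral sequences, and a retraction is in particular surjective on every page and on all differentials, giving the conjecture. The work is therefore concentrated in establishing the filtered module splitting; the spectral-sequence bookkeeping afterward is formal.
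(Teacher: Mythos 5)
First, a point of order: \cref{conj:BPCnmtoBPCnm-1surjective} is stated in the paper as an open \emph{conjecture}, not a theorem. The paper offers no proof; it only records that the statement has been verified in all known computations and sketches a possible inductive strategy (use the shearing isomorphism of \cref{thm:introThmTranschromaticIsom} to reduce the claim for $\Cn$ to the region below the line of slope $1$, the base case $C_2$ being known from the complete Hu--Kriz computation). So there is nothing in the paper to compare your argument against; it must stand on its own. Your first two steps are fine and agree with what the paper asserts: the map (\ref{eq:BPCnmtoBPCnm-1}) is realized by a map of (dual) slice towers, on slice associated graded it is the evident projection of twisted monoid rings, and this gives surjectivity on the $\mathcal{E}_2$-page. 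You also correctly isolate the real difficulty, namely that surjectivity on a page does not propagate to surjectivity on differentials.

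The problem is that your proposed resolution of that difficulty is not merely unproven but false. A splitting of $P_\bullet(\BPCn\langle m+1\rangle)$ as a wedge of shifted copies of $P_\bullet(\BPCn\langle m\rangle)$ exhibiting $q$ as a retract of filtered spectra would, on passing to colimits and projecting to the summand indexed by the monomial $1$, produce a spectrum-level section of $q\colon \BPCn\langle m+1\rangle \to \BPCn\langle m\rangle$. No such section exists, already for $n=1$ and underlying spectra: the truncation $BP\langle m+1\rangle \to BP\langle m\rangle$ induces on mod-$2$ homology the \emph{inclusion} of subalgebras of the dual Steenrod algebra
\[
\mathbb{F}_2[\bar\xi_1^2,\ldots,\bar\xi_{m+2}^2,\bar\xi_{m+3},\ldots] \hookrightarrow \mathbb{F}_2[\bar\xi_1^2,\ldots,\bar\xi_{m+1}^2,\bar\xi_{m+2},\ldots],
\]
which misses $\bar\xi_{m+2}$ and hence is not surjective, whereas a retraction would force $q_*$ to be split surjective. (Relatedly, the module structure you invoke runs the wrong way: $\BPCn\langle m\rangle$ is a quotient algebra of $\BPCn\langle m+1\rangle$, so the latter is not a $\BPCn\langle m\rangle$-module; the freeness on monomials in $\Cn\cdot\tee_{m+1}^{\Cn}$ holds only for the associated graded, and the failure of that splitting to lift to the filtered object is exactly where the nontrivial differentials live.) So the conjecture cannot be proved by exhibiting $q$ as a retract, and your argument does not close the gap you identified. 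A more promising route is the one hinted at after the conjecture in the paper: by \cref{thm:introThmTranschromaticIsom} the statement for $\Cn$ above the line of slope $1$ follows from the statement for $\Cnminusone$, so the genuinely new content is confined to the region between $\mathcal{L}_0$ and $\mathcal{L}_1$, where one must argue differently.
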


Proving \cref{conj:BPCnmtoBPCnm-1surjective} would imply that the higher-height theories $E_{(m+1) \cdot 2^{n-1}}^{hC_{2^n}}$ inherit all the differentials from the lower-height theories $E_{m \cdot 2^{n-1}}^{hC_{2^n}}$.  This constitutes a significant step toward fully resolving \cref{conj:BPCnheightDifferentials}.  Currently, \cref{conj:BPCnmtoBPCnm-1surjective} has been verified for all the known computations listed in \cref{subsec:Motivation}.

Utilizing \cref{thm:introThmTranschromaticIsom}, there is a possible inductive approach to proving \cref{conj:BPCnmtoBPCnm-1surjective}.  The base case of the induction is when the group is $C_2$.  In this scenario, the surjectivity of (\ref{eq:BPCnmtoBPCnm-1}) has already been established for all $m \geq 1$.  Now, suppose that we have proved the claim for the group $\Cnminusone$.  For $\Cn$, we have the following diagram: 
\[\begin{tikzcd}
\SliceSS(\BPCn \langle m+1 \rangle) \ar[r, "\mathbf{1}"] \ar[d, "\text{shearing}", leftrightsquigarrow] & \SliceSS(\BPCn \langle m \rangle) \ar[d, "\text{shearing}", leftrightsquigarrow] \\
\SliceSS(\BPCnminusone \langle m+1 \rangle) \ar[r, "\mathbf{2}"] & \SliceSS(\BPCnminusone \langle m \rangle).
\end{tikzcd}\]
The vertical arrows in the diagram correspond to the shearing isomorphisms established in \cref{thm:introThmTranschromaticIsom}.  Given that we have already demonstrated that map $\mathbf{2}$ satisfies the claim outlined in \cref{conj:BPCnmtoBPCnm-1surjective}, it suffices to prove surjectivity of map $\mathbf{1}$ for the region below the line of slope 1.

\subsubsection*{Transchromatic isomorphism for the negative cone}

For any $G$-spectrum $X$, its equivariant slice spectral sequence is concentrated in two conical regions.  On the integer-graded page, one of these regions is in the first quadrant, and is known as the ``positive cone", while the other, located in the third quadrant, is referred to as the ``negative cone''.  Both regions are bounded by lines of slopes $0$ and $(|G|-1)$.

For connective theories such as $\BPG \langle m \rangle$, the slice spectral sequence will only have a positive cone.  However, for periodic theories such as $D^{-1}\BPG \langle m \rangle$ and Lubin--Tate theories, a negative cone also appears. 

On the integer-graded page, the behavior of the positive cone can be completely understood by utilizing the localized slice spectral sequence, as demonstrated in our use of the stratification tower and our proof of \cref{thm:introThmTranschromaticIsom}.  Given this result, it is reasonable to expect that the negative cone should also exhibit similar transchromatic phenomena.  Investigating the negative cone is particularly important in the context of duality.  Explicit computations have unveiled duality phenomena in the $C_2$-slice spectral sequences of $E_\mathbb{R}(n) := \bar{v}_n^{-1} \BPR \langle n \rangle$ \cite{GreenleesMeierDuality} and in the $C_4$-slice spectral sequence of $D^{-1}\BPCfour \langle 1 \rangle$ \cite{HHRKH}.

In collaboration with Yutao Liu and Guoqi Yan, we will analyze the isomorphism regions in the generalized Tate spectral sequence with respect to different families $\mathcal{F}$.  This analysis will provide a stratification for the negative cone of the slice spectral sequence for non-connective theories.  We will also leverage the shearing isomorphisms already established in this paper to deduce further shearing isomorphisms and vanishing line results for the negative cone of the slice spectral sequence.




\subsubsection*{\texorpdfstring{$Q_8$}{text}-equivariant transchromatic isomorphisms}

According to the classification of maximal finite 2-groups in the stabilizer group $\mathbb{S}_h$ \cite{Hewett, Hewett2, Bujard}, there are two possible types: cyclic groups of order a power of 2 and $Q_8$.  In this paper, our focus has been on cyclic groups.

\begin{quest}
Are there $Q_8$-equivariant analogues of the Transchromatic Isomorphism Theorem (\cref{thm:introThmTranschromaticIsom})? 
\end{quest}

This question is particularly relevant when the height $h$ is congruent to $2$ modulo $4$.  A crucial step in the proof of \cref{thm:introThmTranschromaticIsom} relies on analyzing the stratification tower of $\BPCnplusone \langle m \rangle$.  In the context of $Q_8$, the work of \cite{HahnShi} also implies the existence of a $Q_8$-equivariant orientation $\BPQeight \to E_h$. 

It would be interesting to attempt to create chromatically meaningful quotients from $\BPQeight$ and study their stratification towers.  Drawing a parallel to the comparison made with cyclic groups, where we compared the stratification towers of quotients of $\BPG$ and $\BPGmodtwo$, one might expect similar isomorphism results between the stratification towers of $\BPQeight$ and $\BPCtwoCtwo$, given that $Q_8/C_2 = C_2 \times C_2$. Interestingly, $C_2 \times C_2$ is not a finite subgroup of $\mathbb{S}_{h/2}$, making it intriguing to explore where the transchromatic phenomenon may manifest itself. 

\subsubsection*{Transchromatic phenomena at odd primes, other equivariant filtrations}
When $p$ is an odd prime and the height $h$ is of the form $m \cdot (p-1)p^{n-1}$, the Lubin--Tate spectrum $E_h$ admits a $C_{p^{n}}$-action, as shown in \cite{Hewett, Hewett2, Bujard}.  Consequently, at height $h/p = m \cdot (p-1)p^{n-2}$, the spectrum $E_{h/p}$ admits a $C_{p^{n-1}}$-action. Given this, the authors believe that there should be analogues of the Transchromatic Isomorphism Theorem (\cref{thm:introThmTranschromaticIsom}) for odd primes. In the following conjecture, we will leave certain terms vague in our statement, as discussed below.

\begin{conj} \label{conjecture:oddPrimaryShearing}
There is a shearing isomorphism $d_{p\cdot r-1} \leftrightsquigarrow d_r$ between the following regions of spectral sequences: 
\begin{enumerate}
\item The $C_{p^n}$-spectral sequence of $E_h$ on or above the line of slope $(p-1)$ within the region $t-s \geq 0$; and 
\item The $(C_{p^n}/C_p)$-spectral sequence of $E_{h/p}$ within the region $t-s \geq 0$.  
\end{enumerate} 
\end{conj}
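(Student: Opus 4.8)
\noindent The plan is to transport the strategy of the even-primary case wholesale to an odd prime $p$, with $C_{2^{n+1}}$ replaced by $C_{p^n}$ and the doubling operation $\D$ replaced by a ``$p$-folding'' operation $\D_p$ sending $C_{p^j}$-towers to $C_{p^{j+1}}$-towers. Concretely I would proceed in three stages. First, construct odd-primary \emph{generalized Hill--Hopkins--Ravenel theories}: $C_{p^n}$-equivariant ring spectra $T_{C_{p^n}}\langle I\rangle$ for $I\subseteq\N$, together with localizations $D^{-1}T_{C_{p^n}}\langle m\rangle\to E_h$ realizing the $C_{p^n}$-action of \cite{Hewett, Hewett2, Bujard}, and whose slice associated graded is a generalized Eilenberg--Mac Lane spectrum. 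Second, set up the odd-primary analogues of the stratification tower of \cite{MeierShiZengStratification} and of the localized slice machinery of \cite{MeierShiZengHF2} for these theories. Third, prove a Localized Slice Tower Isomorphism
\[\EF[C_{p^{k+1}}]\wedge P_\bullet\!\left(T_{C_{p^n}}\langle I\rangle\right)\ \simeq\ \Pb_{C_{p^n}/C_{p^k}}\,\D_p^{k}\!\left(\EF[C_p]\wedge P_\bullet\!\left(T_{C_{p^{n-k}}}\langle I\rangle\right)\right),\]
where $\Pb_{C_{p^n}/C_{p^k}}$ is the right adjoint of $\Phi^{C_{p^k}}$. As in the even case, this tower equivalence both constrains the lengths of the differentials in the relevant region of $\SliceSS(T_{C_{p^n}}\langle I\rangle)$ and produces a shearing isomorphism onto $\SliceSS(T_{C_{p^{n-k}}}\langle I\rangle)$ via the Slice Recovery Theorem of \cite{MeierShiZengStratification}; specializing to $k=1$ and $I=\{1,\dots,m\}$ and using the localization of stage one gives \cref{conjecture:oddPrimaryShearing}.

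The crux of the third stage, exactly as in the proof of \cref{thm:introThm5DualTowerEquivalence}, is to compute the $C_p$-geometric fixed points of the slice cells of $T_{C_{p^n}}\langle I\rangle$: one must identify $\Phi^{C_p}$ of the permutation summands $C_{p^n}\cdot\bar{t}_i$ with the corresponding summands one level down, and then track how the slice dimension of a $C_{p^j}$-cell rescales --- by a factor of $p$ per $\D_p$-step, reflecting $\dim\rho_{p^{j+1}}=p\dim\rho_{p^j}$ --- and how the difference between fixed and non-fixed dimensions, carried by powers of the Euler class $a_{\rhobar}$, is reinstated after applying $\Pb_{C_{p^n}/C_{p^k}}$. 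The same computation determines the precise differential-length correspondence. Following the even-primary pattern $d_{2^kr-(2^k-1)}\leftrightsquigarrow d_r$, one expects a correspondence of the shape $d_{p^kr-c_k}\leftrightsquigarrow d_r$ with $c_k$ governed by $\dim\rhobar_{p^k}=p^k-1$; making $c_k$ explicit --- the conjecture records the case $k=1$ schematically as $d_{pr-1}$ --- is part of the work, and may ultimately require either replacing the constant by $p-1$ or adjusting the choice of equivariant filtration.

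I expect the principal obstacle to be the first stage. There is no odd-primary analogue of $\MUR$, so the $C_{p^n}$-equivariant models underlying the conjecture are not supplied by a norm construction in the way $\BPG=N_{C_2}^{G}(\BPR)$ is. One would need either to produce an odd-primary equivariant orientation in the spirit of the $\BPG\to E_h$ of \cite{HahnShi} --- genuinely delicate, since the source of the $C_{p^n}$-action on $E_h$ is obstruction-theoretic rather than geometric --- or to forgo an explicit slice filtration and instead run the argument with a different equivariant filtration (a homotopy-fixed-point or Tate-type tower), re-deriving the shearing isomorphism from a filtered comparison of $E_h$ and $E_{h/p}$. This ambiguity is precisely why the conjecture leaves its terms vague: the ``$C_{p^n}$-spectral sequence of $E_h$'' has no canonical meaning at odd primes yet, and fixing the right filtration is the first order of business. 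A secondary obstacle is that the odd-primary slice filtration is combinatorially heavier --- the reduced regular representation $\rhobar_p$ of $C_p$ is $(p-1)$-dimensional rather than $1$-dimensional --- so the stratification theorem and the Slice Recovery Theorem of \cite{MeierShiZengStratification} will need substantive re-proofs rather than formal translations.
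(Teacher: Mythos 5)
The statement you are addressing is \cref{conjecture:oddPrimaryShearing}, which the paper explicitly leaves \emph{open}: it appears in the section on open questions, and the authors say they ``leave certain terms vague,'' precisely because the $C_{p^n}$-spectral sequence of $E_h$ has no canonical construction at odd primes. There is therefore no proof in the paper to compare your attempt against, and your proposal is — by its own admission — a research program rather than a proof. What you have written is a faithful transposition of the even-primary architecture (generalized HHR theories, stratification/localized slice towers, a $p$-fold analogue of $\D$, the pullback functor $\Pb$, and the Slice Recovery Theorem), and your identification of the principal obstruction matches the authors' own: the absence of an odd-primary $\MUR$ and hence of a geometric source for the $C_{p^n}$-action means stage one of your plan is not a routine step but the heart of the open problem. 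None of the three stages is carried out, so no part of the conjecture is actually established.

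One concrete point in your favor: you correctly flag the tension in the differential-length constant. The even-primary shearing for $N=C_{2^k}$ is $d_{2^kr-(2^k-1)}\leftrightsquigarrow d_r$, i.e.\ $d_{|N|r-(|N|-1)}$, which for an odd prime and $N=C_p$ would naively give $d_{pr-(p-1)}$ rather than the $d_{pr-1}$ written in the conjecture (the two agree only at $p=2$). Resolving this discrepancy — whether by re-indexing the filtration so that each stage of the $p$-fold tower repeats differently, or by accepting $p-1$ in place of $1$ — is exactly the kind of issue that would have to be settled before the conjecture can even be stated precisely, and your proposal correctly defers it rather than asserting a resolution. In short: the plan is sensible and consistent with the paper's own outlook, but it is an outline of future work, not a proof.
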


Should \cref{conjecture:oddPrimaryShearing} hold true, an important calculation is to determine the $C_9$-fixed points of $E_6$ at the prime 3.  The theory $E_6^{hC_9}$ could play a crucial role in resolving the 3-primary Kervaire invariant problem.  Utilizing the shearing isomorphism outlined in \cref{conjecture:oddPrimaryShearing}, the computation of $E_2^{hC_3}$ by Hopkins and Miller (\cite{Nave}) can be inductively used to compute $E_6^{hC_9}$.  This approach may help simplify its computation.

An essential aspect of \cref{conjecture:oddPrimaryShearing} is the need to specify \textit{which} equivariant spectral sequence we are using to compute $E_h$.  In contrast to the situation at the prime 2, odd primes may offer alternative equivariant spectral sequences that are more efficient for computations.  It will be interesting to investigate how to construct a $C_{p^n}$-filtration for $E_h$ whose associated spectral sequence satisfies the shearing isomorphism proposed in \cref{conjecture:oddPrimaryShearing}.

In a more general context, for any equivariant filtration $\{P^\bullet\}$, we can create a new filtration $\{\EF \wedge P^\bullet\}$ by smashing it with $\EF$.  The resulting map of towers $\{P^\bullet\} \to \{\EF \wedge P^\bullet\}$ induces a map of the corresponding spectral sequences.  However, it is important to note that we are not always guaranteed that this map will produce isomorphism regions.  One crucial fact that establishes the existence of the stratification tower and the Transchromatic Isomorphism Theorem is the observation that this map indeed yields isomorphism regions when ${P^\bullet}$ is the equivariant slice filtration, as demonstrated by the Slice Recovery Theorem (\cite[Theorem~3.3]{MeierShiZengStratification}).

\begin{quest}
For what other equivariant filtrations are there stratification results and shearing isomorphisms? 
\end{quest}

\subsection{Organization of the paper}

In \cref{sec:StratificationReview}, we revisit the construction of the stratification tower and its relevant properties from \cite{MeierShiZengStratification}.  In \cref{sec:ShearingDifferentials}, we discuss the pullback functor, the dual slice tower, and the shearing isomorphism.  We prove a result (\cref{thm:ShearingGeneralTheory}) on shearing isomorphism between two interrelated spectral sequences.  This relationship emerges through the interplay of a pullback operation and a modification of filtrations. 

In \cref{sec:IsomLocalizedDualTowers}, we compare the localized dual slice towers of $\BPCnplusone \langle I \rangle$ and $\BPCnminuskplusone \langle I \rangle$, proving \cref{thm:introThm5DualTowerEquivalence} (\cref{thm:DualTowerEquivalenceMain}).  Combined with \cref{thm:ShearingGeneralTheory}, the equivalence of localized dual slice towers induces a shearing isomorphism of the corresponding localized slice spectral sequences.  In \cref{sec:CorrespondenceFormulas}, we establish correspondence formulas (\cref{thm:CorrespondenceFormula}) for classes in these localized slice spectral sequences.

In \cref{sec:TranschromaticIsomTheorem}, we prove \cref{thm:introThmTranschromaticIsomBPGI} (\cref{thm:TranschromaticMain}) and state several corollaries that will be useful for our discussions in the subsequent sections.  \cref{thm:introThmCorrespondenceFormulas} is a direct consequence of \cref{thm:CorrespondenceFormula} and \cref{thm:TranschromaticMain}.  We also provide an alternative proof of the Slice Differentials Theorem by Hill, Hopkins, and Ravenel (\cref{thm:HHRSliceDiffTheorem}).  

Our main theorem, \cref{thm:introThmTranschromaticIsom}, directly follows from \cref{thm:introThmTranschromaticIsomBPGI} by setting $I = \{1, 2, \ldots, m\}$.  This is discussed in more detail in \cref{sec:TranschromaticTowerLubinTateTheories}.  

In the final two sections, we turn our attention to applications of \cref{thm:introThmTranschromaticIsom}.  In \cref{sec:ROGPeriodicity}, we prove \cref{thm:introThmPeriodicity} (\cref{thm:ROGPeriodicityTranschromatic}), which establishes $RO(G)$-graded periodicities of height-$h$ theories from $RO(G/{C_2})$-graded periodicities of height-$(h/2)$-theories.  In \cref{sec:VanishingLines}, we prove \cref{thm:introThmVanishingLines} (\cref{thm:VanishingLineGeneralSlope}).

\subsection*{Acknowledgements}
We would like to thank William Balderrama, Agn\`{e}s Beaudry, Mark Behrens, Mike Hopkins, Hana Jia Kong, Tyler Lawson, Guchuan Li, Wenao Li, Yutao Liu, Peter May, Doug Ravenel, Guozhen Wang, Guoqi Yan, and Foling Zou for helpful conversations.  We would like to especially thank Mike Hill and Zhouli Xu for comments on an earlier draft of our paper.  The first author is supported by the NWO grant VI.Vidi.193.111 and the second author is supported in part by NSF Grant DMS-2313842.

\subsection*{Notations and conventions}
\begin{enumerate}
\item The slice tower we consider will always be the \emph{regular} slice towers (see \cite{UllmanPaper, UllmanThesis}).  
\item For a $G$-spectrum $X$, we denote by $\pi_*^G X$ the integer-graded homotopy group of $X$; by $\pi_\star^GX$ the $RO(G)$-graded homotopy group of $X$; by $\Mpi_* X$ the integer-graded Mackey functor valued homotopy group of $X$; and by $\Mpi_\star X$ the $RO(G)$-graded Mackey functor valued homotopy groups of $X$.  
\item All of our spectral sequences are $RO(G)$-graded.  For a fixed $V \in RO(G)$, the ``$(V+t-s,s)$-graded page'' is the part of the spectral sequence consisting of all the classes with degrees $(V+t-s, s)$, where $t, s \in \mathbb{Z}$.  The ``integer-graded page'' is when we set $V = 0$. 
\item We will denote the regular representation of $\Cn$ by $\rho_{2^n}$.  
\item For $0 \leq i \leq n-1$, we will denote the real 2-dimensional $\Cn$-representation corresponding to rotation by $\left(\frac{\pi}{2^i}\right)$ by $\lambda_i$.  With this notation, $\lambda_0 = 2\sigma$, where $\sigma$ is the 1-dimensional sign representation. 
\item We will denote by $\Sp_G$ the homotopy category of genuine $G$-spectra and by $\wedge$ the derived smash product on it.
\end{enumerate} 

\section{Stratification of the slice spectral sequence}\label{sec:StratificationReview}

In this section, we will recall the relevant definitions and results in \cite{MeierShiZengStratification} that will be needed in the following sections.  The construction of the stratification tower in \cite{MeierShiZengStratification} relies on the utilization of the localized slice spectral sequence, which is a modification of the slice spectral sequence studied by the authors in \cite{MeierShiZengHF2}.  

Let $G$ be a finite group, $X$ a $G$-spectrum, and $\FF$ a family of subgroups of $G$.  The localized slice spectral sequence of $X$ with respect to $\mathcal{F}$ is the spectral sequence associated with the tower $\EF \wedge P^\bullet X$, obtained by smashing the universal space $\EF$ with the slice tower of $X$.  This spectral sequence is denoted by $\EF \wedge \SliceSS(X)$.  

Whenever there is an inclusion $\mathcal{F} \subset \mathcal{F}'$ of families, it induces a map $\EF \to \EF'$ of universal spaces, consequently leading to a map 
\[\varphi: \EF \wedge \SliceSS(X) \longrightarrow \EF' \wedge \SliceSS(X)\]
of the corresponding localized slice spectral sequences.

\begin{construction}[Stratification tower]\rm \label{construction:Tower}
Let $\mathcal{F}_{\leq h}$ be the family consisting of all subgroups of $G$ of order $\leq h$.  Note from definition, $\mathcal{F}_{\leq 0}$ is the empty family, and $\mathcal{F}_{\leq |G|}$ is the family consisting of all subgroups of $G$.  The chain of inclusions 
\[\FF_{\leq 0} \subset \FF_{\leq 1} \subset \cdots \subset \FF_{\leq |G|}\]
induces the maps
\[\EF_{\leq 0} \longrightarrow \EF_{\leq 1} \longrightarrow \cdots \EF_{\leq |G|-1} \longrightarrow \EF_{\leq |G|}.\]
Since $\EF_{\leq 0} \simeq S^0$ and $\EF_{\leq |G|} \simeq *$, the maps above induces a decreasing filtration of the equivariant slice spectral sequence of $X$ given by the tower 
\begin{equation}\label{eq:StratificationTower}
\left\{\EF_{\leq \bullet} \wedge \SliceSS(X), \, 0 \leq \bullet \leq |G| \right\}    
\end{equation} 
of localized slice spectral sequences.  This is called the \textit{\color{darkcyan} stratification tower}. 
\end{construction}

\begin{df} \rm \label{df:HmaxHmintau}
Suppose $V \in RO(G)$, and $S$ is a collection of subgroups of $G$.  Define 
\[\tau_V(S) = \max_{H \in S} \left(|V^H| \cdot |H| - |V|\right).\]
\end{df} 

\begin{df} \rm \label{df:LineLVh-1}
For a fixed $V \in RO(G)$ and $h \geq 1$, let $\mathcal{L}_{h-1}^V$ represent the line of slope $(h-1)$ on the $(V+t-s, s)$-graded page that is defined by the equation 
\[s = (h-1)(t-s) + \tau_V(\mathcal{F}_{\leq h}).\]
\end{df}

In \cref{df:LineLVh-1}, there are two special cases worth noting: 
\begin{enumerate}
\item When $V = 0$, we have $\tau_0(\mathcal{F}_{\leq h})=0$, and $\mathcal{L}_{h-1}^0$ is the line of slope $(h-1)$ through the origin on the integer-graded page.  In this case, we will denote this line by $\mathcal{L}_{h-1}$.  
\item When $h = 1$, we have $\tau_V(\mathcal{F}_{\leq 1}) = |V^e| \cdot 1 - V = 0$.  In this case, the line $L^V_0$ is the horizontal line $s = 0$.
\end{enumerate}

\begin{thm}[Slice Recovery Theorem, \cite{MeierShiZengStratification} Theorem 3.3]\label{thm:SliceRecovery1}
Let $X$ be a $G$-spectrum, and let $h \geq 1$.  On the $(V+t-s, s)$-graded page where $t-s \geq 0$, the map
\[\varphi_h: \SliceSS(X)  \longrightarrow \EF_{\leq h} \sm \SliceSS(X)\]
induces an isomorphism of spectral sequences on or above the line $\mathcal{L}_{h-1}^V$.  In the region where $t-s < 0$, the map $\varphi_h$ induces an isomorphism of spectral sequences on or above the horizontal line $s= \tau_V(\mathcal{F}_{\leq h})$.
\end{thm}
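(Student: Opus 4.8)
The plan is to analyze, for each filtration stage $h$, the cofiber of the map $\EF_{\leq h-1} \to \EF_{\leq h}$ and to show that its contribution to the slice spectral sequence lives strictly below the line $\mathcal L^V_{h-1}$ (in the region $t-s\geq 0$) or strictly below the horizontal line $s=\tau_V(\mathcal F_{\leq h})$ (in the region $t-s<0$). Since $\varphi_h$ factors through the composite $\SliceSS(X)=\EF_{\leq 0}\wedge\SliceSS(X)\to \EF_{\leq 1}\wedge\SliceSS(X)\to\cdots\to\EF_{\leq h}\wedge\SliceSS(X)$, it suffices to control each one-step map $\EF_{\leq j-1}\wedge\SliceSS(X)\to \EF_{\leq j}\wedge\SliceSS(X)$ for $1\le j\le h$ and then assemble. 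First I would recall the standard isotropy-separation cofiber sequence $\EF_{\leq j-1}\to\EF_{\leq j}\to C_j$, where $C_j$ is built from cells $G/K_+$ with $|K|=j$ (more precisely, $C_j\simeq \widetilde{E}\mathcal F[K]$-type pieces smashed with $EG_+$-fattened cells induced from the subgroups of order exactly $j$); smashing the slice tower of $X$ with this cofiber sequence yields a long exact sequence relating the three spectral sequences.

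The key computation is then to pin down, for a subgroup $K$ with $|K|=j$, the bidegrees in which the $K$-geometric-fixed-point contribution $\Phi^K$ of the slice tower can appear on the $(V+t-s,s)$-graded page. Here I would use the behavior of the slice tower under geometric fixed points together with the shearing $a_{\rhobar}$-localization built into $\EF[K]$: the effect of smashing with the relevant cofiber piece is, up to the universal-space fattening, to invert the Euler class $a_{\rho_K}$ (or the appropriate reduced version), which on the $(V,\ast)$-line translates the slice degrees by exactly $|V^K|\cdot|K|-|V|$. Taking the max over all $K\in\mathcal F_{\leq j}$ of order $j$ — and then over all $j\le h$ — produces the constant $\tau_V(\mathcal F_{\leq h})$, and the slope-$(h-1)$ tilt comes from the fact that, in the region $t-s\ge 0$, the slice filtration degree of such induced classes grows like $(h-1)(t-s)$ plus that constant. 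I would phrase this carefully enough to conclude that the cofiber spectral sequences contribute nothing on or above $\mathcal L^V_{h-1}$ when $t-s\geq0$, and nothing on or above $s=\tau_V(\mathcal F_{\leq h})$ when $t-s<0$; the long exact sequences then force $\varphi_h$ to be an isomorphism on $\mathcal E_2$ in those regions, and — since the maps are maps of spectral sequences and the isomorphism region is closed under the differentials pointing into and out of it (differentials can only shorten, never lengthen, the $t-s$ coordinate in a way that escapes the region) — an isomorphism of spectral sequences there.

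The main obstacle I anticipate is the bookkeeping at the boundary: showing that the isomorphism region is genuinely closed under $d_r$ for all $r$, i.e.\ that no differential can enter the "good" region from a source in the "bad" region (below the line) or exit it to a target below the line in a way that would break the isomorphism of $\mathcal E_\infty$-pages. This requires checking that a class on or above $\mathcal L^V_{h-1}$ cannot support or receive a differential from the complementary region — which follows from the geometry of slice differentials (they go up and to the left in a controlled slope range) but needs to be stated precisely relative to the line $\mathcal L^V_{h-1}$ and the sign of $t-s$. A secondary technical point is handling the two cases $t-s\geq 0$ and $t-s<0$ uniformly: in the positive-cone region the bounding object is a line of slope $h-1$, while in the negative-cone region it degenerates to a horizontal line, and I would need to verify that $\tau_V(\mathcal F_{\leq h})$ is exactly the right intercept in both cases by examining where the induced-from-$K$ classes sit when $t-s$ changes sign. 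Once these boundary issues are dispatched, the rest is the formal long-exact-sequence comparison, which is routine.
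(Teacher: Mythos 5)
First, a bookkeeping point: the present paper does not prove this theorem at all --- it is imported verbatim from the companion paper \cite{MeierShiZengStratification} (Theorem 3.3) --- so the comparison is with the argument given there. Your overall architecture (isotropy separation $E\mathcal{F}_{\leq h,+}\wedge P^\bullet X \to P^\bullet X \to \EF_{\leq h}\wedge P^\bullet X$, vanishing of the complementary term in the stated region, long-exact-sequence comparison) is the right one and matches the reference in outline. The problem is that your justification of the central vanishing is the wrong mechanism. You assert that smashing with the cofiber pieces of $\EF_{\leq j-1}\to\EF_{\leq j}$ ``inverts the Euler class $a_{\rho_K}$'' and that this translates slice degrees by $|V^K|\cdot|K|-|V|$. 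Euler-class inversion is what smashing with $\EF_{\leq h}$ does to the \emph{target} of $\varphi_h$; the complementary (fiber) term is built from \emph{induced} cells $G/K_+\wedge S^m$ with $|K|\leq h$, and nothing is inverted there. The vanishing actually comes from: (i) the Wirthm\"uller isomorphism, which identifies the contribution of such a cell to the $\mathcal{E}_2$-term at $(V+t-s,s)$ with $\pi^K_{V+t-s}$ of the $(|V|+t)$-slice of $X$; (ii) the fact that restriction to $K$ preserves the slice (co)connectivity bounds, so the integer-graded homotopy of a restricted $t'$-slice is confined to degrees between roughly $t'/|K|$ and $t'$; and (iii) a $K$-cell decomposition of $S^{i_K^*V}$, whose cells live in dimensions between $|V^K|$ and $|V|$, which converts (ii) into the bound of slope $|K|-1$ with intercept $|V^K|\cdot|K|-|V|$. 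Step (iii) is where $\tau_V(\mathcal{F}_{\leq h})$ actually arises, and the coconnectivity half of (ii) is what produces the horizontal line $s=\tau_V(\mathcal{F}_{\leq h})$ in the region $t-s<0$; your sketch derives neither.

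The second gap is in the passage from an $\mathcal{E}_2$-isomorphism to an isomorphism of spectral sequences. Your proposed resolution (``differentials can only shorten the $t-s$ coordinate\dots'') addresses only targets leaving the region, which is not the issue: a region on or above a line of nonnegative slope is automatically closed under targets of $d_r$, which move by $(-1,+r)$. The genuine danger is a differential whose source lies below the line and whose target lies on or above it, since this changes $\im(d_r)$ inside the region and can break the comparison of $\mathcal{E}_{r+1}$-pages. Handling this requires the sharper boundary statements (isomorphism strictly above the line together with surjectivity on the line at $\mathcal{E}_2$, plus vanishing of the fiber term at both positions $(t-s,s)$ and $(t-s-1,s+1)$ entering the long exact sequence); compare the remark following the proof of the Shearing Isomorphism in this paper about the ``fringe behavior'' on these lines. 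You flag the obstacle but do not supply the statements that dispatch it, so as written the argument does not close.
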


\begin{cor}[\cite{MeierShiZengStratification} Corollary 3.4]\label{cor:sliceRecoveryExample1}
The map 
\[\varphi_1: \SliceSS(X) \longrightarrow \widetilde{E}G \wedge \SliceSS(X)\]
 induces an isomorphism of spectral sequences on or above the horizontal line $s = 0$ on all the $(V+t-s, s)$-graded pages.  
\end{cor}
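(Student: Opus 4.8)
The plan is to derive this statement as the special case $h = 1$ of the Slice Recovery Theorem (\cref{thm:SliceRecovery1}). First I would invoke \cref{thm:SliceRecovery1} with $h = 1$, which splits into two claims according to the sign of $t - s$. For the region $t - s \geq 0$, the theorem says that $\varphi_1$ induces an isomorphism of spectral sequences on or above the line $\mathcal{L}_0^V$. By the second special case recorded immediately after \cref{df:LineLVh-1}, when $h = 1$ we have $\tau_V(\mathcal{F}_{\leq 1}) = |V^e|\cdot 1 - |V| = 0$, so $\mathcal{L}_0^V$ is precisely the horizontal line $s = 0$; hence $\varphi_1$ is an isomorphism on or above $s = 0$ in the region $t - s \geq 0$. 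For the region $t - s < 0$, \cref{thm:SliceRecovery1} directly gives an isomorphism on or above the horizontal line $s = \tau_V(\mathcal{F}_{\leq 1}) = 0$. Combining the two regions, $\varphi_1$ is an isomorphism on or above $s = 0$ on the entire $(V + t - s, s)$-graded page, for every $V \in RO(G)$.

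The only remaining point is to identify the target tower: since $\mathcal{F}_{\leq 1}$ is the family consisting only of the trivial subgroup $\{e\}$, we have $\widetilde{E}\mathcal{F}_{\leq 1} \simeq \widetilde{E}G$ by definition of $\widetilde{E}G$ as the cofiber of $EG_+ \to S^0$, so $\widetilde{E}\mathcal{F}_{\leq 1} \wedge \SliceSS(X) = \widetilde{E}G \wedge \SliceSS(X)$ and $\varphi_1$ is exactly the map named in the statement. This completes the proof.

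I do not anticipate any genuine obstacle here: the corollary is a direct unwinding of \cref{thm:SliceRecovery1} once one observes the vanishing $\tau_V(\mathcal{F}_{\leq 1}) = 0$, which makes both the slanted line $\mathcal{L}_0^V$ and the horizontal cutoff coincide with $s = 0$, so the two regions $t - s \geq 0$ and $t - s < 0$ patch together seamlessly into the single horizontal line $s = 0$. The mild bookkeeping step — checking that $\widetilde{E}\mathcal{F}_{\leq 1}$ really is $\widetilde{E}G$ — is immediate from the definitions of the family $\mathcal{F}_{\leq h}$ in \cref{construction:Tower}.
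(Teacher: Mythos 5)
Your proposal is correct and matches the intended derivation: the corollary is exactly the $h=1$ case of \cref{thm:SliceRecovery1}, using $\tau_V(\mathcal{F}_{\leq 1}) = 0$ so that both the line $\mathcal{L}_0^V$ (for $t-s\geq 0$) and the horizontal cutoff (for $t-s<0$) coincide with $s=0$, together with the identification $\widetilde{E}\mathcal{F}_{\leq 1} \simeq \widetilde{E}G$. Nothing is missing.
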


When $G = C_{2^{n+1}}$, the subgroups of $G$ are linearly ordered by inclusion.  They are of the form $C_{2^k}$ for $0 \leq k \leq n+1$.  For all $k$ and $\ell$ such that $1 \leq k \leq n+1$ and $2^{k-1}\leq \ell \leq 2^k-1$, we have the equality $\mathcal{F}_{\leq \ell} = \mathcal{F}[C_{2^k}]$, where $\mathcal{F}[\Ck]$ is the family consisting of all subgroups of $\Cnplusone$ that do not contain $\Ck$.  The chain of inclusions 
\[\FF_{\leq 0} \subset \FF_{\leq 1} \subset \cdots \FF_{\leq |G|-1}\]
is essentially 
\[\varnothing \subset \mathcal{F}[C_2] \subset \mathcal{F}[C_{4}] \subset \cdots \subset \mathcal{F}[C_{2^{n+1}}].\]
For any $\Cnplusone$-spectrum $X$ and a subgroup $\Ck \subseteq \Cnplusone$, there is a residual $(C_{2^{n+1}}/C_{2^k})$-action on the $C_{2^k}$-geometric fixed points $\Phi^{C_{2^k}}(X)$, and we also have the equivalence 
\[(\EF[C_{2^k}] \wedge X)^{C_{2^{n+1}}} \simeq  \left(\Phi^{C_{2^k}}(X)\right)^{C_{2^{n+1}}/C_{2^k}}.\]
The stratification tower (\ref{eq:StratificationTower}) becomes the following: 

\begin{equation} \label{diagram:SliceSSTower}
\begin{tikzcd}
\SliceSS(X) \ar[rr, "\mathcal{L}^V_0"] \ar[rrd, "\mathcal{L}^V_{1}"] \ar[rrddd, bend right = 10, "\mathcal{L}^V_{2^{n-2}-1}"] \ar[rrdddd, bend right = 30, "\mathcal{L}^V_{2^{n-1}-1}",swap]&& \EF[C_2] \wedge \SliceSS(X) \ar[d, "\mathcal{L}^V_{1}"] \ar[r, Rightarrow] & \Mpi_\star \Phi^{C_2}(X) \ar[d] \\
&& \EF[C_4] \wedge \SliceSS(X) \ar[d, "\mathcal{L}^V_{3}"] \ar[r,Rightarrow] & \Mpi_\star \Phi^{C_{4}}(X) \ar[d]\\
&& \vdots \ar[d, "\mathcal{L}^V_{2^{n-1}-1}"] & \vdots \ar[d] \\ 
&& \EF[C_{2^{n}}] \wedge \SliceSS(X) \ar[d, "\mathcal{L}^V_{2^{n}-1}"] \ar[r, Rightarrow] & \Mpi_\star \Phi^{C_{2^{n}}}(X) \ar[d] \\
&& \EF[\Cnplusone] \wedge \SliceSS(X) \ar[r,Rightarrow] & \pi_*\, \Phi^{C_{2^{n+1}}}(X).
\end{tikzcd}
\end{equation}

\begin{figure}
\begin{center}
\makebox[\textwidth]{\hspace{-0.5in}\includegraphics[trim={0cm 0cm 0cm 0cm}, clip, scale = 0.6]{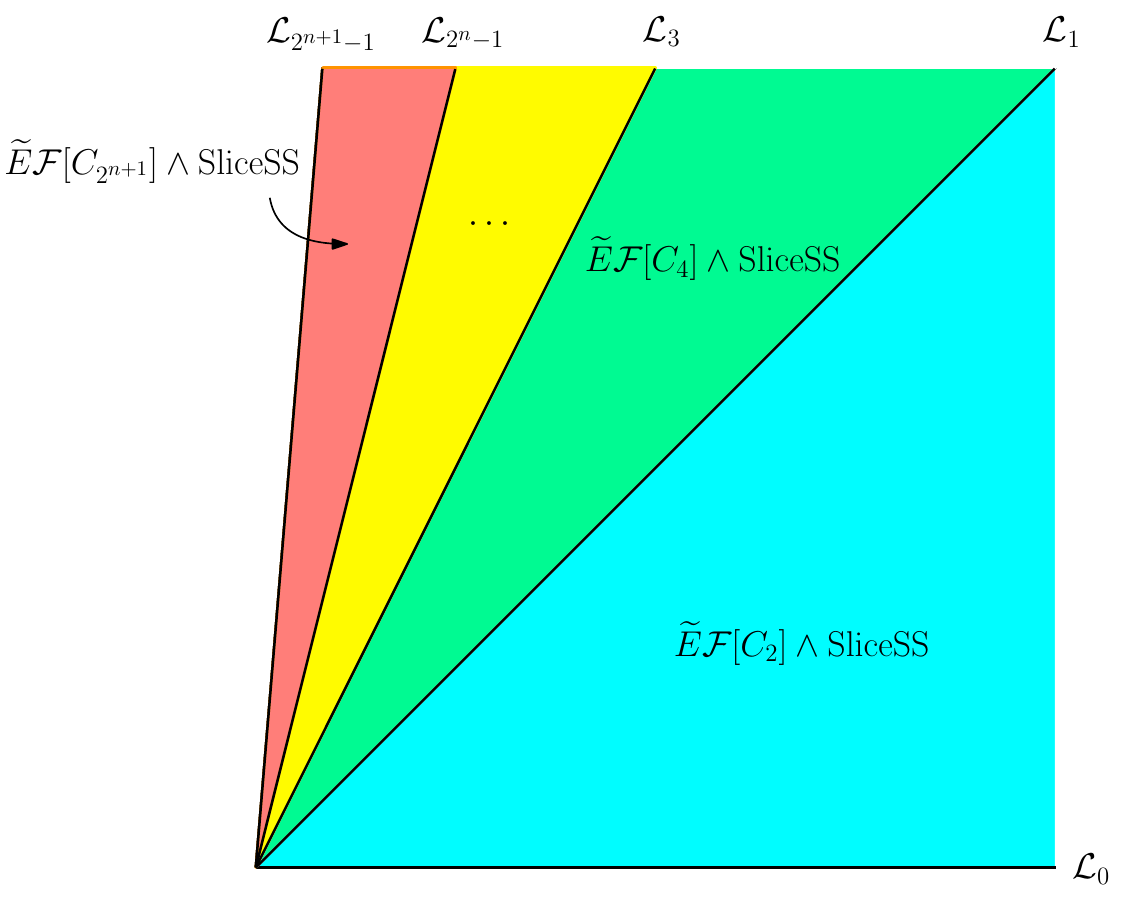}}
\caption{The stratification tower of the slice spectral sequence.}
\hfill
\label{fig:StratificationTower}
\end{center}
\end{figure}

\cref{fig:StratificationTower} shows a picture of the stratification tower when $V = 0$.  In the stratification tower above, the maps are labeled by the lines $\mathcal{L}^V_{h-1}$, indicating the isomorphism regions.  The slice spectral sequence of $X$ is stratified into different regions separated by the lines $\mathcal{L}^V_{1}$, $\mathcal{L}^V_{3}$, $\ldots$, and $\mathcal{L}^V_{2^{n}-1}$.  The differentials within these regions can be recovered from the localized slice spectral sequences, which compute the geometric fixed points equipped with the residue $(C_{2^{n+1}}/C_{2^k})$-action.  As we move up the tower, \cref{thm:SliceRecovery1} shows that each localized slice spectral sequence contains increasingly more information about the original slice spectral sequence.

\section{Shearing isomorphisms of spectral sequences}
\label{sec:ShearingDifferentials}
In this section, we will discuss the pullback functor, the dual slice tower, and the shearing isomorphism.  We will prove a result (\cref{thm:ShearingGeneralTheory}) on shearing isomorphism between two interrelated spectral sequences.  This relationship emerges through the interplay of a pullback operation and a modification of filtrations. 

\subsection{The pullback functor}
Suppose $G$ is a finite group and $N 
\subseteq G$ is a normal subgroup.  The $N$-geometric fixed points functor 
\[\Phi^N: \Sp_G \longrightarrow \Sp_{G/H}\]
is the functor that sends a $G$-spectrum $X$ to $\Phi^N(X) = (\EF[N] \wedge X)^N$, its $N$-geometric fixed points.  This functor is a left adjoint functor.  

\begin{df}\rm \label{df:PullBack}
The \textit{pullback functor}
\[\Pb_{G/N}: \Sp_{G/N} \longrightarrow \Sp_G \]
is the right adjoint to the $N$-geometric fixed points functor $\Phi^N$. 
\end{df}

The following explicit construction of the pullback functor $\Pb_{G/N}$ can be found in \cite[II. $\S$9]{LewisMaySteinberger} and \cite[Section~4]{HillPrimer}: let $q^*: \Sp_{G/N} \to \Sp_{G}$ denote the inflation functor that is associated to the quotient map $q: G \to G/N$.  Then the pullback functor sends a $G/N$-spectrum $X$ to $\Pb_{G/N}(X) = \EF[N] \wedge q^*X$.  

Several pleasant properties of the pullback functor $\Pb_{G/H}$ are shown in \cite[Section~4.1]{HillPrimer} (where $\Pb_{G/N}$ is denoted by $\phi_N^*$).  In particular, the $N$-fixed points functor $(-)^N$ establishes an equivalence between $G/N$-spectra and the image of $\Pb_{G/N}$, which are $G$-spectra of the form $\EF[N] \wedge X$ (\cite[Proposition~4.3]{HillPrimer}). 

There are two lemmas that will be useful for our subsequent discussions in later sections.  

\begin{lem}\label{lem:easyFact1}
For a $G$-spectrum $X$ and a normal subgroup $N$ of $G$, the following equivalence holds:
\[\EF[N] \wedge X \simeq \Pb_{G/N}\Phi^N(X).\]
\end{lem}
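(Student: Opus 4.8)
The plan is to exploit the explicit model of the pullback functor recalled just above the statement, namely $\Pb_{G/N}(Y) = \EF[N] \wedge q^* Y$ where $q^* \colon \Sp_{G/N} \to \Sp_G$ is the inflation functor along $q \colon G \to G/N$. Combining this with the definition $\Phi^N(X) = (\EF[N] \wedge X)^N$, the right-hand side of the asserted equivalence becomes
\[
\Pb_{G/N}\Phi^N(X) \simeq \EF[N] \wedge q^*\!\left((\EF[N] \wedge X)^N\right).
\]
So the task reduces to identifying $\EF[N] \wedge q^*\!\left((\EF[N] \wedge X)^N\right)$ with $\EF[N] \wedge X$.

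First I would recall the general principle (e.g. from \cite[Section~4.1]{HillPrimer}, in particular \cite[Proposition~4.3]{HillPrimer}) that the $N$-fixed-point functor $(-)^N$ is an equivalence from $\Sp_{G/N}$ onto the full subcategory of $\Sp_G$ spanned by the $N$-cofree-in-the-geometric-sense objects, i.e.\ those of the form $\EF[N] \wedge Z$, with inverse given by $Y \mapsto \EF[N] \wedge q^* Y$. Equivalently, for any $G$-spectrum $Z$ there is a natural equivalence $\EF[N] \wedge q^*\!\left((\EF[N] \wedge Z)^N\right) \simeq \EF[N] \wedge Z$. Applying this with $Z = X$ gives exactly what is needed. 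Alternatively, and perhaps more self-containedly, one can use the adjunction $\Phi^N \dashv \Pb_{G/N}$ directly: the counit $\Phi^N \Pb_{G/N} \Rightarrow \id$ is an equivalence (since $\Phi^N$ applied to $\EF[N] \wedge q^* Y$ recovers $Y$, as $\Phi^N q^* \simeq \id$ and $\Phi^N(\EF[N] \wedge -) \simeq \Phi^N(-)$), and then smashing the unit map $X \to \Pb_{G/N}\Phi^N(X)$ with $\EF[N]$ and checking it becomes an equivalence amounts to checking it is an $\Phi^N$-equivalence, which is immediate.

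Concretely, the steps I would carry out are: (1) substitute the explicit formula for $\Pb_{G/N}$ to rewrite the right-hand side as $\EF[N] \wedge q^*\!\left((\EF[N] \wedge X)^N\right)$; (2) observe $\EF[N] \wedge X$ lies in the image of $\Pb_{G/N}$ (it is visibly of the form $\EF[N] \wedge (-)$), so by \cite[Proposition~4.3]{HillPrimer} it equals $\Pb_{G/N}$ of its own $N$-fixed points, i.e.\ $\EF[N] \wedge X \simeq \Pb_{G/N}\!\left((\EF[N] \wedge X)^N\right)$; (3) identify $(\EF[N] \wedge X)^N = \Phi^N(X)$ by definition; (4) conclude. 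A small care point is step (2): one should confirm that $\EF[N] \wedge X$ is genuinely in the essential image of $\Pb_{G/N}$ and not merely of the form $\EF[N] \wedge (\text{something})$ for a non-inflated something — but since $(\EF[N] \wedge X)^N$ is a $G/N$-spectrum and $\Pb_{G/N}$ applied to it returns $\EF[N] \wedge q^*\!\left((\EF[N] \wedge X)^N\right)$, which is $\EF[N]$-local, the comparison map $\EF[N] \wedge X \to \Pb_{G/N}(\EF[N] \wedge X)^N$ is an $\EF[N]$-equivalence between $\EF[N]$-local objects, hence an equivalence.

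\textbf{Main obstacle.} There is no serious obstacle here; the statement is essentially a repackaging of the standard fact that $(-)^N$ and $\EF[N] \wedge q^*(-)$ are mutually inverse equivalences between $\Sp_{G/N}$ and the category of $\EF[N]$-local $G$-spectra. The only thing requiring a line of justification is the claim that the natural map (unit of the $\Phi^N \dashv \Pb_{G/N}$ adjunction) $X \to \Pb_{G/N}\Phi^N(X)$ becomes an equivalence after smashing with $\EF[N]$; this follows because the map is a $\Phi^N$-isomorphism (the counit $\Phi^N\Pb_{G/N} \Rightarrow \id$ being an equivalence), and a map of $G$-spectra that is a $\Phi^N$-isomorphism becomes an equivalence after smashing with $\EF[N]$ — indeed, for such $\EF[N]$-local targets $\Phi^N$-isomorphism and $\EF[N]$-equivalence coincide.
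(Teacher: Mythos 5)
Your proposal is correct and follows the same route as the paper: the paper's proof likewise observes that $\EF[N] \wedge X$ lies in the image of $\Pb_{G/N}$, invokes \cite[Proposition~4.3]{HillPrimer} to write it as $\Pb_{G/N}\bigl((\EF[N] \wedge X)^N\bigr)$, and identifies the fixed points with $\Phi^N(X)$. Your additional adjunction-based sanity check in step (2) is a fine (if not strictly necessary) elaboration of the same argument.
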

\begin{proof}
This follows immediately from the discussion above.  Since $\EF[N] \wedge X$ is in the image of $\Pb_{G/N}$, 
\begin{align*}
\EF[N] \wedge X &\simeq \Pb_{G/N}(\EF[N] \wedge X)^N \\
&\simeq \Pb_{G/N}\Phi^N(X). \qedhere
\end{align*}
\end{proof}

\begin{df}[\cite{GreenleesMay, HillPrimer}]\rm
Suppose $\underline{M}$ is a Mackey functor for $G/N$, then $\Pb_{G/N}\underline{M}$ is the Mackey functor on $G$ obtained by composing $\underline{M}$ with the $N$-fixed points functor (that sends finite $G$-sets to finite $G/N$-sets).  More precisely, 
\[\Pb_{G/N}\underline{M}(G/H) = \left\{\begin{array}{ll} 0 & \text{if } N \not \subseteq H \,\,\, ( H \in \mathcal{F}[N]),\\
\underline{M}((G/N)/(H/N)) &\text{if } N \subseteq H \,\,\,( H \notin \mathcal{F}[N]). \end{array} \right.\]
\end{df}

\begin{lem}\label{lem:PullbackHomotopyGroups}
Suppose $X$ is a $G/N$-spectrum.  Then for every $V \in RO(G)$, there is a canonical isomorphism 
\[\Mpi_V \Pb_{G/N}(X) \cong \Pb_{G/N} (\Mpi_{V^N} X),\]
where $V^N$ is the $N$-fixed points of $V$, considered as an element in $RO(G/N)$.  
\end{lem}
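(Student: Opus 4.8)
The plan is to reduce the $RO(G)$-graded statement to a computation of mapping spectra, using the adjunction that defines $\Pb_{G/N}$ together with the explicit model $\Pb_{G/N}(X)\simeq \EF[N]\wedge q^*X$. First I would fix $V\in RO(G)$ and a representation sphere $S^V$, and rewrite $\Mpi_V\Pb_{G/N}(X)(G/H) = [G/H_+\wedge S^V,\ \Pb_{G/N}(X)]^G$, viewed as a Mackey functor in $G/H$. The goal is to identify this, naturally in $G/H$, with the Mackey functor $\Pb_{G/N}(\Mpi_{V^N}X)$, whose value is $0$ when $N\not\subseteq H$ and $\Mpi_{V^N}(X)((G/N)/(H/N))$ when $N\subseteq H$.

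The vanishing half is immediate: for $H\in\mathcal{F}[N]$, $\Pb_{G/N}(X)\simeq \EF[N]\wedge q^*X$ becomes $H$-equivariantly contractible after restriction (since $\EF[N]$ is $\mathcal{F}[N]$-contractible), so $\Mpi_V\Pb_{G/N}(X)(G/H)=0$, matching the left-hand side. For $H$ containing $N$, I would use the geometric fixed point / pullback adjunction: $[G/H_+\wedge S^V,\Pb_{G/N}(X)]^G \cong [\Phi^N(G/H_+\wedge S^V),\ X]^{G/N}$. Here one needs two input facts. First, $\Phi^N$ is monoidal, so it takes $S^V$ to $S^{V^N}$ (this is where $V^N\in RO(G/N)$ enters — $\Phi^N$ on a representation sphere gives the $N$-fixed subspace). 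Second, $\Phi^N(G/H_+) \simeq (G/N)/(H/N)_+$ when $N\subseteq H$, which is the standard computation that $N$-geometric fixed points of an induced/orbit spectrum picks out the $N$-fixed orbit. Combining these, $[\Phi^N(G/H_+\wedge S^V),X]^{G/N}\cong [(G/N)/(H/N)_+\wedge S^{V^N},X]^{G/N} = \Mpi_{V^N}(X)((G/N)/(H/N))$, which is exactly the claimed value.

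The remaining work is to check that these identifications are natural in the $G$-set variable $G/H$ — i.e.\ that they assemble into an isomorphism of Mackey functors rather than just an objectwise isomorphism — and that restriction, transfer, and conjugation maps on both sides correspond. This follows because $\Phi^N$ is a functor on the whole category of $G$-spectra (hence on the Burnside-category-indexed diagram computing the Mackey functor structure), sending the orbit category $\mathcal{O}_G$ restricted to objects $G/H$ with $N\subseteq H$ to $\mathcal{O}_{G/N}$ compatibly with the stable transfer maps; one then cites the definition of $\Pb_{G/N}$ on Mackey functors (given just above in the excerpt) as "compose with the $N$-fixed points functor on finite $G$-sets" to see the two Mackey-functor structures agree. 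I would phrase this as: the isomorphism of the previous paragraph is natural in the entries of a diagram over the Burnside category of $G$ supported on subgroups containing $N$, and is zero elsewhere, which is precisely the description of $\Pb_{G/N}$ applied to $\Mpi_{V^N}X$.

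The main obstacle I anticipate is not any single computation but assembling the naturality cleanly: making sure the monoidality isomorphism $\Phi^N(S^V)\simeq S^{V^N}$ and the orbit identification $\Phi^N(G/H_+)\simeq (G/N)/(H/N)_+$ are chosen compatibly with wrong-way (transfer) maps, so that the resulting isomorphism respects the full Mackey functor structure and not merely the $\pi_V^H$ groups. In practice I expect this is handled by noting that $\Phi^N$ preserves finite (co)products and wedges and is symmetric monoidal on $\Sp_G$, so it induces a morphism of Mackey functors automatically; the content is then bookkeeping of which orbits survive, which is exactly encoded by the formula for $\Pb_{G/N}\underline{M}$ recalled before the lemma. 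So the proof is short: state the two structural facts about $\Phi^N$, invoke the adjunction defining $\Pb_{G/N}$, and read off both cases of the Mackey functor formula.
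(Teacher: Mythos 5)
Your proposal is correct and follows essentially the same route as the paper: both cases (vanishing for $N \not\subseteq H$ via contractibility of $\EF[N]$, and the identification $\pi_V^H\Pb_{G/N}(X)\cong \pi_{V^N}^{H/N}X$ for $N\subseteq H$ via the $\Phi^N\dashv\Pb_{G/N}$ adjunction and $\Phi^N(S^V)\simeq S^{V^N}$) are exactly the paper's argument, with the only cosmetic difference that the paper restricts to $H$ first and uses the adjunction for $H$-spectra, whereas you smash in the orbit $G/H_+$ and apply the global adjunction. Your extra care about the Mackey functor structure (transfers and naturality over the Burnside category) is a point the paper's proof leaves implicit, so it is a welcome addition rather than a divergence.
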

\begin{proof}
Let $H$ be a subgroup of $G$.  If $N \not\subseteq H$, then $i_H^* \EF[N]$ is contractible, and so $i_H^* \Pb_{G/N}(X)$ is also contractible.  In this case, we have $\pi_{V}^H \Pb_{G/N}(X) \cong 0$.

If $N \subseteq H$, then $i_H^* \Pb_{G/N}(X)= \Pb_{H/N}(i_{H/N}^* X)$, and we have
\begin{align*}
\pi_{V}^H \Pb_{G/N}(X) &\cong  \left[i_H^* S^V, \Pb_{H/N}(i_{H/N}^* X) \right]^H \\
&\cong   \left[\Phi^N(S^{i_H^*V}), i_{H/N}^* X\right]^{H/N} \\
&\cong   \left[S^{(i_H^*V)^N}, i_{H/N}^* X\right]^{H/N} \\
&\cong   \left[S^{i_{H/N}^*V^N}, i_{H/N}^* X\right]^{H/N} \\
&\cong   \pi_{V^N}^{H/N} X.    \qedhere
\end{align*}
\end{proof}

\subsection{The dual slice tower}

For a $G$-spectrum $X$, let $P_{d+1}X$ be the fiber of $X \to P^d X$.  There is a functorial fiber sequence 
\[P_{d+1} X \longrightarrow X \longrightarrow P^d X.\] 
The tower
\[P_\bullet X = \left\{\cdots \longrightarrow P_{d+1}X \longrightarrow P_d X \longrightarrow P_{d-1} X \longrightarrow \cdots\right\}\]
is the \textit{dual slice tower} of $X$.  It is a consequence of \cite[Theorem~4.42]{HHR} that $\varinjlim P_\bullet X \simeq X$ and $\varprojlim P_\bullet X \simeq *$.  Just like the slice tower is an equivariant refinement of the Postnikov tower of $X$, the dual slice tower is an equivariant refinement of the Whitehead tower of $X$.

From the definition, the cofiber of $P_{d+1} X \to P_d X$ and the fiber of $P^d X \to P^{d-1} X$ are both $P_d^dX$, the $d$-slice of $X$.  The spectral sequence associated to $P_\bullet X$ has $\mathcal{E}_2$-page 
\[\E_2^{s, V} = \Mpi_{V-s} P^{|V|}_{|V|}X \Longrightarrow \Mpi_{V-s} X,\]
and is the exact same spectral sequence as the slice spectral sequence of $X$ (which is associated to $P^\bullet X$).  All the results and constructions that we have established for the slice tower carry over directly to the dual slice tower. 

Moving forward, we will be working with the dual slice tower.  This is just a stylistic preference, as all of our theorem statements and arguments will work analogously for the slice tower as well.

\begin{df}\rm \label{df:doubleTower}
The \textit{double} of a tower $P_\bullet$ is the tower $\D P_\bullet$ defined by setting $\D P_{2d-\varepsilon}= P_d$ for all $d$ and $\varepsilon = 0, 1$.  For $k \geq 1$, the \textit{$k$-fold double} of $P_\bullet$ is the tower $\D^k P_\bullet$, obtained by taking the double $k$-times.  In particular, we have $(\D^k P)^{2^kd}_{2^kd} = P^d_d$, and $(\D^k P)^{2^kd -j}_{2^kd-j}$ are contractible for all $k \geq 1$ and $1 \leq j \leq 2^k-1$.  
\end{df}

\begin{prop}[\cite{UllmanPaper} Corollary~4.5, \cite{HillPrimer} Remark~4.13] \label{prop:pullBackSliceTower}
Let $G$ be a finite group and $N \subseteq G$ a normal subgroup. 
 For $X$ a $G/N$-spectrum, the following equivalence holds: 
\[P_m (\Pb_{G/N} X) \simeq \Pb_{G/N}\left(P_{\lceil m/|N|\rceil} X \right).\]
In the special case that $N = C_{2^k}$, there is an equivalence
\[P_\bullet \left(\Pb_{G/C_{2^k}} X\right) \simeq \Pb_{G/C_{2^k}} \left(\D^k P_\bullet X\right).\]
\end{prop}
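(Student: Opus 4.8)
The plan is to deduce both equivalences from the universal property of the slice tower: for every integer $d$, the fiber sequence $P_d Y \to Y \to P^{d-1} Y$ is the unique functorial fiber sequence with first term slice $\geq d$ and last term slice $\leq d-1$. Write $\ell := \lceil m/|N|\rceil$. Since $\Pb_{G/N} \simeq \EF[N]\wedge q^*(-)$ is a composite of a smashing functor with the inflation functor, it is exact; applying it to the fiber sequence $P_\ell X \to X \to P^{\ell-1}X$ in $\Sp_{G/N}$ therefore produces a fiber sequence
\[
\Pb_{G/N}(P_\ell X) \longrightarrow \Pb_{G/N}(X) \longrightarrow \Pb_{G/N}(P^{\ell-1}X)
\]
in $\Sp_G$. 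It then suffices to prove (i) $\Pb_{G/N}(P_\ell X)$ is slice $\geq m$, and (ii) $\Pb_{G/N}(P^{\ell-1}X)$ is slice $\leq m-1$; the equivalence $P_m(\Pb_{G/N}X)\simeq \Pb_{G/N}(P_{\lceil m/|N|\rceil}X)$ then follows from uniqueness of the slice decomposition, which simultaneously identifies $P^{m-1}(\Pb_{G/N}X)$ with $\Pb_{G/N}(P^{\ell-1}X)$.

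The two inputs needed for (i) and (ii) concern the interaction of $\Phi^N$ and $\Pb_{G/N}$ with the regular slice filtration. First, for a regular slice cell $\hat S = G_+\wedge_K S^{k\rho_K}$ (or its desuspension) of $\Sp_G$: if $N\not\subseteq K$ then $\Phi^N\hat S\simeq *$, since $G_+\wedge_K(-)$ lands in $\mathcal{F}[N]$-spectra, on which $\Phi^N$ vanishes; and if $N\subseteq K$ then $\rho_K^N\cong\rho_{K/N}$, so $\Phi^N\hat S\simeq (G/N)_+\wedge_{K/N}S^{k\rho_{K/N}}$ (or its desuspension) is again a regular slice cell of $\Sp_{G/N}$, induced from $K/N$, of dimension divided by $|N|$. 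Second, I will use the connectivity characterization of the regular slice filtration \cite{HHR, HillPrimer}: $Y\in\tau_{\geq n}(\Sp_G)$ if and only if $\Phi^H Y$ is $\lceil n/|H|\rceil$-connective for every subgroup $H\leq G$; combined with the identity $\Phi^H\circ\Pb_{G/N}\simeq *$ for $N\not\subseteq H$ and $\Phi^H\circ\Pb_{G/N}\simeq \Phi^{H/N}$ for $N\subseteq H$, which follows from $\Phi^H(\EF[N])\simeq S^0$ for $N\subseteq H$ together with $\Phi^H = \Phi^{H/N}\circ\Phi^N$ and $\Phi^N\circ q^*\simeq\mathrm{id}$.

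Granting these, (i) is immediate: for $N\subseteq H$, $\Phi^H\Pb_{G/N}(P_\ell X)\simeq \Phi^{H/N}(P_\ell X)$ is $\lceil \ell/|H/N|\rceil = \lceil \ell|N|/|H|\rceil$-connective (applying the forward direction of the connectivity characterization to $P_\ell X\in\tau_{\geq\ell}(\Sp_{G/N})$), and since $\ell|N|\geq m$ this is $\lceil m/|H|\rceil$-connective; for $N\not\subseteq H$ the geometric fixed points vanish, so $\Pb_{G/N}(P_\ell X)\in\tau_{\geq m}$. For (ii) I check instead that $[\hat S,\Pb_{G/N}(P^{\ell-1}X)]^G=0$ for every regular slice cell $\hat S$ of $\Sp_G$ that is slice $\geq m$, using the adjunction $[\hat S,\Pb_{G/N}Y]^G\cong[\Phi^N\hat S,Y]^{G/N}$: if $N\not\subseteq K_{\hat S}$ then $\Phi^N\hat S\simeq *$; if $N\subseteq K_{\hat S}$ then $\Phi^N\hat S$ is a regular slice cell of $\Sp_{G/N}$ that is slice $\geq \ell$, hence killed by the slice $\leq\ell-1$ spectrum $P^{\ell-1}X$. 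Finally, for the tower statement with $N=C_{2^k}$: by \cref{df:doubleTower} one has $(\D^k P_\bullet X)_m = P_{\lceil m/2^k\rceil}X$, with structure maps induced from the maps $P_{d+1}X\to P_d X$, so the levelwise equivalences $P_m(\Pb_{G/C_{2^k}}X)\simeq \Pb_{G/C_{2^k}}(P_{\lceil m/2^k\rceil}X) = \Pb_{G/C_{2^k}}\big((\D^k P_\bullet X)_m\big)$ assemble, by naturality in $m$, into the asserted equivalence of towers.

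The main obstacle is the dimension bookkeeping underlying (i) and (ii), particularly the desuspended regular slice cells: in the regular slice filtration $G_+\wedge_K S^{k\rho_K-1}$ sits in $\tau_{\geq (k-1)|K|}$ rather than $\tau_{\geq k|K|-1}$, so one must verify that after dividing all dimensions by $|N|$ and comparing against $\ell=\lceil m/|N|\rceil$ the inequalities still come out correctly --- they do, but only once everything is rewritten in terms of multiples of $|K|$ and one uses that $N\subseteq K$ forces $|N|\mid k|K|$. The other point requiring care is establishing the identity $\Phi^H\circ\Pb_{G/N}\simeq\Phi^{H/N}$ (for $N\subseteq H$) and its slice-cell analogue; with these in hand, both halves reduce cleanly to the connectivity characterization and the slice-cell description of the regular slice filtration.
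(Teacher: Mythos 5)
The paper does not prove this proposition but cites Ullman's Corollary~4.5 and Hill's Remark~4.13, so there is no in-text argument to compare against. Your proof is correct and is the standard route: exactness of $\Pb_{G/N}\simeq\EF[N]\wedge q^*(-)$, the geometric-fixed-point connectivity characterization of the regular slice filtration, and the $\Phi^N\dashv\Pb_{G/N}$ adjunction applied to slice cells, assembled via uniqueness of the fiber sequence $P_n Y\to Y\to P^{n-1}Y$. The divisibility observation $|N|\mid |K|$ for $N\subseteq K$ is exactly what makes the ceiling bookkeeping close up, and your reduction of the tower statement to the levelwise one via $(\D^k P_\bullet X)_m = P_{\lceil m/2^k\rceil}X$ and naturality is fine.

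One small point worth flagging in your discussion of the ``main obstacle'': for the \emph{regular} slice filtration, which is the one used throughout this paper, the localizing subcategory $\tau_{\geq n}$ is generated by slice \emph{spheres} $G_+\wedge_K S^{k\rho_K}$ alone; the desuspended objects $G_+\wedge_K S^{k\rho_K-1}$ are not among the generators (that is a feature of the original HHR slice filtration). So the check in step (ii) does not actually require you to handle them. You nevertheless handle them correctly by using their genuine slice position $(k-1)|K|$ in $\tau_{\geq(k-1)|K|}$, so no harm done, but the worry is a non-issue in the regular setting. It would also be worth making explicit that the orthogonality test in (ii) applies to $\Sigma^j\hat S$ for all $j\geq 0$, not just $j=0$; this causes no trouble since $\Phi^N$ commutes with $\Sigma$ and $\tau_{\geq\ell}$ is closed under nonnegative suspension, but stating it would make the verification that $\Pb_{G/N}(P^{\ell-1}X)$ lies in $\tau_{\leq m-1}$ airtight.
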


\subsection{Shearing isomorphism}
Let $G$ be a finite group, and let $N = C_{2^k}$ be a normal subgroup of $G$.  Suppose $P_\bullet$ is a $G/N$-equivariant tower.  Define the $G$-equivariant tower $Q_\bullet$ as follows: 
\[Q_\bullet = \Pb_{G/N}(\D^k P_\bullet).\]
The following theorem establishes the relationship between the $G$-equivariant spectral sequence $\E_{Q_\bullet}$ that is associated to $Q_\bullet$ and the $G/N$-equivariant spectral sequence $\E_{P_\bullet}$ that is associated to $P_\bullet$.  

\begin{thm} \label{thm:ShearingGeneralTheory}
Suppose $V \in RO(G)$, and let $V^N \in RO(G/N)$ be the $N$-fixed points of $V$.  There is a shearing isomorphism of spectral sequences
\[\E_{Q_\bullet, 2^kr - (2^k-1)}^{s', V+t'} \cong \Pb_{G/N}\left(\E_{P_\bullet, r}^{s, V^N+t}\right) \,\,\,\,\, (r \geq 2),\]
where
\begin{align*}
t' &= \left(|V^N|\cdot 2^k-|V|\right) + 2^k t, \\
s' &= \left(|V^N|\cdot 2^k-|V|\right) + (2^k-1)(t-s) +2^k s.
\end{align*}
In particular, this shearing isomorphism induces a one-to-one correspondence between the the $d_{2^kr - (2^k-1)}$-differentials in $\E_{Q_\bullet}$ and the $d_r$-differentials in $\E_{P_\bullet}$.  
\end{thm}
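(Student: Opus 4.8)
The plan is to deduce the statement by chasing the reindexing through the definition of the dual slice spectral sequence associated to $Q_\bullet = \Pb_{G/N}(\D^k P_\bullet)$, using \cref{prop:pullBackSliceTower} and \cref{lem:PullbackHomotopyGroups} to identify the relevant $\mathcal{E}_2$-terms, and then tracking how the filtration degree scales under the $k$-fold doubling. First I would record the $\mathcal{E}_2$-page of $\E_{Q_\bullet}$: by the general formula for the spectral sequence of a tower, $\E_{Q_\bullet, 2}^{s', W} = \Mpi_{W-s'} Q^{|W|}_{|W|}$, where $Q^d_d$ is the $d$-th layer of $Q_\bullet$. The key computation is that the layers of $Q_\bullet = \Pb_{G/N}(\D^k P_\bullet)$ are controlled by \cref{prop:pullBackSliceTower}, which gives $P_\bullet(\Pb_{G/N} Y) \simeq \Pb_{G/N}(\D^k P_\bullet Y)$; combined with \cref{df:doubleTower}, the layer $(\D^k P)^{2^k d}_{2^k d} = P^d_d$ and $(\D^k P)^{j}_{j} \simeq *$ whenever $j$ is not divisible by $2^k$. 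Hence the layers of $Q_\bullet$ are concentrated in degrees divisible by $2^k$, and $Q^{2^k d}_{2^k d} \simeq \Pb_{G/N}(P^d_d)$. This is the source of the ``shearing'': only every $2^k$-th filtration supports anything, which collapses the intermediate pages and forces differentials in $\E_{Q_\bullet}$ to have length $\equiv -(2^k-1) \pmod{2^k}$.

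Next I would set $W = V + t'$ with $t' = (|V^N|\cdot 2^k - |V|) + 2^k t$, so that $|W| = |V| + t' = |V^N| \cdot 2^k + 2^k t = 2^k(|V^N| + t)$, which is divisible by $2^k$ exactly as needed, with $|W|/2^k = |V^N| + t = |V^N + t|$ (viewing $V^N + t \in RO(G/N)$). Then
\[
\E_{Q_\bullet, 2}^{s', V+t'} = \Mpi_{V + t' - s'}\, Q^{|W|}_{|W|} \simeq \Mpi_{V+t'-s'}\,\Pb_{G/N}\!\left(P^{|V^N|+t}_{|V^N|+t}\right).
\]
Now apply \cref{lem:PullbackHomotopyGroups}: $\Mpi_{V+t'-s'}\Pb_{G/N}(Z) \cong \Pb_{G/N}(\Mpi_{(V+t'-s')^N} Z)$. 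A short computation with the stated value of $s'$ shows $(V + t' - s')^N = V^N + t - s$ as an element of $RO(G/N)$: indeed $t' - s' = 2^k t - (2^k-1)(t-s) - 2^k s = t - s$, so $V + t' - s' = V + (t-s)$ and its $N$-fixed points are $V^N + (t-s)$. Therefore $\E_{Q_\bullet, 2}^{s', V+t'} \cong \Pb_{G/N}(\Mpi_{V^N + t - s} P^{|V^N+t|}_{|V^N+t|}) = \Pb_{G/N}(\E_{P_\bullet, 2}^{s, V^N+t})$, establishing the isomorphism on $\mathcal{E}_2$-pages.

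Finally I would upgrade from $\mathcal{E}_2$ to $\mathcal{E}_r$ and match differentials. Since the layers of $Q_\bullet$ vanish in filtrations not divisible by $2^k$, the tower $Q_\bullet$ is, up to reindexing, literally $\Pb_{G/N}$ applied to the $2^k$-fold-doubled tower $\D^k P_\bullet$; the functor $\Pb_{G/N}$ is exact (it preserves fiber sequences, being $\EF[N] \wedge q^*(-)$), so it induces an isomorphism of the exact couples and hence an isomorphism of spectral sequences, compatible with the degree reindexing above. Under this identification a $d_r$-differential in $\E_{P_\bullet}$ — which changes the $P$-filtration by $r$ — becomes a differential in $\E_{Q_\bullet}$ changing the $Q$-filtration by $2^k r$; but since the $d_{2^k r - j}$-differentials for $0 < j < 2^k - 1$ in $\E_{Q_\bullet}$ all have trivial source or target (again because only $2^k$-divisible filtrations are populated, and the argument of \cref{df:doubleTower} shows the offset is exactly $2^k - 1$), the first possibly-nonzero differential of total displacement $2^k r$ is the $d_{2^k r - (2^k-1)}$, giving the claimed correspondence $d_{2^k r - (2^k-1)} \leftrightsquigarrow d_r$. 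The main obstacle I anticipate is purely bookkeeping: verifying that the doubling-induced shift contributes exactly the additive constant $|V^N|\cdot 2^k - |V|$ to both $t'$ and $s'$ (this is where the ``regular'' slice convention and the identity $\Mpi_{V-s} P^{|V|}_{|V|}$ must be handled with care, since $|V|$ need not equal $\dim V^N \cdot |N|$), and checking that the reindexing is compatible across all pages rather than just on $\mathcal{E}_2$ — but the exactness of $\Pb_{G/N}$ makes the latter formal once the $\mathcal{E}_2$-identification and the layer computation are in hand.
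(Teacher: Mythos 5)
Your argument follows the same route as the paper: identify the layers of $Q_\bullet$ via the $k$-fold doubling and the exactness of $\Pb_{G/N}$, set up the degree shift $W = V + t'$, and use \cref{lem:PullbackHomotopyGroups} to push $\Pb_{G/N}$ through the Mackey-functor homotopy groups. The paper handles the passage from $\mathcal{E}_2$ to $\mathcal{E}_r$ and the differential matching somewhat more explicitly, working directly with the image description $\E_{P_\bullet, r}^{s, W^N} = \im\bigl(\Mpi_{W^N-s}P^{|W^N|+(r-2)}_{|W^N|} \to \Mpi_{W^N-s}P^{|W^N|}_{|W^N|-(r-2)}\bigr)$ for all $r \geq 2$ at once and then identifying the boundary maps after applying $\Pb_{G/N}$, whereas you argue from the $\mathcal{E}_2$-identification plus exactness of $\Pb_{G/N}$ on Mackey functors and the degree constraint imposed by the vanishing layers; both are valid. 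Two small bookkeeping slips that do not affect the conclusion: the differentials with trivial source or target are the $d_{r'}$ with $r' \not\equiv 1 \pmod{2^k}$, i.e.\ $d_{2^kr - j}$ for $0 \leq j \leq 2^k-2$, so your range $0 < j < 2^k-1$ omits $j = 0$; and a $d_r$-differential changes the tower (slice) filtration by $r-1$, not $r$, so the $Q$-filtration shift is $2^k(r-1)$ rather than $2^k r$ --- which is precisely why solving $r'-1 = 2^k(r-1)$ gives $r' = 2^kr - (2^k-1)$.
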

\begin{proof}
Set $W = V+t$. As elaborated in \cite[Section 3.3]{MeierShiZengHF2}, the $\E_r$-page of the spectral sequence associated with $P_{\bullet}$ is given by
\[\E_{P_{\bullet}, r}^{s,W^N} = \im \left(\Mpi_{W^N-s}P^{|W^N|+(r-2)}_{|W^N|} \longrightarrow  \Mpi_{W^N-s}P^{|W^N|}_{|W^N|-(r-2)} \right).\]
By the construction of the tower $Q_\bullet$ and \cref{lem:PullbackHomotopyGroups}, the Mackey functor $P^*_{G/N}\E_{P_{\bullet}, r}^{s, W^N}$ is isomorphic to 
\begin{equation}\label{eq:Q} \im \left(\Mpi_{W-s}Q^{|W^N|\cdot 2^k+2^k(r-2)}_{|W^N|\cdot 2^k} \longrightarrow  \Mpi_{W-s}Q^{|W^N|\cdot 2^k}_{|W^N|\cdot 2^k-2^k(r-2)} \right).\end{equation}
Note that we could replace $2^k(r-2)$ here by any larger number smaller than $2^k(r-1)$ since the terms in $Q_{\bullet}$ repeat $2^k$ times. The term (\ref{eq:Q}) is isomorphic to $\E_{Q_\bullet, r'}^{s', V+t'}= \E_{Q_\bullet, r'}^{s', W-t+t'}$ if $2^k(r-2) \leq r' -2 \leq 2^k(r-1)-1$ and the pair $(t', s')$ satisfies the following equalities: 
\begin{align*}
|W| -t + t'&= |W^N|\cdot 2^k, \\ 
W -t + t' -s' &= W -s.
\end{align*}
Equivalently, if $2^kr-(2^{k+1}-2) \leq r' \leq 2^kr-(2^k-1)$ and 
\begin{align*}
t' &= |W^N|\cdot 2^k - |W|+t = \left(|V^N|\cdot 2^k-|V|\right) + 2^k t \\ 
s' &= t' -t +s = \left(|V^N|\cdot 2^k-|V|\right) + (2^k-1)(t-s) +2^k s.
\end{align*}
Therefore, $\E_{Q_\bullet, r'}^{s', V+t'} \cong \Pb_{G/N}\left(\E_{P_\bullet, r}^{s, V^N+t}\right)$ for $r\geq 2$ with $r'$ falling within the specified range. 
    
The previous paragraph shows that unless $r'$ is of the form $2^kr-(2^k-1)$, $\E_{Q_\bullet, r'}^{s', V+t'} = \E_{Q_\bullet, r'+1}^{s', V+t'}$ and so $d_{r'}=0$ in $\E_{Q_\bullet}$.  We claim that when ${r' = 2^kr-(2^k-1)}$, the differential $d_{r'}$ in $\E_{Q_\bullet}$ corresponds to the differential $d_r\colon \E_{P_{\bullet}, r}^{s,W^N}\to \E_{P_{\bullet}, r}^{s+r, W^N+(r-1)}$. This $d_r$-differential is induced by the boundary map 
\[ \Mpi_{W^N-s}P^{|W^N|}_{|W^N|-(r-2)} \longrightarrow \Mpi_{W^N -s-1}P^{|W^N|+(r-1)}_{|W^N|+1}.\]
Upon applying $P^*_{G/N}$, this map becomes isomorphic to 
\[ \Mpi_{W-s}Q^{|W^N|\cdot 2^k}_{|W^N|\cdot 2^k-2^k(r-2)} \longrightarrow \Mpi_{W -s-1}Q^{|W^N|\cdot 2^k+2^k(r-1)}_{|W^N|\cdot 2^k+1}.\]
Using the formula for $t'$ and $s'$ above, this indeed yields the differential 
\[d_{r'}\colon \E_{Q_\bullet, r'}^{s', W+t'-t} \longrightarrow \E_{Q_\bullet, r'}^{s'+r', W+t'-t+(r'-1)}.\qedhere\]
\end{proof}

\section{Isomorphism of localized slice towers}
\label{sec:IsomLocalizedDualTowers}

In this section, we will compare the localized dual slice towers of $\BPCnplusone \langle I \rangle$ and $\BPCn \langle I \rangle$.  Our main result in this section the following theorem, which establishes an isomorphism between the various $C_{2^{n+1}}$-equivariant and $C_{2^n}$-equivariant localized dual slice towers via the pullback functor $\Pb_{C_{2^{n+1}}/C_2}(-)$ and the doubling operation $\D(-)$.

\begin{thm}\label{thm:DualTowerEquivalenceMain}
Suppose $n$ is a positive integer, and $I \subseteq \N$.  For all $1 \leq k \leq n$, we have the following equivalence of towers:
\[\EF[C_{2^{k+1}}] \wedge P_\bullet\left(\BPCnplusone \langle I \rangle \right) \simeq \Pb_{C_{2^{n+1}}/C_2}\D \left( \EF[C_{2^k}] \wedge P_\bullet (\BPCn \langle I \rangle) \right).\]
Moreover, by applying this equivalence iteratively $k$-times, we have the following equivalence of towers: 
\[\EF[C_{2^{k+1}}] \wedge P_\bullet\left(\BPCnplusone \langle I \rangle \right) \simeq \Pb_{C_{2^{n+1}}/C_{2^k}}\D^k\left(\EF[C_{2}] \wedge P_\bullet (\BPCnminuskplusone \langle I \rangle) \right).\]
\end{thm}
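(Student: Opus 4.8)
The plan is to prove the first displayed equivalence (the single-step case $k \mapsto k$ versus $k+1$) and then obtain the second displayed equivalence by iteration, unwinding the pullback functors along the tower of quotients $C_{2^{n+1}} \to C_{2^{n+1}}/C_2 \to \cdots \to C_{2^{n+1}}/C_{2^k}$. For the single-step case, the key structural input is that $\EF[C_{2^{k+1}}]$ as a $C_{2^{n+1}}$-space only ``sees'' the quotient $C_{2^{n+1}}/C_2$ in the following sense: writing $N = C_2$ as the central subgroup, we have $\EF[C_{2^{k+1}}] \simeq \Pb_{C_{2^{n+1}}/C_2}\left(\EF[C_{2^{k+1}}/C_2]\right) = \Pb_{C_{2^{n+1}}/C_2}\left(\EF[C_{2^k}]\right)$, since a subgroup $H$ of $C_{2^{n+1}}$ fails to contain $C_{2^{k+1}}$ if and only if $H$ contains $C_2$ and $H/C_2$ fails to contain $C_{2^{k+1}}/C_2 = C_{2^k}$, or $H$ does not contain $C_2$ at all — and in the latter case $\EF[C_{2^{k+1}}]$ is already contractible, as is $\Pb_{C_{2^{n+1}}/C_2}(-)$. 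So smashing $P_\bullet(\BPCnplusone\langle I\rangle)$ with $\EF[C_{2^{k+1}}]$ is the same, by \cref{lem:easyFact1}, as applying $\Pb_{C_{2^{n+1}}/C_2}\Phi^{C_2}$ to it.

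First I would therefore reduce the left-hand side: by \cref{lem:easyFact1},
\[\EF[C_{2^{k+1}}] \wedge P_\bullet\left(\BPCnplusone \langle I \rangle\right) \simeq \Pb_{C_{2^{n+1}}/C_2}\,\Phi^{C_2}\! \left(P_\bullet\left(\BPCnplusone \langle I \rangle\right)\right),\]
using that $\Phi^{C_2}$ commutes with smashing against $\EF[C_{2^{k+1}}]$ on the nose after passing to the image of the pullback (the point being $\Phi^{C_2}$ of a tower that is already $\EF[C_2]$-local). Next I would identify $\Phi^{C_2}$ applied to the dual slice tower. Geometric fixed points are a left adjoint and are symmetric monoidal, and they interact with the slice/dual-slice filtration by \emph{halving} the filtration degree: $\Phi^{C_2} P_d X$ should be $P_{\lceil d/2\rceil}\Phi^{C_2}(X)$, i.e. $\Phi^{C_2}$ of the dual slice tower of $X$ is the \emph{double} $\D$ of the dual slice tower of $\Phi^{C_2}(X)$. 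This is the counterpart, on geometric fixed points, of \cref{prop:pullBackSliceTower}, which records the same halving for the pullback functor; I would cite or adapt the relevant statement from \cite{MeierShiZengStratification} or \cite{HillPrimer} rather than reprove it. Finally I would compute $\Phi^{C_2}\left(\BPCnplusone\langle I\rangle\right)$: by the norm–geometric-fixed-point formula of Hill--Hopkins--Ravenel, $\Phi^{C_2}\!\left(\BPCnplusone\right) \simeq \BPCn$ (more precisely $\Phi^{C_2} N_{C_2}^{C_{2^{n+1}}}(\BPR) \simeq N_{C_2}^{C_{2^n}}(\BPR)$ after identifying $\Phi^{C_2}\BPR \simeq \BPR$-style inputs, together with the standard compatibility of $\Phi^{C_2}$ with the quotient construction), and this identification carries the permutation summands $C_{2^{n+1}}\cdot \tee_j^{C_{2^{n+1}}}$ to $C_{2^n}\cdot \tee_j^{C_{2^n}}$, so it passes to quotients: $\Phi^{C_2}\!\left(\BPCnplusone\langle I\rangle\right) \simeq \BPCn\langle I\rangle$. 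Combining,
\[\EF[C_{2^{k+1}}] \wedge P_\bullet\left(\BPCnplusone \langle I \rangle\right) \simeq \Pb_{C_{2^{n+1}}/C_2}\,\D\! \left(P_\bullet\left(\BPCn\langle I\rangle\right)\right),\]
and then smashing the inside with $\EF[C_{2^k}]$ — which does nothing new since the outer $\Pb_{C_{2^{n+1}}/C_2}$ has already made everything $\EF[C_2]$-local and $\D$ commutes with smashing — I can insert $\EF[C_{2^k}]\wedge(-)$ to match the right-hand side exactly. For the iterated statement I would apply the single-step equivalence with $(n,k)$ replaced successively by $(n,k),(n-1,k-1),\dots,(n-k+1,1)$, each time stripping one $C_2$, and use that $\Pb$ is transitive along quotients — $\Pb_{C_{2^{n+1}}/C_2}\circ\Pb_{(C_{2^{n+1}}/C_2)/C_2} \simeq \Pb_{C_{2^{n+1}}/C_4}$, and so on — together with $\D\circ\D^{k-1} = \D^k$, to assemble the composite into $\Pb_{C_{2^{n+1}}/C_{2^k}}\D^k$.

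The main obstacle I expect is the clean identification of $\Phi^{C_2}$ applied to the \emph{dual slice tower} as the double of a dual slice tower, uniformly in the generalized theories $\BPCn\langle I\rangle$ — i.e., checking that $\Phi^{C_2}$ sends the $d$-slice-connective cover $P_d$ to the $\lceil d/2\rceil$-slice-connective cover of $\Phi^{C_2}$, and that it does so compatibly with the cofiber sequences defining the tower (so that it really is a map/equivalence of \emph{towers}, not just a level-wise equivalence). This requires knowing that $\Phi^{C_2}$ preserves slice cells appropriately (e.g. $\Phi^{C_2}$ of an induced slice cell is again a slice cell with degree halved, which is where the specific form of the slice cells for norms of $\BPR$ enters) and that the quotient by permutation summands is compatible with all of this — presumably by appeal to the method of twisted monoid rings of \cite[Section 2]{HHR} and the slice theorem of \cite{BHLSZ}. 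A secondary technical point is making sure the $\EF[-]$-localizations can be freely inserted and removed at each stage; this is handled by the observation above that $\Pb_{C_{2^{n+1}}/C_2}$ lands in $\EF[C_2]$-local spectra and that $\EF[C_{2^{k+1}}] = \Pb_{C_{2^{n+1}}/C_2}\EF[C_{2^k}]$, so one should set up the bookkeeping of which universal spaces are being smashed in at which stage carefully to avoid a spurious mismatch of indices.
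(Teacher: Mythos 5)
There is a genuine gap, and it sits at the heart of your reduction: the identification $\Phi^{C_2}\left(\BPCnplusone\langle I\rangle\right)\simeq \BPCn\langle I\rangle$ is false. By the norm--geometric-fixed-points formula, $\Phi^{C_2}N_{C_2}^{C_{2^{n+1}}}(\BPR)\simeq N_e^{C_{2^{n}}}\Phi^{C_2}(\BPR)$, and $\Phi^{C_2}(\BPR)\simeq \HF$ (not anything $BP$-like), so $\Phi^{C_2}(\BPCnplusone)\simeq N_e^{C_{2^n}}\HF$, a smash power of $H\mathbb{F}_2$ rather than $\BPCn=N_{C_2}^{C_{2^n}}(\BPR)$. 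Consequently your chain "LHS $\simeq \Pb_{C_{2^{n+1}}/C_2}\D P_\bullet(\Phi^{C_2}\BPCnplusone\langle I\rangle)\simeq \Pb_{C_{2^{n+1}}/C_2}\D P_\bullet(\BPCn\langle I\rangle)$" breaks at the second step. The correct comparison — and this is the actual content of the theorem — is between $\Phi^{C_4}$ of the $C_{2^{n+1}}$-theory and $\Phi^{C_2}$ of the $C_{2^n}$-theory: both are quotients of $N_e^{C_{2^{n-1}}}\HF$, and identifying them requires matching the images of the generators, i.e.\ proving $\Phi^{C_2}(\ti^{C_{2^n}})=\Phi^{C_4}N_{C_2}^{C_4}(\ti^{C_{2^{n+1}}})$. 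This is \cref{lem:tiGeneratorRelations}, proved via Real orientations and formal group laws, and your proposal treats the corresponding matching of permutation summands as automatic when it is the key nontrivial input (it is what makes \cref{prop:spectrumLevelEquivalence} work). Note also that after smashing with $\EF[C_{2^k}]$ ($k\geq 1$) your claim becomes recoverable precisely because both sides then factor through $\Phi^{C_4}$ resp.\ $\Phi^{C_2}$ — but that is exactly the comparison you have skipped.

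A secondary problem: the assertion that $\Phi^{C_2}P_d X\simeq P_{\lceil d/2\rceil}\Phi^{C_2}X$, so that $\Phi^{C_2}$ of a dual slice tower is the double of a dual slice tower, is not the content of \cref{prop:pullBackSliceTower} (which concerns the right adjoint $\Pb_{G/N}$, not $\Phi^N$) and is false for general $G$-spectra. For the theories at hand one must instead use the explicit twisted-monoid-ring model $P_d=\BPCnplusone\langle I\rangle\wedge_A M_{\geq d}$ and check by hand that the relevant geometric fixed points of consecutive filtration stages coincide (the surviving slice cells after $\Phi^{C_4}$ have dimensions divisible by $4$, which is what produces the repetition encoded by $\D$), and that the structure maps of the two towers agree under the generator identification above. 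So the architecture of your argument can be salvaged, but only by supplying precisely the two ingredients it currently asserts without proof.
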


To prove \cref{thm:DualTowerEquivalenceMain}, we will begin by proving a lemma that establishes a connection between the geometric fixed points of the $\ti^{C_{2^n}}$-generators for $\pi_{*\rho_2}^{C_2} \BPCn$ and the $\ti^{C_{2^{n+1}}}$-generators for $\pi_{*\rho_2}^{C_2} \BPCnplusone$. 
 First, consider the $C_2$-equivariant map 
\[\ti^{C_{2^n}}: S^{(2^i-1)\rho_{2}} \longrightarrow i_{C_2}^*\BPCn.\]
After taking $C_2$-geometric fixed points $\Phi^{C_2}(-)$, we obtain the map 
\[\Phi^{C_2}(\ti^{C_{2^n}}): S^{2^i-1} \longrightarrow \Phi^{C_2}i_{C_2}^*\BPCn \simeq i_e^* \Phi^{C_2}N_{C_2}^{C_n}(\BPR) \simeq i_e^* N_e^{C_{2^{n-1}}} \HF.\]
Now, consider the $C_2$-equivariant map 
\[\ti^{C_{2^{n+1}}}: S^{(2^i-1)\rho_{2}} \longrightarrow i_{C_2}^*\BPCnplusone.\]
After taking the norm $N_{C_2}^{C_4}(-)$ and applying the norm-restriction adjunction, we obtain a $C_4$-equivariant map 
\[N_{C_2}^{C_4}(\ti^{C_{2^{n+1}}}): S^{(2^i-1)\rho_{4}} \longrightarrow N_{C_2}^{C_4}i_{C_2}^*\BPCnplusone \longrightarrow i_{C_4}^* \BPCnplusone.\]
Here, we would like to note that since $\MUCnplusone$ splits as a wedge of suspensions of $\BPCnplusone$, the norm-restriction adjunction for $\BPCnplusone$ is defined to be the composition map 
\[N_{C_2}^{C_4} i_{C_2}^*\BPCnplusone \longrightarrow N_{C_2}^{C_4} i_{C_2}^*\MUCnplusone \longrightarrow i_{C_4}^*\MUCnplusone \longrightarrow i_{C_4}^* \BPCnplusone.\]
Taking the $C_4$-geometric fixed points $\Phi^{C_4}(-)$ of the map $N_{C_2}^{C_4}(\ti^{C_{2^{n+1}}})$ defined above produces the map
\begin{equation*}
\begin{split}
\Phi^{C_4}N_{C_2}^{C_4}(\ti^{C_{2^{n+1}}}): S^{2^i-1} \longrightarrow \Phi^{C_4}i_{C_4}^* \BPCnplusone &\simeq i_e^* \Phi^{C_4} N_{C_4}^{C_{2^{n+1}}}(\BPCfour) \\
&\simeq i_e^* N_e^{C_{2^{n-1}}}\HF.
\end{split}
\end{equation*}

\begin{lem}\label{lem:tiGeneratorRelations}
For all $n \geq 1$ and $i\geq 1$, we have the equality
\[\Phi^{C_2}(\ti^{C_{2^n}}) = \Phi^{C_4} N_{C_2}^{C_4} (\ti^{C_{2^{n+1}}}).\]
\end{lem}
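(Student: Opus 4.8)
The plan is to reduce the equality to a statement about generators of Real bordism, where both sides become the Hurewicz image of the same class $\bar t_i$ coming from $\pi_{(2^i-1)\rho_2}^{C_2}\MUR$. The key point is that both $\ti^{C_{2^n}}$ and $\ti^{C_{2^{n+1}}}$ are, essentially by definition in \cite[Section~5]{HHR}, the images of a single universal class $\tee_i \in \pi_{(2^i-1)\rho_2}^{C_2}\MUR \subset \pi_{(2^i-1)\rho_2}^{C_2}\BPR$ under the structure maps into the norms. Concretely, $\ti^{C_{2^n}}$ is obtained from $\tee_i$ by smashing with the identity to land in $i_{C_2}^*N_{C_2}^{C_{2^n}}(\BPR) = i_{C_2}^*\BPCn$ (via the inclusion of the ``first smash factor''), and likewise $\ti^{C_{2^{n+1}}}$ comes from $\tee_i$ landing in $i_{C_2}^*\BPCnplusone$.

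First I would make precise the naturality diagram relating the two geometric-fixed-point constructions. On the right-hand side, $N_{C_2}^{C_4}(\ti^{C_{2^{n+1}}})$ is $N_{C_2}^{C_4}$ applied to the class $\tee_i$ (viewed in the first two smash factors of $\BPCnplusone = N_{C_2}^{C_{2^{n+1}}}\BPR$), followed by the norm–restriction counit $N_{C_2}^{C_4}i_{C_2}^* \to i_{C_4}^*$. After applying $\Phi^{C_4}$, the diagonal formula $\Phi^{C_4}N_{C_2}^{C_4} \simeq \Phi^{C_2}$ (the standard compatibility of geometric fixed points with norms, $\Phi^H N_H^G X \simeq \Phi^{\res}$ for the appropriate residual structure — here using $\Phi^{C_4}N_{C_2}^{C_4}(-) \simeq \Phi^{C_2}(-)$ as functors $\Sp_{C_2}\to\Sp_e$) converts $\Phi^{C_4}N_{C_2}^{C_4}(\ti^{C_{2^{n+1}}})$ into $\Phi^{C_2}$ of the class $\tee_i$ sitting inside $i_{C_2}^*\BPCnplusone$. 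So after applying $\Phi^{C_4}N_{C_2}^{C_4}$ the right side is literally $\Phi^{C_2}(\tee_i)$, mapped into $\Phi^{C_2}i_{C_2}^*\BPCnplusone$.

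Next I would identify the two targets. Both $\Phi^{C_2}i_{C_2}^*\BPCn$ and $\Phi^{C_2}i_{C_2}^*\BPCnplusone$ are computed via $\Phi^{C_2}i_{C_2}^*N_{C_2}^{C_{2^m}}(\BPR) \simeq i_e^*\Phi^{C_2}N_{C_2}^{C_{2^m}}(\BPR) \simeq i_e^*N_e^{C_{2^{m-1}}}\HF$ (using the HHR computation $\Phi^{C_2}\BPR \simeq \HF$ together with $\Phi^{C_2}N_{C_2}^{C_{2^m}} \simeq N_e^{C_{2^{m-1}}}\Phi^{C_2}$), and the class $\Phi^{C_2}(\tee_i)$ maps to the polynomial generator of $\pi_{2^i-1}\HF[\ldots]$ independently of $m$, since it only involves the single $\BPR$-smash-factor that houses $\tee_i$. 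Thus $\Phi^{C_2}(\ti^{C_{2^n}})$ and $\Phi^{C_2}(\tee_i)$-in-$\BPCnplusone$ (equivalently, by the previous paragraph, $\Phi^{C_4}N_{C_2}^{C_4}(\ti^{C_{2^{n+1}}})$) name the same generator.

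The main obstacle will be pinning down the compatibility of geometric fixed points with norms with enough care that the residual actions and the identifications of $\Phi^{C_4}i_{C_4}^*\BPCnplusone$ with $i_e^*N_e^{C_{2^{n-1}}}\HF$ match up on the nose rather than merely up to an automorphism — in particular, tracking that the norm–restriction counit and the $\MUCnplusone$-vs-$\BPCnplusone$ splitting do not introduce a discrepancy in which generator is hit, and that the equivalence $\Phi^{C_4}N_{C_2}^{C_4}\simeq \Phi^{C_2}$ is the one compatible with these splittings. I would handle this by working throughout with $\MUG$ (where the generators $\tee_i$ and the norm maps are canonical), proving the identity there, and only at the end passing to $\BPG$ via the chosen idempotent splitting, noting the splitting is natural for the maps involved.
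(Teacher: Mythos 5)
There is a genuine gap at the very first step: your premise that $\ti^{C_{2^n}}$ and $\ti^{C_{2^{n+1}}}$ are both images of a single universal class $\tee_i \in \pi_{(2^i-1)\rho_2}^{C_2}\BPR$ living on one smash factor of $i_{C_2}^*\BPCn \simeq \BPR^{\wedge 2^{n-1}}$ is not the definition, and the argument built on it cannot work. The generators $\ti^{C_{2^n}}$ are defined as the coefficients of the power series comparing \emph{two} Real orientations of $i_{C_2}^*\BPCn$ --- the one coming from the first smash factor and the one coming from the second; they genuinely involve two factors and are not pushed forward from $\pi_{*\rho_2}^{C_2}\BPR$ along the inclusion $\bar{\iota}_1$ of a single factor. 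Your own reasoning exposes the problem: a class concentrated on a single $\BPR$-factor would have $\Phi^{C_2}$ landing in $\pi_{2^i-1}\Phi^{C_2}\BPR = \pi_{2^i-1}\HF = 0$, so it could never be ``the polynomial generator'' of $\pi_*(i_e^*N_e^{C_{2^{n-1}}}\HF)$; the nonzero degree-$(2^i-1)$ generators of $\pi_*(\HF \wedge \cdots \wedge \HF)$ (the $\xi_i$'s) arise exactly from comparing two different unit maps, mirroring the two orientations upstairs. Consistently, for $n=1$ there is only one factor and both sides of the lemma really are $0$, which is why the paper treats that case separately.

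The paper's actual proof is the two-orientation version of your naturality idea: it applies $\Phi^{C_2}$ to the defining relation $\bar{x}_2 = \bar{x}_1 + \sum^{\bar{F}_2}\ti^{C_{2^n}}\bar{x}_1^{2^i}$ and applies $\Phi^{C_4}N_{C_2}^{C_4}$ to the corresponding relation defining $\ti^{C_{2^{n+1}}}$, checks that in both cases the pair of orientations becomes the \emph{same} pair $(a_1,a_2)$ of orientations of $i_e^*N_e^{C_{2^{n-1}}}\HF$ (because $N_{C_2}^{C_4}(\widetilde{\iota}_1)$ and $N_{C_2}^{C_4}(\widetilde{\iota}_2)$ are again the inclusions of the first and second $\BPCfour$-factors of $i_{C_4}^*\BPCnplusone$), and then compares coefficients using that both functors are ring maps. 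To repair your write-up you would need to replace the single class $\tee_i$ by this pair of orientations and verify that your identification $\Phi^{C_4}N_{C_2}^{C_4}\simeq\Phi^{C_2}$ matches the second orientation correctly, not just the first; that matching is where the content of the lemma lies.
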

\begin{proof}
When $n = 1$, it is a consequence of \cite[Proposition~5.50]{HHR} that the maps $\Phi^{C_2}(\ti^{C_{2^n}})$ and $\Phi^{C_4}N_{C_2}^{C_4}(\ti^{C_4})$ are both 0 for all $i \geq 1$, and so the equality holds.  For $n \geq 2$, first consider the Real orientations 
\[\bar{x}_1, \bar{x}_2: \CP \longrightarrow \Sigma^{\rho_{2}} i_{C_2}^*\BPCn\]
that define the Real formal group laws $\bar{F}_1$ and $\bar{F}_2$ corresponding to the maps 
\[\bar{\iota}_1, \bar{\iota}_2: \BPR \longrightarrow i_{C_2}^* \BPCn \simeq \underbrace{\BPR \wedge \cdots \wedge \BPR}_{2^{n-1}},\]
where $\bar{\iota}_1$ and $\bar{\iota}_2$ are inclusions into the first and the second factors of $i_{C_2}^* \BPCn$, respectively.  By definition, the $\ti^{C_{2^n}}$-generators are the coefficients of the power series relating $\bar{x}_1$ and $\bar{x}_2$: 
\begin{equation}
\bar{x}_2  = \bar{x}_1 + {\sum_{i \geq 1}}^{\bar{F}_2} \ti^{C_{2^n}}\bar{x}_1^{2^i}. \label{eq:tiC2nDef}
\end{equation}
Note that $\Phi^{C_2}(\bar{\iota}_1) = \iota_1$ and $\Phi^{C_2}(\bar{\iota}_2) = \iota_2$, where 
\[\iota_1, \iota_2: \HF \longrightarrow i_{e}^* N_{e}^{C_{2^{n-1}}} \HF \simeq \underbrace{\HF \wedge \cdots \wedge \HF}_{2^{n-1}}\]
are again the inclusions into the first and the second factors, respectively. 
 It follows that $\Phi^{C_2}(\bar{x}_1) = a_1$ and $\Phi^{C_2}(\bar{x}_2) = a_2$, where $a_1$ and $a_2$ are the orientations 
\[a_1, a_2: \RP \to \Sigma^1 i_{e}^* N_{e}^{C_{2^{n-1}}} \HF\]
corresponding to the formal group laws with $MO$-orientations $\iota_1$ and $\iota_2$.  Since applying $\Phi^{C_2}(-)$ is a ring homomorphism \cite[Proposition~2.59]{HHR}, applying $\Phi^{C_2}(-)$ to (\ref{eq:tiC2nDef}) shows that the power series relating $a_1$ and $a_2$ is  
\begin{equation}
a_2 = f(a_1) = a_1 + {\sum_{i \geq 1}}^{F_2} \Phi^{C_2}(\ti^{C_{2^n}}) a_1^{2^i}. \label{eq:a1a2RelationOne}
\end{equation}
Here, $F_2$ is the formal group law corresponding to $\iota_2$. 

Now, consider the Real orientations 
\[\widetilde{x}_1, \widetilde{x}_2: \CP \longrightarrow \Sigma^{\rho_2} i_{C_2}^*\BPCnplusone\]
defining the Real formal group laws $\widetilde{F}_1$ and $\widetilde{F}_2$ corresponding to the maps 
\[\widetilde{\iota}_1, \widetilde{\iota}_2: \BPR \longrightarrow i_{C_2}^* \BPCnplusone \simeq \underbrace{\BPR \wedge \cdots \wedge \BPR}_{2^n}, \]
where $\widetilde{\iota}_1$ and $\widetilde{\iota}_2$ are inclusions into the first and second factors of $i_{C_2}^* \BPCnplusone$ respectively.  The $\ti^{C_{2^{n+1}}}$-generators are defined to be the coefficients of the power series relating $\widetilde{x}_1$ and $\widetilde{x}_2$: 
\begin{equation}
\widetilde{x}_2 = \widetilde{x}_1 + {\sum_{i \geq 1}}^{\widetilde{F}_2} \ti^{C_{2^{n+1}}}\widetilde{x}_1^{2^i}. \label{eq:tiC2nplusoneDef}
\end{equation}
After taking the norm $N_{C_2}^{C_4}(-)$ and applying the norm-restriction adjunction to obtain the maps 
\begin{multline*}
N_{C_2}^{C_4}(\widetilde{\iota}_1), N_{C_2}^{C_4}(\widetilde{\iota}_2): \BPCfour \longrightarrow N_{C_2}^{C_4} i_{C_2}^* \BPCnplusone \longrightarrow i_{C_4}^* \BPCnplusone \\
\simeq \underbrace{\BPCfour \wedge \cdots \wedge \BPCfour}_{2^{n-1}},
\end{multline*}
the observation is that $N_{C_2}^{C_4}(\widetilde{\iota}_1)$ and $N_{C_2}^{C_4}(\widetilde{\iota}_2)$ are inclusions into the first and second factors of $i_{C_4}^*\BPCnplusone$, respectively.  This implies that after taking $\Phi^{C_4}(-)$, the maps 
\[\Phi^{C_4}N_{C_2}^{C_4}(\widetilde{\iota}_1), \Phi^{C_4}N_{C_2}^{C_4}(\widetilde{\iota}_2): \HF \longrightarrow i_e^* N_e^{C_{2^{n-1}}} \HF\]
are again $\iota_1$ and $\iota_2$, respectively.  It follows that $\Phi^{C_4}N_{C_2}^{C_4}(\widetilde{x}_1) = a_1$ and $\Phi^{C_4}N_{C_2}^{C_4}(\widetilde{x}_2) = a_2$.  Since applying $\Phi^{C_4}N_{C_2}^{C_4}(-)$ is a ring homomorphism \cite[Proposition~2.59]{HHR}, applying $\Phi^{C_4}N_{C_2}^{C_4}(-)$ to (\ref{eq:tiC2nplusoneDef}) shows that the power series relating $a_1$ and $a_2$ is
\begin{equation}
a_2 = f(a_1) = a_1 + {\sum_{i \geq 1}}^{F_2} \Phi^{C_4} N_{C_2}^{C_4} (\ti^{C_{2^{n+1}}}) a_1^{2^i}.\label{eq:a1a2RelationTwo}
\end{equation}
Comparing the coefficients of the power series in (\ref{eq:a1a2RelationOne}) and (\ref{eq:a1a2RelationTwo}) implies the equality $\Phi^{C_2}(\ti^{C_{2^n}}) = \Phi^{C_4} N_{C_2}^{C_4} (\ti^{C_{2^{n+1}}})$, as desired. 
\end{proof}

\begin{prop} \label{prop:spectrumLevelEquivalence}
 For all $n \geq 1$ and $I \subseteq \N$, the following equivalence holds: 
 \[\EF[C_4] \wedge \BPCnplusone \langle I \rangle \simeq \EF[C_4] \wedge \Pb_{C_{2^{n+1}}/C_2}\BPCn\langle I \rangle.\]
\end{prop}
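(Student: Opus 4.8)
The plan is to identify both sides as $\EF[C_4]$-localized spectra and then to recognize them, via \cref{lem:easyFact1}, as pullbacks of their $C_2$-geometric fixed points. Concretely, by \cref{lem:easyFact1} applied to the normal subgroup $C_2 \subseteq \Cnplusone$, we have $\EF[C_2] \wedge \BPCnplusone \langle I \rangle \simeq \Pb_{C_{2^{n+1}}/C_2}\Phi^{C_2}(\BPCnplusone \langle I \rangle)$, and smashing with $\EF[C_4]$ (which is $\EF[C_4]$-local and receives a map from $\EF[C_2]$) we get $\EF[C_4] \wedge \BPCnplusone \langle I \rangle \simeq \EF[C_4] \wedge \Pb_{C_{2^{n+1}}/C_2}\Phi^{C_2}(\BPCnplusone \langle I \rangle)$. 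On the other side, $\EF[C_4] \wedge \Pb_{C_{2^{n+1}}/C_2}\BPCn \langle I \rangle$ is already of this form. So it suffices to produce an equivalence of $(\Cnplusone/C_2 = \Cn)$-spectra
\[\Phi^{C_2}\left(\BPCnplusone \langle I \rangle\right) \simeq \BPCn \langle I \rangle\]
after suitably localizing, i.e. after smashing with $\EF[C_2]$ regarded as a $\Cn$-spectrum — equivalently, it suffices to match the two sides after applying $\Pb_{C_{2^{n+1}}/C_2}$ and then $\EF[C_4] \wedge (-)$.

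The key computation is therefore the geometric fixed points $\Phi^{C_2}(\BPCnplusone \langle I \rangle)$. I would start from the twisted-monoid-ring presentation: recall from \cite[Section~5]{HHR} that $\BPCnplusone$ is built from $\BPR$ by norming up the $C_2$-equivariant classes $\tee_i^{C_{2^{n+1}}}$, and that $\BPCnplusone \langle I \rangle$ is the quotient by the permutation summands $\Cnplusone \cdot \tee_j^{C_{2^{n+1}}}$ for $j \notin I$. The functor $\Phi^{C_2}(-)$ is monoidal and commutes with the relevant norms and quotients (via \cite[Proposition~2.59]{HHR} and the diagonal/norm formula for geometric fixed points), so $\Phi^{C_2}(\BPCnplusone)$ acquires a twisted-monoid-ring presentation over $\Phi^{C_2}(\BPR) \simeq \HF$ with polynomial generators indexed by the $\Cn$-orbits coming from the $\Phi^{C_2}N_{C_2}^{C_4}(\tee_i^{C_{2^{n+1}}})$ together with classes detected by $\Phi^{C_2}(\tee_i^{C_{2^{n+1}}})$. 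Here \cref{lem:tiGeneratorRelations} is exactly the input that pins down these generators: it identifies $\Phi^{C_2}(\tee_i^{C_{2^n}})$ with $\Phi^{C_4}N_{C_2}^{C_4}(\tee_i^{C_{2^{n+1}}})$, and (combined with the $n=1$ vanishing statement $\Phi^{C_2}\tee_i = 0$) shows that $\Phi^{C_2}(\tee_i^{C_{2^{n+1}}}) = 0$, so that the only surviving generators are the normed-up ones, which match the generators $\tee_i^{C_{2^n}}$ of $\BPCn$. Passing to the quotient by the summands for $j \notin I$ on both sides then yields $\Phi^{C_2}(\BPCnplusone \langle I \rangle) \simeq \BPCn \langle I \rangle$ as $\Cn$-equivariant twisted monoid rings.

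The main obstacle I anticipate is the bookkeeping in the second step: making the ``geometric fixed points of a twisted monoid ring is a twisted monoid ring'' statement precise at the level of equivariant orbits, and correctly tracking how a $\Cnplusone$-orbit $\Cnplusone \cdot \tee_i^{C_{2^{n+1}}}$ of size $2^n$ behaves under $\Phi^{C_2}$ — it should become a $\Cn$-orbit of size $2^{n-1}$ of classes $\tee_i^{C_{2^n}}$, together with a ``diagonal'' contribution that is killed precisely by \cref{lem:tiGeneratorRelations}. One has to be careful that the identification respects the $\Cn = \Cnplusone/C_2$-action (this is why one norms up to $C_4$ before taking $\Phi$, as in the setup preceding \cref{lem:tiGeneratorRelations}), and that the quotient construction is compatible with $\Phi^{C_2}$ — this uses that $\Phi^{C_2}$ preserves cofiber sequences and commutes with the relevant smash powers. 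Once these compatibilities are in hand, the conclusion follows by assembling the pieces: apply $\Pb_{C_{2^{n+1}}/C_2}$ to the equivalence $\Phi^{C_2}(\BPCnplusone \langle I \rangle) \simeq \BPCn \langle I \rangle$, invoke \cref{lem:easyFact1} to rewrite the left-hand side as $\EF[C_2] \wedge \BPCnplusone \langle I \rangle$, and finally smash with $\EF[C_4]$ to land on the statement as written. A cleaner alternative, avoiding an explicit presentation, is to build a $\Cnplusone$-equivariant map $\BPCnplusone \langle I \rangle \to \Pb_{C_{2^{n+1}}/C_2}\BPCn \langle I \rangle$ by adjunction from $\Phi^{C_2}(\BPCnplusone \langle I \rangle) \to \BPCn \langle I \rangle$ (defined on generators) and check it becomes an equivalence after $\EF[C_4] \wedge (-)$ by comparing homotopy Mackey functors via \cref{lem:PullbackHomotopyGroups}; I would pursue whichever of these is less computationally heavy in the surrounding sections.
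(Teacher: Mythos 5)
Your reduction step is sound and is essentially the paper's: both sides are $\EF[C_4]$-local, hence of the form $\Pb_{C_{2^{n+1}}/C_4}(-)$, so it suffices to compare appropriate geometric fixed points. But the central computation in your second paragraph contains a genuine error. You claim that $\Phi^{C_2}(\ti^{C_{2^{n+1}}}) = 0$ and deduce the unlocalized equivalence $\Phi^{C_2}(\BPCnplusone \langle I \rangle) \simeq \BPCn \langle I \rangle$. Both claims are false for $n \geq 1$. The vanishing $\Phi^{C_2}(\ti^{C_2}) = 0$ holds only for the group $C_2$ itself, where there is a single $\HF$ factor; for $\Cnplusone$ with $n \geq 1$, the classes $\Phi^{C_2}(\ti^{C_{2^{n+1}}})$ are the coefficients of the power series relating the two orientations $a_1, a_2$ of the first and second factors of $i_e^* N_e^{C_{2^n}}\HF \simeq \HF^{\wedge 2^n}$ (exactly as in the proof of \cref{lem:tiGeneratorRelations}), and these are nonzero. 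Moreover, $\Phi^{C_2}(\BPCnplusone) \simeq N_e^{C_{2^n}}\HF$ has underlying spectrum a smash of $2^n$ copies of $\HF$, whereas $\BPCn$ has underlying spectrum a smash of $2^{n-1}$ copies of $BP$; already for $n=1$ and $I = \N$ your claimed equivalence would assert $\HF \wedge \HF \simeq BP$ nonequivariantly. So the third paragraph's plan of applying $\Pb_{C_{2^{n+1}}/C_2}$ to this equivalence and then smashing with $\EF[C_4]$ cannot be carried out, and the "cleaner alternative" map $\Phi^{C_2}(\BPCnplusone\langle I\rangle) \to \BPCn\langle I\rangle$ does not exist for the same reason.

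The fix is that the further localization you mention in passing ("after smashing with $\EF[C_2]$ as a $\Cn$-spectrum") is not optional bookkeeping but the entire content of the argument: it amounts to applying $\Phi^{C_2}$ one more time, so the comparison to be made is
\[\Phi^{C_4}\left(\BPCnplusone\langle I\rangle\right) \simeq \Phi^{C_2}\left(\BPCn\langle I\rangle\right)\]
as $C_{2^{n-1}}$-spectra. This is what the paper proves: both sides are quotients of $N_e^{C_{2^{n-1}}}\HF$ by twisted monoid ideals, with generators $\Phi^{C_4}N_{C_2}^{C_4}(\ti^{C_{2^{n+1}}})$ on the left and $\Phi^{C_2}(\ti^{C_{2^n}})$ on the right, and \cref{lem:tiGeneratorRelations} identifies these generators (it does \emph{not} say either one vanishes). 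You should rewrite the key computation at the level of $\Phi^{C_4}$ rather than $\Phi^{C_2}$, checking in addition that the two quotient maps agree under this identification.
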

\begin{proof}
By ~\cref{lem:easyFact1}, the left hand side is equal to 
\begin{align*}
& \quad \, \,\Pb_{C_{2^{n+1}}/C_4}\left(\Phi^{C_4} \BPCnplusone \langle I \rangle \right) \\
&\simeq  \Pb_{C_{2^{n+1}}/C_4} \Phi^{C_4} \left(\BPCnplusone \smashover{S^0\left[C_{2^{n+1}}\cdot \ti^{C_{2^{n+1}}} \,|\, i \geq 1\right]} S^0\left[C_{2^{n+1}}\cdot \ti^{C_{2^{n+1}}}\,|\, i \in I\right]\right) \\ 
&\simeq  \Pb_{C_{2^{n+1}}/C_4} \left(N_e^{C_{2^{n-1}}} \HF \smashover{S^0\left[C_{2^{n-1}} \cdot \Phi^{C_4}N_{C_2}^{C_4}(\ti^{C_{2^{n+1}}}) \,|\, i \geq 1\right]} S^0\left[C_{2^{n-1}} \cdot \Phi^{C_4}N_{C_2}^{C_4}(\ti^{C_{2^{n+1}}}) \,|\, i \in I \right]\right).
\end{align*}
Here, for the last equivalence, we have used the fact that the geometric fixed points functor $\Phi^H(-)$ is a symmetric monoidal functor.  Furthermore, if $H$ is a normal subgroup of $G$ that contains $C_2$, then
\[\Phi^H S^0[G \cdot \bar{x}] \simeq S^0\left[G/H \cdot \Phi^H N_{C_2}^H (\bar{x})\right]\]
according to the properties of the functor (see \cite[Section~2]{HHR} and \cite[Lemma~4.3]{UllmanPaper}).

By ~\cref{lem:easyFact1}, the right hand side is equal to 
\begin{align*}
& \quad \,\, \Pb_{C_{2^{n+1}}/C_4}\left(\Phi^{C_4}\Pb_{C_{2^{n+1}}/C_2}(\BPCn\langle I \rangle)\right) \\
&\simeq  \Pb_{C_{2^{n+1}}/C_4}\left(\Phi^{C_2} \BPCn\langle I \rangle \right)\\
&\simeq  \Pb_{C_{2^{n+1}}/C_4}\Phi^{C_2}\left(\BPCn \smashover{S^0\left[C_{2^{n}}\cdot \ti^{C_{2^{n}}} \,|\, i \geq 1\right]} S^0\left[C_{2^{n}}\cdot \ti^{C_{2^{n}}}\,|\, i \in I\right]\right) \\
&\simeq  \Pb_{C_{2^{n+1}}/C_4} \left(N_e^{C_{2^{n-1}}}\HF \smashover{S^0\left[C_{2^{n-1}}\cdot \Phi^{C_2}(\ti^{C_{2^{n}}}) \,|\, i \geq 1\right]} S^0\left[C_{2^{n-1}}\cdot \Phi^{C_2}(\ti^{C_{2^{n}}})\,|\, i \in I\right]\right). 
\end{align*}
The equivalence now follows from ~\cref{lem:tiGeneratorRelations} and the fact that the maps 
\begin{align*}
S^0\left[C_{2^{n-1}} \cdot \Phi^{C_4}N_{C_2}^{C_4}(\ti^{C_{2^{n+1}}}) \,|\, i \geq 1\right] &\longrightarrow S^0\left[C_{2^{n-1}} \cdot \Phi^{C_4}N_{C_2}^{C_4}(\ti^{C_{2^{n+1}}}) \,|\, i \in I \right] \\
S^0\left[C_{2^{n-1}}\cdot \Phi^{C_2}(\ti^{C_{2^{n}}}) \,|\, i \geq 1\right] &\longrightarrow S^0\left[C_{2^{n-1}}\cdot \Phi^{C_2}(\ti^{C_{2^{n}}})\,|\, i \in I\right]
\end{align*}
are the same quotient maps via the identification $\Phi^{C_4} N_{C_2}^{C_4}(\ti^{C_{2^{n+1}}}) \leftrightsquigarrow \Phi^{C_2}(\ti^{C_{2^n}})$. 
\end{proof}

To proceed further, we will briefly recall the dual slice towers of $\BPCnplusone \langle I \rangle$ and $\BPCn \langle I \rangle$, following the construction in \cite[Section~6]{HHR}.  For more details on the slice towers of $\MUCn$ and its quotients, see \cite[Section~6]{HHR} and \cite[Section~2]{BHLSZ}.  
 
Let 
\[S^0\left[C_{2^{n+1}}\cdot \ti^{C_{2^{n+1}}} \,|\, i \geq 1\right] \longrightarrow \BPCnplusone\]
be the refinement of $\BPCnplusone$ with respect to the generators $\ti^{C_{2^{n+1}}}$, and let $A = S^0\left[C_{2^{n+1}}\cdot \ti^{C_{2^{n+1}}} \,|\, i \in I\right]$.  Then $A$ is a wedge summand of $S^0\left[C_{2^{n+1}}\cdot \ti^{C_{2^{n+1}}} \,|\, i \geq 1\right]$.  Applying $(-) \wedge_{S^0[C_{2^{n+1}}\cdot \ti^{C_{2^{n+1}}} \,|\, i \geq 1]} A$ to the refinement above produces the refinement 
\[A \longrightarrow \BPCnplusone \langle I \rangle\]
for $\BPCn \langle I \rangle$. 

The spectrum $\BPCnplusone \langle I \rangle$ is a $S^0\left[C_{2^{n+1}}\cdot \ti^{C_{2^{n+1}}} \,|\, i \geq 1\right]$-module, and we can further view it as an $A$-module via the associative algebra map 
\[A \longrightarrow S^0\left[C_{2^{n+1}}\cdot \ti^{C_{2^{n+1}}} \,|\, i \geq 1\right].\]
Define $M_{\geq d} \subset A$ to be the monomial ideal consisting of all the spheres of dimension $\geq d$.  These monomial ideals produces an increasing filtration
\[\cdots \hookrightarrow M_{\geq d+1} \hookrightarrow M_{\geq d} \hookrightarrow M_{\geq d-1} \hookrightarrow \cdots\]
on $A$.  Define 
\begin{equation}\label{eq:DualSlicesBPCnplusone}
P_d\left(\BPCnplusone\langle I \rangle\right):= \BPCnplusone\langle I \rangle \wedge_{A} M_{\geq d}.
\end{equation}
Then the tower 
\[\cdots \longrightarrow P_{d+1}\left(\BPCnplusone\langle I \rangle\right) \longrightarrow P_{d}\left(\BPCnplusone\langle I \rangle\right) \longrightarrow P_{d-1}\left(\BPCnplusone\langle I \rangle\right) \longrightarrow \cdots\]
is the dual slice tower of $\BPCnplusone\langle I \rangle$.  

Smashing the dual slice tower with $\EF[C_4]$ produces the localized dual slice tower $\EF[C_4] \wedge P_\bullet \left(\BPCnplusone\langle I \rangle\right)$.  By \cref{lem:easyFact1}, the following equivalence holds: 
\begin{align}
& \quad \,\,\EF[C_4] \wedge P_d\left(\BPCnplusone\langle I \rangle\right) \nonumber \\
&\simeq \EF[C_4] \wedge \left(\BPCnplusone\langle I \rangle \wedge_A M_{\geq d}\right) \nonumber \\
&\simeq \Pb_{C_{2^{n+1}}/C_4}\left(\Phi^{C_4}(\BPCnplusone\langle I \rangle) \smashover{\Phi^{C_4}(A)} \Phi^{C_4}(M_{\geq d})\right). \label{eq:TowerForCnplusone}
\end{align}

The construction of the dual slice tower of $\BPCn\langle I \rangle$ follows a  similar method.  Let $B \to \BPCn\langle I \rangle$ be the refinement of $\BPCn\langle I \rangle$ with respect to the generators $\bar{t}_i^{C_{2^n}}$, where 
\[B = S^0\left[C_{2^n}\cdot \ti^{C_{2^n}} \,|\, i \in I\right].\]
Define $N_{\geq d} \subset B$ to be the monomial ideal consisting of all the spheres of dimension $\geq d$ and let 
\[P_d\left(\BPCn\langle I \rangle\right) := \BPCn\langle I \rangle \wedge_B N_{\geq d}.\]
Then the tower 
\[\cdots \longrightarrow P_{d+1}\left(\BPCn\langle I \rangle\right) \longrightarrow P_{d}\left(\BPCn\langle I \rangle\right) \longrightarrow P_{d-1}\left(\BPCn\langle I \rangle\right) \longrightarrow \cdots\]
is the dual slice tower of $\BPCn\langle I \rangle$.  Smashing this dual slice tower with $\EF[C_2]$ produces the localized dual slice tower $\EF[C_2] \wedge P_\bullet (\BPCn \langle I \rangle)$.  By \cref{lem:easyFact1}, we have the following equivalence: 
\begin{align}
\EF[C_2] \wedge P_d\left(\BPCn \langle I \rangle \right) &\simeq   \EF[C_2] \wedge\left( \BPCn\langle I \rangle \wedge_B N_{\geq d}\right) \nonumber \\
&\simeq   \Pb_{C_{2^{n}}/C_2} \left( \Phi^{C_2}(\BPCn\langle I \rangle) \smashover{\Phi^{C_2}(B)} \Phi^{C_2}(N_{\geq d}) \right) \label{eq:TowerForCn}
\end{align}

\begin{prop}\label{prop:LocalizedDualTowerEquivalence1}
We have the following equivalence of towers:
\[\EF[C_4] \wedge P_\bullet \left(\BPCnplusone\langle I \rangle\right) \simeq \EF[C_4] \wedge P_\bullet \left(\mathcal{P}_{C_{2^{n+1}}/C_2}^*\BPCn\langle I \rangle\right).\]
\end{prop}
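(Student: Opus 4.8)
The plan is to apply the $C_4$-geometric fixed point functor to both localized dual slice towers and to recognize the two outputs as different presentations of a single $C_{2^{n-1}}$-equivariant filtered object.

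First I would rewrite the dual slice tower on the right. By \cref{prop:pullBackSliceTower} (with $N=C_2$), $P_\bullet\!\left(\Pb_{\Cnplusone/C_2}\BPCn\langle I\rangle\right)\simeq\Pb_{\Cnplusone/C_2}\!\left(\D P_\bullet(\BPCn\langle I\rangle)\right)$. Smashing with $\EF[C_4]$ and using \cref{lem:easyFact1} in the form $\EF[C_4]\wedge(-)\simeq\Pb_{\Cnplusone/C_4}\Phi^{C_4}(-)$, together with the identity $\Phi^{C_4}\circ\Pb_{\Cnplusone/C_2}\simeq\Phi^{C_2}$ — which follows from the iterated geometric fixed point decomposition $\Phi^{C_4}\simeq\Phi^{C_4/C_2}\circ\Phi^{C_2}$ and the fact that $\Phi^{C_2}\circ\Pb_{\Cnplusone/C_2}\simeq\id$ (a restatement of \cite[Proposition~4.3]{HillPrimer}), the $\Phi^{C_2}$ on the right being that of the subgroup $C_4/C_2\cong C_2$ of $\Cn$ — the right-hand localized tower becomes $\Pb_{\Cnplusone/C_4}$ applied to $\Phi^{C_2}\!\left(\D P_\bullet(\BPCn\langle I\rangle)\right)$, i.e.\ (as $\Phi^{C_2}$ is symmetric monoidal and $\D$ is a levelwise reindexing) to the tower $\Phi^{C_2}(\BPCn\langle I\rangle)\wedge_{\Phi^{C_2}(B)}\Phi^{C_2}(N_{\geq\lceil\bullet/2\rceil})$. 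On the left, \eqref{eq:TowerForCnplusone} already expresses $\EF[C_4]\wedge P_\bullet(\BPCnplusone\langle I\rangle)$ as $\Pb_{\Cnplusone/C_4}$ applied to $\Phi^{C_4}(\BPCnplusone\langle I\rangle)\wedge_{\Phi^{C_4}(A)}\Phi^{C_4}(M_{\geq\bullet})$. Since $\Pb_{\Cnplusone/C_4}$ is fully faithful onto its image, it suffices to produce a levelwise equivalence between these two towers of $C_{2^{n-1}}$-spectra.

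The module and ring parts are already matched by earlier results: the proof of \cref{prop:spectrumLevelEquivalence} identifies $\Phi^{C_4}(\BPCnplusone\langle I\rangle)$ with $\Phi^{C_2}(\BPCn\langle I\rangle)$ as modules over $\Phi^{C_4}(A)\simeq\Phi^{C_2}(B)$, both of which are the twisted monoid ring $S^0[\,C_{2^{n-1}}\cdot\xi_i \mid i\in I\,]$ on classes $\xi_i$ of underlying dimension $2^i-1$, the identification being $\xi_i=\Phi^{C_4}N_{C_2}^{C_4}(\ti^{\Cnplusone})=\Phi^{C_2}(\ti^{\Cn})$ furnished by \cref{lem:tiGeneratorRelations}. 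So the remaining — and genuinely technical — point is to check that, under this identification, the monomial-ideal filtrations agree: $\Phi^{C_4}(M_{\geq d})\simeq\Phi^{C_2}(N_{\geq\lceil d/2\rceil})$ for every $d$. This comes down to tracking underlying dimensions through $\Phi^{C_4}$ and $\Phi^{C_2}$. Here $C_4\subset\Cnplusone$ acts on each generator orbit $\Cnplusone\cdot\ti^{\Cnplusone}$ through its image $C_2\subset\Cn$, pairing the generators into free $C_4/C_2$-orbits, so only the $C_4$-isotropic monomials survive $\Phi^{C_4}$; each generator contributes underlying dimension $2(2^i-1)=|{(2^i-1)\rho_2}|$, and a surviving monomial of underlying dimension $\delta$ (necessarily a multiple of $4$) is sent to a monomial of underlying dimension $\delta/4$, so that $\Phi^{C_4}(M_{\geq d})$ is the monomial ideal spanned by monomials of underlying dimension $\geq\lceil d/4\rceil$. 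On the other side, $C_2\subset\Cn$ — being the kernel of $\Cn\to C_{2^{n-1}}$ — acts trivially on all the generators $\ti^{\Cn}$, so $\Phi^{C_2}$ simply halves the underlying dimension of every monomial and $\Phi^{C_2}(N_{\geq e})$ is the monomial ideal spanned by monomials of underlying dimension $\geq\lceil e/2\rceil$. Taking $e=\lceil d/2\rceil$ and invoking the nested-ceiling identity $\bigl\lceil\lceil d/2\rceil/2\bigr\rceil=\lceil d/4\rceil$ shows the two monomial ideals coincide; combined with the module identification this gives the required levelwise equivalence of towers, and applying $\Pb_{\Cnplusone/C_4}$ completes the argument.

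The main obstacle is precisely this last bookkeeping: one must unwind the definitions of $S^0[G\cdot\bar x]$ and of $\Phi^H$ on such twisted monoid rings (as in \cite[Section~2]{HHR}, \cite[Lemma~4.3]{UllmanPaper}, \cite[Section~2]{BHLSZ}) carefully enough to pin down the contraction factors $4$ and $2$ and to see that the reindexed monomial-ideal towers are exactly the $\D$-double of the base tower downstairs. Everything else — the adjunction manipulations via \cref{lem:easyFact1}, the reduction via \cref{prop:pullBackSliceTower}, and the inputs from \cref{prop:spectrumLevelEquivalence} and \cref{lem:tiGeneratorRelations} — is formal.
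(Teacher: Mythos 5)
Your proposal is correct and follows essentially the same route as the paper's proof: reduce via \cref{lem:easyFact1} and \cref{prop:pullBackSliceTower} to comparing the towers of $C_4$- and $C_2$-geometric fixed points, identify the twisted monoid rings and modules via \cref{lem:tiGeneratorRelations} (as in \cref{prop:spectrumLevelEquivalence}), and then match the monomial-ideal filtrations, with the inclusions agreeing automatically once the ideals are identified inside the same ring. The only cosmetic difference is organizational: the paper first notes both towers are constant on blocks of four levels and compares only at levels $4d$ (matching $\Phi^{C_4}(M_{\geq 4d})$ with $\Phi^{C_2}(N_{\geq 2d})$), whereas you compare all levels at once via the ceiling computation $\lceil\lceil d/2\rceil/2\rceil=\lceil d/4\rceil$ — these amount to the same bookkeeping.
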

\begin{proof}
For any $N \subseteq G$ a normal subgroup and $H \subseteq G$ a subgroup, there is a natural equivalence (see \cite[Section~4.2]{HillPrimer})
\[\Phi^{N}(G_+ \wedge_H S^{k\rho_H}) \simeq \left\{\begin{array}{ll}
* & \text{if } N \not \subseteq H, \\
(G/N)_+ \wedge_{H/N} S^{k\rho_{H/N}} & \text{if } N \subseteq H. \end{array} \right. \]
This fact, combined with (\ref{eq:TowerForCnplusone}), shows that for all $d$ and $0 \leq i \leq 3$, the localized dual slice sections
\[\EF[C_4] \wedge P_{4d-i} \left(\BPCnplusone\langle I \rangle\right) \]
are equivalent to each other.  Similarly, by (\ref{eq:TowerForCn}) and \cref{prop:LocalizedDualTowerEquivalence2}, 
\[\EF[C_4] \wedge P_{4d-i} \left(\mathcal{P}_{C_{2^{n+1}}/C_2}^*\BPCn\langle I \rangle\right)\]
are also equivalent to each other for all $d$ and $0 \leq i \leq 3$.  Therefore, in order to prove the desired equivalence of towers, it suffices to show that for all $d$, we have the equivalence
\[\EF[C_4] \wedge P_{4d} \left(\BPCnplusone\langle I \rangle\right) \simeq \EF[C_4] \wedge P_{4d} \left(\Pb_{C_{2^{n+1}}/C_2}\BPCn\langle I \rangle\right),\]
and that the maps 
\begin{align*}
\EF[C_4] \wedge P_{4d+4} \left(\BPCnplusone\langle I \rangle\right) &\longrightarrow \EF[C_4] \wedge P_{4d} \left(\BPCnplusone\langle I \rangle\right), \\
\EF[C_4] \wedge P_{4d+4} \left(\Pb_{C_{2^{n+1}}/C_2}\BPCn\langle I \rangle\right) &\longrightarrow \EF[C_4] \wedge P_{4d} \left(\Pb_{C_{2^{n+1}}/C_2}\BPCn\langle I \rangle\right)
\end{align*}
are the same.  By (\ref{eq:TowerForCnplusone}), \cref{prop:LocalizedDualTowerEquivalence2}, and (\ref{eq:TowerForCn}), this is equivalent to showing that 
\begin{equation}\label{eq:geomFixPointEquivalence}
\Phi^{C_4}(\BPCnplusone\langle I \rangle) \smashover{\Phi^{C_4}(A)} \Phi^{C_4}(M_{\geq 4d}) \simeq \Phi^{C_2}(\BPCn\langle I \rangle) \smashover{\Phi^{C_2}(B)} \Phi^{C_2}(N_{\geq 2d}) 
\end{equation}
and that the two maps 
\begin{equation}\label{map:geomFixedPointMap1}
\Phi^{C_4}(\BPCnplusone\langle I \rangle) \smashover{\Phi^{C_4}(A)} \Phi^{C_4}(M_{\geq 4d+4}) \longrightarrow \Phi^{C_4}(\BPCnplusone\langle I \rangle) \smashover{\Phi^{C_4}(A)} \Phi^{C_4}(M_{\geq 4d})  
\end{equation}
\begin{equation}\label{map:geomFixedPointMap2}
\Phi^{C_2}(\BPCn\langle I \rangle) \smashover{\Phi^{C_2}(B)} \Phi^{C_2}(N_{\geq 2d+2}) \longrightarrow \Phi^{C_2}(\BPCn\langle I \rangle) \smashover{\Phi^{C_2}(B)} \Phi^{C_2}(N_{\geq 2d}) 
\end{equation}
are the same. 

Note that since 
\begin{align*}
\Phi^{C_4}(A) &= S^0\left[C_{2^{n-1}} \cdot \Phi^{C_4}N_{C_2}^{C_4}(\ti^{C_{2^{n+1}}}) \, |\, i \in I\right], \\
\Phi^{C_2}(B) &=S^0\left[C_{2^{n-1}} \cdot \Phi^{C_2}(\ti^{C_{2^n}}) \,|\, i \in I \right],
\end{align*}
they are isomorphic wedges of slice cells under the identification 
\[\Phi^{C_4} N_{C_2}^{C_4}(\ti^{C_{2^{n+1}}}) \leftrightsquigarrow \Phi^{C_2}(\ti^{C_{2^n}}).\]  
Also, under this identification, the quotient maps 
\[\Phi^{C_4}(A) \longrightarrow \Phi^{C_4}(M_{\geq 4d})\]
and 
\[\Phi^{C_2}(B) \longrightarrow \Phi^{C_2}(N_{\geq 2d})\]
are the same.  Furthermore, by Lemma~\ref{lem:tiGeneratorRelations}, and the proof of Proposition~\ref{prop:spectrumLevelEquivalence}, the maps 
\begin{align*}
\Phi^{C_4}(A) &\longrightarrow \Phi^{C_4}(\BPCnplusone\langle I \rangle), \\
\Phi^{C_2}(B) &\longrightarrow \Phi^{C_2}(\BPCn\langle I \rangle)
\end{align*}
are also the same.  It follows that equivalence~(\ref{eq:geomFixPointEquivalence}) holds.  

To prove that the maps (\ref{map:geomFixedPointMap1}) and (\ref{map:geomFixedPointMap2}) are the same, it suffices to show that the inclusion maps
\begin{align*}
\Phi^{C_4}(M_{\geq 4d+4}) &\longrightarrow \Phi^{C_4}(M_{\geq 4d}),\\
\Phi^{C_2}(N_{\geq 2d+2}) &\longrightarrow \Phi^{C_2}(N_{\geq 2d})
\end{align*}
are the same.  This follows directly from the same identification 
\[\Phi^{C_4} N_{C_2}^{C_4}(\ti^{C_{2^{n+1}}}) \leftrightsquigarrow \Phi^{C_2}(\ti^{C_{2^n}})\]  
earlier. 
\end{proof}

\begin{prop}\label{prop:LocalizedDualTowerEquivalence2}
We have the following equivalence of towers:
\[\EF[C_4] \wedge P_\bullet \left(\Pb_{C_{2^{n+1}}/C_2}\BPCn \langle I \rangle \right) \simeq \Pb_{C_{2^{n+1}}/C_2}\left(\EF[C_2] \wedge \D P_\bullet (\BPCn \langle I \rangle)\right).\]
\end{prop}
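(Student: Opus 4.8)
The plan is to deduce this from the compatibility of the dual slice tower with the pullback functor (\cref{prop:pullBackSliceTower}) together with a direct identification that commutes $\EF[C_4] \wedge (-)$ past $\Pb_{\Cnplusone/C_2}$. Write $G = \Cnplusone$ and let $q \colon G \to G/C_2 \cong \Cn$ be the quotient map, with inflation $q^*\colon \Sp_{\Cn}\to\Sp_G$, so that $\Pb_{G/C_2}(Z) \simeq \EF[C_2] \wedge q^*Z$ for every $\Cn$-spectrum $Z$ (as recalled in \cref{sec:ShearingDifferentials}).

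The key step is the following general fact: for every $\Cn$-spectrum — or tower of $\Cn$-spectra — $Y$, there is a natural equivalence
\[\EF[C_4] \wedge \Pb_{G/C_2}(Y) \simeq \Pb_{G/C_2}\!\left(\EF[C_2] \wedge Y\right),\]
where on the left $\EF[C_4]$ is formed for $\mathcal{F}[C_4]$ inside $G$ and on the right $\EF[C_2]$ is formed for $\mathcal{F}[C_2]$ inside $\Cn$. I would prove this by unwinding both sides: since $q^*$ is symmetric monoidal, the right-hand side is $\EF[C_2] \wedge q^*\EF[C_2] \wedge q^*Y$, while the left-hand side is $\EF[C_4] \wedge \EF[C_2] \wedge q^*Y$. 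Now $q^*\EF[C_2] \simeq \EF[C_4]$ as $G$-spaces: checking $H$-fixed points, $(q^*\EF[C_2])^H$ is nontrivial iff $q(H) \notin \mathcal{F}[C_2]$, i.e. iff $H \not\subseteq \ker q = C_2$, which for the cyclic group $G$ means exactly $C_4 \subseteq H$; equivalently, the order-$2$ subgroup of $\Cn = G/C_2$ is the image of $C_4 \subseteq G$. Thus both sides agree with $\EF[C_4] \wedge \EF[C_2] \wedge q^*Y$, which moreover simplifies to $\EF[C_4] \wedge q^*Y$ because $\widetilde{E}\mathcal{F}_1 \wedge \widetilde{E}\mathcal{F}_2 \simeq \widetilde{E}(\mathcal{F}_1 \cup \mathcal{F}_2)$ and $\mathcal{F}[C_2] \subseteq \mathcal{F}[C_4]$ in $G$.

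Granting this, the proposition is immediate. Applying \cref{prop:pullBackSliceTower} with $N = C_2$ to $X = \BPCn\langle I\rangle$ gives an equivalence of towers $P_\bullet\!\left(\Pb_{G/C_2}\BPCn\langle I\rangle\right) \simeq \Pb_{G/C_2}\!\left(\D P_\bullet \BPCn\langle I\rangle\right)$; smashing with $\EF[C_4]$ and applying the key fact with $Y = \D P_\bullet(\BPCn\langle I\rangle)$ produces
\[\EF[C_4] \wedge P_\bullet\!\left(\Pb_{G/C_2}\BPCn\langle I\rangle\right) \simeq \EF[C_4] \wedge \Pb_{G/C_2}\!\left(\D P_\bullet \BPCn\langle I\rangle\right) \simeq \Pb_{G/C_2}\!\left(\EF[C_2] \wedge \D P_\bullet(\BPCn\langle I\rangle)\right),\]
which is exactly the claimed equivalence of towers; all the functors involved ($\EF[C_4] \wedge (-)$, $q^*$, $\Pb_{G/C_2}$, $\D$) are applied levelwise, so the equivalences are compatible with the structure maps. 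The only real subtlety — and thus the main obstacle, though it is bookkeeping rather than anything substantive — is keeping track of which subgroup the symbol $C_2$ refers to on each side: in $\EF[C_2] \wedge \D P_\bullet(\BPCn\langle I\rangle)$ it is the order-$2$ subgroup of $\Cn = G/C_2$, which corresponds to $C_4 \subseteq G$ and not to $C_2 \subseteq G$, and it is precisely this correspondence that makes $q^*\EF[C_2] \simeq \EF[C_4]$ and drives the whole argument.
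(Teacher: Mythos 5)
Your proof is correct, and its first half coincides with the paper's: both begin by applying \cref{prop:pullBackSliceTower} with $N = C_2$ to rewrite $P_\bullet\left(\Pb_{C_{2^{n+1}}/C_2}\BPCn\langle I\rangle\right)$ as $\Pb_{C_{2^{n+1}}/C_2}\left(\D P_\bullet(\BPCn\langle I\rangle)\right)$, so that everything reduces to commuting $\EF[C_4]\wedge(-)$ past $\Pb_{C_{2^{n+1}}/C_2}$. Where you diverge is in how this commutation is verified. The paper applies \cref{lem:easyFact1} to both sides and identifies each with $\Pb_{C_{2^{n+1}}/C_4}\left(\Phi^{C_2}\D P_\bullet(\BPCn\langle I\rangle)\right)$, using on the left that $\Phi^{C_4}\circ\Pb_{C_{2^{n+1}}/C_2}\simeq\Phi^{C_2}$ and on the right that the composite $\Pb_{C_{2^{n+1}}/C_2}\circ\Pb_{C_{2^{n}}/C_2}$ is $\Pb_{C_{2^{n+1}}/C_4}$. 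You instead unwind the explicit model $\Pb_{G/N}(-)\simeq\EF[N]\wedge q^*(-)$ and reduce to the identification of universal spaces $q^*\,\EF[C_2]\simeq\EF[C_4]$ (the former formed in $\Cn$, the latter in $\Cnplusone$), which you correctly check on fixed points, after which both sides become $\EF[C_4]\wedge q^*Y$. The two verifications carry the same content and are equally formal; yours is somewhat more self-contained in that it avoids the geometric-fixed-points detour, and your closing remark about which copy of $C_2$ the notation refers to on each side is precisely the point that makes the bookkeeping work.
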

\begin{proof}
By \cref{prop:pullBackSliceTower}, there is an equivalence of towers
\begin{equation}\label{eq:UllmanTowerEquivalence}
P_\bullet \left(\Pb_{C_{2^{n+1}}/C_2}\BPCn \langle I \rangle \right) \simeq \Pb_{C_{2^{n+1}}/C_2} \left( \D P_\bullet (\BPCn \langle I \rangle ) \right). 
\end{equation}
More precisely, for all $d \geq 1$, we have the equivalences 
\begin{align*}
P_{2d} \left(\Pb_{C_{2^{n+1}}/C_2}\BPCn\langle I \rangle \right) & \simeq P_{2d-1} \left(\Pb_{C_{2^{n+1}}/C_2} \BPCn\langle I \rangle \right) \\
& \simeq \Pb_{C_{2^{n+1}}/C_2}\left(P_d(\BPCn\langle I \rangle)\right).
\end{align*}
Therefore, by (\ref{eq:UllmanTowerEquivalence}), the left hand side of the equivalence in the proposition statement is equivalent to 
\begin{align}
&\quad \,\, \EF[C_4] \wedge \Pb_{C_{2^{n+1}}/C_2} \left( \D P_{\bullet}(\BPCn\langle I \rangle) \right) \nonumber \\ 
&\simeq \Pb_{C_{2^{n+1}}/C_4}\left(\Phi^{C_4} \Pb_{C_{2^{n+1}}/C_2} \D P_{\bullet}(\BPCn\langle I \rangle) \right) \nonumber \,\,\, (\text{\cref{lem:easyFact1}})\\
&\simeq \Pb_{C_{2^{n+1}}/C_4} \left(\Phi^{C_2} \D P_{\bullet}(\BPCn\langle I \rangle \right). \label{eq:IdentificationPullback}
\end{align}
By \cref{lem:easyFact1}, the right hand side is equivalent to
\[\Pb_{C_{2^{n+1}}/C_2} \left(\Pb_{C_{2^{n}}/C_2} \Phi^{C_2} \D P_{\bullet}(\BPCn\langle I \rangle\right) \simeq \Pb_{C_{2^{n+1}}/C_4} \left(\Phi^{C_2} \D P_{\bullet}(\BPCn\langle I \rangle \right). \]
This proves the desired equivalence.  
\end{proof}

\begin{proof}[Proof of \cref{thm:DualTowerEquivalenceMain}]
The case $k =1$ is an immediate consequence of \cref{prop:LocalizedDualTowerEquivalence1} and \cref{prop:LocalizedDualTowerEquivalence2}:
\begin{align*}
\EF[C_{4}] \wedge P_\bullet\left(\BPCnplusone \langle I \rangle \right) & \simeq \EF[C_4] \wedge P_\bullet \left(\mathcal{P}_{C_{2^{n+1}}/C_2}^*\BPCn\langle I \rangle\right) \\
&\simeq \Pb_{C_{2^{n+1}}/C_2}\left(\EF[C_{2}] \wedge \D P_\bullet (\BPCn \langle I \rangle) \right) \\
&\simeq \Pb_{C_{2^{n+1}}/C_2} \D \left(\EF[C_{2}] \wedge P_\bullet (\BPCn \langle I \rangle) \right). 
\end{align*}

For the general case when $1 \leq k \leq n$, the left hand side of the equivalence is
\begin{align*}
& \quad \,\, \EF[C_{2^{k+1}}] \wedge P_\bullet\left(\BPCnplusone \langle I \rangle \right) \\
&\simeq  \EF[C_{2^{k+1}}] \wedge \left(\EF[C_4] \wedge P_\bullet (\BPCnplusone\langle I \rangle ) \right) \\
&\simeq \EF[C_{2^{k+1}}] \wedge \Pb_{C_{2^{n+1}}/C_2} \D \left(\EF[C_{2}] \wedge P_\bullet (\BPCn \langle I \rangle) \right) \,\,\, \text{($k=1$ case)}\\
&\simeq  \EF[C_{2^{k+1}}] \wedge \Pb_{C_{2^{n+1}}/C_2}\left(\EF[C_{2}] \wedge \D P_\bullet (\BPCn \langle I \rangle) \right) \\
&\simeq  \Pb_{C_{2^{n+1}}/C_{2^{k+1}}} \Phi^{C_{2^{k+1}}}\left(\Pb_{C_{2^{n+1}}/C_2}\left(\EF[C_{2}] \wedge \D P_\bullet (\BPCn \langle I \rangle) \right) \right)\,\,\, \text{(\cref{lem:easyFact1})}\\
&\simeq  \Pb_{C_{2^{n+1}}/C_{2^{k+1}}} \Phi^{C_{2^{k}}}\left(\EF[C_{2}] \wedge \D P_\bullet (\BPCn \langle I \rangle) \right) \\
&\simeq  \Pb_{C_{2^{n+1}}/C_{2^{k+1}}} \Phi^{C_{2^{k}}}\left( \D P_\bullet (\BPCn \langle I \rangle) \right).
\end{align*}
The right hand side of the equivalence is 
\begin{align*}
& \quad \,\, \Pb_{C_{2^{n+1}}/C_2}\D \left( \EF[C_{2^k}] \wedge P_\bullet (\BPCn \langle I \rangle) \right) \\ 
&\simeq \Pb_{C_{2^{n+1}}/C_2}\left(\EF[C_{2^k}] \wedge \D P_\bullet (\BPCn \langle I \rangle) \right) \\ 
&\simeq \Pb_{C_{2^{n+1}}/C_2} \left(\Pb_{C_{2^n}/C_{2^k}} \Phi^{C_{2^k}}\left( \D P_\bullet(\BPCn \langle I \rangle)\right) \right) \,\,\, \text{(\cref{lem:easyFact1})}\\
&\simeq   \Pb_{C_{2^{n+1}}/C_{2^{k+1}}} \Phi^{C_{2^{k}}}\left( \D P_\bullet (\BPCn \langle I \rangle) \right).
\end{align*}
This establishes the desired equivalence.
\end{proof}

\section{Correspondence formulas} \label{sec:CorrespondenceFormulas}

By \cref{thm:ShearingGeneralTheory}, the equivalence of localized dual slice towers proven in \cref{thm:DualTowerEquivalenceMain} induces a shearing isomorphism of the corresponding localized slice spectral sequences.  In this section, we will establish correspondence formulas between classes in these localized slice spectral sequences under the shearing isomorphism. 

In order to establish our formulas, we will first focus on the equivalence 
\begin{equation}\label{eq:correspondenceFormulaTower}
\EF[C_4] \wedge P_\bullet\left(\BPCnplusone \right) \simeq \Pb_{C_{2^{n+1}}/C_2}\D \left( \EF[C_2] \wedge P_\bullet (\BPCn) \right).
\end{equation}
Once we have established correspondence formulas for classes on the $\mathcal{E}_2$-pages of $\EF[C_4] \wedge \SliceSS\left(\BPCnplusone \right)$ and $\EF[C_2] \wedge \SliceSS (\BPCn )$, the correspondence formulas for all the other equivalences in \cref{thm:DualTowerEquivalenceMain} will follow.  

To start our analysis, we will first recall some notations.  When working 2-locally, the representation spheres associated to the 2-dimensional real $\Cn$-representations corresponding to rotations by $\left(\frac{2\pi \cdot k}{2^n}\right)$ and $\left(\frac{2\pi \cdot k'}{2^n}\right)$ are equivalent whenever $k$ and $k'$ have the same 2-adic valuation.  Therefore, the $RO(C_{2^n})$-graded homotopy groups are graded by the representations $\{1, \sigma, \lambda_1, \ldots, \lambda_{n-1}\}$, and the $RO(C_{2^{n+1}})$-graded homotopy groups are graded by the representations $\{1, \sigma, \lambda_1, \ldots, \lambda_n\}$.  Here, 1 is the trivial representation, $\sigma$ is the sign representation, and $\lambda_i$ is the 2-dimensional real representation that corresponds to rotation by $\left(\frac{\pi}{2^i} \right)$.

We will denote the $C_{2^{n}}$- and the $C_{2^{n+1}}$-equivariant constant Mackey functors by $\Z_{C_{2^n}}$ and $\Z_{C_{2^{n+1}}}$, respectively.

The slice associated graded for the tower $\EF[C_4] \wedge P_\bullet\left(\BPCnplusone \right)$ is 
\[\EF[C_4] \wedge H\Z_{C_{2^{n+1}}}\left[C_{2^{n+1}}\cdot \ti^{C_{2^{n+1}}} \,|\, i \geq 1 \right] \simeq  a_{\lambda_{n-1}}^{-1}H\Z_{C_{2^{n+1}}}\left[C_{2^{n}}\cdot N_{C_2}^{C_4}(\ti^{C_{2^{n+1}}}) \,|\, i \geq 1 \right],\]
and the slice associated graded for the tower $\EF[C_2] \wedge P_\bullet (\BPCn)$ is 
\[\EF[C_2] \wedge H\Z_{C_{2^{n}}}\left[C_{2^{n}}\cdot \ti^{C_{2^{n}}} \,|\, i \geq 1 \right] \simeq a_{\lambda_{n-1}}^{-1}H\Z_{C_{2^{n}}} \left[C_{2^{n}}\cdot \ti^{C_{2^{n}}} \,|\, i \geq 1 \right].\]
For every $V \in RO(\Cnplusone)$ and $C_2 \subseteq H \subseteq \Cnplusone$, ~(\ref{eq:correspondenceFormulaTower}) induces an isomorphism 
\begin{align}\label{eq:correspondenceFormulaHtpyIsom}
& \,\, \pi_V^H \left(a_{\lambda_{n-1}}^{-1}H\Z_{C_{2^{n+1}}}\left[C_{2^{n}}\cdot N_{C_2}^{C_4}(\ti^{C_{2^{n+1}}}) \,|\, i \geq 1\right] \right) \nonumber \\
\cong & \,\, \pi_{V^{C_2}}^{H/C_2} \left(a_{\lambda_{n-1}}^{-1}H\Z_{C_{2^{n}}} \left[C_{2^{n}}\cdot \ti^{C_{2^{n}}} \,|\, i \geq 1 \right] \right)
\end{align}
by \cref{lem:PullbackHomotopyGroups}.  There are distinguished generators for the homotopy groups on each side of the above isomorphism ($a_V$, $u_V$, $\bar{t}_i^{C_{2^n}}$, $N_{C_2}^{C_4}(\ti^{C_{2^{n+1}}})$).  Our first goal is to prove correspondence formulas for these generators.    

\begin{prop}\label{prop:aVCorrespondenceFormula}
Suppose $V \in RO(C_{2^{n+1}}/C_2)$ is of the form 
\[V = c_0 \sigma + c_1 \lambda_1 + \cdots + c_{n-1} \lambda_{n-1},\] where $c_i \geq 0$ for all $0 \leq i \leq n-1$.  Equivalence~(\ref{eq:correspondenceFormulaTower}) and isomorphism~(\ref{eq:correspondenceFormulaHtpyIsom}) induce the correspondence $a_V \leftrightsquigarrow a_V$.  Here, $V$ is also viewed as an element of $RO(C_{2^{n+1}})$ through pullback along the quotient map $C_{2^{n+1}} \to C_{2^{n+1}}/C_2$.
\end{prop}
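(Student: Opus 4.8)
The plan is to recognize the class $a_V$ on both sides of isomorphism~(\ref{eq:correspondenceFormulaHtpyIsom}) as the image of a single topological Euler class under a unit map, and then to observe that the tower equivalence~(\ref{eq:correspondenceFormulaTower}) is built entirely out of (lax) symmetric monoidal functors, so that it intertwines these unit maps. Since every $c_i \geq 0$, we have $a_V = a_\sigma^{c_0}a_{\lambda_1}^{c_1}\cdots a_{\lambda_{n-1}}^{c_{n-1}}$, so by multiplicativity of~(\ref{eq:correspondenceFormulaHtpyIsom}) it suffices to treat the case where $V$ is a single irreducible $(\Cnplusone/C_2)$-representation. Write $q\colon\Cnplusone\to\Cnplusone/C_2$ for the quotient map; as an element of $RO(\Cnplusone)$ the representation $V$ is $q^{*}V$, and $(q^{*}V)^{C_2}=V$ since $C_2$ acts trivially on $q^{*}V$.

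First I would record two naturality facts. \textbf{(i)} The inflation functor $q^{*}\colon\Sp_{\Cnplusone/C_2}\to\Sp_{\Cnplusone}$ is strong symmetric monoidal, satisfies $q^{*}S^{V}=S^{q^{*}V}$ and $q^{*}S(V)_+=S(q^{*}V)_+$, and therefore carries the cofiber sequence $S(V)_+\to S^0\xrightarrow{a_V}S^{V}$ defining the topological Euler class $a_V$ to the corresponding one for $a_{q^{*}V}$; hence $q^{*}(a_V)=a_{q^{*}V}$. \textbf{(ii)} Since $\Pb_{\Cnplusone/C_2}=\EF[C_2]\wedge q^{*}(-)$, there is a natural equivalence $\Pb_{\Cnplusone/C_2}(Y)\wedge S^{q^{*}V}\simeq\Pb_{\Cnplusone/C_2}(Y\wedge S^{V})$, and, being lax symmetric monoidal, $\Pb_{\Cnplusone/C_2}$ sends the unit map of a $(\Cnplusone/C_2)$-ring spectrum $R$ to a map $\EF[C_2]=\Pb_{\Cnplusone/C_2}(S^0)\to\Pb_{\Cnplusone/C_2}(R)$ which, precomposed with the canonical map $S^0\to\EF[C_2]$, is the unit of $\Pb_{\Cnplusone/C_2}(R)$. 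Together, (i) and (ii) say that $\Pb_{\Cnplusone/C_2}$ carries the image of $a_V$ in $\pi^{H/C_2}_{-V}(R)$ to the image of $a_{q^{*}V}$ in $\pi^{H}_{-q^{*}V}(\Pb_{\Cnplusone/C_2}(R))$, compatibly with the isomorphism of \cref{lem:PullbackHomotopyGroups}.

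Next I would pass to associated gradeds in \cref{thm:DualTowerEquivalenceMain} with $k=1$, that is, in the zig-zag of \cref{prop:spectrumLevelEquivalence}, \cref{prop:LocalizedDualTowerEquivalence1}, and \cref{prop:LocalizedDualTowerEquivalence2}. Because the associated graded of the double $\D(-)$ in degree $2d$ is the $d$-slice and is contractible in degree $2d-1$, this produces an equivalence
\[a_{\lambda_{n-1}}^{-1}H\Z_{\Cnplusone}\left[\Cn\cdot N_{C_2}^{C_4}(\ti^{\Cnplusone})\,|\,i\geq 1\right]\;\simeq\;\Pb_{\Cnplusone/C_2}\left(a_{\lambda_{n-1}}^{-1}H\Z_{\Cn}\left[\Cn\cdot\ti^{\Cn}\,|\,i\geq 1\right]\right),\]
which is an equivalence of ring spectra compatible with unit maps: the quoted propositions are assembled from symmetric monoidal functors ($\Phi^{(-)}$, $q^{*}$, $\EF[-]\wedge(-)$) together with the monoidal identification of generators $\Phi^{C_4}N_{C_2}^{C_4}(\ti^{\Cnplusone})\leftrightsquigarrow\Phi^{C_2}(\ti^{\Cn})$ of \cref{lem:tiGeneratorRelations}. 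Applying $\pi^{H}_{-q^{*}V}(-)$ to this equivalence, the class $a_V$ on the left --- the image of the topological Euler class $a_{q^{*}V}$ under the unit --- is by (i) and (ii) carried to $\Pb_{\Cnplusone/C_2}(a_V)$, which under isomorphism~(\ref{eq:correspondenceFormulaHtpyIsom}) (the isomorphism of \cref{lem:PullbackHomotopyGroups}) is exactly the class $a_V$ on the $\Cn$-side. This establishes the correspondence $a_V\leftrightsquigarrow a_V$.

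The step I expect to require the most care is the verification that the spectrum-level equivalences of \cref{sec:IsomLocalizedDualTowers} refine to equivalences of $E_\infty$-ring spectra compatible with unit maps; granting this, the remainder is formal naturality of Euler classes under inflation along $q$. Alternatively one can sidestep the multiplicative structure entirely: since identifying the image of a class pulled back from the sphere requires only knowing where the unit $S^0\to(-)$ is sent, it is enough to track the homotopy class of that unit map through the equivalences of \cref{prop:spectrumLevelEquivalence}, \cref{prop:LocalizedDualTowerEquivalence1}, and \cref{prop:LocalizedDualTowerEquivalence2}.
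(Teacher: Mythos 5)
Your proposal is correct and takes essentially the same approach as the paper: both track the class $a_V$ as the composite of the topological Euler class $S^{-V}\to S^{0}$ with the unit map, and both ultimately rely on the $\Phi^{C_2}\dashv\Pb_{\Cnplusone/C_2}$ adjunction (in your write-up this is hidden in the verification that your facts (i) and (ii) are compatible with the isomorphism of \cref{lem:PullbackHomotopyGroups}, whose proof is precisely that adjunction chase). The paper's version is a bit more economical --- it applies $\Phi^{C_2}(-)$ to the composite directly and uses $\Phi^{C_2}(a_V)=a_V$ since $V^{C_2}=V$, so it needs neither the reduction to irreducible $V$ nor any check of $E_\infty$-ring compatibility of the tower equivalence; as you note in your final paragraph, the latter can be sidestepped entirely by tracking only the homotopy class of the unit.
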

\begin{proof}
By definition, on the left hand side of isomorphism~(\ref{eq:correspondenceFormulaHtpyIsom}), the special class ${a_V \in \pi_{-V}^{C_{2^{n+1}}} (\EF[C_4] \wedge H\Z_{C_{2^{n+1}}})}$ is defined to be the composite 
\[S^{-V} \xrightarrow{a_V} S^0 \longrightarrow \EF[C_4] \wedge H\Z_{C_{2^{n+1}}} \simeq \Pb_{C_{2^{n+1}}/C_2}(\EF[C_2] \wedge H\Z_{C_{2^n}}),\]
where the first map is the Euler class $a_V \in a_{-V} S^0$ that corresponds to the inclusion map $S^0 \to S^V$, and the last equivalence can be deduced from (\ref{eq:correspondenceFormulaTower}).  By adjunction between the functors $\Pb_{C_{2^{n+1}}/C_2}$ and $\Phi^{C_2}$, the composition above corresponds to the composition
\[\Phi^{C_2}(S^{-V}) \xrightarrow{\Phi^{C_2}(a_V)} \Phi^{C_2}(S^0) \longrightarrow \EF[C_2] \wedge H\Z_{C_{2^n}}.\]

By our assumption on $V$, we have the equality $V^{C_2} = V$.  Therefore, $\Phi^{C_2}(S^{-V}) = S^{-V}$ and ${\Phi^{C_2}(a_V) = a_V \in \pi_{-V}^{C_{2^n}} S^0}$.  It follows that the composition map is the class ${a_V \in \pi_{-V}^{\Cn} (\EF[C_2] \wedge H\Z_{\Cn})}$.  
\end{proof}

\begin{prop}\label{prop:uVCorrespondenceFormula}
Suppose $V \in RO(\Cnplusone/C_2)$ is of the form 
\[c_0 (2\sigma) + c_1 \lambda_1 + \cdots + c_{n-1} \lambda_{n-1},\]
where $c_i \geq 0$ for all $0 \leq i \leq n-1$.  Equivalence~(\ref{eq:correspondenceFormulaTower}) and isomorphism~(\ref{eq:correspondenceFormulaHtpyIsom}) induce the correspondence $u_V \leftrightsquigarrow u_V$, where $V$ is also viewed as an element of $RO(C_{2^{n+1}})$ through pullback along the quotient map $C_{2^{n+1}} \to C_{2^{n+1}}/C_2$.
\end{prop}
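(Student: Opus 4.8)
The plan is to run the argument of \cref{prop:aVCorrespondenceFormula} with Euler classes replaced by orientation classes, after first reducing to indecomposable inputs. Since $u_{V\oplus V'}=u_V\cdot u_{V'}$ and both the equivalence~(\ref{eq:correspondenceFormulaTower}) and the induced isomorphism~(\ref{eq:correspondenceFormulaHtpyIsom}) are multiplicative, it suffices to prove $u_W\leftrightsquigarrow u_W$ for each building block $W$ of a representation of the stated form, namely $W=2\sigma=\lambda_0$ and $W=\lambda_j$ for $1\le j\le n-1$; the general case then follows by taking products, and the statements for the iterated equivalences of \cref{thm:DualTowerEquivalenceMain} follow by composition. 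Fix such a $W$.

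The key representation-theoretic input, playing the role of the identity $V^{C_2}=V$ in \cref{prop:aVCorrespondenceFormula}, is that $W$ is \emph{inflated} along $q\colon\Cnplusone\to\Cnplusone/C_2$: the order-two subgroup $C_2\subseteq\Cnplusone$ acts trivially on $2\sigma$ and on each $\lambda_j$ with $j\le n-1$, as it rotates these representations through an even multiple of $\pi$. Thus $W^{C_2}=W$ in $RO(\Cnplusone/C_2)$ and $\Phi^{C_2}(S^{|W|-W})\simeq S^{|W|-W}$, so \cref{lem:PullbackHomotopyGroups} matches the degree of $u_W$ on the left with that of $u_W$ on the right with no regrading. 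Exactly as in \cref{prop:aVCorrespondenceFormula}, $u_W$ on the left is the composite
\[
S^{|W|-W}\xrightarrow{u_W}\EF[C_4]\wedge H\Z_{\Cnplusone}\ \simeq\ \Pb_{\Cnplusone/C_2}\bigl(\EF[C_2]\wedge H\Z_{\Cn}\bigr),
\]
where the last equivalence is the $I=\varnothing$ case of \cref{prop:spectrumLevelEquivalence}, using $\BPCnplusone\langle\varnothing\rangle\simeq H\Z_{\Cnplusone}$, $\BPCn\langle\varnothing\rangle\simeq H\Z_{\Cn}$, and $\EF[C_4]\wedge\Pb_{\Cnplusone/C_2}(-)\simeq\Pb_{\Cnplusone/C_2}(\EF[C_2]\wedge(-))$. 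Applying the adjunction $(\Phi^{C_2}\dashv\Pb_{\Cnplusone/C_2})$ and the counit equivalence $\Phi^{C_2}\Pb_{\Cnplusone/C_2}\simeq\id$, this composite corresponds to
\[
\Phi^{C_2}(S^{|W|-W})=S^{|W|-W}\xrightarrow{\Phi^{C_2}(u_W)}\EF[C_2]\wedge H\Z_{\Cn},
\]
under the induced identification of $\Phi^{C_2}(\EF[C_4]\wedge H\Z_{\Cnplusone})$ with $\EF[C_2]\wedge H\Z_{\Cn}$. So the proposition reduces to the identity $\Phi^{C_2}(u_W)=u_W$ in $\pi_{|W|-W}^{\Cn}(\EF[C_2]\wedge H\Z_{\Cn})$.

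This last identity is the crux, and the step I expect to be the main obstacle, since $\EF[C_2]\wedge H\Z_{\Cn}$ has contractible underlying spectrum and so $u_W$ cannot be detected by restriction to the trivial subgroup as in the unlocalized setting. The approach I would take is to exploit that geometric fixed points carry the orientation class of an inflated representation to the orientation class of the corresponding quotient representation. Concretely, the equivalence of \cref{prop:spectrumLevelEquivalence} used above is assembled from $\Phi^{C_2}$-comparisons of twisted monoid rings together with the identification $\Phi^{C_4}N_{C_2}^{C_4}(\ti^{\Cnplusone})\leftrightsquigarrow\Phi^{C_2}(\ti^{\Cn})$ of \cref{lem:tiGeneratorRelations}; because $W$ is inflated, the class $u_W^{\Cnplusone}$ is built from data that $\Phi^{C_2}$ returns to the analogous $\Cn$-data, so tracking $u_W$ through the construction yields $u_W^{\Cn}$ on the nose. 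As an independent check, one can instead argue multiplicatively: in these localized $H\Z$-algebras the orientation classes are determined, modulo the classes already matched, by their relations with the Euler classes $a_V$ and with the invertible class $a_{\lambda_{n-1}}$, whose correspondence $a_V\leftrightsquigarrow a_V$ is \cref{prop:aVCorrespondenceFormula}; combined with the multiplicativity of~(\ref{eq:correspondenceFormulaHtpyIsom}), this again forces $u_W\leftrightsquigarrow u_W$.
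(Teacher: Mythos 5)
Your setup matches the paper's: you unwind the definition of $u_V$ as a composite through $\Pb_{\Cnplusone/C_2}(\EF[C_2]\wedge H\Z_{\Cn})$, use that $V$ is inflated so that $V^{C_2}=V$ and $\Phi^{C_2}(S^{|V|-V})\simeq S^{|V|-V}$, and apply the $(\Phi^{C_2}\dashv\Pb_{\Cnplusone/C_2})$ adjunction to reduce to identifying a single class in $\pi_{|V|-V}^{\Cn}(\EF[C_2]\wedge H\Z_{\Cn})$. (Your preliminary reduction to the indecomposable summands $2\sigma$ and $\lambda_j$ is harmless but unnecessary; the paper treats general $V$ directly.)

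The gap sits exactly at the step you yourself flag as the crux, and neither of your two proposed arguments closes it. Approach (a) --- ``tracking $u_W$ through the construction yields $u_W$ on the nose'' --- is an assertion of the proposition rather than a proof of it: the identification $\Phi^{C_2}(\EF[C_4]\wedge H\Z_{\Cnplusone})\simeq \EF[C_2]\wedge H\Z_{\Cn}$ is produced by the tower equivalence, not by an a priori computation of $\Phi^{C_2}H\Z_{\Cnplusone}$ (which is not $H\Z_{\Cn}$), so there is no ready-made ``orientation class goes to orientation class'' statement to invoke; that is precisely what must be shown. Approach (b) is also shaky: the relations tying orientation classes to Euler classes (the gold relations $a_{\lambda_i}u_{\lambda_j}=2^{j-i}a_{\lambda_j}u_{\lambda_i}$) involve non-invertible factors of $2$, so inverting $a_{\lambda_{n-1}}$ does not let you solve for $u_{\lambda_j}$ in terms of already-matched classes. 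The paper closes the gap with a short counting argument you miss: the localization map $\mathbb{Z}\cong\pi_{|V|-V}^{\Cnplusone}H\Z_{\Cnplusone}\to\pi_{|V|-V}^{\Cnplusone}(a_{\lambda_{n-1}}^{-1}H\Z_{\Cnplusone})\cong\mathbb{Z}/2^{n}$ sends $u_V$ to a generator; the adjunction isomorphism carries this generator to a generator of $\pi_{|V|-V}^{\Cn}(a_{\lambda_{n-1}}^{-1}H\Z_{\Cn})\cong\mathbb{Z}/2^{n}$; and that group is generated by the image of the generator $u_V$ of $\pi_{|V|-V}^{\Cn}H\Z_{\Cn}\cong\mathbb{Z}$. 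The class you need to identify is therefore forced to be (the image of) $u_V$ simply because both localized homotopy groups are cyclic and generated by the respective orientation classes. Supplying this observation would complete your argument.
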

\begin{proof}
Note that by our assumption, $V$ is orientable, and the orientation class $u_V$ is defined to be the class in ${\pi_{|V|-V}^{\Cnplusone} (\EF[C_4] \wedge H\Z_{\Cnplusone})}$ that represents the composite map 
\[S^{|V|-V} \xrightarrow{u_V} H\Z_{C_{2^{n+1}}} \longrightarrow \EF[C_4] \wedge H\Z_{C_{2^{n+1}}} \simeq \Pb_{C_{2^{n+1}}/C_2}(\EF[C_2] \wedge H\Z_{C_{2^n}}).\]
Here, the first map $u_V$ is the (preferred) generator for $\pi_{|V| - V}^{\Cnplusone} H\Z_{\Cnplusone} \cong \mathbb{Z}$, and the second map is the quotient map 
\[\mathbb{Z} \cong \pi_{|V| - V}^{C_{2^{n+1}}} H\Z_{C_{2^{n+1}}} \longrightarrow \pi_{|V| - V}^{C_{2^{n+1}}} (a_{\lambda_{n-1}}^{-1} H\Z_{C_{2^{n+1}}}) \cong \mathbb{Z}/2^{n}.\]

By adjunction between the functors $\Pb_{C_{2^{n+1}}/C_2}$ and $\Phi^{C_2}$, the composite map $u_V$ corresponds to the map 
\[S^{|V| - V} \simeq \Phi^{C_2}(S^{|V| - V}) \longrightarrow \EF[C_2] \wedge H\Z_{\Cn} \simeq a_{\lambda_{n-1}}^{-1} H\Z_{\Cn}.\]
This represents the generator in $\pi_{|V| - V}^{\Cn} (a_{\lambda_{n-1}}^{-1} H\Z_{\Cn}) \cong \mathbb{Z}/2^{n}$.  Therefore, this map must be in the image of a generator $u_V$ for $\pi_{|V| - V}^{\Cn} H\Z_{\Cn} \cong \mathbb{Z}$ under the composition 
\[S^{|V| - V} \longrightarrow H\Z_{\Cn} \longrightarrow \EF[C_2] \wedge H\Z_{\Cn}.\]
This shows that $u_V \leftrightsquigarrow u_V$.  
\end{proof}

\begin{prop}\label{prop:tiCorrespondenceFormula}
For all $1 \leq j \leq n$, Equivalence~(\ref{eq:correspondenceFormulaTower}) and isomorphism~(\ref{eq:correspondenceFormulaHtpyIsom}) induce the correspondence 
\[ N_{C_2}^{C_{2^{j+1}}}(\ti^{C_{2^{n+1}}})a_{\lambda_j}^{2^{j-1}(2^i-1)} \leftrightsquigarrow N_{C_2}^{C_{2^j}} (\ti^{C_{2^n}}) \]
\end{prop}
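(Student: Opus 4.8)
The plan is to reduce the statement to a computation of $C_2$-geometric fixed points. By the proof of \cref{lem:PullbackHomotopyGroups} (applied with $G=\Cnplusone$, $N=C_2$, and intermediate group $C_{2^{j+1}}$), the isomorphism~(\ref{eq:correspondenceFormulaHtpyIsom}) carries a class $\alpha$ on the left-hand side to its $\Phi^{C_2}$-adjoint; concretely, under the identification
\[\Phi^{C_2}\!\left(a_{\lambda_{n-1}}^{-1}H\Z_{\Cnplusone}\!\left[\Cn\cdot N_{C_2}^{C_4}(\ti^{\Cnplusone}) \,|\, i\geq 1\right]\right)\;\simeq\;a_{\lambda_{n-1}}^{-1}H\Z_{\Cn}\!\left[\Cn\cdot\ti^{\Cn} \,|\, i\geq 1\right]\]
supplied by \cref{prop:spectrumLevelEquivalence}, the correspondent of $\alpha$ is $\Phi^{C_2}(\alpha)$, regarded as an element of $\pi_\star^{C_{2^j}}$ of (the restriction to $C_{2^j}$ of) the localized slice associated graded of $\BPCn$. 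Hence it suffices to prove
\[\Phi^{C_2}\!\left(N_{C_2}^{C_{2^{j+1}}}(\ti^{\Cnplusone})\cdot a_{\lambda_j}^{2^{j-1}(2^i-1)}\right)=N_{C_2}^{C_{2^j}}(\ti^{\Cn}).\]

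First I would discard the Euler class. The order-two subgroup $C_2\subseteq C_{2^{j+1}}$ acts on the faithful two-dimensional representation $\lambda_j$ by $-\id$, so $\lambda_j^{C_2}=0$; since $\Phi^{C_2}$ is symmetric monoidal and sends Euler classes to Euler classes, $\Phi^{C_2}(a_{\lambda_j})=a_{\lambda_j^{C_2}}=a_0=\id_{S^0}$, and thus $\Phi^{C_2}\!\left(N_{C_2}^{C_{2^{j+1}}}(\ti^{\Cnplusone})\,a_{\lambda_j}^{2^{j-1}(2^i-1)}\right)=\Phi^{C_2}\!\left(N_{C_2}^{C_{2^{j+1}}}(\ti^{\Cnplusone})\right)$. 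The power of $a_{\lambda_j}$ serves only for bookkeeping: $N_{C_2}^{C_{2^{j+1}}}(\ti^{\Cnplusone})$ has degree $(2^i-1)\rho_{2^{j+1}}$, in which $\lambda_j$ occurs with multiplicity $2^{j-1}$, so multiplying by $a_{\lambda_j}^{2^{j-1}(2^i-1)}$ cancels exactly that summand and produces a degree $V$ with $V^{C_2}=(2^i-1)\rho_{2^{j}}$, the degree of $N_{C_2}^{C_{2^j}}(\ti^{\Cn})$ under \cref{lem:PullbackHomotopyGroups}; as $a_{\lambda_j}$ is invertible in the localized ring, this is merely a choice of representative.

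Next I would compute $\Phi^{C_2}\!\left(N_{C_2}^{C_{2^{j+1}}}(\ti^{\Cnplusone})\right)$ by factoring the norm through $C_4$. By transitivity of the multiplicative norm, $N_{C_2}^{C_{2^{j+1}}}(\ti^{\Cnplusone})=N_{C_4}^{C_{2^{j+1}}}\!\left(N_{C_2}^{C_4}(\ti^{\Cnplusone})\right)$, and since $C_2\subseteq C_4$ the norm along $C_4\subseteq C_{2^{j+1}}$ commutes with $\Phi^{C_2}$ by the standard compatibility of norms with geometric fixed points \cite[Proposition~B.96]{HHR}; using $C_4/C_2=C_2$ and $C_{2^{j+1}}/C_2=C_{2^j}$ this gives
\[\Phi^{C_2}\!\left(N_{C_2}^{C_{2^{j+1}}}(\ti^{\Cnplusone})\right)=N_{C_2}^{C_{2^j}}\!\left(\Phi^{C_2}\!\left(N_{C_2}^{C_4}(\ti^{\Cnplusone})\right)\right).\]
It therefore remains to treat the base case $j=1$, namely the equality $\Phi^{C_2}\!\left(N_{C_2}^{C_4}(\ti^{\Cnplusone})\right)=\ti^{\Cn}$ in $\pi_{(2^i-1)\rho_2}^{C_2}$ of (the restriction to $C_2$ of) the localized slice associated graded of $\BPCn$.

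The base case is the crux. Applying a further $\Phi^{C_2}$ turns the left-hand side into $\Phi^{C_4}N_{C_2}^{C_4}(\ti^{\Cnplusone})$ (iterated geometric fixed points) and the right-hand side into $\Phi^{C_2}(\ti^{\Cn})$, and these coincide by \cref{lem:tiGeneratorRelations}. To upgrade this to an equality of the $C_2$-equivariant classes themselves, I would observe that $\lambda_{n-1}$ restricts to $2\sigma$ on $C_2$, so the restriction to $C_2$ of $a_{\lambda_{n-1}}^{-1}H\Z_{\Cn}[\ldots]$ is $\widetilde{E}C_2$-local and hence lies in the image of $\Pb_{C_2/C_2}$ (\cref{lem:easyFact1}); \cref{lem:PullbackHomotopyGroups} then identifies the geometric-fixed-points map $\Phi^{C_2}\colon\pi_{(2^i-1)\rho_2}^{C_2}\to\pi_{2^i-1}^{e}$ on this spectrum with an isomorphism, so injectivity yields the desired equality. (Alternatively, this base identification is built into the construction of~(\ref{eq:correspondenceFormulaTower}): unwinding \cref{prop:LocalizedDualTowerEquivalence1}, \cref{prop:LocalizedDualTowerEquivalence2} and the proof of \cref{prop:spectrumLevelEquivalence}, the equivalence~(\ref{eq:correspondenceFormulaTower}) matches the $C_2$-geometric fixed points of the localized slice associated gradeds of $\BPCnplusone$ and $\BPCn$ precisely via $\Phi^{C_2}N_{C_2}^{C_4}(\ti^{\Cnplusone})\leftrightsquigarrow\ti^{\Cn}$.) Combining the three steps gives $\Phi^{C_2}\!\left(N_{C_2}^{C_{2^{j+1}}}(\ti^{\Cnplusone})\,a_{\lambda_j}^{2^{j-1}(2^i-1)}\right)=N_{C_2}^{C_{2^j}}(\ti^{\Cn})$, which is the claimed correspondence. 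I expect the base case — more precisely, passing from the $\pi_*^{e}$-level identity of \cref{lem:tiGeneratorRelations} to the $\pi_\star^{C_2}$-level identity needed here — to be the only non-formal point; the remaining steps are formal manipulations with norms, Euler classes and the pullback adjunction.
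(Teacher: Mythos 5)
Your proof is correct and uses essentially the same ingredients as the paper's: the crux is \cref{lem:tiGeneratorRelations} together with the compatibility of norms with geometric fixed points, and the observation that the isomorphism~(\ref{eq:correspondenceFormulaHtpyIsom}) is implemented by the $\Phi^{C_2}$-adjunction. The only difference is organizational — you run the argument "top down" (apply $\Phi^{C_2}$ to the left-hand side, kill the Euler class via $\lambda_j^{C_2}=0$, factor the norm through $C_4$ and commute it past $\Phi^{C_2}$ to isolate the $j=1$ base case, then verify that base case by a further application of $\Phi^{C_2}$ and injectivity on the $\widetilde{E}C_2$-local ring), whereas the paper runs it "bottom up" (establish the $j=1$ case by matching the explicit representatives $\ti^{\Cn}a_\sigma^{2^i-1}$ and $N_{C_2}^{C_4}(\ti^{\Cnplusone})a_{\rhobar_4}^{2^i-1}$ of $\Phi^{C_2}(\ti^{\Cn})=\Phi^{C_4}N_{C_2}^{C_4}(\ti^{\Cnplusone})$, cancel $a_\sigma$ using \cref{prop:aVCorrespondenceFormula}, then apply $N_{C_4}^{C_{2^{j+1}}}$). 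Both routes are valid and reduce to the same two facts; your injectivity step (restriction to $C_2$ inverts $a_{\lambda_{n-1}}|_{C_2}=a_\sigma^2$, so the object is $\widetilde{E}C_2$-local and $\Phi^{C_2}$ is an isomorphism on its homotopy) does the work that the paper's $a_\sigma$-cancellation does.
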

\begin{proof}
We will first prove that when $j=1$, we have the correspondence 
\[ N_{C_2}^{C_4} (\ti^{C_{2^{n+1}}}) a_{\lambda_1}^{2^i-1} \leftrightsquigarrow  \ti^{C_{2^n}}.\]
The general case will follow immediately by applying the norm.  

By \cref{lem:tiGeneratorRelations}, we have the equality 
\[\Phi^{C_2}(\ti^{C_{2^n}}) = \Phi^{C_4} N_{C_2}^{C_4} (\ti^{C_{2^{n+1}}}).\]
For the map 
\[\ti^{\Cn}: S^{(2^i-1)(1+\sigma)} \longrightarrow i_{C_2}^* \BPCn,\]
the $C_2$-geometric fixed points $\Phi^{C_2}(\ti^{C_{2^n}})$ is represented by the composite map $\ti^{\Cn}a_\sigma^{2^i-1}$: 
\[S^{2^i-1} \xrightarrow{a_\sigma^{2^i-1}} S^{(2^i-1)(1+\sigma)} \xrightarrow{\ti^{\Cn}} i_{C_2}^*\BPCn \longrightarrow \EF[C_2] \wedge i_{C_2}^*\BPCn.\]
Similarly, for the map 
\[N_{C_2}^{C_4} (\ti^{C_{2^{n+1}}}): S^{(2^i-1)(1 + \sigma + \lambda_1)} \longrightarrow i_{C_4}^* \BPCnplusone,\]
the $C_4$-geometric fixed points $\Phi^{C_4} N_{C_2}^{C_4} (\ti^{C_{2^{n+1}}})$ is represented by the composite map $N_{C_2}^{C_4} (\ti^{C_{2^{n+1}}}) a_{\bar{\rho}_4}^{2^i-1}:$
\[S^{2^i-1} \xrightarrow{a_{\bar{\rho}_4}^{2^i-1}} S^{(2^i-1)(1 + \sigma + \lambda_1)} \xrightarrow{N_{C_2}^{C_4} (\ti^{C_{2^{n+1}}})} i_{C_4}^* \BPCnplusone \longrightarrow \EF[C_4] \wedge i_{C_4}^* \BPCnplusone.\]
Therefore, this implies that we have the correspondence 
\[\ti^{C_{2^n}} a_\sigma^{2^i-1} \leftrightsquigarrow N_{C_2}^{C_4} (\ti^{C_{2^{n+1}}}) a_{\bar{\rho}_4}^{2^i-1} = N_{C_2}^{C_4} (\ti^{C_{2^{n+1}}}) a_\sigma^{2^i-1}a_{\lambda_1}^{2^i-1}.\]
Since we have already established the correspondence $a_\sigma \leftrightsquigarrow a_\sigma$ in \cref{prop:aVCorrespondenceFormula}, we obtain the correspondence   
\[\ti^{C_{2^n}} \leftrightsquigarrow N_{C_2}^{C_4} (\ti^{C_{2^{n+1}}}) a_{\lambda_1}^{2^i-1}. \]
Applying the norm, we deduce that for $1 \leq j \leq n$, we have the correspondence
\[N_{C_2}^{C_{2^j}} (\ti^{C_{2^n}}) \leftrightsquigarrow N_{C_4}^{C_{2^{j+1}}}\left( N_{C_2}^{C_4}(\ti^{\Cnplusone}) a_{\lambda_1}^{2^i-1} \right) = N_{C_2}^{C_{2^{j+1}}}(\ti^{\Cnplusone})a_{\lambda_j}^{2^{j-1}(2^i-1)}.\]
\end{proof}

We are now ready to prove correspondence formulas for the general case of the equivalence in \cref{thm:DualTowerEquivalenceMain}, which states that for $1 \leq k \leq n$, we have the equivalence
\begin{equation}\label{eq:corresopndenceFormulaGeneralCase1}
\EF[C_{2^{k+1}}] \wedge P_\bullet\left(\BPCnplusone \langle I \rangle \right) \simeq \Pb_{C_{2^{n+1}}/C_{2^k}}\D^k\left(\EF[C_{2}] \wedge P_\bullet (\BPCnminuskplusone \langle I \rangle) \right).
\end{equation}
The slice associated graded for the tower $\EF[C_{2^{k+1}}] \wedge P_\bullet\left(\BPCnplusone \langle I \rangle \right)$ is 
\begin{align*}
& \,\,\EF[\Ckplusone] \wedge H\Z_{\Cnplusone}\left[\Cnplusone \cdot \ti^{\Cnplusone} \,|\, i \in I\right]\\
\simeq & \,\, a_{\lambda_{n-k}}^{-1}H\Z_{\Cnplusone} \left[\Cnminuskplusone \cdot N_{C_2}^{\Ckplusone}(\ti^{\Cnplusone}) \,|\, i \in I \right],
\end{align*}
and the slice associated graded for the tower $\EF[C_{2}] \wedge P_\bullet (\BPCnminuskplusone \langle I \rangle)$ is 
\begin{align*}
& \,\, \EF[C_2] \wedge H\Z_{\Cnminuskplusone}\left[\Cnminuskplusone \cdot \ti^{\Cnminuskplusone} \,|\, i \in I \right]\\ 
\simeq & \,\, a_{\lambda_{n-k}}^{-1} H\Z_{\Cnminuskplusone} \left[\Cnminuskplusone \cdot \ti^{\Cnminuskplusone} \,|\, i \in I \right].
\end{align*}
For every $V \in RO(\Cnplusone)$ and $\Ck \subseteq H \subseteq \Cnplusone$, ~(\ref{eq:corresopndenceFormulaGeneralCase1}) induces an isomorphism 
\begin{align}\label{eq:correspondenceGeneralHtpIsom}
& \,\, \pi_V^{H} \left(a_{\lambda_{n-k}}^{-1}H\Z_{\Cnplusone} \left[\Cnminuskplusone \cdot N_{C_2}^{\Ckplusone}(\ti^{\Cnplusone}) \,|\, i \in I \right] \right) \nonumber \\
\cong & \,\, \Mpi_{V^{\Ck}}^{H/\Ck} \left(a_{\lambda_{n-k}}^{-1} H\Z_{\Cnminuskplusone} \left[\Cnminuskplusone \cdot \ti^{\Cnminuskplusone} \,|\, i \in I \right] \right)
\end{align}
by \cref{lem:PullbackHomotopyGroups}.  

Recall that the reduced regular representation $\rhobar_{2^{n+1}} \in RO(\Cnplusone)$ can be decomposed as 
\[\rhobar_{2^{n+1}} = 2^{n-1}\lambda_n + 2^{n-2}\lambda_{n-1} + \cdots + \lambda_1 + \sigma .\]
\begin{df}\rm \label{df:RhobarNotation}
For $1 \leq k \leq n+1$, define $\rhobar_{2^{n+1}/2^{k}} \in RO(\Cnplusone)$ to be the representation 
\[\rhobar_{2^{n+1}/2^{k}} := 2^{n-k-1}\lambda_{n-k} + 2^{n-k-2}\lambda_{n-k-1}+ \ldots + \lambda_1 + \sigma.\]
\end{df}
 
\begin{thm}[Correspondence formulas]\label{thm:CorrespondenceFormula} 
Equivalence~(\ref{eq:corresopndenceFormulaGeneralCase1}) and isomorphism~(\ref{eq:correspondenceGeneralHtpIsom}) induce the following correspondences: 
\begin{enumerate}
\item Suppose $V \in RO(\Cnplusone/\Ck) = RO(\Cnminuskplusone)$ is of the form $V = c_0 \sigma + c_1 \lambda_1 + \cdots + c_{n-k} \lambda_{n-k}$, where $c_i \geq 0$ for all $0 \leq i \leq n-k$.  Then we have the correspondence 
\[a_V \leftrightsquigarrow a_V,\]
where $V$ is also viewed as an element of $RO(\Cnplusone)$ through pullback along the map $\Cnplusone \to \Cnplusone/\Ck$. 
\item Suppose $V \in RO(\Cnplusone/\Ck) = RO(\Cnminuskplusone)$ is of the form $V = c_0 (2\sigma) + c_1 \lambda_1 + \cdots + c_{n-k}\lambda_{n-k}$, where $c_i \geq 0$ for all $0 \leq i \leq n-k$.  Then we have the correspondence
\[u_V \leftrightsquigarrow u_V, \]
where as above $V$ is also viewed as an element of $RO(\Cnplusone)$ through pullback along the map $\Cnplusone \to \Cnplusone/\Ck$. 
\item For all $i \in I$ and $1 \leq j \leq n -k +1$, we have the correspondence 
\[N_{C_2}^{C_{2^{k+j}}}(\ti^{\Cnplusone}) \cdot \left(\frac{a_{\rhobar_{2^{k+j}}}}{a_{\rhobar_{2^{k+j}/2^k}}} \right)^{2^i-1} \leftrightsquigarrow N_{C_2}^{C_{2^j}}(\ti^{\Cnminuskplusone}).\]
\end{enumerate}
\end{thm}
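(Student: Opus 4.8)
The plan is to prove \cref{thm:CorrespondenceFormula} by induction on $k$, using that the equivalence \eqref{eq:corresopndenceFormulaGeneralCase1} is, by its construction in \cref{thm:DualTowerEquivalenceMain}, the composite of $k$ single-step equivalences. Concretely, \eqref{eq:corresopndenceFormulaGeneralCase1} factors as the single-step equivalence
\[
\EF[\Ckplusone]\wedge P_\bullet(\BPCnplusone\langle I\rangle)\ \simeq\ \Pb_{\Cnplusone/C_2}\D\bigl(\EF[\Ck]\wedge P_\bullet(\BPCn\langle I\rangle)\bigr)
\]
followed by the iterated equivalence of \cref{thm:DualTowerEquivalenceMain} applied to $\BPCn\langle I\rangle$ with iteration count $k-1$; here transitivity of the pullback functors, $\Pb_{G/N_1}\circ\Pb_{(G/N_1)/(N_2/N_1)}\simeq\Pb_{G/N_2}$, together with the commutation of $\D$ with levelwise $\Pb$, reassembles the two into \eqref{eq:corresopndenceFormulaGeneralCase1}. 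A correspondence for \eqref{eq:corresopndenceFormulaGeneralCase1} is therefore the composite of the correspondences for the individual single-step equivalences, and for those I will invoke \cref{prop:aVCorrespondenceFormula,prop:uVCorrespondenceFormula,prop:tiCorrespondenceFormula}: their proofs use only the ring structure on the localized slice associated gradeds, the norm--restriction adjunctions, \cref{lem:tiGeneratorRelations}, \cref{lem:PullbackHomotopyGroups}, and the multiplicativity $a_{V\oplus W}=a_Va_W$ of Euler classes, so they apply verbatim to the theories $\BPCn\langle I\rangle$ and to the variant single-step equivalences carrying an extra smash factor $\EF[C_{2^m}]$.

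Parts (1) and (2) will then be immediate. At each of the $k$ stages the representation $V\in RO(\Cnminuskplusone)=RO(\Cnplusone/\Ck)$ is pulled back from a quotient on which the relevant copy of $C_2$ acts trivially, so $V^{C_2}=V$, and \cref{prop:aVCorrespondenceFormula} (resp. \cref{prop:uVCorrespondenceFormula}) yields $a_V\leftrightsquigarrow a_V$ (resp. $u_V\leftrightsquigarrow u_V$) at that stage. Composing over all $k$ stages, and identifying the pullback along $\Cnplusone\to\Cnplusone/\Ck$ with the composite $\Cnplusone\to\Cnplusone/C_2\to\cdots\to\Cnplusone/\Ck$, I obtain $a_V\leftrightsquigarrow a_V$ and $u_V\leftrightsquigarrow u_V$.

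Part (3) carries the real content. The base case $k=1$ is \cref{prop:tiCorrespondenceFormula}, once I record the identity $\rhobar_{2^{j+1}}=2^{j-1}\lambda_j+\rhobar_{2^{j+1}/2}$ (read off \cref{df:RhobarNotation}), which gives $a_{\rhobar_{2^{j+1}}}/a_{\rhobar_{2^{j+1}/2}}=a_{\lambda_j}^{2^{j-1}}$ and so matches the formula of \cref{prop:tiCorrespondenceFormula} with the shape of part (3). For the inductive step, the inductive hypothesis says $N_{C_2}^{C_{2^j}}(\ti^{\Cnminuskplusone})$ corresponds, under the inner iterated equivalence, to $N_{C_2}^{C_{2^{k-1+j}}}(\ti^{\Cn})\cdot(a_{\rhobar_{2^{k-1+j}}}/a_{\rhobar_{2^{k-1+j}/2^{k-1}}})^{2^i-1}$; I then push this through the first single-step equivalence term by term, applying \cref{prop:tiCorrespondenceFormula} in the form $N_{C_2}^{C_{2^{k-1+j}}}(\ti^{\Cn})\leftrightsquigarrow N_{C_2}^{C_{2^{k+j}}}(\ti^{\Cnplusone})\,a_{\lambda_{k-1+j}}^{2^{k+j-2}(2^i-1)}$ and \cref{prop:aVCorrespondenceFormula} to the remaining Euler classes (all of whose $\lambda$-indices are $\le n-1$ since $j\le n-k+1$). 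The result is $N_{C_2}^{C_{2^{k+j}}}(\ti^{\Cnplusone})\cdot a_{\lambda_{k-1+j}}^{2^{k+j-2}(2^i-1)}\cdot(a_{\rhobar_{2^{k-1+j}}}/a_{\rhobar_{2^{k-1+j}/2^{k-1}}})^{2^i-1}$, and it remains to verify the Euler-class identity
\[
a_{\lambda_{k-1+j}}^{2^{k+j-2}}\cdot\frac{a_{\rhobar_{2^{k-1+j}}}}{a_{\rhobar_{2^{k-1+j}/2^{k-1}}}}=\frac{a_{\rhobar_{2^{k+j}}}}{a_{\rhobar_{2^{k+j}/2^{k}}}},
\]
which follows from the two identifications $\rhobar_{2^{k-1+j}/2^{k-1}}=\rhobar_{2^{k+j}/2^{k}}$ and $\rhobar_{2^{k+j}}=\rhobar_{2^{k-1+j}}+2^{k+j-2}\lambda_{k-1+j}$, both read off \cref{df:RhobarNotation}. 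This closes the induction.

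The hard part is not any single computation but the bookkeeping that supports it. I will need to keep careful track, stage by stage, of which quotient group each intermediate representation is pulled back from (so that the single-step correspondences of \cref{prop:aVCorrespondenceFormula,prop:uVCorrespondenceFormula,prop:tiCorrespondenceFormula} genuinely apply), to check that every Euler class entering a ratio $a_{\rhobar}/a_{\rhobar'}$ is invertible in the relevant localized slice associated graded so that the formulas are well-defined at every intermediate height, and to confirm that the composite of the $k$ single-step tower equivalences is precisely the equivalence \eqref{eq:corresopndenceFormulaGeneralCase1} of \cref{thm:DualTowerEquivalenceMain}, including the compatibility of $\D$ with $\Pb$ used to telescope the pullbacks. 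Once this scaffolding is in place, the arithmetic of the representations $\rhobar_{2^m}$ makes the induction close with no further difficulty.
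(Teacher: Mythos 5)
Your proof is correct and essentially the same as the paper's: parts (1)--(2) follow from \cref{prop:aVCorrespondenceFormula} and \cref{prop:uVCorrespondenceFormula}, and part (3) is proved by iterating the single-step correspondence of \cref{prop:tiCorrespondenceFormula} across the $k$ stages of the tower equivalence. The only cosmetic difference is that the paper chains the $k$ steps at the $j=1$ level and applies the norm $N_{C_2}^{C_{2^j}}$ once at the end, whereas you carry the norm through the induction and close with the (correct) Euler-class identity relating $\bar{\rho}_{2^{k+j}}$ to $\bar{\rho}_{2^{k-1+j}}$.
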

\begin{proof}
Correspondences (1) and (2) follow immediately from \cref{prop:aVCorrespondenceFormula} and \cref{prop:uVCorrespondenceFormula}.  To prove correspondence (3), we will apply \cref{prop:tiCorrespondenceFormula} iteratively.  For $j =1$, we have the correspondences
\begin{align*}
\ti^{\Cnminuskplusone} & \leftrightsquigarrow N_{C_2}^{C_4}(\ti^{C_{2^{n-k+2}}})\cdot a_{\lambda_1}^{2^i-1}\\     
& \leftrightsquigarrow N_{C_2}^{C_8}(\ti^{C_{2^{n-k+3}}})\cdot a_{\lambda_2}^{2(2^i-1)}a_{\lambda_1}^{2^i-1}\\
& \,\,\,\,\, \vdots \\
& \leftrightsquigarrow N_{C_2}^{\Ckplusone}(\ti^{C_{2^{n+1}}})\cdot a_{\lambda_k}^{2^{k-1}(2^i-1)}a_{\lambda_{k-1}}^{2^{k-2}(2^i-1)} \cdots a_{\lambda_2}^{2(2^i-1)} a_{\lambda_1}^{(2^i-1)}
\end{align*}
Applying the norm, it follows that for $1 \leq j \leq n-k+1$, we have the correspondence 
\begin{align*}
N_{C_2}^{C_{2^j}}(\ti^{\Cnminuskplusone}) & \leftrightsquigarrow N_{\Ckplusone}^{C_{2^{k+j}}} \left( N_{C_2}^{\Ckplusone}(\ti^{C_{2^{n+1}}})\cdot a_{\lambda_k}^{2^{k-1}(2^i-1)}a_{\lambda_{k-1}}^{2^{k-2}(2^i-1)} \cdots a_{\lambda_2}^{2(2^i-1)} a_{\lambda_1}^{(2^i-1)}\right) \\
&= N_{C_2}^{C_{2^{k+j}}}(\ti^{C_{2^{n+1}}}) \cdot a_{\lambda_{k+j-1}}^{2^{k+j-2}(2^i-1)} a_{\lambda_{k+j-2}}^{2^{k+j-3}(2^i-1)} \cdots a_{\lambda_{j}}^{2^{j-1}(2^i-1)}\\
&= N_{C_2}^{C_{2^{k+j}}}(\ti^{C_{2^{n+1}}}) \cdot \left(\frac{a_{\rhobar_{2^{k+j}}}}{a_{\rhobar_{2^{k+j}/2^k}}} \right)^{2^i-1}.\qedhere
\end{align*}
\end{proof}

\section{The Transchromatic Isomorphism Theorem}
\label{sec:TranschromaticIsomTheorem}

In this section, we will prove the Transchromatic Isomorphism Theorem (\cref{thm:TranschromaticMain}), which establishes shearing isomorphisms between the slice spectral sequences of $\BPG \langle I \rangle$ for different groups $G$.  

Recall that if $V$ is an element in $RO(C_{2^{n+1}})$ and $1 \leq k \leq n+1$, then $V^{C_{2^k}} \in RO(C_{2^{n+1}}/C_{2^k})$ is the fixed points of $V$ with respect to the subgroup $C_{2^k} \subseteq \Cnplusone$.  Also, recall from \cref{df:LineLVh-1} that the line $\mathcal{L}^V_{2^k-1}$ on the $(V+t-s, s)$-graded page of an $RO(C_{2^{n+1}})$-graded spectral sequence is defined by the equation 
\[s = (2^k-1)(t-s) + \tau_V(\mathcal{F}_{\leq 2^k}),\]
where by \cref{df:HmaxHmintau}, 
\[\tau_V(\mathcal{F}_{\leq 2^k}) = \max_{0 \leq j \leq k} \left(|V^{C_{2^j}}| \cdot 2^j - |V| \right).\]

\begin{df} \label{df:ConstantCinTranschromaticTheorem}\rm
For $V \in RO(C_{2^{n+1}})$ and $1 \leq k \leq n$, let $C_{V,k}$ be the non-negative constant defined by the equation
\[C_{V,k} = \frac{\tau_V(\mathcal{F}_{\leq 2^k}) - \left(|V^{C_{2^k}}|\cdot 2^k-|V|\right)}{2^k} \geq 0. \]
\end{df}

\begin{thm}[Shearing Isomorphism]\label{thm:TranschromaticMain}
Suppose $I \subseteq \N$.  For all $V \in RO(C_{2^{n+1}})$ and $1 \leq k \leq n$, there is a shearing isomorphism $d_{2^kr - (2^k-1)} \leftrightsquigarrow d_r$ between the following regions of spectral sequences:
\begin{enumerate}
\item The $(V+t-s, s)$-graded page of ${\SliceSS(\BPCnplusone \langle I \rangle)}$ on or above the line $\mathcal{L}^V_{2^k-1}$ within the region $t-s \geq 0$; and 
\item The $(V^{C_{2^k}} + t-s, s)$-graded page of $\SliceSS(\BPCnminuskplusone \langle I \rangle)$ on or above the horizontal line $s = C_{V, k}$ within the region $t -s \geq 0$.  
\end{enumerate}
More precisely, let $\E_{\Cnplusone}$ and $\E_{\Cnminuskplusone}$ represent the slice spectral sequences of  $\BPCnplusone \langle I \rangle$ and $\BPCnminuskplusone \langle I \rangle$, respectively.  Then the following statements hold: 
\begin{enumerate}
\item In the specified regions of their respective $\mathcal{E}_2$-pages, there is an isomorphism 
\begin{equation}\label{eq:TranschromaticMainIsom}
\E_{\Cnplusone, 2}^{s', V+t'} \cong \Pb_{\Cnplusone/\Ck}\left(\E_{\Cnminuskplusone, 2}^{s, V^{\Ck}+t}\right),
\end{equation}
where
\begin{align*}
t' &= \left(|V^{\Ck}|\cdot 2^k-|V|\right) + 2^k t, \\
s' &= \left(|V^{\Ck}|\cdot 2^k-|V|\right) + (2^k-1)(t-s) +2^k s.
\end{align*}

\item In the specified region for $\E_{\Cnplusone}$, all the differentials have lengths of the form 
\[2^kr - (2^k-1), \,\,\, r \geq 2.\] 

\item The isomorphism (\ref{eq:TranschromaticMainIsom}) induces a one-to-one correspondence between the $d_{2^kr - (2^k-1)}$-differentials in $\E_{\Cnplusone}$ originating on or above the line $\mathcal{L}^V_{2^k-1}$ and the $d_r$-differentials in $\E_{\Cnminuskplusone}$ originating on or above the horizontal line $s = C_{V, k}$. 
\end{enumerate}
\end{thm}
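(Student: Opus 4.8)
The plan is to deduce the theorem by composing three results already established in this paper. First I would invoke the Slice Recovery Theorem (\cref{thm:SliceRecovery1}) for $X=\BPCnplusone\langle I\rangle$ with $h=2^k$: since the subgroups of $\Cnplusone$ are linearly ordered we have $\FF_{\leq 2^k}=\mathcal{F}[C_{2^{k+1}}]$, so $\varphi_{2^k}\colon \SliceSS(\BPCnplusone\langle I\rangle)\to \EF[C_{2^{k+1}}]\wedge\SliceSS(\BPCnplusone\langle I\rangle)$ is an isomorphism of spectral sequences on or above $\mathcal{L}^V_{2^k-1}$ in the region $t-s\geq 0$ (and on or above $s=\tau_V(\FF_{\leq 2^k})$ when $t-s<0$). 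Next, \cref{thm:DualTowerEquivalenceMain}, together with the fact that the dual slice tower gives the same spectral sequence as the slice tower, identifies the target with the spectral sequence $\E_{Q_\bullet}$ of the tower $Q_\bullet=\Pb_{\Cnplusone/\Ck}\D^k\bigl(\EF[C_2]\wedge P_\bullet(\BPCnminuskplusone\langle I\rangle)\bigr)$. Then I would apply the general Shearing Isomorphism (\cref{thm:ShearingGeneralTheory}) with $G=\Cnplusone$, $N=\Ck$, and with the $G/N$-tower there taken to be $\EF[C_2]\wedge P_\bullet(\BPCnminuskplusone\langle I\rangle)$; this yields, for $r\geq 2$, an isomorphism $\E_{Q_\bullet,\,2^kr-(2^k-1)}^{s',V+t'}\cong \Pb_{\Cnplusone/\Ck}\bigl(\E_{P_\bullet,\,r}^{s,V^{\Ck}+t}\bigr)$ with the displayed $(t',s')$ formulas, shows that $\E_{Q_\bullet}$ has no other nonzero differentials (so $\E_{Q_\bullet,2}=\E_{Q_\bullet,2^k+1}$, giving the $\mathcal{E}_2$-statement upon setting $r=2$), and identifies $\E_{P_\bullet}$ with $\EF[C_2]\wedge\SliceSS(\BPCnminuskplusone\langle I\rangle)$. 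Finally, since for the group $\Cnminuskplusone$ one has $\EF[C_2]=\widetilde{E}\Cnminuskplusone$ and $\tau_W(\FF_{\leq 1})=0$ for every $W$, \cref{cor:sliceRecoveryExample1} identifies $\EF[C_2]\wedge\SliceSS(\BPCnminuskplusone\langle I\rangle)$ with $\SliceSS(\BPCnminuskplusone\langle I\rangle)$ on or above $s=0$. Composing these four identifications produces \eqref{eq:TranschromaticMainIsom} and the correspondence $d_{2^kr-(2^k-1)}\leftrightsquigarrow d_r$.

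The part that needs care — and essentially the only work left, since the substantive inputs \cref{thm:DualTowerEquivalenceMain} and \cref{thm:ShearingGeneralTheory} are already in hand — is tracking the regions through the shear. From the $(t',s')$ formulas one computes $t'-s'=t-s$, so the shear preserves stems; and using the defining relation $|V^{\Ck}|\cdot 2^k-|V|=\tau_V(\FF_{\leq 2^k})-2^kC_{V,k}$ of $C_{V,k}$ (\cref{df:ConstantCinTranschromaticTheorem}), one finds $s'=(2^k-1)(t-s)+\tau_V(\FF_{\leq 2^k})+2^k\bigl(s-C_{V,k}\bigr)$. Hence the horizontal line $s=C_{V,k}$ shears exactly onto $\mathcal{L}^V_{2^k-1}$, the half-plane $s\geq C_{V,k}$ shears onto the region on or above $\mathcal{L}^V_{2^k-1}$, and — because $C_{V,k}\geq 0$ — that half-plane lies inside the region $s\geq 0$ where the second Slice Recovery isomorphism holds, so the spectral sequences really do agree on the claimed overlapping regions. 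I would also check that these regions are closed under the relevant differentials: a $d_{2^kr-(2^k-1)}$ out of a class on or above $\mathcal{L}^V_{2^k-1}$, or a $d_r$ out of a class with $s\geq C_{V,k}$, has strictly larger target filtration and so remains in the isomorphism region — including the boundary case $t-s=0$, where the target lands in $t-s<0$ but still strictly above $s=\tau_V(\FF_{\leq 2^k})$. This closure is what licenses transporting differentials in both directions along all four maps.

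Granting the above, statement (1) is the composite isomorphism on $\mathcal{E}_2$-pages; statement (2) holds because in the region on or above $\mathcal{L}^V_{2^k-1}$ the spectral sequence $\E_{\Cnplusone}$ agrees with $\E_{Q_\bullet}$, whose only nonzero differentials are the $d_{2^kr-(2^k-1)}$ by \cref{thm:ShearingGeneralTheory}; and statement (3) is the differential correspondence of \cref{thm:ShearingGeneralTheory} transported through $\varphi_{2^k}$ and $\varphi_1$. I expect the only genuine obstacle to be this region bookkeeping — in particular being careful that the $\tau_V(\FF_{\leq 2^k})$ appearing in $\mathcal{L}^V_{2^k-1}$ is a maximum over \emph{all} $j\leq k$ rather than just $j=k$, which is precisely the origin of the shift $C_{V,k}$ and of the discrepancy between ``on or above $\mathcal{L}^V_{2^k-1}$'' and ``on or above the image of $s=0$''.
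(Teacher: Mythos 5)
Your proposal is correct and follows essentially the same route as the paper's proof: the same four-step composite ($\varphi_{2^k}$ via the Slice Recovery Theorem, the tower equivalence of \cref{thm:DualTowerEquivalenceMain}, the general shearing isomorphism of \cref{thm:ShearingGeneralTheory}, and $\varphi_1$ via \cref{cor:sliceRecoveryExample1}), together with the same verification that $t'-s'=t-s$ and that the line $s=C_{V,k}$ shears onto $\mathcal{L}^V_{2^k-1}$. Your region bookkeeping, including the closure of the isomorphism regions under the relevant differentials, is if anything slightly more explicit than the paper's.
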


In \cref{thm:TranschromaticMain}, for every $V \in RO(\Cnplusone)$, we have shearing isomorphisms between $RO(G)$-graded regions of spectral sequences.  Certain choices of $V$ will be particularly useful for our applications.  Before presenting the proof of \cref{thm:TranschromaticMain}, we will state some immediate corollaries that will be useful for our discussions in subsequent sections.

\begin{cor}\label{cor:TranschromaticVinRO(G/N)}
Suppose $I \subseteq \N$ and $V \in RO(\Cnplusone/\Ck)$ with $|V| = 0$.  We can view $V$ as an element in $RO(\Cnplusone)$ through pullback along the map $\Cnplusone \to \Cnplusone/\Ck$.  There is a shearing isomorphism ${d_{2^kr - (2^k-1)} \leftrightsquigarrow d_r}$ between the following regions of spectral sequences:
\begin{enumerate}
\item The $(V+t-s, s)$-graded page of $\SliceSS(\BPCnplusone \langle I \rangle)$ on or above the line $\mathcal{L}_{2^k-1}$ (the line of slope $(2^k-1)$ through the origin $(V, 0)$) within the region $t-s \geq 0$; and 
\item The $(V+t-s,s)$-graded page of $\SliceSS(\BPCnminuskplusone \langle I \rangle)$ on or above the line $s = 0$ within the region $t-s \geq 0$.  
\end{enumerate}
\end{cor}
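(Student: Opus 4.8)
The plan is to obtain this corollary as the special case of \cref{thm:TranschromaticMain} in which $V$ is inflated from $RO(\Cnplusone/\Ck)$ and has virtual dimension zero; all that is needed is to simplify the constants $\tau_V(\mathcal{F}_{\leq 2^k})$ and $C_{V,k}$, and the shearing formulas for $t'$ and $s'$, in this situation.

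First I would record the relevant vanishing of dimensions. Since $V$ is inflated along the quotient $q\colon \Cnplusone \to \Cnplusone/\Ck$, the subgroup $\Ck = \ker q$ acts trivially on $V$, and hence so does every $C_{2^j}$ with $0 \leq j \leq k$. Thus $V^{C_{2^j}} = V$ for all such $j$, and combined with the hypothesis $|V| = 0$ this gives $|V^{C_{2^j}}| \cdot 2^j - |V| = 0$ for every $0 \leq j \leq k$. By \cref{df:HmaxHmintau} this yields $\tau_V(\mathcal{F}_{\leq 2^k}) = 0$, and then \cref{df:ConstantCinTranschromaticTheorem}, using also $|V^{\Ck}| \cdot 2^k - |V| = 0$, gives $C_{V,k} = 0$.

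Next I would feed these values back into \cref{thm:TranschromaticMain}. With $\tau_V(\mathcal{F}_{\leq 2^k}) = 0$, the line $\mathcal{L}^V_{2^k-1}$ of \cref{df:LineLVh-1} becomes $s = (2^k-1)(t-s)$, which is precisely the slope-$(2^k-1)$ line through $(V,0)$; this is the line written $\mathcal{L}_{2^k-1}$ in part (1). Likewise the target horizontal line $s = C_{V,k}$ becomes $s = 0$. Since $V^{\Ck} = V$ as an element of $RO(\Cnminuskplusone) = RO(\Cnplusone/\Ck)$ and $|V^{\Ck}| \cdot 2^k - |V| = 0$, the degree shift in $(\ref{eq:TranschromaticMainIsom})$ collapses to $\E_{\Cnplusone,2}^{s',\, V+t'} \cong \Pb_{\Cnplusone/\Ck}\bigl(\E_{\Cnminuskplusone,2}^{s,\, V+t}\bigr)$ with $t' = 2^k t$ and $s' = (2^k-1)(t-s) + 2^k s$; in particular $t' - s' = t - s$, so the subregion $t-s \geq 0$ on one side matches the subregion $t-s \geq 0$ on the other. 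Parts (2) and (3) of \cref{thm:TranschromaticMain} then give immediately that every differential in the indicated region of $\SliceSS(\BPCnplusone \langle I \rangle)$ has length $2^k r - (2^k-1)$ and corresponds under the shearing isomorphism to a $d_r$-differential of $\SliceSS(\BPCnminuskplusone \langle I \rangle)$ originating on or above $s = 0$, which is exactly the asserted statement.

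The proof presents essentially no obstacle: it is a specialization of \cref{thm:TranschromaticMain} together with routine bookkeeping of the constants, and \cref{thm:TranschromaticMain} carries all of the content. The only points meriting care are the identification of $V^{\Ck}$ with $V$ under the equality $RO(\Cnplusone/\Ck) = RO(\Cnminuskplusone)$ and the observation $t' - s' = t - s$, both of which are immediate from the displayed formulas.
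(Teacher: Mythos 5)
Your proof is correct and follows essentially the same route as the paper's: both specialize \cref{thm:TranschromaticMain} after checking that $\tau_V(\mathcal{F}_{\leq 2^k}) = 0$ and $C_{V,k} = 0$ when $V$ is inflated from $RO(\Cnplusone/\Ck)$ and has $|V|=0$. Your additional observations about the degree-shift formulas collapsing and $t' - s' = t - s$ are correct and make the bookkeeping more explicit, but the argument is the same.
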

\begin{proof}
As an element in $RO(\Cnplusone)$, $V$ is fixed by $C_j$ for all $0 \leq j \leq k$.  Therefore $|V^{C_j}| = |V| = 0$ for all $0 \leq j \leq k$.  This implies that the constants $\tau_V(\mathcal{F}_{\leq 2^k})$ and $C_{V, k}$ are both 0.  The claim now follows directly from \cref{thm:TranschromaticMain}.  
\end{proof}

\begin{cor}\label{cor:TranschromaticIntegerGraded}
Suppose $I \subseteq \N$.  On the integer-graded page, there is a shearing isomorphism ${d_{2^kr - (2^k-1)} \leftrightsquigarrow d_r}$ between the following regions of spectral sequences:
\begin{enumerate}
\item The slice spectral sequence of $\BPCnplusone \langle I \rangle$ on or above the line $\mathcal{L}_{2^k-1}$ (the line of slope $(2^k-1)$ through the origin); and 
\item The entire slice spectral sequence of $\BPCnminuskplusone \langle I \rangle$.  
\end{enumerate}
\end{cor}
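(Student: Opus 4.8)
The plan is to obtain \cref{cor:TranschromaticIntegerGraded} as the special case $V = 0$ of \cref{cor:TranschromaticVinRO(G/N)} (equivalently, of \cref{thm:TranschromaticMain}), combined with the elementary observation that, for a connective spectrum of the form $\BPCn \langle I \rangle$, its full slice spectral sequence already lives in the first-quadrant ``positive cone''.

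First I would apply \cref{cor:TranschromaticVinRO(G/N)} with $V = 0$. Since $0 \in RO(\Cnplusone/\Ck)$ satisfies $|0| = 0$ and pulls back to $0 \in RO(\Cnplusone)$, the $(V + t - s, s)$-graded page is by definition the integer-graded page, and $\mathcal{L}_{2^k-1}$ is the line of slope $(2^k-1)$ through the origin. \Cref{cor:TranschromaticVinRO(G/N)} then yields a shearing isomorphism $d_{2^kr - (2^k-1)} \leftrightsquigarrow d_r$ between the integer-graded page of $\SliceSS(\BPCnplusone \langle I \rangle)$ on or above $\mathcal{L}_{2^k-1}$ within $t - s \geq 0$, and the integer-graded page of $\SliceSS(\BPCnminuskplusone \langle I \rangle)$ on or above the line $s = 0$ within $t - s \geq 0$.

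Second I would remove the region qualifiers by a connectivity argument. The spectrum $\BPCnminuskplusone \langle I \rangle$ is a quotient of $\BPCnminuskplusone = N_{C_2}^{\Cnminuskplusone}(\BPR)$ by permutation summands $\Cnminuskplusone \cdot \tee_j^{\Cnminuskplusone}$, hence is slice-connective; by \cite[Theorem~A]{BHLSZ} its slice associated graded is $H\Z[\Cnminuskplusone \cdot \tee_i^{\Cnminuskplusone} \mid i \in I]$, which is concentrated in non-negative dimensions. Consequently its $\mathcal{E}_2$-term $\E_2^{s,t} = \Mpi_{t-s} P^t_t(\BPCnminuskplusone \langle I \rangle)$ vanishes unless $t \geq 0$, and since the homotopy Mackey functors of a wedge of $H\Z$-slice cells of dimension $t$ are supported in degrees $0 \leq t - s \leq t$, the whole spectral sequence lies in the cone $\{\, s \geq 0,\ t - s \geq 0 \,\}$. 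Thus the region ``on or above $s = 0$ within $t - s \geq 0$'' is exactly all of $\SliceSS(\BPCnminuskplusone \langle I \rangle)$. The same argument shows $\SliceSS(\BPCnplusone \langle I \rangle)$ is concentrated in $t - s \geq 0$, so the qualifier ``within $t - s \geq 0$'' on the $\BPCnplusone$ side is vacuous. Assembling these facts with the shearing isomorphism from the first step gives the corollary.

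I do not expect any real obstacle: all of the substance is in \cref{thm:TranschromaticMain}, and the remainder is only the standard boundedness of slice spectral sequences of connective norms of $\BPR$. The one point worth double-checking is that no class of the target spectral sequence lies on the line $s = 0$ with $t - s < 0$, so that part~(2) of \cref{cor:TranschromaticVinRO(G/N)} genuinely exhausts the spectral sequence; this too follows at once from slice-connectivity.
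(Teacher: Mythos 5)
Your proposal is correct and follows essentially the same route as the paper: the paper's proof also consists of applying \cref{cor:TranschromaticVinRO(G/N)} with $V=0$ and observing that both spectral sequences are concentrated in the region $t-s\geq 0$. The only difference is that where you re-derive this concentration from slice-connectivity of $\BPCn\langle I\rangle$ and the structure of its slices, the paper simply cites \cite[Theorem~C]{MeierShiZengStratification}; your inline argument is a valid substitute.
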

\begin{proof}
By \cite[Theorem~C]{MeierShiZengStratification}, both slice spectral sequences are concentrated in the region $t-s \geq 0$.  The claim now follows directly from \cref{cor:TranschromaticVinRO(G/N)} by setting $V = 0$.  
\end{proof}

\begin{exam}\rm
Consider the slice spectral sequence of $\BPCfour\langle 1 \rangle$, as shown in \cref{fig:SliceSSBPCfourone}.  Letting $V = 0$ and $I = \{1\}$ in \cref{thm:TranschromaticMain} shows that there is a shearing isomorphism $d_{2r-1} \leftrightsquigarrow d_r$ between the integer-graded page of $\SliceSS(\BPCfour \langle 1 \rangle)$ on or above the line $\mathcal{L}_1$ and the integer-graded page of $\SliceSS(\BPR \langle 1 \rangle)$ (\cref{fig:SliceSSBPRone}) on or above the line $s = 0$.  This is the first instance of the shearing isomorphism that was observed by Hill--Hopkins--Ravenel that motivated the authors to embark on the current project. 
\begin{figure}
\begin{center}
\makebox[\textwidth]{\hspace{0in}\includegraphics[trim={0cm 12cm 5cm 4cm}, clip, scale = 0.8]{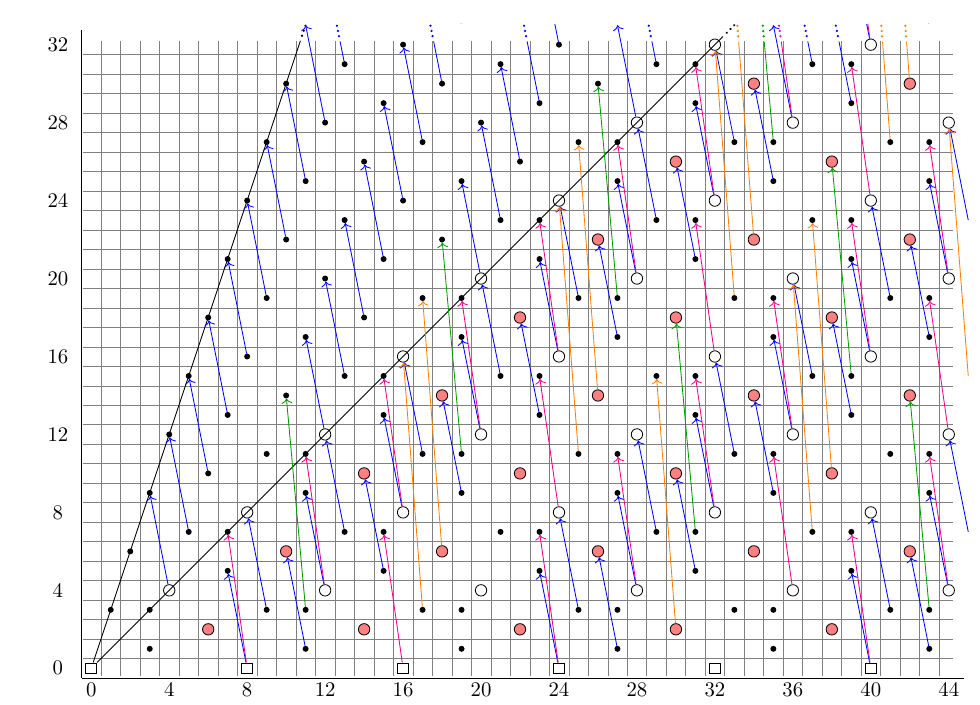}}
\caption{The slice spectral sequence of $\BPCfour \langle 1 \rangle$.}
\hfill
\label{fig:SliceSSBPCfourone}
\end{center}
\end{figure}

\begin{figure}
\begin{center}
\makebox[\textwidth]{\hspace{0in}\includegraphics[trim={3cm 15.5cm 5cm 4cm}, clip, scale = 0.7]{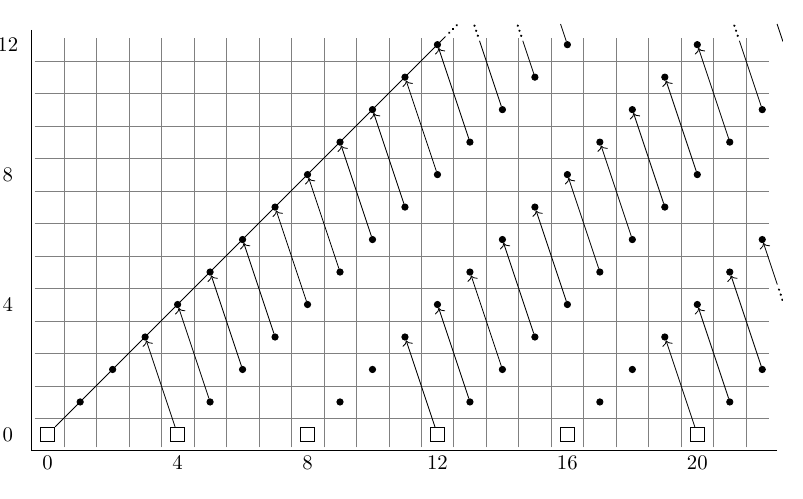}}
\caption{The slice spectral sequence of $\BPR \langle 1 \rangle$.}
\hfill
\label{fig:SliceSSBPRone}
\end{center}
\end{figure}
\end{exam}

\begin{cor}\label{cor:UVmapstoUVTranschromatic}
Suppose $I \subseteq \N$ and $V \in RO(\Cnplusone/\Ck)$ is of the form $V = c_0 (2\sigma) + c_1 \lambda_1 + \cdots + c_{n-k}\lambda_{n-k}$, where $c_i \geq 0$ for all $0 \leq i \leq n-k$.  Then the orientation class $u_V$ is a permanent cycle in the slice spectral sequence of $\BPCnplusone \langle I \rangle$ if and only if $u_V$ is a permanent cycle in the slice spectral sequence of $\BPCnminuskplusone \langle I \rangle$.  
\end{cor}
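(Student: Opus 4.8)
The plan is to read this off from the correspondence $u_V \leftrightsquigarrow u_V$ of \cref{thm:CorrespondenceFormula}(2) together with the shearing isomorphism of \cref{thm:TranschromaticMain}, once the class $u_V$ has been located inside the isomorphism region. Set $W := |V| - V$ and regard it as an element of $RO(\Cnplusone)$ via pullback along $\Cnplusone \to \Cnplusone/\Ck$; then $W$ is $\Ck$-fixed and $|W| = 0$, so $\tau_W(\mathcal{F}_{\leq 2^k}) = 0$ (by \cref{df:HmaxHmintau}) and $C_{W,k} = 0$ (by \cref{df:ConstantCinTranschromaticTheorem}). On the $(W+t-s, s)$-graded page the orientation class $u_V \in \pi^{\Cnplusone}_{|V|-V}$ sits at $(t-s, s) = (0,0)$, which lies on the line $\mathcal{L}^W_{2^k-1}$ (a slope-$(2^k-1)$ line through the origin, as $\tau_W(\mathcal{F}_{\leq 2^k}) = 0$) and in the half-plane $t - s \geq 0$; on the corresponding page for $\BPCnminuskplusone\langle I\rangle$ the class $u_V$ again sits at $(0,0)$, on the horizontal line $s = C_{W,k} = 0$. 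Since $V = c_0(2\sigma) + c_1\lambda_1 + \cdots + c_{n-k}\lambda_{n-k}$ has exactly the shape required by \cref{thm:CorrespondenceFormula}(2), the shearing isomorphism carries $u_V$ to $u_V$.

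Granting this, \cref{thm:TranschromaticMain}(2) shows that every differential originating from $u_V$ in $\SliceSS(\BPCnplusone\langle I\rangle)$ has length $2^k r - (2^k-1)$ for some $r \geq 2$, and part (3) of that theorem, in combination with $u_V \leftrightsquigarrow u_V$, identifies $d_{2^k r - (2^k-1)}(u_V)$ in $\SliceSS(\BPCnplusone\langle I\rangle)$ with $d_r(u_V)$ in $\SliceSS(\BPCnminuskplusone\langle I\rangle)$. As $u_V$ sits in filtration $0$ it can never be a boundary, so ``permanent cycle'' means ``supports no differential''; hence $u_V$ is a permanent cycle in $\SliceSS(\BPCnplusone\langle I\rangle)$ if and only if it is a permanent cycle in $\SliceSS(\BPCnminuskplusone\langle I\rangle)$.

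The only point needing care — and the one genuine obstacle — is that a differential out of $u_V$ has its target at $t - s = -1$, outside the region $t - s \geq 0$ in which \cref{thm:TranschromaticMain} is phrased, so to match such differentials as honest maps one passes to the localized slice spectral sequences. Applying \cref{thm:SliceRecovery1} with $h = 2^k$ (so $\EF_{\leq 2^k} = \EF[\Ckplusone]$), the comparison map $\varphi$ is an isomorphism at $u_V$ and at every group where a differential $d_\bullet(u_V)$ could land: such a target lies at $t - s = -1$ in filtration $\geq 2 > 0 = \tau_W(\mathcal{F}_{\leq 2^k})$, hence on or above the horizontal line that controls Slice Recovery when $t - s < 0$. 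Therefore $u_V$ is a permanent cycle in $\SliceSS(\BPCnplusone\langle I\rangle)$ iff it is one in $\EF[\Ckplusone] \wedge \SliceSS(\BPCnplusone\langle I\rangle)$; likewise, by \cref{cor:sliceRecoveryExample1}, $u_V$ is a permanent cycle in $\SliceSS(\BPCnminuskplusone\langle I\rangle)$ iff it is one in $\EF[C_2] \wedge \SliceSS(\BPCnminuskplusone\langle I\rangle)$. Finally, \cref{thm:DualTowerEquivalenceMain} together with \cref{thm:ShearingGeneralTheory} and the correspondence $u_V \leftrightsquigarrow u_V$ of \cref{thm:CorrespondenceFormula}(2) furnishes an isomorphism between these two localized slice spectral sequences, with no restriction on $t - s$, under which $u_V$ corresponds to $u_V$; thus the permanent-cycle property passes across, and chaining the three equivalences gives the corollary.
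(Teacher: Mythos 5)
Your proof is correct and takes essentially the same route as the paper's: place $u_V$ at the origin of the $(|V|-V+t-s,\, s)$-graded page, observe $\tau_{|V|-V}(\mathcal{F}_{\leq 2^k}) = 0$ so that $u_V$ lies on the boundary line of \cref{cor:TranschromaticVinRO(G/N)}, and transport differentials via the correspondence $u_V \leftrightsquigarrow u_V$ from \cref{thm:CorrespondenceFormula}(2). Your explicit verification that the targets of potential differentials out of $u_V$ (at $t-s=-1$, filtration $\geq 2$) still fall within the Slice Recovery isomorphism region is a worthwhile clarification of a point the paper handles only implicitly, but it is the same argument.
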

\begin{proof}
Since $|u_V| = |V|- V$, it is in position $(|V|-V, 0)$ on the $(|V|-V + t-s, s)$-graded page.  The claim now follows from applying \cref{cor:TranschromaticVinRO(G/N)} for the ${\left(|V| - V+t-s, s\right)}$-graded page and \cref{thm:CorrespondenceFormula} (2).  
\end{proof}

As a consequence of \cref{thm:TranschromaticMain} and its corollaries, we give an alternative proof of the Slice Differentials Theorem by Hill, Hopkins, and Ravenel \cite[Theorem~9.9]{HHR}.  In our proof, we utilize the $C_2$-slice differentials from the slice spectral sequence of $\BPR$ as an initial input.  By applying the shearing isomorphism, we then establish a family of differentials in the slice spectral sequence of $\BPCnplusone$ for all $n \geq 1$.  

\begin{thm}[Hill--Hopkins--Ravenel Slice Differentials Theorem]\label{thm:HHRSliceDiffTheorem} $\hfill$ \\
For all $n \geq 0$, the following family of differentials exist in the slice spectral sequence for $\BPCnplusone$:
\[d_{2^{n+1}(2^{i}-1)+1}(u_{2\sigma}^{2^{i-1}}) = N_{C_2}^{C_{2^{n+1}}}(\ti^{C_{2^{n+1}}}) a_{\rhobar_{2^{n+1}}}^{2^i-1} a_\sigma^{2^i}, \,\,\, i \geq 1. \]
\end{thm}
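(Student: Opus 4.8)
The plan is to argue by induction on $n$, using the Transchromatic Isomorphism Theorem — concretely \cref{cor:TranschromaticVinRO(G/N)} with $k=1$ and $I=\mathbb{N}$, so that $\BPCnplusone\langle I\rangle=\BPCnplusone$ and $\BPCn\langle I\rangle=\BPCn$ — to transport the stated differentials from the $C_{2^n}$-slice spectral sequence of $\BPCn$ up to the $C_{2^{n+1}}$-slice spectral sequence of $\BPCnplusone$. For the base case $n=0$, note that for $C_2$ one has $\rhobar_2=\sigma$ and $N_{C_2}^{C_2}=\id$, so the asserted formula specializes to $d_{2^{i+1}-1}(u_{2\sigma}^{2^{i-1}})=\ti^{C_2}a_\sigma^{2^{i+1}-1}$; these are the $C_2$-slice differentials of $\BPR$, which I would take as input (Hu--Kriz \cite{HuKriz}; see also \cite{HHR}).

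For the inductive step I would assume the formula in $\SliceSS(\BPCn)$, i.e.\ $d_r(u_{2\sigma}^{2^{i-1}})=N_{C_2}^{C_{2^n}}(\ti^{C_{2^n}})a_{\rhobar_{2^n}}^{2^i-1}a_\sigma^{2^i}$ with $r=2^{n}(2^i-1)+1$, and fix $i\ge 1$. The key point is to run the shearing isomorphism on the $(V+t-s,s)$-graded page for $V=2^i-2^i\sigma$: this $V$ is pulled back along $\Cnplusone\to\Cnplusone/C_2$ and has $|V|=0$, and $u_{2\sigma}^{2^{i-1}}$ has degree exactly $V$ at filtration $0$, so it sits at the point $(V,0)$. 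On the $\BPCnplusone$ side this point lies on the slope-$1$ line $\mathcal{L}_1$ through the origin (within $t-s\ge 0$), on the $\BPCn$ side it lies on $s=0$, and the target $N_{C_2}^{C_{2^n}}(\ti^{C_{2^n}})a_{\rhobar_{2^n}}^{2^i-1}a_\sigma^{2^i}$ also has $s\ge 0$ and $t-s\ge 0$; so the differential lies in the isomorphism region. Then \cref{cor:TranschromaticVinRO(G/N)} converts this $d_r$ into a differential of length $2r-1=2^{n+1}(2^i-1)+1$ in $\SliceSS(\BPCnplusone)$, which is the predicted length.

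Next I would identify the source and target of the transported differential via the correspondence formulas \cref{thm:CorrespondenceFormula} (with $k=1$). Part (2) gives $u_{2\sigma}^{2^{i-1}}=u_{2^i\sigma}\leftrightsquigarrow u_{2^i\sigma}=u_{2\sigma}^{2^{i-1}}$; part (1) gives $a_\sigma\leftrightsquigarrow a_\sigma$ and $a_{\rhobar_{2^n}}\leftrightsquigarrow a_{\rhobar_{2^{n+1}/2}}$, once one checks that the reduced regular representation $\rhobar_{2^n}$ of $\Cnplusone/C_2=\Cn$, pulled back to $\Cnplusone$, coincides with $\rhobar_{2^{n+1}/2}$ of \cref{df:RhobarNotation}; and part (3) with $j=n$ gives
\[
N_{C_2}^{C_{2^{n+1}}}(\ti^{\Cnplusone})\left(\frac{a_{\rhobar_{2^{n+1}}}}{a_{\rhobar_{2^{n+1}/2}}}\right)^{2^i-1}\leftrightsquigarrow N_{C_2}^{C_{2^n}}(\ti^{C_{2^n}}).
\]
Multiplying the last correspondence by $a_{\rhobar_{2^{n+1}/2}}^{2^i-1}a_\sigma^{2^i}$ — which on the $\BPCn$ side is $a_{\rhobar_{2^n}}^{2^i-1}a_\sigma^{2^i}$ — clears the denominator and matches the target of the $\BPCn$-differential with $N_{C_2}^{C_{2^{n+1}}}(\ti^{\Cnplusone})a_{\rhobar_{2^{n+1}}}^{2^i-1}a_\sigma^{2^i}$. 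Assembling these identifications, the transported differential reads $d_{2^{n+1}(2^i-1)+1}(u_{2\sigma}^{2^{i-1}})=N_{C_2}^{C_{2^{n+1}}}(\ti^{C_{2^{n+1}}})a_{\rhobar_{2^{n+1}}}^{2^i-1}a_\sigma^{2^i}$, which would complete the induction.

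The main obstacle is not really conceptual: essentially all of the substance is already carried by \cref{thm:DualTowerEquivalenceMain}, \cref{thm:TranschromaticMain}, and \cref{thm:CorrespondenceFormula}, so the remaining work is bookkeeping. The two things that genuinely need verifying are (a) that the source and target of every differential land inside the isomorphism region — which the choice $V=2^i-2^i\sigma$, with $\tau_V(\mathcal{F}_{\leq 2})=0$ and the companion constant of \cref{thm:TranschromaticMain} also vanishing, is designed to ensure — and (b) the representation identity $\rhobar_{2^n}=\rhobar_{2^{n+1}/2}$ (and compatibility of the $\lambda_j$'s under pullback), which is exactly what lets part (3) of the correspondence formula clear its denominator precisely to the power $a_{\rhobar_{2^{n+1}}}^{2^i-1}$ in the statement. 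I would also want to be explicit about the multiplicativity of the class correspondence used to pass from $u_{2\sigma}$ to its powers and to combine the norm classes with the $a$-classes, which follows from the module-level compatibility built into \cref{thm:ShearingGeneralTheory}.
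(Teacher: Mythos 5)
Your proof is correct and rests on exactly the same machinery as the paper's — the shearing isomorphism from \cref{cor:TranschromaticVinRO(G/N)} at the chosen $V = 2^i - 2^i\sigma$, followed by the correspondence formulas of \cref{thm:CorrespondenceFormula} to identify the transported source and target. The only difference is packaging: the paper applies \cref{cor:TranschromaticVinRO(G/N)} once with $k=n$ to transport the $\BPR$-differential directly into $\SliceSS(\BPCnplusone)$, whereas you induct on $n$ using the $k=1$ case at each step; since the general-$k$ statement is itself obtained by iterating $k=1$ (as in the proof of \cref{thm:DualTowerEquivalenceMain}), these are effectively the same argument, and both need the same boundary-line care (your source sits on $s=0$ on the low side and on $\mathcal{L}_1$ on the high side, which the remark following \cref{thm:TranschromaticMain} handles).
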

\begin{proof}
When $n = 0$, it is a consequence of the computation in \cite{HuKriz} that in the slice spectral sequence for $\BPR$, we have the differentials 
\[d_{2^{i+1}-1}(u_{2\sigma}^{2^{i-1}}) = \ti^{C_2} a_{\sigma}^{2^{i+1}-1}, \,\,\, i \geq 1.\]
Letting $I = \mathbb{N}$, $k = n$, and $V = |u_{2\sigma}^{2^{i-1}}|= (2^i-2^i\sigma) \in RO(\Cnplusone/\Cn)$ in \cref{cor:TranschromaticVinRO(G/N)} shows that there is a shearing isomorphism $d_{2^nr - (2^n-1)} \leftrightsquigarrow d_r$ between the following regions of spectral sequences:  
\begin{enumerate}
\item The $(2^i-2^i\sigma + t-s, s)$-graded page of $\SliceSS(\BPCnplusone)$ on or above the line $\mathcal{L}_{2^n-1}$ (the line of slope $(2^n-1)$ through the origin $(2^i-2^i\sigma, 0)$) within the region $t-s \geq 0$; and 
\item The $(2^i-2^i\sigma + t-s, s)$-graded page of $\SliceSS(\BPR)$ on or above the line $s = 0$ within the region $t-s \geq 0$.  
\end{enumerate}
Since the $C_2$-differential on $u_{2\sigma}^{2^{i-1}}$ is within this isomorphism region, it corresponds to a $\Cnplusone$-differential in $\SliceSS(\BPCnplusone)$ of length 
\[2^n\cdot (2^{i+1}-1) - (2^n-1) = 2^{n+1}(2^i-1)-1.\]
Using the correspondence formulas in \cref{thm:CorrespondenceFormula}, we have the following correspondences: 
\begin{enumerate}
\item $a_\sigma^{2^{i+1}-1} \leftrightsquigarrow a_\sigma^{2^{i+1}-1}$;
\item $u_{2\sigma}^{2^{i-1}} \leftrightsquigarrow u_{2\sigma}^{2^{i-1}}$;
\item $\ti^{C_2} \leftrightsquigarrow N_{C_2}^{\Cnplusone}(\ti^{\Cnplusone})\cdot \left(\frac{a_{\bar{\rho}_{2^{n+1}}}}{a_\sigma} \right)^{2^i-1}$.  
\end{enumerate}
Therefore, the shearing isomorphism produces the $\Cnplusone$-differentials 
\begin{align*}
d_{2^{n+1}(2^{i}-1)+1}(u_{2\sigma}^{2^{i-1}}) &= N_{C_2}^{\Cnplusone}(\ti^{\Cnplusone})\cdot \left(\frac{a_{\bar{\rho}_{2^{n+1}}}}{a_\sigma} \right)^{2^i-1} \cdot a_\sigma^{2^{i+1}-1} \\
&= N_{C_2}^{C_{2^{n+1}}}(\ti^{C_{2^{n+1}}}) a_{\rhobar_{2^{n+1}}}^{2^i-1} a_\sigma^{2^i}
\end{align*}
for all $i \geq 1$.
\end{proof}

\begin{proof}[Proof of \cref{thm:TranschromaticMain}] 
The following diagram outlines the steps of our proof: 
\begin{equation} \label{diag:TranschromaticProof}
\begin{tikzcd}
P_\bullet\left(\BPCnminuskplusone \langle I \rangle \right) \ar[ddddd, leftrightsquigarrow, "d_r \leftrightsquigarrow d_{2^kr - (2^k-1)}"', "\text{ shearing isom.}"] \ar[rrr, "\varphi_1", "\text{Corollary}~\ref{cor:sliceRecoveryExample1}\,\,(s=0)"'] &&& \EF[C_2] \wedge P_\bullet\left(\BPCnminuskplusone \langle I \rangle \right) \ar[ddd, "\text{ shearing isom.}", leftrightsquigarrow, "\text{Theorem}~\ref{thm:ShearingGeneralTheory}"'near start, "d_r \leftrightsquigarrow d_{2^kr - (2^k-1)}"'] \\ \\ \\
&&& \Pb_{C_{2^{n+1}}/C_{2^k}} \D^k \left(\EF[C_2] \wedge P_\bullet(\BPCnminuskplusone \langle I \rangle) \right) \ar[dd, dash, "\, \simeq", "\text{Theorem}~\ref{thm:DualTowerEquivalenceMain}"'] \\ \\
P_\bullet\left(\BPCnplusone \langle I \rangle\right) \arrow[rrr, "\varphi_{2^{k}}", "\text{Theorem}~\ref{thm:SliceRecovery1}\,\,\left(\mathcal{L}^V_{2^k-1}\right)"'] &&& \EF[C_{2^{k+1}}] \wedge P_\bullet(\BPCnplusone \langle I \rangle). 
\end{tikzcd}
\end{equation}
By \cref{cor:sliceRecoveryExample1}, the map $\varphi_1$ induces an isomorphism of spectral sequences on or above the horizontal line $s = 0$ on the $(V^{C_{2^k}}+t-s,s)$-graded page.  \cref{thm:ShearingGeneralTheory} then establishes a shearing isomorphism $d_r \leftrightsquigarrow d_{2^kr - (2^k-1)}$ between the spectral sequence $\mathcal{E}_{P_\bullet}$ that is associated to the tower 
\[\EF[C_2] \wedge P_\bullet\left(\BPCnminuskplusone \langle I \rangle \right)\]
and the spectral sequence $\mathcal{E}_{Q_\bullet}$ that is associated to the tower 
\[\Pb_{C_{2^{n+1}}/C_{2^k}} \D^k \left(\EF[C_2] \wedge P_\bullet(\BPCnminuskplusone \langle I \rangle) \right),\]
which, by \cref{thm:DualTowerEquivalenceMain}, is also the spectral sequence that is associated to the tower 
\[\EF[C_{2^{k+1}}] \wedge P_\bullet(\BPCnplusone \langle I \rangle).\]
More explicitly, there is an isomorphism 
\[\E_{Q_\bullet, 2}^{s', V+t'} \cong \Pb_{\Cnplusone/\Ck}\left(\E_{P_\bullet, 2}^{s, V^{\Ck}+t}\right) \,\,\,\,\, (r \geq 2), \]
where
\begin{align*}
t' &= \left(|V^{\Ck}|\cdot 2^k-|V|\right) + 2^k t, \\
s' &= \left(|V^{\Ck}|\cdot 2^k-|V|\right) + (2^k-1)(t-s) +2^k s.
\end{align*}
This shearing isomorphism induces a one-to-one correspondence between the $d_r$-differentials in $\mathcal{E}_{P_\bullet}$ and the $d_{2^kr - (2^k-1)}$-differentials in $\mathcal{E}_{Q_\bullet}$.  Moreover, under this shearing isomorphism, the horizontal line $s = C_{V, k}$ is sheared to the line 
\begin{align*}
s' &= \left(|V^{\Ck}|\cdot 2^k-|V|\right) + (2^k-1)(t'-s') + 2^k \cdot C_{V,k}  \\
&= (2^k-1)(t'-s') + \tau_V(\mathcal{F}_{\leq 2^k}),
\end{align*}
which is exactly $\mathcal{L}^V_{2^k-1}$.  

By the Slice Recovery Theorem (\cref{thm:SliceRecovery1}), the map $\varphi_{2^k}$ induces an isomorphism of spectral sequences on or above the line $\mathcal{L}^V_{2^k-1}$ in the region $t'-s' \geq 0$.  This finishes the proof of the theorem.
\end{proof}

\begin{rmk}\rm
In the proof of \cref{thm:TranschromaticMain}, we observe that (\ref{eq:TranschromaticMainIsom}) induces an isomorphism for classes that are above the lines $\mathcal{L}_{2^k-1}^V$ and $s = C_{V,k}$, and there is some fringe behavior for classes that are on these two lines.  More specifically, for classes on the $\mathcal{E}_2$-pages that lie on the lines $\mathcal{L}_{2^k-1}^V$ and $s = C_{V,k}$, (\ref{eq:TranschromaticMainIsom}) induces an isomorphism between them if they map to nontrivial classes in the corresponding localized slice spectral sequences under the maps 
\[\SliceSS(\BPCnplusone \langle I \rangle) \longrightarrow \EF[\Ckplusone] \wedge \SliceSS(\BPCnplusone \langle I \rangle)\]
and 
\[\SliceSS(\BPCnminuskplusone \langle I \rangle) \longrightarrow \EF[C_2] \wedge \SliceSS(\BPCnminuskplusone \langle I \rangle).\]
According to \cite[Corollary 2.3]{MeierShiZengStratification}, these maps induce surjections on the $\mathcal{E}_2$-pages for classes lying on these two lines.  Furthermore, by \cref{thm:SliceRecovery1}, the classes in the kernel of the aforementioned maps do not support nontrivial differentials.  Therefore, on the lines $\mathcal{L}_{2^k-1}^V$ and $s = C_{V,k}$, (\ref{eq:TranschromaticMainIsom}) induces an isomorphism between classes that could potentially support nontrivial differentials.
\end{rmk}

\section{The transchromatic tower} \label{sec:TranschromaticTowerLubinTateTheories}

In this section, we will use \cref{thm:TranschromaticMain} to study transchromatic phenomena in the slice spectral sequences of $\BPCnplusone \langle m \rangle$ and $\BPCnplusone \langle m, m \rangle$.  Recall from the introduction that Beaudry, Hill, the second author, and Zeng \cite{BHSZ} used the $\Cnplusone$-equivariant formal group law associated with $\BPCnplusone \langle m \rangle$ to construct explicit $\Cnplusone$-models of $E_{2^n \cdot m}$.  From their construction, the equivariant orientation of \cite{HahnShi} factors through the quotient $\BPCnplusone \langle m \rangle$: 
\[\begin{tikzcd}
\BPCnplusone \ar[r] \ar[d] & E_h \\ 
\BPCnplusone \langle m \rangle \ar[ru]
\end{tikzcd}\]
In light of this construction, the spectrum $\BPCnplusone \langle m \rangle$ can be regarded as a connective model for the $\Cnplusone$-spectrum $E_h$.  

\begin{construction}[Transchromatic tower] \rm \label{construction:TranschromaticTower}
In \cref{construction:Tower}, the stratification tower~(\ref{diagram:SliceSSTower}) for $\BPCnplusone \langle m \rangle$ is 
\[\begin{tikzcd}
\SliceSS\left(\BPCnplusone \langle m \rangle\right) \ar[rr, "\mathcal{L}_0^V"] \ar[rrd, swap, "\mathcal{L}_{1}^V"] \ar[rrddd, swap, bend right = 7, "\mathcal{L}_{2^{n-1}-1}^V"] \ar[rrdddd, bend right = 30, swap, "\mathcal{L}_{2^n-1}^V"]&& \EF[C_2] \wedge \SliceSS\left(\BPCnplusone \langle m \rangle\right) \ar[d, "\mathcal{L}_{1}^V"]  \\
&& \EF[C_4] \wedge \SliceSS\left(\BPCnplusone \langle m \rangle\right) \ar[d, "\mathcal{L}_{3}^V"]  \\
&& \vdots \ar[d, "\mathcal{L}_{2^{n-1}-1}^V"] \\ 
&& \EF[C_{2^n}] \wedge \SliceSS\left(\BPCnplusone \langle m \rangle\right) \ar[d, "\mathcal{L}_{2^{n}-1}^V"] \\
&& \EF[C_{2^{n+1}}] \wedge \SliceSS\left(\BPCnplusone \langle m \rangle\right). 
\end{tikzcd}\]
For each localized slice spectral sequence 
\[\EF[\Ckplusone] \wedge \SliceSS\left(\BPCnplusone \langle m \rangle \right), \, 1 \leq k \leq n\]
in the tower above, the Transchromatic Isomorphism Theorem (\cref{thm:TranschromaticMain}) uses the shearing isomorphism 
\[\EF[\Ckplusone] \wedge \SliceSS\left(\BPCnplusone \langle m \rangle \right) \leftrightsquigarrow \EF[C_2] \wedge \SliceSS\left(\BPCnminuskplusone \langle m \rangle \right)\]
from \cref{thm:DualTowerEquivalenceMain} to establish shearing isomorphisms between the corresponding slice spectral sequences:
\[\begin{tikzcd}
\SliceSS\left(\BPCnplusone \langle m \rangle \right) 
\ar[rr, leftrightsquigarrow, "\mathcal{L}_{2^k-1}^V"] && \SliceSS\left(\BPCnminuskplusone \langle m \rangle \right) 
\end{tikzcd}\]
Here, the label $\mathcal{L}_{2^k-1}^V$ indicates the shearing isomorphism region in the slice spectral sequence of $\BPCnplusone \langle m \rangle$.  The shearing isomorphisms $\mathcal{L}_{2^k-1}^V$ form the following transchromatic tower: 
\begin{equation}\label{diagram:TranschromaticTower}
\begin{tikzcd}
\SliceSS\left( \BPCnplusone \langle m \rangle \right) \,\,\, \left(E_h^{h\Cnplusone}\right) \ar[r, leftrightarrow, "\mathcal{L}_1^V"] \ar[rd, leftrightarrow, swap, "\mathcal{L}_3^V"] \ar[rddd, leftrightarrow, swap, bend right = 7, "\mathcal{L}_{2^{n-1}-1}^V"] \ar[rdddd, leftrightarrow, swap, bend right = 30, "\mathcal{L}_{2^{n}-1}^V"]& \SliceSS\left(\BPCn \langle m \rangle \right) \,\,\, \left(E_{h/2}^{h\Cn}\right)\ar[d, leftrightarrow] \\ 
& \SliceSS\left(\BPCnminusone \langle m \rangle \right) \,\,\, \left(E_{h/4}^{hC_{2^{n-1}}}\right)\ar[d, leftrightarrow] \\ 
& \vdots \ar[d, leftrightarrow] \\ 
& \SliceSS\left( \BPCfour \langle m \rangle \right) \,\,\, \left(E_{h/2^{n-1}}^{hC_4}\right)\ar[d, leftrightarrow] \\ 
& \SliceSS\left( \BPR \langle m \rangle \right) \,\,\, \left(E_{h/2^{n}}^{hC_2}\right).
\end{tikzcd}
\end{equation}
\end{construction}

The transchromatic tower (\ref{diagram:TranschromaticTower}) provides an inductive method for computing the slice spectral sequences of the higher height theories from the lower height theories (see \cref{fig:PicTranschromaticTower}).  More specifically, when computing the slice spectral sequence of $\BPCnplusone \langle m \rangle$, which serves as the $\Cnplusone$-equivariant connective model for $E_h$, the tower reveals that the region on or above the line $\mathcal{L}_{2^k-1}^V$, where $1 \leq k \leq n$, is completely determined by the slice spectral sequence of $\BPCnminuskplusone \langle m \rangle$, the $\Cnminuskplusone$-equivariant connective model for $E_{h/2^k}$.  

\begin{figure}
\begin{center}
\makebox[\textwidth]{\includegraphics[trim={0cm 0cm 0cm 0cm}, clip, scale = 0.6]{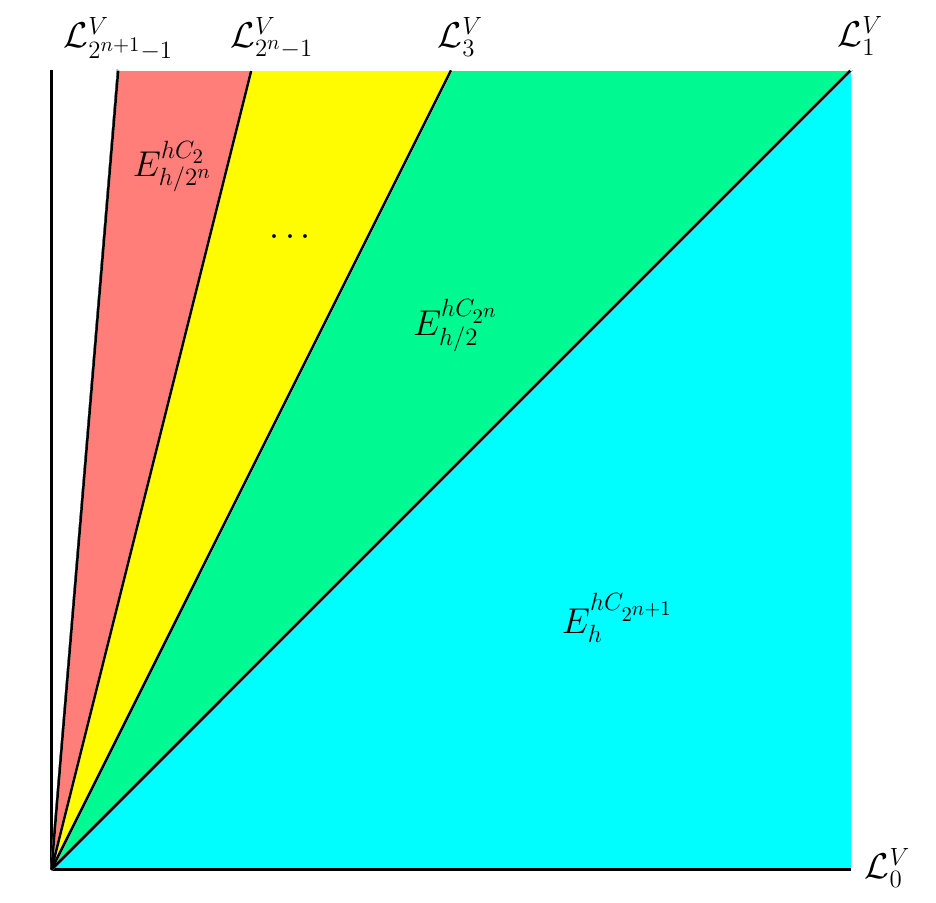}}
\caption{Regions of the slice spectral sequence of $E_h$ corresponding to the transchromatic tower.}
\hfill
\label{fig:PicTranschromaticTower}
\end{center}
\end{figure}

\begin{exam}\rm
For all $m \geq 1$, $\BPR \langle m \rangle$ is the $C_2$-equivariant connective model for $E_m$, and $\BPCnplusone \langle m \rangle$ is the $\Cnplusone$-connective model for $E_{2^n \cdot m}$.  The computation of the $C_2$-slice spectral sequence of $\BPR \langle m \rangle$ by Hu and Kriz \cite{HuKriz} completely determines the $RO(G)$-graded page of the slice spectral sequence of $\BPCnplusone \langle m \rangle$ on or above the line $\mathcal{L}^V_{2^n-1}$.
\end{exam}

\begin{exam}\rm
For $n \geq 0$, the $\Cnplusone$-spectrum $\BPCnplusone \langle 1 \rangle$ serves as the $\Cnplusone$-equivariant connective model for $E_{2^n}$.  The computation of the slice spectral sequence of $\BPCfour \langle 1 \rangle$ ($E_2^{hC_4}$) by Hill, Hopkins, and Ravenel \cite{HHRKH} completely determines the integer-graded spectral sequence of $\BPCnplusone \langle 1 \rangle$ ($E_{2^n}^{h\Cnplusone}$) on or above the line of slope $(2^{n-1}-1)$ through the origin.

In particular, when $n = 2$, in the slice spectral sequence of $\BPCeight \langle 1 \rangle$ ($E_4^{hC_8}$), which detects all the Kervaire invariant elements, the differentials on or above the line of slope 1 are completely determined by the differentials in the slice spectral sequence of $\BPCfour \langle 1 \rangle$.  
\end{exam}

\begin{exam} \rm
For $n \geq 0$, the $\Cnplusone$-spectrum $\BPCnplusone \langle 2 \rangle$ serves as the $\Cnplusone$-equivariant connective model for $E_{2^{n+1}}$.  The computation of the slice spectral sequence of $\BPCfour \langle 2 \rangle$ ($E_4^{hC_4}$) by Hill, the second author, Wang, and Xu \cite{HSWX} completely determines the integer-graded spectral sequence of $\BPCnplusone \langle 2 \rangle$ ($E_{2^{n+1}}^{h\Cnplusone}$) on or above the line of slope $(2^{n-1}-1)$ through the origin.
\end{exam}

\begin{rmk}\rm
In \cite{BHLSZ}, Beaudry, Hill, Lawson, the second author, and the third author constructed and studied the $\Cnplusone$-equivariant theories $\BPCnplusone \langle m, m \rangle$.  These theories are the equivariant analogues of the integral connective Morava $K$-theories.  One notable feature of these theories is that while the non-equivariant connective Morava $K$-theory $K(m) = BP \langle m, m \rangle$ has chromatic heights concentrated at levels $0$ and $m$, the theories $\BPCnplusone \langle m, m \rangle$ have chromatic heights concentrated at levels $0$, $m$, $2m$, $\ldots$, $2^n \cdot m$ \cite[Theorem~3.1]{BHLSZ}. 

\cref{thm:TranschromaticMain} establishes a transchromatic tower for the theories $\BPCnplusone \langle m, m \rangle$.  In particular, everything on or above the line $\mathcal{L}_1^V$ in the slice spectral sequence for $\BPCnplusone \langle m, m \rangle$, which captures chromatic information at heights $0$, $m$, $\ldots$, $2^n\cdot m$, is completely determined by the slice spectral sequence for $\BPCn \langle m, m \rangle$, which captures chromatic information at heights $0$, $m$, $\ldots$, $2^{n-1}\cdot m$.  
\end{rmk}

\section{\texorpdfstring{$RO(G)$}{text}-graded periodicities} \label{sec:ROGPeriodicity}

In this section, we will apply the Transchromatic Isomorphism Theorem (\cref{thm:TranschromaticMain}) to establish $RO(G)$-graded periodicities of $E_h$ from $RO(G/{C_2})$-graded periodicities of $E_{h/2}$.  

For $G = \Cn$, Beaudry--Hill--Shi--Zeng \cite{BHSZ} used the universal $\Cn$-equivariant formal group law of $\BPCn$ to construct $\Cn$-equivariant models of the height-$h$ Lubin--Tate theory $E_h$, where $h = 2^{n-1}\cdot m$.  For our discussion in this section, we will focus on the following two Lubin--Tate theories: 
\begin{enumerate}
\item The $\Cn$-equivariant theory $E_{2^{n-1} \cdot m}$; and 
\item The $\Cnplusone$-equivariant theory $E_{2^n \cdot m}$.  
\end{enumerate}
By work of Hahn and the second author \cite{HahnShi}, these models are equipped with equivariant orientations from $\BPCn$ and $\BPCnplusone$, respectively. It is a consequence of the construction in \cite{BHSZ} that these equivariant orientations factor through certain localizations and quotients of $\BPCn$ and $\BPCnplusone$, which we will now discuss.


For $1 \leq k \leq n$, the $\ti^{\Ck}$-generators are originally defined to be elements in $\pi_{*\rho_2}^{C_2} i_{C_2}^*\BPCk$.  The unit map 
\[\BPCk \longrightarrow i_{\Ck}^*N_{\Ck}^{\Cn}(\BPCk) = i_{\Ck}^*\BPCn\]
of the norm-restriction adjunction enables us to treat the $\ti^{\Ck}$-generators as elements in $\pi_{*\rho_2}^{C_2} i_{C_2}^* \BPCn$ through the composition 
\[\ti^{\Ck}: S^{(2^i-1)\rho_2} \longrightarrow i_{C_2}^*\BPCk \longrightarrow i_{C_2}^* \BPCn.\]
Similarly, we can treat the $\ti^{\Ckplusone}$-generators as elements in $\pi_{*\rho_2}^{C_2} i_{C_2}^*\BPCnplusone$ through the composition 
\[\ti^{\Ckplusone}: S^{(2^i-1)\rho_2} \longrightarrow i_{C_2}^*\BPCkplusone \longrightarrow i_{C_2}^* \BPCnplusone.\]
We note that under this more general treatment, the equality 
\[\Phi^{C_2}(\ti^{\Ck}) = \Phi^{C_4} N_{C_2}^{C_4} (\ti^{\Ckplusone})\]
proven in \cref{lem:tiGeneratorRelations} and the correspondence formulas proven in \cref{prop:tiCorrespondenceFormula} and \cref{thm:CorrespondenceFormula} still hold.  

\begin{df}\label{df:DCnDCnplusone}\rm
Define $D_{\Cn, m} \in \pi_{*\rho_{2^n}}^{\Cn} \BPCn$ to be the element
\begin{align*}
D_{\Cn, m} = \prod_{k =1}^n N_{C_2}^{\Cn}(\tee^{\Ck}_{2^{n-k}\cdot m}) = N_{C_2}^{\Cn}(\tee^{C_2}_{2^{n-1} \cdot m}) \cdots N_{C_2}^{\Cn}(\tee^{\Cn}_{m}).
\end{align*}
Furthermore, define $\overline{D}_{\Cn, m} \in \pi_{*\rho_{2^{n}}}^{\Cn} \BPCn$ to be the element 
\[\overline{D}_{\Cn, m} = \prod_{k = 2}^{n} N_{C_2}^{\Cn}(\tee^{\Ck}_{2^{n-k}\cdot m}) = N_{C_2}^{\Cn}(\tee^{C_4}_{2^{n-2}\cdot m}) \cdots N_{C_2}^{\Cn}(\tee^{\Cn}_{m}).\]
Note that with our definition, $D_{\Cn, m} = N_{C_2}^{\Cn}(\tee^{C_2}_{2^{n-1}\cdot m}) \cdot \overline{D}_{\Cn, m}$. 
\end{df}


It is a consequence from the constructions of $E_{2^{n-1}\cdot m}$ and $E_{2^n \cdot m}$ in \cite{BHSZ} that the equivariant orientations $\BPCn \to E_{2^{n-1}\cdot m}$ and $\BPCnplusone \to E_{2^n \cdot m}$ factor through the localizations $D_{\Cn, m}^{-1} \BPCn$ and $D_{\Cnplusone, m}^{-1} \BPCnplusone$, and further through the quotients $D_{\Cn, m}^{-1} \BPCn \langle m \rangle$ and $D_{\Cnplusone, m}^{-1} \BPCnplusone \langle m \rangle$.  There are diagrams
\begin{equation*}
\begin{tikzcd}
D_{\Cn, m}^{-1} \BPCn \ar[r] \ar[d] & E_{2^{n-1} \cdot m} \\ 
D_{\Cn, m}^{-1} \BPCn \langle m \rangle \ar[ru]
\end{tikzcd}
\end{equation*}
and 
\begin{equation*}
\begin{tikzcd} 
D_{\Cnplusone, m}^{-1} \BPCnplusone \ar[r] \ar[d] & E_{2^n \cdot m} \\
D_{\Cnplusone, m}^{-1} \BPCnplusone \langle m \rangle \ar[ru] 
\end{tikzcd}
\end{equation*}

\begin{lem}\label{lem:DinverseLocalizedSSEquiv}
We have the following equivalence of towers:     
\begin{multline*}
\EF[C_4] \wedge P_\bullet\left(\overline{D}_{\Cnplusone, m}^{-1} \BPCnplusone \langle m \rangle \right) \\
\simeq  \Pb_{C_{2^{n+1}}/C_2}\D \left( \EF[C_2] \wedge P_\bullet (D_{\Cn, m}^{-1}\BPCn \langle m \rangle) \right).    
\end{multline*}
\end{lem}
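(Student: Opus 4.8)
The plan is to obtain this by localizing the $k=1$ instance of \cref{thm:DualTowerEquivalenceMain} with $I=\{1,\dots,m\}$,
\[
\EF[C_4]\wedge P_\bullet\!\left(\BPCnplusone\langle m\rangle\right)\;\simeq\;\Pb_{C_{2^{n+1}}/C_2}\D\!\left(\EF[C_2]\wedge P_\bullet(\BPCn\langle m\rangle)\right),
\]
and checking that inverting $D_{\Cn,m}$ on the $\BPCn$-side becomes, after smashing with $\EF[C_4]$, the inversion of $\overline{D}_{\Cnplusone,m}$ on the $\BPCnplusone$-side. First I would establish the localized analogue of \cref{lem:tiGeneratorRelations}: using the norm/geometric-fixed-point computation from the proof of \cref{prop:spectrumLevelEquivalence}, namely $\Phi^{C_2}(N_{C_2}^{\Cn}(\bar x))=N_e^{C_{2^{n-1}}}(\Phi^{C_2}(\bar x))$ and its $\Phi^{C_4}N_{C_2}^{C_4}$-analogue, together with \cref{lem:tiGeneratorRelations} (in the extended form recorded earlier in this section) and the reindexing $k\mapsto k-1$ of the products in \cref{df:DCnDCnplusone}, one gets the identity $\Phi^{C_4}(\overline{D}_{\Cnplusone,m})=\Phi^{C_2}(D_{\Cn,m})$ in the homotopy of $N_e^{C_{2^{n-1}}}\HF$. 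Here one uses that the two products have the same total $\bar t$-degree $c=\sum_{k=1}^{n}(2^{2^{n-k}m}-1)$. Since $\Phi^{C_2}$ and $\Phi^{C_4}$ are symmetric monoidal and commute with the filtered colimits defining these localizations, this yields $\Phi^{C_4}(\overline{D}_{\Cnplusone,m}^{-1}\BPCnplusone\langle m\rangle)\simeq\Phi^{C_2}(D_{\Cn,m}^{-1}\BPCn\langle m\rangle)$, which by \cref{lem:easyFact1} upgrades \cref{prop:spectrumLevelEquivalence} to $\EF[C_4]\wedge\overline{D}_{\Cnplusone,m}^{-1}\BPCnplusone\langle m\rangle\simeq\EF[C_4]\wedge\Pb_{C_{2^{n+1}}/C_2}(D_{\Cn,m}^{-1}\BPCn\langle m\rangle)$.

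Next I would promote this to an equivalence of towers by running the arguments of \cref{prop:LocalizedDualTowerEquivalence1} and \cref{prop:LocalizedDualTowerEquivalence2} with the localized spectra in place of $\BPCnplusone\langle I\rangle$ and $\BPCn\langle I\rangle$. The extra ingredient is that $D_{\Cn,m}$ and $\overline{D}_{\Cnplusone,m}$ are products of norm classes lying on the slice line and graded by multiples of the regular representation; since $\Sigma^{c\rho_{2^j}}$ shifts the regular slice filtration by $c\cdot 2^j$ and the slice localizations are smashing (so $P_\bullet(-)$ commutes with the relevant filtered colimits), the dual slice tower commutes with these localizations:
\[
P_\bullet(D_{\Cn,m}^{-1}\BPCn\langle m\rangle)\simeq\colim_N\Sigma^{-NV_0}P_{\bullet-Nd_0}(\BPCn\langle m\rangle),\qquad
P_\bullet(\overline{D}_{\Cnplusone,m}^{-1}\BPCnplusone\langle m\rangle)\simeq\colim_N\Sigma^{-NW_0}P_{\bullet-2Nd_0}(\BPCnplusone\langle m\rangle),
\]
with $V_0=c\rho_{2^n}$, $W_0=c\rho_{2^{n+1}}$ and $d_0=|V_0|=c\cdot 2^n$, so that $|W_0|=2d_0$. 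Applying $\EF[C_4]\wedge(-)$, $\D$ and $\Pb_{C_{2^{n+1}}/C_2}$ to the first tower (all of which commute with filtered colimits and suspension, and with reindexing in the case of $\D$), and invoking \cref{thm:DualTowerEquivalenceMain} ($k=1$) levelwise, produces $\colim_N\Sigma^{-Nq^*V_0}\!\left(\EF[C_4]\wedge P_{\bullet-2Nd_0}(\BPCnplusone\langle m\rangle)\right)$, where $q\colon\Cnplusone\to\Cn$ is the quotient. Finally, $q^*\rho_{2^n}=\rho_{2^{n+1}}-2^{n-1}\lambda_n$, so $W_0=q^*V_0+c\,2^{n-1}\lambda_n$; and because $\lambda_n^H=0$ for every $H$ with $C_4\subseteq H$, the Euler class $a_{\lambda_n}$ is invertible on $\EF[C_4]$-modules (by \cref{lem:easyFact1}, $\Phi^{C_4}(a_{\lambda_n})=\id_{S^0}$), equivalently $\Sigma^{\lambda_n}$ is the identity there. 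Hence $\Sigma^{-NW_0}\simeq\Sigma^{-Nq^*V_0}$ on $\EF[C_4]$-modules, the two colimits agree, and $\EF[C_4]\wedge P_\bullet(\overline{D}_{\Cnplusone,m}^{-1}\BPCnplusone\langle m\rangle)$ is identified with $\Pb_{C_{2^{n+1}}/C_2}\D\!\left(\EF[C_2]\wedge P_\bullet(D_{\Cn,m}^{-1}\BPCn\langle m\rangle)\right)$. Conceptually, the reconciliation of the two representation gradings is exactly the correspondence $D_{\Cn,m}\leftrightsquigarrow\overline{D}_{\Cnplusone,m}\cdot a_{\lambda_n}^{N}$ coming from \cref{prop:tiCorrespondenceFormula} and \cref{thm:CorrespondenceFormula}, with $a_{\lambda_n}$ a unit after $\EF[C_4]\wedge(-)$.

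The main obstacle I expect is the tower-level bookkeeping of the preceding paragraph: making precise the statement that localization at a regular-representation-graded slice-line class commutes with the dual slice tower, and then carefully threading the doubling functor $\D$, the grading shift $q^*\rho_{2^n}=\rho_{2^{n+1}}-2^{n-1}\lambda_n$, and the triviality of $\Sigma^{\lambda_n}$ on $\EF[C_4]$-modules through the functors $\EF[-]\wedge(-)$, $\D$, $\Pb_{C_{2^{n+1}}/C_2}$ and $\colim$. By contrast, the spectrum-level identity $\Phi^{C_4}(\overline{D}_{\Cnplusone,m})=\Phi^{C_2}(D_{\Cn,m})$ is a routine unwinding of \cref{df:DCnDCnplusone} and \cref{lem:tiGeneratorRelations}.
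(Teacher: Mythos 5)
Your proposal is correct and follows essentially the same route as the paper: the paper also starts from the $k=1$, $I=\{1,\dots,m\}$ case of \cref{thm:DualTowerEquivalenceMain}, multiplies the correspondences of \cref{prop:tiCorrespondenceFormula} to get $\overline{D}_{\Cnplusone,m}\cdot a_{\lambda_n}^{\ell}\leftrightsquigarrow D_{\Cn,m}$ with $\ell=\sum_{k=1}^{n}2^{n-1}(2^{2^{n-k}m}-1)$, and concludes because $a_{\lambda_n}$ is inverted after smashing with $\EF[C_4]$. Your filtered-colimit bookkeeping for commuting the localization past $P_\bullet$, $\D$, and $\Pb_{C_{2^{n+1}}/C_2}$ is a more explicit account of a step the paper leaves implicit, but it is the same argument.
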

\begin{proof}
By setting $I = \{1, \ldots, m\}$ and $k = 1$ in \cref{thm:DualTowerEquivalenceMain}, we obtain the equivalence 
\begin{equation}\label{eq:DinverseLocSSEquiv}
\EF[C_{4}] \wedge P_\bullet\left(\BPCnplusone \langle m \rangle \right) \simeq \Pb_{C_{2^{n+1}}/C_2}\D \left( \EF[C_2] \wedge P_\bullet (\BPCn \langle m \rangle) \right).
\end{equation}
Under this equivalence, \cref{prop:tiCorrespondenceFormula} establishes the correspondence
\[N_{C_2}^{\Ckplusone}(\tee^{\Ckplusone}_{2^{n-k}\cdot m})a_{\lambda_k}^{2^{k-1}(2^{2^{n-k}\cdot m}-1)} \leftrightsquigarrow N_{C_2}^{\Ck}(\tee^{\Ck}_{2^{n-k}\cdot m})\]
for all $1 \leq k \leq n$.  Applying the norm, these correspondence formulas become
\[N_{C_2}^{\Cnplusone}(\tee^{\Ckplusone}_{2^{n-k}\cdot m})a_{\lambda_n}^{2^{n-1}(2^{2^{n-k}\cdot m}-1)} \leftrightsquigarrow N_{C_2}^{\Cn}(\tee^{\Ck}_{2^{n-k}\cdot m})\]
for all $1 \leq k \leq n$.  Multiplying these formulas together yields the correspondence 
\[\overline{D}_{\Cnplusone, m} \cdot a_{\lambda_n}^{\ell} \leftrightsquigarrow D_{\Cn, m},\]
where 
\[\ell = \sum_{1 \leq k \leq n} 2^{n-1}(2^{2^{n-k}\cdot m}-1).\]
Since smashing with $\EF[C_4]$ inverts the class $a_{\lambda_{n-1}}$, the class $a_{\lambda_n}$ is also inverted.  Therefore the term $a_{\lambda_n}^{\ell}$ is a unit in ${\EF[C_{4}] \wedge P_\bullet\left(\BPCnplusone \langle m \rangle \right)}$.  The desired equivalence now follows from Equivalence~(\ref{eq:DinverseLocSSEquiv}) and the correspondence formula above.  
\end{proof}

For the remainder of this section, we will use the notation $G=\Cnplusone$ and $h=2^n \cdot m$.  Suppose $V$ is an element in $RO(G)$ such that the orientation class $u_V$ in the slice spectral sequence of $D_{G, m}^{-1} \BPG \langle m \rangle$ is a permanent cycle.  Then under the map of spectral sequences
\[\SliceSS(D_{G, m}^{-1} \BPG \langle m \rangle) \longrightarrow \SliceSS(E_h),\]
its image $u_V \in \SliceSS(E_h)$ is also a permanent cycle.  Let $u \in \pi_{|V|-V}^G E_h$ denote any element in homotopy that is represented by $u_V$.  The composite
\[S^{|V|- V} \wedge E_h \xrightarrow{u \wedge \id} E_h \wedge E_h \xrightarrow{m} E_h\]
is a $G$-equivariant map between cofree (Borel) spectra.  Since the underlying non-equivariant map induces an equivalence $i_e^* E_h \simeq i_e^* E_h$, the composition map above induces a $G$-equivariant equivalence.  This produces a $(|V|-V)$-periodicity for $E_h$.

\begin{thm}\label{thm:ROGPeriodicityTranschromatic}
Suppose $V \in RO(G/C_2)$.  The class $u_V$ is a permanent cycle in the slice spectral sequence of $D_{G, m}^{-1} \BPG \langle m \rangle$ (and hence in the $G$-slice spectral sequence of $E_{h}$) if and only if the class $u_V$ is a permanent cycle in the slice spectral sequence of $D_{G/C_2, m}^{-1} \BPGmodtwo \langle m \rangle$ (and therefore in the $G/C_2$-slice spectral sequence of $E_{h/2}$).  Here, $V$ is treated as an element in $RO(G)$ through pullback along the map $G \to G/C_2$.
\end{thm}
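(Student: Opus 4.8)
The plan is to repeat the proof of \cref{thm:TranschromaticMain} in the $D$-localized setting, substituting \cref{lem:DinverseLocalizedSSEquiv} for \cref{thm:DualTowerEquivalenceMain}, and then to read off the statement from the correspondence $u_V \leftrightsquigarrow u_V$. Write $W = |V|-V \in RO(G)$. Because $V$ is pulled back along $G \to G/C_2$ we have $W^{C_2} = W$ and $|W| = 0$, so $\tau_W(\FF_{\leq 2}) = 0$ and the constant $C_{W,1}$ of \cref{df:ConstantCinTranschromaticTheorem} vanishes. Hence $\mathcal{L}_1^W$ is the slope-$1$ line through the origin of the $(W+t-s,s)$-graded page, the orientation class $u_V$ sits at $(W,0)$ --- on $\mathcal{L}_1^W$ and in the region $t-s \geq 0$ --- and on the $G/C_2$-side $u_V$ likewise sits at $(W,0)$, on the line $s = 0$ and in $t-s \geq 0$.

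First I would assemble the localized transchromatic isomorphism. By \cref{lem:DinverseLocalizedSSEquiv} there is an equivalence of towers
\[
\EF[C_4] \wedge P_\bullet\!\left(\overline{D}_{G,m}^{-1}\BPG\langle m\rangle\right) \;\simeq\; \Pb_{G/C_2}\D\!\left(\EF[C_2] \wedge P_\bullet\!\left(D_{G/C_2,m}^{-1}\BPGmodtwo\langle m\rangle\right)\right),
\]
so \cref{thm:ShearingGeneralTheory} with $N = C_2$ yields a shearing isomorphism $d_r \leftrightsquigarrow d_{2r-1}$ between the two associated localized slice spectral sequences, and on $\mathcal{E}_2$-terms an isomorphism of the shape of \eqref{eq:TranschromaticMainIsom} with $t' = 2t$ and $s' = t+s$. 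By \cref{cor:sliceRecoveryExample1} the localized spectral sequence on the $G/C_2$-side agrees with $\SliceSS(D_{G/C_2,m}^{-1}\BPGmodtwo\langle m\rangle)$ on or above $s = 0$; by the Slice Recovery Theorem (\cref{thm:SliceRecovery1} with $h = 2$, noting $\FF_{\leq 2} = \FF[C_4]$ for $G$) the localized spectral sequence on the $G$-side agrees with $\SliceSS(\overline{D}_{G,m}^{-1}\BPG\langle m\rangle)$ on or above $\mathcal{L}_1^W$ within $t-s \geq 0$. Exactly as in \cref{prop:uVCorrespondenceFormula} and \cref{thm:CorrespondenceFormula}(2) --- arguments that involve only the orientation classes and so are unaffected by the localizations --- the isomorphism carries $u_V$ to $u_V$; since $u_V \neq 0$ maps to a nontrivial generator under both localization maps, there is no fringe ambiguity of the kind described in the remark after \cref{thm:TranschromaticMain}. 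As $u_V$ lies in slice filtration $0$ it can never be a boundary, so ``permanent cycle'' and ``survives to $\mathcal{E}_\infty$'' coincide for it, and we conclude that $u_V$ is a permanent cycle in $\SliceSS(\overline{D}_{G,m}^{-1}\BPG\langle m\rangle)$ if and only if it is one in $\SliceSS(D_{G/C_2,m}^{-1}\BPGmodtwo\langle m\rangle)$.

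The remaining step is to bridge $\overline{D}_{G,m}$ and $D_{G,m} = N_{C_2}^{G}(\tee^{C_2}_{2^n\cdot m}) \cdot \overline{D}_{G,m}$ on the $G$-side, and this is where I expect the main obstacle. Concretely, one needs $\SliceSS(D_{G,m}^{-1}\BPG\langle m\rangle)$ and $\SliceSS(\overline{D}_{G,m}^{-1}\BPG\langle m\rangle)$ to agree in the region on or above $\mathcal{L}_1^W$ --- ideally because the extra localization class $N_{C_2}^{G}(\tee^{C_2}_{2^n\cdot m})$ is already invertible in $\overline{D}_{G,m}^{-1}\BPG\langle m\rangle$, so that the two spectra coincide, or, failing that, because it is detected there by a permanent cycle that does not interact with $u_V$. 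Identifying which localization of $\BPG\langle m\rangle$ computes $E_h$ at the level of the stratification tower --- and in particular checking this compatibility, which should follow from the formal-group-law analysis of \cite{BHSZ} together with the correspondence formula \cref{thm:CorrespondenceFormula}(3) --- is the main technical point; note that a correspondence for the ``extra'' factor does not come for free from \cref{prop:tiCorrespondenceFormula}, since there is no $C_2$-level generator on the $G/C_2$-side to match it with.

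Once this compatibility is in place, the previous paragraphs give the asserted equivalence of permanent-cycle conditions. Finally, the parenthetical statements about $E_h$ and $E_{h/2}$ follow because the equivariant orientations $D_{G,m}^{-1}\BPG\langle m\rangle \to E_h$ and $D_{G/C_2,m}^{-1}\BPGmodtwo\langle m\rangle \to E_{h/2}$ of \cite{HahnShi, BHSZ} send $u_V$ to $u_V$ and hence carry permanent cycles to permanent cycles, and the resulting $(|V|-V)$-periodicity of $E_h$ is obtained exactly as in the discussion preceding the theorem.
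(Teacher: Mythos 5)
Your approach is the same as the paper's: apply \cref{lem:DinverseLocalizedSSEquiv} in place of \cref{thm:DualTowerEquivalenceMain}, feed the resulting tower equivalence into \cref{thm:ShearingGeneralTheory}, invoke \cref{cor:sliceRecoveryExample1} and \cref{thm:SliceRecovery1} to recover the non-localized spectral sequences, and read off $u_V \leftrightsquigarrow u_V$ as in \cref{cor:UVmapstoUVTranschromatic}. Your computation of the constants ($\tau_W(\FF_{\leq 2}) = 0$, $C_{W,1} = 0$), your remark that $u_V$ cannot be a boundary because it sits at filtration $0$, and your treatment of the parenthetical $E_h$/$E_{h/2}$ statements via naturality of the equivariant orientation are all correct and match the paper.

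Where you diverge from the paper is in the final paragraph, and this is worth dwelling on: you explicitly flag that \cref{lem:DinverseLocalizedSSEquiv} puts $\overline{D}_{G,m}^{-1}\BPG\langle m\rangle$ on the $G$-side, whereas the theorem is stated for $D_{G,m}^{-1}\BPG\langle m\rangle$, and that the ``extra'' localization class $N_{C_2}^G(\tee^{C_2}_{2^n m})$ has no counterpart in the correspondence of \cref{prop:tiCorrespondenceFormula} (there is no $j=0$ case). The paper's proof of \cref{thm:ROGPeriodicityTranschromatic} in fact sets up the commuting square with $\overline{D}_{G,m}^{-1}\BPG\langle m\rangle$ in the top-left corner and then states the conclusion for $D_{G,m}^{-1}\BPG\langle m\rangle$ without spelling out the passage between the two; \cref{rmk:ROGPeriodicityBPGm} only handles the easy (naturality) direction. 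So your instinct that this is the place needing care is well-founded, and you have correctly identified a step that the paper elides rather than a step you failed to see. You do not, however, close the gap yourself: the first mechanism you propose (that $N_{C_2}^G(\tee^{C_2}_{2^n m})$ is already invertible in $\overline{D}_{G,m}^{-1}\BPG\langle m\rangle$) is false on the level of spectra, since $\Phi^{C_4}$ of this class vanishes. The relevant statement is at the level of the $\EF[\Ckplusone]$-localized slice towers, where the filtration-zero leading term of $N_{C_2}^G(\tee^{C_2}_{2^n m})$ in the localized slice associated graded becomes (up to units) a power of factors already inverted by $\overline{D}_{G,m}$, so that inverting $D_{G,m}$ or $\overline{D}_{G,m}$ produces the same localized tower; that, combined with \cref{thm:SliceRecovery1} applied to both spectra, gives the needed identification on or above $\mathcal{L}_1^W$. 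Your second alternative (``detected there by a permanent cycle that does not interact with $u_V$'') is a reasonable paraphrase of this, but you would need to actually exhibit the leading-term computation to make it a proof.

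So: right route, correct identification of the delicate point, but the proof is incomplete exactly where you say it is, and the paper's own proof is terse at the same spot.
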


\begin{rmk} \label{rmk:ROGPeriodicityBPGm}\rm
If the class $u_V$ is a permanent cycle in the slice spectral sequence of $D_{G, m}^{-1}\BPG$ or $\BPG\langle m \rangle$, then $u_V$ will also be a permanent cycle in the slice spectral sequence of $D_{G, m}^{-1} \BPG \langle m \rangle$ by naturality.  Therefore, the statement of \cref{thm:ROGPeriodicityTranschromatic} remains valid if we replace $D_{G, m}^{-1} \BPG \langle m \rangle$ by $D_{G, m}^{-1}\BPG$ or $\BPG\langle m \rangle$.
\end{rmk}

\begin{rmk}\rm
In light of \cref{rmk:ROGPeriodicityBPGm}, we can apply \cref{thm:ROGPeriodicityTranschromatic} iteratively to establish that if $V \in RO(G/\Ck)$, where $1 \leq k \leq n$, and $u_V$ is a permanent cycle in the slice spectral sequence of $\BPGmodCk \langle m \rangle$ (and therefore in the $G/\Ck$-slice spectral sequence of $E_{h/2^k}$), then $u_V$ remains a permanent cycle in the slice spectral sequence of $\BPG \langle m \rangle$ (and hence in the $G$-slice spectral sequence of $E_h$).  
\end{rmk}

\begin{proof}[Proof of \cref{thm:ROGPeriodicityTranschromatic}]
The equivalence of localized dual slice towers in \cref{lem:DinverseLocalizedSSEquiv} establishes a shearing isomorphism between the corresponding localized slice spectral sequences.  Consequently, this shearing isomorphism extends to the regions of the respective slice spectral sequences: 
\[\begin{tikzcd}
\SliceSS\left(\overline{D}_{G, m}^{-1} \BPG \langle m \rangle\right) \ar[r, leftrightsquigarrow, "\text{shearing}"]  \ar[d] & \SliceSS\left(D_{G/C_2, m}^{-1} \BPGmodtwo \langle m \rangle\right) \ar[d] \\ 
\EF[C_4] \wedge \SliceSS\left(\overline{D}_{G, m}^{-1} \BPG \langle m \rangle\right) \ar[r, leftrightsquigarrow, "\text{shearing}"] & \EF[C_2] \wedge \SliceSS\left(D_{G/C_2, m}^{-1} \BPGmodtwo \langle m \rangle\right).
\end{tikzcd}\]
The proof of \cref{cor:UVmapstoUVTranschromatic} applies to this context, showing that $u_V$ is a permanent cycle in the slice spectral sequence of $D_{G/C_2, m}^{-1} \BPGmodtwo \langle m \rangle$ if and only if $u_V$ is a permanent cycle in the slice spectral sequence of $D_{G, m}^{-1} \BPG \langle m \rangle$.
\end{proof}

\begin{rmk}\rm
Recall that the $RO(G)$-graded homotopy groups are graded by the representations $\{1, \sigma, \lambda_1, \ldots, \lambda_n\}$, while $RO(G/C_2)$-graded homotopy groups are graded by the representations $\{1, \sigma, \lambda_1, \ldots, \lambda_{n-1}\}$.  By considering the $RO(G/C_2)$-graded periodicities of the $G/C_2$-spectrum $E_{h/2}$ and applying \cref{thm:ROGPeriodicityTranschromatic}, we can immediately establish all the $RO(G)$-graded periodicities of the $G$-spectrum $E_h$ that are of the form $|V| - V$, where $V$ is generated by the representations $\{1, \sigma, \lambda_1, \ldots, \lambda_{n-1}\}$.
\end{rmk}

\begin{exam}\rm
By explicit computation of Hu and Kriz \cite{HuKriz}, it is shown that the class $u_{2^{m+1}\sigma}$ is a permanent cycle in the slice spectral sequence of $\BPR\langle m \rangle$.  This establishes a ${(2^{m+1} - 2^{m+1}\sigma)}$-graded periodicity for $E_m$ when considered as a $C_2$-spectrum.  Applying \cref{thm:ROGPeriodicityTranschromatic}, the class $u_{2^{m+1}\sigma}$ is also a permanent cycle in the slice spectral sequence of $\BPCnplusone \langle m \rangle$ for all $n \geq 0$.  As a consequence, a ${(2^{m+1} - 2^{m+1}\sigma)}$-graded periodicity is established for the $\Cnplusone$-spectrum $E_{2^n \cdot m}$ for all $n \geq 0$. 
\end{exam}

\begin{exam}\rm
The computation by Hill, Hopkins, and Ravenel~\cite{HHRKH} shows that the classes $u_{4\sigma}$, $u_{8\lambda_1}$, and $u_{4\lambda_1 2\sigma}$ are permanent cycles in the slice spectral sequence of $\BPCfour \langle 1 \rangle$.  These permanent cycles establish $RO(C_4)$-graded periodicities ${4 - 4 \sigma}$, ${16-8\lambda_1}$, and ${10-4\lambda_1 - 2 \sigma}$ for $E_2$ as a $C_4$-spectrum.  By applying \cref{thm:ROGPeriodicityTranschromatic}, we conclude that these classes are also permanent cycles in the slice spectral sequence of $\BPCnplusone \langle 1 \rangle$ for all $n \geq 1$.  Consequently, the $RO(\Cnplusone)$-graded periodicities 
\[4 - 4 \sigma, \, 16-8\lambda_1, \, 10-4\lambda_1 - 2 \sigma\]
are established for the $\Cnplusone$-spectrum $E_{2^n}$, where $n \geq 1$.
\end{exam}

\begin{exam}\rm
Similar to the previous example, Hill, the second author, Wang, and Xu~\cite{HSWX} computed the slice spectral sequence of $\BPCfour \langle 2 \rangle$ and showed that the classes $u_{8\sigma}$, $u_{32\lambda_1}$, and $u_{16\lambda_1 4 \sigma}$ are permanent cycles.  By applying \cref{thm:ROGPeriodicityTranschromatic}, we deduce that these classes are also permanent cycles in the slice spectral sequence of $\BPCnplusone \langle 2 \rangle$ for all $n \geq 1$.  Consequently, this establishes the $RO(\Cnplusone)$-graded periodicities 
\[8 - 8 \sigma, \, 64-32\lambda_1, \, 36-16\lambda_1 - 4 \sigma\]
for the $\Cnplusone$-spectrum $E_{2^{n}\cdot 2}$, where $n \geq 1$.
\end{exam}\rm

\section{Strong vanishing lines}\label{sec:VanishingLines}

In this section, we will maintain the notation $G = \Cnplusone$ and $h = 2^n \cdot m$.  In \cite{DuanLiShiVanishing}, Duan, Li, and the second author established a strong horizontal vanishing line of filtration $(2^{h+n+1} - 2^{n+1}+1)$ in the homotopy fixed point spectral sequence of $E_{h}$.  A key step in their proof relies on identifying isomorphism regions between the homotopy fixed point spectral sequence and the Tate spectral sequence \cite[Theorem~3.6]{DuanLiShiVanishing}.  

We can apply the exact same proof technique by substituting the homotopy fixed point spectral sequence with the equivariant slice spectral sequence, and the Tate spectral sequence with the localized slice spectral sequence $\widetilde{E}G \wedge \SliceSS$, while utilizing the slice recovery theorem (\cref{thm:SliceRecovery1}).  Consequently, this also establishes a strong horizontal vanishing line of filtration $(2^{h+n+1} - 2^{n+1}+1)$ in the slice spectral sequence of $E_h$.

We will use the Transchromatic Isomorphism Theorem (\cref{thm:TranschromaticMain}) to further establish strong vanishing lines of slope $(2^k-1)$ for all $0 \leq k \leq n$ in the slice spectral sequence of $E_h$.  

To start our discussion, recall that for $0 \leq k \leq n$, the $G/\Ck$-equivariant model of $E_{h/2^k}$ constructed in \cite{BHSZ} has a $G/\Ck$-equivariant orientation from $\BPGmodCk$ that factors through the localization $N_{C_2}^{G/\Ck}(\tee^{C_2}_{h/2^k})^{-1}\BPGmodCk$: 
\[\begin{tikzcd}
\BPGmodCk \ar[r] \ar[d] & E_{h/2^k}  \\
N_{C_2}^{G/\Ck}(\tee^{C_2}_{h/2^k})^{-1}\BPGmodCk \ar[ru]
\end{tikzcd}\]

\begin{lem}\label{lem:VanshingLinesLocalizedTowerEquiv}
For $1 \leq k \leq n$, we have the following equivalence of towers: 
\begin{multline*}
\EF[\Ckplusone] \wedge P_\bullet\left(N_{C_2}^{G}(\tee^{\Ckplusone}_{h/2^k})^{-1} \BPG \right) \\
\simeq \Pb_{G/\Ck} \D^k \left(\EF[C_2] \wedge P_\bullet\left(N_{C_2}^{G/\Ck}(\tee^{C_2}_{h/2^k})^{-1} \BPGmodCk\right) \right). 
\end{multline*}    
\end{lem}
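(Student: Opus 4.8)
The plan is to adapt the proof of \cref{lem:DinverseLocalizedSSEquiv} to the present setting: begin from the unlocalized tower equivalence of \cref{thm:DualTowerEquivalenceMain} and then track the localizing elements through it. Taking $I = \N$ in \cref{thm:DualTowerEquivalenceMain}, so that $\BPCnplusone \langle I \rangle = \BPG$ and $\BPCnminuskplusone \langle I \rangle = \BPGmodCk$, one first records the equivalence of towers
\[
\EF[\Ckplusone] \wedge P_\bullet\left(\BPG\right) \simeq \Pb_{G/\Ck}\,\D^k\left(\EF[C_2] \wedge P_\bullet(\BPGmodCk)\right).
\]
It then remains to match the two localizing classes under this equivalence.

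For this I would invoke \cref{thm:CorrespondenceFormula}(3), in the mildly generalized form recorded in \cref{sec:ROGPeriodicity} that allows the $\bar t$-generators pushed in from subgroups via the norm--restriction unit (valid because the corresponding generalization of \cref{lem:tiGeneratorRelations} holds). Applying part (3) with $j = n-k+1$ and generator index $i = h/2^k$, and then norming up, yields the correspondence
\[
N_{C_2}^{G}\left(\tee^{\Ckplusone}_{h/2^k}\right)\cdot \left(\frac{a_{\rhobar_{2^{n+1}}}}{a_{\rhobar_{2^{n+1}/2^k}}}\right)^{2^{h/2^k}-1} \;\leftrightsquigarrow\; N_{C_2}^{G/\Ck}\left(\tee^{C_2}_{h/2^k}\right).
\]
By \cref{df:RhobarNotation} one has $\rhobar_{2^{n+1}} - \rhobar_{2^{n+1}/2^k} = \sum_{j=n-k+1}^{n} 2^{j-1}\lambda_j$, so the correction term is the monomial $\prod_{j=n-k+1}^{n} a_{\lambda_j}^{2^{j-1}(2^{h/2^k}-1)}$ in the Euler classes $a_{\lambda_{n-k+1}}, \dots, a_{\lambda_n}$.

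The key point, exactly as in the proof of \cref{lem:DinverseLocalizedSSEquiv}, is that this correction term is already a unit on the left-hand tower: smashing with $\EF[\Ckplusone]$, the localization at the family of subgroups not containing $\Ckplusone$, inverts $a_{\lambda_j}$ whenever $\lambda_j^{\Ckplusone} = 0$, which holds for every $j \geq n-k$, in particular for $n-k+1 \leq j \leq n$. Hence, under the displayed tower equivalence, inverting $N_{C_2}^{G/\Ck}(\tee^{C_2}_{h/2^k})$ on the right side corresponds to inverting $N_{C_2}^{G}(\tee^{\Ckplusone}_{h/2^k})$ on the left. Since both localizations are at elements coming from the twisted monoid ring structure and hence are represented by self-maps compatible with the respective dual slice filtrations, they commute with forming dual slice towers, with $\D^k$, with $\Pb_{G/\Ck}$, and with smashing against $\EF[-]$; localizing both sides of the displayed equivalence therefore produces the asserted equivalence of towers.

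I expect the main obstacle to be the bookkeeping in the second step: confirming that \cref{thm:CorrespondenceFormula}(3) genuinely applies to the pushed-in generator $\tee^{C_2}_{h/2^k}$ of $\BPGmodCk$ rather than only to the top generator, and pinning down the correction term precisely enough to see that every Euler class occurring in it lies in the range $j \geq n-k$ inverted by $\EF[\Ckplusone]$. A smaller technical point to spell out is the compatibility of the two localizations with the dual slice filtration, so that $P_\bullet(N_{C_2}^{G}(\tee^{\Ckplusone}_{h/2^k})^{-1}\BPG)$ and $P_\bullet(N_{C_2}^{G/\Ck}(\tee^{C_2}_{h/2^k})^{-1}\BPGmodCk)$ really are the localizations of $P_\bullet(\BPG)$ and $P_\bullet(\BPGmodCk)$ at those elements, as the statement implicitly requires.
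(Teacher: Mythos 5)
Your proposal is correct and follows essentially the same route as the paper's proof: start from the unlocalized tower equivalence of \cref{thm:DualTowerEquivalenceMain} with $I=\N$, use \cref{thm:CorrespondenceFormula}(3) in the pushed-in-generator form of \cref{sec:ROGPeriodicity} to match the two localizing elements up to the factor $\left(a_{\rhobar_{2^{n+1}}}/a_{\rhobar_{2^{n+1}/2^k}}\right)^{2^{h/2^k}-1}$, and note that this factor is a unit after smashing with $\EF[\Ckplusone]$. The paper obtains the same correspondence by taking the $n=k$, $j=1$ instance and then norming from $\Ckplusone$ up to $G$, but this is only a presentational difference, and your closing remark about compatibility of the localization with the dual slice filtration is a technical point the paper likewise leaves implicit (as it does in \cref{lem:DinverseLocalizedSSEquiv}).
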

\begin{proof}
The proof is almost exactly the same as that of \cref{lem:DinverseLocalizedSSEquiv}.  Consider the equivalence 
\begin{equation}\label{eq:VanishingLineTowerEquivalence}
\EF[\Ckplusone] \wedge P_\bullet \left(\BPG\right) \simeq \Pb_{G/\Ck} \D^k \left(\EF[C_2] \wedge P_\bullet\left(\BPGmodCk\right) \right)
\end{equation}
established in \cref{thm:DualTowerEquivalenceMain}.  Under this equivalence, letting $n = k$, $j=1$, and $i = h/2^k$ in \cref{thm:CorrespondenceFormula} (3) establishes the correspondence
\[N_{C_2}^{\Ckplusone}(\tee_{h/2^k}^{\Ckplusone}) \left(\frac{a_{\bar{\rho}_{2^{k+1}}}}{a_{\bar{\rho}_{2^{k+1}/2^k}}} \right)^{2^{h/2^k}-1} \leftrightsquigarrow \tee_{h/2^k}^{C_2},\]
where 
\[\frac{a_{\bar{\rho}_{2^{k+1}}}}{a_{\bar{\rho}_{2^{k+1}/2^k}}} = a_{\lambda_k}^{2^{k-1}}a_{\lambda_{k-1}}^{2^{k-2}}\cdots a_{\lambda_1}.\]
Applying the norm, this correspondence becomes 
\[N_{C_2}^{G}(\tee_{h/2^k}^{\Ckplusone})\left(a_{\lambda_n}^{2^{n-1}}a_{\lambda_{n-1}}^{2^{n-2}}\cdots a_{\lambda_{n-k+1}}^{2^{n-k}}\right)^{2^{h/2^k}-1} \leftrightsquigarrow N_{C_2}^{G/\Ck}(\tee_{h/2^k}^{C_2}).\]
Since smashing with $\EF[\Ckplusone]$ inverts the class $a_{\lambda_{n-k}}$, which also inverts the classes $a_{\lambda_n}$, $\ldots$, $a_{\lambda_{n-k+1}}$, the term $\left(a_{\lambda_n}^{2^{n-1}}a_{\lambda_{n-1}}^{2^{n-2}}\cdots a_{\lambda_{n-k+1}}^{2^{n-k}}\right)^{2^{h/2^k}-1}$ in the correspondence above is a unit in $\EF[\Ckplusone] \wedge P_\bullet \left(\BPG\right)$.  The desired equivalence now follows from Equivalence~(\ref{eq:VanishingLineTowerEquivalence}) and the correspondence formula above.  
\end{proof}

To state our theorem about general vanishing lines for $E_h$, recall that from \cref{df:LineLVh-1}, the line $\mathcal{L}_{2^k-1}^V$ on the $(V+t-s, s)$-graded page is defined by the equation 
\[s = (2^k-1)(t-s) + \tau_V(\mathcal{F}_{\leq 2^k}),\]
where by \cref{df:HmaxHmintau}.
\[\tau_V(\mathcal{F}_{\leq 2^k}) = \max_{0 \leq j \leq k} \left(|V^{C_{2^j}}| \cdot 2^j - |V| \right).\]

\begin{df}\rm
For $0 \leq k \leq n$, let $N_k$ be the positive integer 
\[N_k := 2^{h/2^k+n+1} - 2^{n+1}+2^k.\]
Define $\mathcal{L}_{2^k-1, N_k}^V$ to be the line of slope $(2^k-1)$ on the $(V+t-s, s)$-graded page that is given by the equation
\[s = (2^k-1)(t-s) + \tau_V(\mathcal{F}_{\leq 2^k}) + N_k.\]
\end{df}

\begin{thm}[Vanishing Lines]\label{thm:VanishingLineGeneralSlope}
Let $G = \Cnplusone$ and $h = 2^n \cdot m$.  For all $V \in RO(G)$ and $0 \leq k \leq n$, the line $\mathcal{L}_{2^k-1, N_k}^V$ is a vanishing line in the $(V+t-s, s)$-graded page of the slice spectral sequence of $E_h$ within the region $t-s \geq 0$.  This vanishing line satisfies the following properties: 
\begin{enumerate}
\item There are no classes on or above $\mathcal{L}_{2^k-1, N_k}^V$ that survive to the $\mathcal{E}_\infty$-page; 
\item Differentials originating on or above the line $\mathcal{L}_{2^k-1}^V$ have lengths at most ${N_k - (2^k-1) = 2^{h/2^k+n+1} - 2^{n+1} + 1}$; 
\item All differentials originating below $\mathcal{L}_{2^k-1}^V$ have targets strictly below $\mathcal{L}_{2^k-1, N_k}^V$.
\end{enumerate}
\end{thm}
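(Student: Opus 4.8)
The plan is to prove the theorem by downward induction on $k$, using the transchromatic tower to transport the horizontal vanishing line for lower-height theories up to slope-$(2^k-1)$ vanishing lines for $E_h$. The base case $k = n$ is the statement that the $C_2$-slice spectral sequence of $E_{h/2^n} = E_m$ (equivalently, $\BPR\langle m \rangle$ localized appropriately) has a strong horizontal vanishing line. This is precisely the case $k=0$ of the theorem applied to the $C_2$-equivariant theory $E_m$, which was established in \cite{DuanLiShiVanishing} (transferred to the slice spectral sequence via \cref{thm:SliceRecovery1}, as explained in the paragraph preceding the theorem statement): the slice spectral sequence of $E_m$ has a strong horizontal vanishing line at filtration $N_0 = 2^{m+n+1-n} \cdots$—more precisely, one applies \cite{DuanLiShiVanishing} directly to the group $C_2$ and height $m$ to obtain a horizontal vanishing line at filtration $2^{m+1}-1$, which one must then match against the formula $N_n = 2^{h/2^n+n+1} - 2^{n+1} + 2^n$ after the appropriate shearing. (I would double-check the arithmetic here: $h/2^n = m$, so $N_n$ should reduce to something comparable with the $C_2$-bound $2^{m+1}-1$ after dividing by the shearing factor $2^n$ and accounting for the $\tau_V$ shift.)

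For the inductive step, suppose the slope-$(2^{k}-1)$ vanishing line has been established for $E_h$ for some value—actually, the cleaner formulation is: I would establish each $\mathcal{L}^V_{2^k-1,N_k}$ directly from the horizontal vanishing line of the height-$(h/2^k)$ theory $E_{h/2^k}$ via \cref{thm:TranschromaticMain}. Concretely, by \cref{lem:VanshingLinesLocalizedTowerEquiv} the localized slice spectral sequences $\EF[\Ckplusone] \wedge \SliceSS(N_{C_2}^G(\tee^{\Ckplusone}_{h/2^k})^{-1}\BPG)$ and $\EF[C_2]\wedge \D^k\SliceSS(N_{C_2}^{G/\Ck}(\tee^{C_2}_{h/2^k})^{-1}\BPGmodCk)$ agree, so the Transchromatic Isomorphism (\cref{thm:TranschromaticMain}, or rather the localized version used in its proof, together with the slice-recovery identifications $\varphi_{2^k}$ and $\varphi_1$) gives a shearing isomorphism $d_r \leftrightsquigarrow d_{2^kr-(2^k-1)}$ between the region of $\SliceSS(E_{h/2^k})$ above $s = C_{V,k}$ and the region of $\SliceSS(E_h)$ above $\mathcal{L}^V_{2^k-1}$. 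The horizontal vanishing line $s = C_{V,k} + N'$ in $\SliceSS(E_{h/2^k})$, where $N'$ is the $k$-th level horizontal bound from \cite{DuanLiShiVanishing} applied to the group $G/\Ck = \Cnminuskplusone$ and height $h/2^k$ (so $N' = 2^{h/2^k + (n-k+1)} - 2^{n-k+1}+1$), then shears—using the line-transport computation in the proof of \cref{thm:TranschromaticMain} that sends $s = C_{V,k}$ to $\mathcal{L}^V_{2^k-1}$—to a line of slope $(2^k-1)$ in $\SliceSS(E_h)$. One then checks that $2^k \cdot N'$ plus the shift $\tau_V(\mathcal{F}_{\leq 2^k})$ produces exactly $\tau_V(\mathcal{F}_{\leq 2^k}) + N_k$, i.e. that $2^k N' = N_k$; this is the arithmetic identity $2^k(2^{h/2^k+n-k+1} - 2^{n-k+1}+1) = 2^{h/2^k+n+1} - 2^{n+1}+2^k$, which holds. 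Properties (1), (2), (3) then follow from the corresponding properties of the horizontal vanishing line downstairs: (1) from no permanent cycles above $s = C_{V,k}+N'$, (2) from the fact that a $d_r$ with $r \leq N'$ shears to a $d_{2^kr-(2^k-1)}$ with $2^kr - (2^k-1) \leq 2^k N' - (2^k-1) = N_k - (2^k-1)$, and (3) from transporting the statement that differentials from below the horizontal line land strictly below the vanishing line.

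There is a gap to address: the shearing isomorphism only controls the region on or above $\mathcal{L}^V_{2^k-1}$ within $t - s \geq 0$, whereas the vanishing line $\mathcal{L}^V_{2^k-1,N_k}$ lies strictly above $\mathcal{L}^V_{2^k-1}$, so its entire half-plane is inside the isomorphism region—good—but for property (3) I need to control differentials originating \emph{below} $\mathcal{L}^V_{2^k-1}$ whose targets might cross into the region above. Here I would invoke the Slice Recovery Theorem together with the stratification: a differential originating below $\mathcal{L}^V_{2^k-1}$ is "invisible" to the localized spectral sequence $\EF[\Ckplusone]\wedge\SliceSS$, and the target, if it were on or above $\mathcal{L}^V_{2^k-1,N_k}$, would be a nonzero class surviving in the localized picture to that filtration, contradicting the transported vanishing line.

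\textbf{The main obstacle.} I expect the hardest part to be bookkeeping the various constants and shears consistently: matching the height-$(h/2^k)$ horizontal bound from \cite{DuanLiShiVanishing} (which is stated for the homotopy fixed point spectral sequence and must be moved to the slice spectral sequence, and which involves a group of order $2^{n-k+1}$) against $N_k$ after applying the degree-$2^k$ shearing and the $\tau_V(\mathcal{F}_{\leq 2^k})$ shift, and verifying that the claimed line $s = (2^k-1)(t-s) + \tau_V(\mathcal{F}_{\leq 2^k}) + N_k$ is exactly what the shear produces and not off by the discrepancy between $\tau_V(\mathcal{F}_{\leq 2^k})$ and $(|V^{\Ck}|\cdot 2^k - |V|)$ (which is precisely $2^k C_{V,k}$, by \cref{df:ConstantCinTranschromaticTheorem}). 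Secondarily, one must confirm that inverting $N_{C_2}^G(\tee^{\Ckplusone}_{h/2^k})$ rather than the full $D_{G,m}$ suffices to model $E_h$ after smashing with $\EF[\Ckplusone]$ — this is the content of \cref{lem:VanshingLinesLocalizedTowerEquiv} and the remark that the other factors of $D_{G,m}$ become units (or irrelevant) after localization — and that the resulting spectral sequence genuinely computes $E_h$ and not merely a localized quotient, which follows from the cofree/Borel comparison as in the periodicity section.
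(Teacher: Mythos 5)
Your high-level plan, the arithmetic identity $2^k N' = N_k$, and your sketches for properties (1)--(3) via the Slice Recovery Theorem all match the paper's proof in spirit, and you correctly identify \cref{lem:VanshingLinesLocalizedTowerEquiv} as the relevant tower equivalence. But there is a real gap in the middle step. You assert a shearing isomorphism between a region of $\SliceSS(E_{h/2^k})$ and a region of $\SliceSS(E_h)$, and then want to feed the horizontal vanishing line of $E_{h/2^k}$ from \cite{DuanLiShiVanishing} into it. No such shearing isomorphism exists: \cref{lem:VanshingLinesLocalizedTowerEquiv} and \cref{thm:ShearingGeneralTheory} give a shearing isomorphism between $\EF[\Ckplusone]\wedge\SliceSS(X)$ and $\EF[C_2]\wedge\SliceSS(Y)$, where $X = N_{C_2}^G(\tee^{\Ckplusone}_{h/2^k})^{-1}\BPG$ and $Y = N_{C_2}^{G/\Ck}(\tee^{C_2}_{h/2^k})^{-1}\BPGmodCk$. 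The orientation maps go $X \to E_h$ and $Y \to E_{h/2^k}$, so a strong-collapse statement for $E_{h/2^k}$ does not push back to $Y$: a differential $d_r(x) = 1$ in $\SliceSS(E_{h/2^k})$ need not lift along $\SliceSS(Y) \to \SliceSS(E_{h/2^k})$. Citing \cite{DuanLiShiVanishing} for $E_{h/2^k}$ therefore leaves you unable to get started in the localized spectral sequence of $Y$, which is where the shearing actually lives.

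What the paper does instead is re-derive the bound directly inside $\EF[C_2]\wedge\SliceSS(Y)$: the known $C_2$-slice differential $d_{2^{h/2^k+1}-1}(u_{2^{h/2^k}\sigma}) = \tee^{C_2}_{h/2^k}\,a_\sigma^{2^{h/2^k+1}-1}$, combined with the fact that both $\tee^{C_2}_{h/2^k}$ and $a_\sigma$ are units there, yields $d_{2^{h/2^k+1}-1}(\cdot) = 1$ at the $C_2$-level; norming up to $G/\Ck$ bounds the length of the differential killing $1$ by $2^{n-k}(2^{h/2^k+1}-2)+1$; shearing then bounds the length of the differential killing $1$ in $\EF[\Ckplusone]\wedge\SliceSS(X)$ by $N_k-(2^k-1)$; and only at this point does the orientation map $X \to E_h$ come in, transferring the bound to $\EF[\Ckplusone]\wedge\SliceSS(E_h)$ by naturality (the unit maps to the unit and differentials commute with the map of spectral sequences, which is a map in the correct direction). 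Multiplicativity then forces the localized $\mathcal{E}_{N_k-(2^k-1)+1}$-page to vanish, and the Slice Recovery Theorem transports this to $\SliceSS(E_h)$ on or above $\mathcal{L}^V_{2^k-1}$, giving (1)--(3). So the ingredient you are missing is not bookkeeping but the replacement of ``import the Duan--Li--Shi result'' by ``redo the Duan--Li--Shi norm argument inside the localized spectral sequence of $Y$,'' followed by a naturality argument through $X \to E_h$ rather than through $Y \to E_{h/2^k}$.
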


\begin{proof}
When $k = 0$, the line $\mathcal{L}^V_{2^0-1, N_0}$ is the horizontal line 
\[s = 2^{h+n+1}-2^{n+1}+1,\] 
and our claim follows immediately from Theorem~6.1 and Theorem~7.1 in \cite{DuanLiShiVanishing}.

For $1 \leq k \leq n$, we will abbreviate the $G$-spectrum $N_{C_2}^G(\tee^{\Ckplusone}_{h/2^k})^{-1}\BPG$ by $X$ and the $G/\Ck$-spectrum $N_{C_2}^{G/\Ck}(\tee^{C_2}_{h/2^k})^{-1}\BPGmodCk$ by $Y$.  The equivalence of towers 
\[\EF[\Ckplusone] \wedge P_\bullet (X) 
\simeq \Pb_{G/\Ck} \D^k \left(\EF[C_2] \wedge P_\bullet(Y) \right) 
\]
proven in \cref{lem:VanshingLinesLocalizedTowerEquiv}, combined with \cref{thm:ShearingGeneralTheory}, establishes a shearing isomorphism 
\begin{equation} \label{eq:VanishingLineProofShearingIsom1}
\EF[\Ckplusone] \wedge \SliceSS(X) \leftrightsquigarrow \EF[C_2] \wedge \SliceSS(Y).
\end{equation}
This shearing isomorphism establishes a one-to-one correspondence between the $d_{2^kr - (2^k-1)}$-differentials and the $d_r$-differentials.  

On the $C_2$-level of $\EF[C_2] \wedge \SliceSS(Y)$, we have the $C_2$-differential 
\[d_{2^{h/2^k+1}-1}\left(u_{2^{h/2^k}\sigma}\right) = \tee^{C_2}_{h/2^k} a_\sigma^{2^{h/2^k+1}-1}.\]
Since both $\tee^{C_2}_{h/2^k}$ and $a_\sigma$ are inverted, the $C_2$-differential above implies the $C_2$-differential
\[d_{2^{h/2^k+1}-1}\left(u_{2^{h/2^k}\sigma} (\tee^{C_2}_{h/2^k})^{-1} a_\sigma^{1-2^{h/2^k+1}} \right) = 1.\]
Applying the norm $N_{C_2}^{G/\Ck}(-)$ to this differential, we deduce that on the $G/\Ck$-level, the unit class 1 must be killed by a differential of length at most 
\[2^{n-k} \cdot (2^{h/2^k+1}-2) + 1 = 2^{h/2^k + n-k+1} - 2^{n-k+1} + 1\]
(for a discussion of the behavior of norms in the localized slice spectral sequence, see \cite[Section~3.4]{MeierShiZengHF2}).  Under the shearing isomorphism (\ref{eq:VanishingLineProofShearingIsom1}), we deduce that the unit class $1$ in $\EF[\Ckplusone] \wedge \SliceSS(X)$ must be killed by a differential of length at most 
\[2^k\cdot (2^{h/2^k + n-k+1} - 2^{n-k+1} + 1) - (2^k-1) = N_k - (2^k-1). \]

The $G$-equivariant map $X \to E_h$ induces a diagram 
\[\begin{tikzcd}
\SliceSS(X) \ar[r] \ar[d, "\mathcal{L}_{2^k-1}^V"] & \SliceSS(E_h) \ar[d, "\mathcal{L}_{2^k-1}^V"] \\ 
\EF[\Ckplusone] \wedge \SliceSS(X) \ar[r] & \EF[\Ckplusone] \wedge \SliceSS(E_h)
\end{tikzcd}\]
of the corresponding spectral sequences.  By naturality, the unit class 1 in the localized slice spectral sequence ${\EF[\Ckplusone] \wedge \SliceSS(E_h)}$ is killed by a differential of length at most ${N_k - (2^k-1)}$.  This implies that every class in ${\EF[\Ckplusone] \wedge \SliceSS(E_h)}$ must die on or before the $\mathcal{E}_{N_k - (2^k-1)}$-page.  In other words, every class must either support a differential or be hit by a differential of length at most ${N_k - (2^k-1)}$.  

Note that the line $\mathcal{L}^V_{2^{k-1}, N_k}$ by definition is the line $\mathcal{L}^V_{2^{k-1}}$ shifted vertically upwards by $N_k$.  The claim now follows from the Slice Recovery Theorem (\cref{thm:SliceRecovery1}), which states that the map 
\[\SliceSS(E_h) \longrightarrow \EF[\Ckplusone] \wedge \SliceSS(E_h)\]
induces an isomorphism of spectral sequences on or above the line $\mathcal{L}_{2^k-1}^V$ on the $(V+t-s, s)$-graded page within the region $t-s \geq 0$.      
\end{proof}

\begin{figure}
\begin{center}
\makebox[\textwidth]{\includegraphics[trim={0cm 0cm 0cm 0cm}, clip, scale = 0.6]{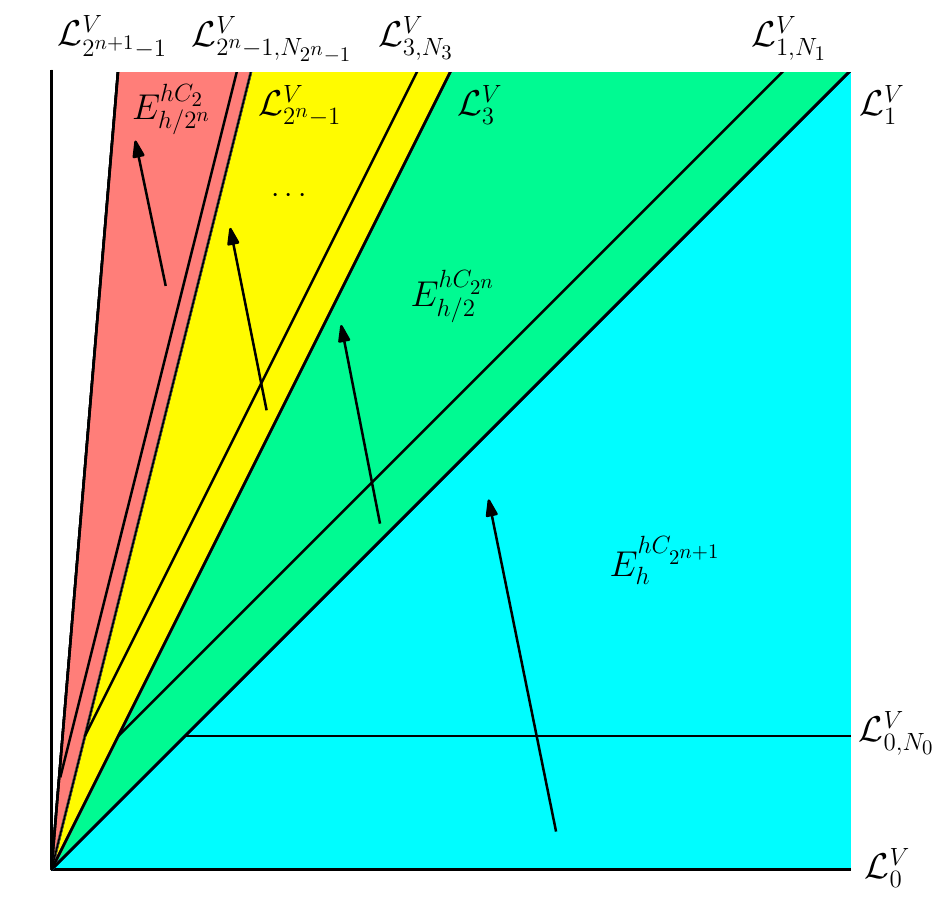}}
\caption{Vanishing lines in the slice spectral sequence of $E_h^{h\Cnplusone}$.}
\hfill
\label{fig:PicVanishingLines}
\end{center}
\end{figure}

\begin{rmk}\rm 
In addition to the stratification provided by the transchromatic tower~(\ref{diagram:TranschromaticTower}), the vanishing lines $\mathcal{L}_{2^k-1, N_k}^V$ (${0 \leq k \leq n}$) established in \cref{thm:VanishingLineGeneralSlope} impose further restrictions on the differentials in the slice spectral sequence of $E_h$ within the region $t-s \geq 0$.  More precisely: 
\begin{enumerate}
\item For $1 \leq k \leq n$, any differential originating below the line $\mathcal{L}_{2^{k}-1}^V$ must have its target strictly below the vanishing line $\mathcal{L}_{2^k-1, N_k}^V$; 
\item Any differential originating on or above the line $\mathcal{L}_{2^n-1}^V$ must have its target on or below the boundary of the positive cone established in \cite[Theorem~C]{MeierShiZengStratification}.  This boundary is given by the line $\mathcal{L}^V_{2^{n+1}-1}$ defined by the equation 
\[s = (|G|-1)(t-s) - |V|+ |G|\cdot \max_{H \subseteq G} |V^H|.\] 
\end{enumerate}
As a consequence, any differential in the slice spectral sequence of $E_h$ must reside within one of the following $(n+1)$ conical regions:
\[\left(\mathcal{L}_0^V, \mathcal{L}_{1, N_1}^V\right), \, \left(\mathcal{L}_1^V, \mathcal{L}_{3, N_3}^V\right), \, \ldots, \, \left(\mathcal{L}_{2^{n-1}-1}^V, \mathcal{L}_{2^{n}-1, N_{2^n-1}}^V\right), \, \left(\mathcal{L}_{2^n-1}^V, \mathcal{L}^V_{2^{n+1}-1} \right).\]  
See \cref{fig:PicVanishingLines}.
\end{rmk}

\begin{rmk}\rm
The arguments presented in the proof of \cref{thm:VanishingLineGeneralSlope} applies to any ${N_{C_2}^G(\tee_{h/2^k}^{\Ckplusone})^{-1}\BPG}$-module $M$ to show that $\mathcal{L}^V_{2^{k}-1, N_k}$ is a vanishing line in the slice spectral sequence of $M$. 
\end{rmk}

\bibliographystyle{alpha}
\bibliography{Bibliography}

\end{document}